\documentclass[10pt, letterpaper,reqno]{amsart}
\usepackage[left=1in,right=1in,bottom=0.5in,top=0.8in]{geometry}
\usepackage{amsfonts}
\usepackage{amsmath, amssymb}
\usepackage{graphicx}
\usepackage[font=small,labelfont=bf]{caption}
\usepackage{epstopdf}
\usepackage{xcolor}
\usepackage{amsthm}
\usepackage{float}
\usepackage{enumitem}
\usepackage{pgfplots}
\usepackage{listings}
\usepackage{longtable}
\usepackage{mathrsfs}
\usepackage{dsfont}
\usepackage{mathtools}
\usepackage{dsfont}
\usepackage{stmaryrd}
\usepackage[pdfpagelabels,hyperindex]{hyperref}
\hypersetup{linkbordercolor=green}

\newcommand*{\rom}[1]{\expandafter\@slowromancap\romannumeral #1@}
 \colorlet{lgray}{white!80!black}
\colorlet{lred}{white!85!red}
\colorlet{lgreen}{white!60!green}
\colorlet{dgreen}{black!30!green}
\colorlet{lpurple}{white!60!purple}
\colorlet{lblue}{white!60!blue}
\definecolor{green}{rgb}{0.1,0.8,0.1}
\definecolor{yellow}{rgb}{1.0,0.85,0.25}
\definecolor{purple}{rgb}{1.0, 0, 1.0}
\definecolor{blue}{rgb}{0, 0, 1.0}
\tikzstyle{unfused}=[lgray, line width=1.5pt, ->]
\tikzstyle{fused}=[lgray, line width=4pt, ->]
\tikzstyle{dual}=[black, line width=1pt, dashed]
\tikzstyle{lightdual}=[black, line width=0.5pt, dashed]
\tikzstyle{cut}=[black, line width=1.0pt]

\theoremstyle{plain}
\newtheorem{theorem}{Theorem}[section]
\newtheorem{lemma}[theorem]{Lemma}
\newtheorem{proposition}[theorem]{Proposition}
\newtheorem{corollary}[theorem]{Corollary}

\theoremstyle{remark}
\newtheorem{definition}[theorem]{Definition}
\newtheorem{remark}[theorem]{Remark}

\colorlet{shadecolor}{gray!20}
\pgfplotsset{compat=1.9}
\usetikzlibrary{shapes.multipart}
\usetikzlibrary{patterns}
\usetikzlibrary{shapes.multipart}
\usetikzlibrary{arrows}
\usetikzlibrary{decorations.markings}
\usepgflibrary{decorations.shapes}
\usetikzlibrary{decorations.shapes}
\usepgflibrary{shapes.symbols}
\usetikzlibrary{shapes.symbols}
\usetikzlibrary{decorations.pathreplacing}
\tikzstyle{fleche}=[>=stealth', postaction={decorate}, thick]
\tikzstyle{axis}=[->, >=stealth', thick, gray]
\tikzstyle{paths}=[>->, >=stealth', thick]
\tikzstyle{path}=[->, >=stealth', thick]
\tikzstyle{grille}=[dotted, gray]
\DeclareFontFamily{U}{mathx}{}
\DeclareFontShape{U}{mathx}{m}{n}{<-> mathx10}{}
\DeclareSymbolFont{mathx}{U}{mathx}{m}{n}
\DeclareMathAccent{\widehat}{0}{mathx}{"70}
\DeclareMathAccent{\widecheck}{0}{mathx}{"71}

\usepackage{romannum}
\usepackage{upgreek}

\usepackage[scr=boondoxo]{mathalfa}

\newcommand{\ber}{\mathrm{Ber}}
\newcommand{\lrw}{\mathsf{LRW}}
\newcommand{\bg}{\boldsymbol{\gamma}}
\newcommand{\bs}{\boldsymbol{\sigma}}

\renewcommand{\a}{\mathsf{a}}
\renewcommand{\b}{\mathsf{b}}
\def \lc {\mathbf{loc}}
\def \lBr {\llbracket}
\def \rBr {\rrbracket}
\def \lb {\left(}
\def \rb {\right)} 
\def \llb {\llbracket}
\def \rrb {\rrbracket}
\def \ral {\longrightarrow}
\def \lal {\longleftarrow}

\newcommand{\TV}[1]{{\left\lVert #1 \right\rVert}_{\normalfont
\text{TV}}}

\numberwithin{equation}{section}

\usepgflibrary{fpu}
\makeatletter
\let\NAT@parse\undefined
\makeatother

\author{Dominik Schmid}
\address
{
Dominik Schmid, 
University of Augsburg, Germany 
}
\email{d.schmid@uni-a.de} 
\author{Zongrui Yang}
\address
{
Zongrui Yang, 
University of California, Berkeley
}
\email{zongrui@berkeley.edu} 

\title[Approximation for the stationary measure of the open ASEP]{Sharp approximations for the stationary measures of the open ASEP I: The shock region}  
\begin{document}
\pagenumbering{arabic}

\begin{abstract} We investigate the role of asymmetry in the stationary measures of integrable probabilistic models with boundary interactions. This paper is the first in a series of three focusing on the open asymmetric simple exclusion process (ASEP). The stationary measure of open ASEP admits a two-layer Gibbs representation in which a correction term, which we call the Bryc polynomial, encodes the dependence on the asymmetry parameter. 
We give a probabilistic interpretation of the Bryc polynomial in terms of a birth–death-type Markov chain. As an application, we obtain sharp total variation approximations of the stationary measure by Bernoulli product measures in the high- and low-density phases of the shock region. These approximations yield further applications, including moderate deviation estimates for the stationary measure of open ASEP, as well as asymptotic results for the related model of open ASEP with light particles.
\end{abstract}
\maketitle

%
\section{Introduction and main results}

\subsection{Preface}

The simple exclusion process is a central model in non-equilibrium statistical physics and integrable probability. Introduced in the biology literature in 1968 by MacDonald, Gibbs and Pipkin \cite{macdonald1968kinetics}, and in the mathematical literature in 1970 by Spitzer \cite{SPITZER1970246}, it since then serves as a prototypical interacting particle system studied in various fields; see \cite{liggett1999stochastic,wood2020combinatorial,blythe2007nonequilibrium,corwin2022some} for a selection of expository work from combinatorial, probabilistic, and statistical mechanical perspectives. The one-dimensional simple exclusion process with bias parameter $q \geq 0$ has a simple description: particles attempt $q$-biased random walks on the integer lattice, i.e., they independently attempt to jump at rate $1$ to the right and rate $q$ to the left. The moves are subject to an exclusion rule, suppressing all jumps of particles to already occupied sites. 

The asymmetric simple exclusion process with $q\neq1$ is one of the few models rigorously known to belong to the Kardar--Parisi--Zhang (KPZ) universality class \cite{KPZ86}. Its totally asymmetric special case, $q=0$, known as TASEP, is particularly important because of its connection to the exactly solvable random growth model of exponential last passage percolation \cite{rost1981non}. For TASEP, quantities such as particle currents and space-time correlations admit explicit formulas through connections with representation theory and random matrix theory. In particular, KPZ scaling limits for the ASEP were first determined when $q=0$, and in turn used to construct universal limit objects for models in the KPZ universality class, notably the KPZ fixed point \cite{johansson2000shape,matetski2021kpz}. For general asymmetry $q\in(0,1)$, the analysis is substantially more involved, and analogous process-level KPZ scaling limits have been established only recently \cite{aggarwal2024kpz,aggarwal2024scaling}.

The goal of this work is to better understand the role of the asymmetry parameter $q$ in models in the KPZ universality class with open boundaries. To this end, we study the open ASEP, in which particles perform $q$-biased random walks on a finite segment subject to the exclusion constraint and may enter or exit at the endpoints at prescribed rates. The open ASEP exhibits a phase diagram governed by the entry and exit rates and reflected in the structure of its stationary measures; see \cite{liggett1975ergodic,derrida1993exact,uchiyama2004asymmetric,Bryc2017,wang2023askey}.

Our starting point is Bryc's two-layer Gibbs representation of the stationary measure of the open ASEP under Liggett's condition on the boundary rates \cite{bryc2024stationary}. This representation builds on the framework of two-layer Gibbs measures for integrable models with two-sided open boundaries developed in \cite{barraquand2024stationary}. The stationary measure is realized as a marginal of a reweighted law of a pair of independent Bernoulli random walk paths. The Radon--Nikodym derivative contains exponential weights and polynomial correction factors, which we call \emph{Bryc polynomials}, that encode the dependence on the asymmetry parameter $q$. These polynomials thus relate the stationary measures of the open ASEP and the open TASEP through their two-layer representations. For the latter \cite{bryc2024two,bryc2024limitCoex} provide effective tools for asymptotic analysis.

Our main contribution is a probabilistic interpretation of the Bryc polynomials as probability generating functions of the terminal states of inhomogeneous birth--death Markov chains. This representation makes the polynomials amenable to asymptotic analysis through probabilistic estimates for the birth--death chains. In particular, it allows us to control their effect along typical reweighted random walk paths and thereby transfer stationary approximation results from the open TASEP to the open ASEP. In a broader sense, it provides a way to understand the effect of asymmetry in integrable models with open boundaries through polynomial corrections to two-layer Gibbs representations; see Section~\ref{sec:ScopePerspective} for further discussion.

As our main application, we establish sharp Bernoulli product approximations in total variation distance for the stationary measures of the open ASEP in the high- and low-density phases of the shock region, under Liggett's condition. The estimates apply to joint occupation variables away from the left boundary in the high-density phase and away from the right boundary in the low-density phase. The errors decay exponentially in the distance from that boundary. These results refine earlier approximation results \cite{liggett1975ergodic,nestoridi2024approximating,yang2025limits} and yield further applications, including moderate deviation estimates for stationary particle counts and asymptotic results for the open ASEP with light particles.

\subsection{The open ASEP}\label{subsec:model and results}

We define the open ASEP as a continuous-time Markov chain $(\tau(t))_{t \geq 0}$ on $\{0,1\}^{N}$ with parameters
\begin{equation}\label{eq:conditions open ASEP}
\alpha,\beta>0,\quad \gamma,\delta\geq 0,\quad\text{and}\quad  0\leq q<1 . 
\end{equation}
We write $\tau=(\tau_1,\tau_2,\dots,\tau_N) \in \{0,1\}^N$ for configurations of the open ASEP, where $\tau_i$ is the indicator for the $i^{\text{th}}$ site in $\tau$ being occupied. On sites $\llbracket 2,N\rrbracket$ particles attempt jumps according to independent rate $q$ Poisson clocks to the left, and on sites $\llbracket 1,N-1\rrbracket$ to the right according to independent rate $1$ Poisson clocks, subject to the constraint that a jump is performed if and only if the target site is vacant. Here, we write $\llbracket a,b\rrbracket := [a,b] \cap \mathbb Z$ for all  $a<b$.  In addition, particles can enter at the left boundary at rate $\alpha$, exit at the right boundary at rate $\beta$, exit at the left boundary at rate $\gamma$, and enter at the right boundary at rate $\delta$; see Figure~\ref{fig:openASEP} for an illustration. Intuitively, the assumptions in \eqref{eq:conditions open ASEP} ensure that particles have a drift to the right side while new particles are created at site $1$ at a linear rate. We will frequently refer to the special case $q=\gamma=\delta=0$,  called the \textit{open TASEP}. 

In the following, we consider the function
\begin{equation} \label{eq:phi}
\phi_{\pm}(x,y):=\frac{1}{2x}\left(1-q-x+y\pm\sqrt{(1-q-x+y)^2+4xy}\right), \quad \mbox{for }\; x>0\mbox{ and }y\geq 0,
\end{equation}
and define the parameters
\begin{equation}\label{eq:defining ABCD}
\a=\phi_+(\alpha,\gamma),\quad\b=\phi_+(\beta,\delta),\quad \mathsf{c}=\phi_-(\alpha,\gamma), \quad \mathsf{d}=\phi_-(\beta,\delta).
\end{equation}
We remark that 
under our assumptions \eqref{eq:conditions open ASEP}, 
\begin{equation}\label{eq:conditions qABCD}
\a,\b\geq0,\quad -1<\mathsf{c},\mathsf{d}\leq 0 . 
\end{equation}
The parameters $\a,\b$ allow us to describe the boundary effects in the open ASEP.
More precisely, we define $\rho_{\textup{L}}:=(1+\a)^{-1}$, respectively $\rho_{\textup{R}}:=\b(1+\b)^{-1}$ as the left side, respectively right side \textit{effective densities}. The parameters $\a$ and $\b$ allow us to partition the open ASEP into different regimes. When $\a > \max(\b,1)$, we say that the open ASEP belongs to the \textit{low density phase}, for $\b > \max(\a,1)$ the open ASEP belongs to the \textit{high density phase}, and for $\a,\b \leq 1$ to the \textit{maximal current phase}. The special case $\a=\b>1$ is called the \textit{coexistence line}. Moreover, we say that the open ASEP belongs to the \textit{fan region} if $\a \b < 1$, and to the \textit{shock region} if $\a \b >1$. The different phases and regions are illustrated in Figure~\ref{fig:Phase}. Heuristically, the particle density under the stationary measure will be given by $\rho_{\textup{L}} < \frac{1}{2}$ in the low density phase, by $\rho_{\textup{R}} > \frac{1}{2}$ in the high density phase, and by $\frac{1}{2}$ in the maximal current phase \cite{liggett1975ergodic,uchiyama2004asymmetric}. 
The terms fan and shock region originate from the creation of fans and shocks in Burgers equation, where solutions can be constructed from the ASEP via hydrodynamic limits \cite{kipnis1999scaling}.

Throughout this article, we will work under \textit{Liggett's condition} from \cite{liggett1975ergodic} that
\begin{equation} \label{eq:Liggett's condition}
\gamma=q(1-\alpha),\quad\delta=q(1-\beta),\quad \alpha,\beta\in(0,1] .
\end{equation}
Note that under \eqref{eq:Liggett's condition}, the parameters $\a,\b,\mathsf{c},\mathsf{d}$ simplify to
\begin{equation}\label{eq:parameters under Liggett condition}
\a=\frac{1-\alpha}{\alpha},\quad\b=\frac{1-\beta}{\beta},\quad \mathsf{c}=-q,\quad\mathsf{d}=-q.
\end{equation}
\begin{figure}
\centering
\begin{tikzpicture}[scale=0.85]
\def\spiral[#1](#2)(#3:#4:#5){
\pgfmathsetmacro{\domain}{pi*#3/180+#4*2*pi}
\draw [#1,
       shift={(#2)},
       domain=0:\domain,
       variable=\t,
       smooth,
       samples=int(\domain/0.08)] plot ({\t r}: {#5*\t/\domain})
}
\def\particles(#1)(#2){
  \draw[black,thick](-3.9+#1,0.55-0.075+#2) -- (-4.9+#1,0.55-0.075+#2) -- (-4.9+#1,-0.4-0.075+#2) -- (-3.9+#1,-0.4-0.075+#2) -- (-3.9+#1,0.55-0.075+#2);
\node[shape=circle,scale=0.6,fill=red] (Y1) at (-4.15+#1,0.2-0.075+#2) {};
  	\node[shape=circle,scale=0.6,fill=red] (Y2) at (-4.6+#1,0.35-0.075+#2) {};
  	\node[shape=circle,scale=0.6,fill=red] (Y3) at (-4.2+#1,-0.2-0.075+#2) {};
   	\node[shape=circle,scale=0.6,fill=red] (Y4) at (-4.45+#1,0.05-0.075+#2) {};
  	\node[shape=circle,scale=0.6,fill=red] (Y5) at (-4.65+#1,-0.15-0.075+#2) {}; }
\def\annhil(#1)(#2){	  \spiral[black,thick](9.0+#1,0.09+#2)(0:3:0.42);
\draw[black,thick](8.5+#1,0.55+#2) -- (9.5+#1,0.55+#2) -- (9.5+#1,-0.4+#2) -- (8.5+#1,-0.4+#2) -- (8.5+#1,0.55+#2); }
\node[shape=circle,scale=1.5,draw] (B) at (2.3,0){} ;
	\node[shape=circle,scale=1.5,draw] (C) at (4.6,0) {};
	\node[shape=circle,scale=1.2,fill=red] (CB) at (2.3,0) {};
    \node[shape=circle,scale=1.5,draw] (A) at (0,0){} ;
 	\node[shape=circle,scale=1.5,draw] (D) at (6.9,0){} ; 
 	 	\node[shape=circle,scale=1.5,draw] (Z) at (-2.3,0){} ;
	\node[shape=circle,scale=1.2,fill=red] (YZ) at (-2.3,0) {};
   \node[line width=0pt,shape=circle,scale=1.6] (B2) at (2.3,0){};
	\node[line width=0pt,shape=circle,scale=2.5] (D2) at (6.9,0){};
		\node[line width=0pt,shape=circle,scale=2.5] (Z2) at (-2.3,0){};
		
	\node[line width=0pt,shape=circle,scale=2.5] (X10) at (6.8,0){};
		\node[line width=0pt,shape=circle,scale=2.5] (X11) at (-2.2,0){};	
			\node[line width=0pt,shape=circle,scale=2.5] (X12) at (8.4,0){};
		\node[line width=0pt,shape=circle,scale=2.5] (X13) at (-3.8,0){};		
\draw[thick] (Z) to (A);	
	\draw[thick] (A) to (B);	
		\draw[thick] (B) to (C);	
  \draw[thick] (C) to (D);

\particles(0)(0);
\particles(6.9+4.9+1.6)(0);


\draw [->,line width=1pt]  (B2) to [bend right,in=135,out=45] (C);
 \draw [->,line width=1pt] (6.9+1.6,-0.475) to [bend right,in=135,out=45] (D);	
   \draw [->,line width=1pt] (B2) to [bend right,in=-135,out=-45] (A);
     \draw [->,line width=1pt] (Z) to [bend right,in=135,out=45] (-3.9,-0.475);
    \node (text1) at (3.5,1){$1$} ;    
	\node (text2) at (1.1,1){$q$} ;   
	\node (text3) at (-2.3-1,1){$\alpha$}; 
	\node (text4) at (6.9+1,1){$\beta$};   
    \node (text3) at (-2.3-1,-1){$\gamma$}; 
	\node (text4) at (6.9+1,-1){$\delta$};   

\node[scale=0.9] (textL) at (-5.2,0){$\rho_{L}$} ;
\node[scale=0.9] (textR) at (9.9,0){$\rho_{R}$} ;

    \node[scale=0.9] (text1) at (-2.2,-0.7){$1$} ;    
    \node[scale=0.9] (text1) at (6.8,-0.7){$N$} ;   
  	
  \draw [->,line width=1pt] (-3.9,0.475) to [bend right,in=135,out=45] (Z);
   \draw [->,line width=1pt] (D) to [bend right,in=135,out=45] (6.9+1.6,0.475);

	\end{tikzpicture}	
\caption{\label{fig:openASEP} Visualization of the open ASEP for parameters $(q,\alpha,\beta,\gamma,\delta)$  on a segment of length $N$.}
\end{figure}
For the open ASEP on a segment of length $N$, we denote its unique stationary measure by $\mu_N=\mu^{(q,\a,\b)}_N$. Our main goal is to investigate the structure of the stationary measure $\mu^{(q,\a,\b)}_N$ for the open ASEP on a segment of length $N$ as $N\rightarrow \infty$.
\begin{figure}
    \centering
    \begin{tikzpicture}[scale=0.75]
 \draw[scale = 1,domain=6.7:11,smooth,variable=\x,dotted,thick] plot ({\x},{1/((\x-7)*1/3+2/3)*3+5});
 \draw[->] (5,5) to (5,10.2);
 \draw[->] (5.,5) to (11,5);
   \draw[dashed] (5,8) to (8,8);
  \draw[dashed] (8,8) to (8,5);
   \draw[dashed] (8,8) to (10.2,10.2);
   \node [left] at (5,8) {$1$};
   \node[below] at (8,5) {$1$};
     \node [below] at (11,5) {$\a$};
   \node [left] at (5,10) {$\b$};
 \draw[dashed] (8,4.9) to (8,5.1);
  \draw[dashed] (4.9,8) to (5.1,8);
 \node [below] at (5,5) {$(0,0)$};
    \node [above] at (6.5,8.5) {LD};\node [above] at (8,8.7) {LD};
    \node at (10.6,10) {\small coexistence line }; 
    \node [below] at (10,6) {HD};  \node [below] at (10,8) {HD}; 
 \node [below] at (6.5,6) {MC}; 
 \node at (6.4,10) {\small{fan}};
 \node at (7.6,10) {\small{shock}};
 \end{tikzpicture} 
        \caption{    \label{fig:Phase} Visualization of the different regimes: Low density (LD), high density (HD), and maximal current (MC) phase. Moreover, the curve $\a \b =1$ separates the fan and shock region, while the case $\a=\b>1$ is called the coexistence line. }
\end{figure}

\subsection{Statement of the main characterization result} 
Fixing $q\in[0,1)$, we will define the Bryc polynomial $W_{\bs}(z)$ in Definition \ref{def:Bryc poly}, which (see Remark \ref{rmk:Bryc is polynomial}) is a polynomial with a single variable $z$ with non-negative coefficients depending on $q$.
The Bryc polynomial $W_{\bs}(z)$ is indexed by a \emph{composition} $\bs=(\sigma_0,\dots,\sigma_r)$ of $N+1$, where $r\in\mathbb{Z}_{\geq0}$. Here, we call a sequence $\bs=(\sigma_0,\dots,\sigma_r)$ a composition of $N+1$ if $\sigma_i\in\mathbb{N}$, $i\in\llbracket0,r\rrbracket$ and $\sigma_0+\dots+\sigma_r=N+1$. The order in which the elements appear in a composition is significant, which distinguishes it from the notion of a partition.  
This polynomial $W_{\bs}(z)$ was introduced by Bryc in \cite{bryc2024stationary} to characterize the stationary measure of the open ASEP. We next explain this characterization, and defer the precise construction of the polynomials $W_{\bs}(z)$ from \cite{bryc2024stationary} to Section~\ref{sec:TwoLayer}.

We begin by introducing some notation.
A \emph{lazy random walk path} is given by $\bg=(\gamma_0,\dots,\gamma_N)$ such that $\gamma_i\in\mathbb{Z}$ for $i\in\llbracket0,N\rrbracket$ and $\gamma_i-\gamma_{i-1}\in\{-1,0,1\}$ for $i\in\llbracket1,N\rrbracket$. We denote by $\lrw_N$ the space of lazy random walk paths $\bg$ of length $N$ started at $\gamma_0=0$. For each $\bg\in\lrw_N$, we write
\begin{equation*}
    \min(\bg) = \min\left\{\gamma_i \colon i \in \llbracket 0,N \rrbracket\right\} \quad \text{ and } \quad \max(\bg) = \max\left\{\gamma_i \colon i \in \llbracket 0,N \rrbracket\right\} . 
\end{equation*}
For a lazy random walk path $\bg\in\lrw_N$, we define the composition $\bs(\bg)$ associated to $\bg$ to be the composition $\bs(\bg)=(\sigma_0,\dots,\sigma_r)$ of $N+1$ given by 
\begin{equation*}
    r:=\max(\bg)-\min(\bg)\in\mathbb{Z}_{\geq0}
\end{equation*}
 and
\begin{equation} \label{eq:Signature}
\sigma_i:=\#\{j:\gamma_j=\min(\bg)+i\}\in\mathbb{N}\quad\mbox{ for } i\in\llbracket0,r\rrbracket.
\end{equation}
Here and below, for a set $\mathcal{S}$ we write $\#\mathcal{S}\in\mathbb{Z}_{\geq0}\cup\{\infty\}$ to denote the number of elements in it.
We note that $\bs(\bg)$ given by \eqref{eq:Signature} is indeed a composition of $N+1$: since $\bg$ is integer valued and has increments $\{-1,0,1\}$, every integer value in $\llbracket\min(\bg), \max(\bg)\rrbracket$ is visited at least once, and hence $\sigma_i\in\mathbb{N}$ for $i\in\llbracket0,r\rrbracket$; since the level sets $\{j:\gamma_j=\min(\bg)+i\}$ in \eqref{eq:Signature} partition the time indices  $\llbracket0,N\rrbracket$, we have $\sigma_0+\dots+\sigma_r=N+1$.

Let $\a,\b\geq0$ and write $H(\bg)$ for the number of horizontal steps in a lazy random walk path $\bg$, i.e.,
\[
H(\bg)=\#\{i\in\llbracket1,N\rrbracket, \gamma_i=\gamma_{i-1}\} \quad\mbox{ for }\bg\in\lrw_N.
\]  We define the probability measure $\mathbb{P}^{(q,\a,\b)}$ on $\lrw_N$ by 
\begin{equation}\label{eq:PNab}
\mathbb{P}^{(q,\a,\b)}(\bg)=\frac{1}{Z_N^{(q,\a,\b)}}\frac{2^{H(\bg)}\b^{\gamma_N}}{(\a\b)^{\min(\bg)}} W_{\bs(\bg)}(\a\b)\quad\mbox{ for }\bg\in\lrw_N,
\end{equation}
 where  we set
\begin{equation}\label{eq:normalization}
 Z_N^{(q,\a,\b)}=\sum_{\bg\in\lrw_N} \frac{2^{H(\bg)}\b^{\gamma_N}}{(\a\b)^{\min(\bg)}}W_{\bs(\bg)}(\a\b).
\end{equation}
We mention that $W_{\bs(\bg)}(\a\b)$ is the Bryc polynomial indexed by the composition $\bs(\bg)$ of $N+1$ associated to $\bg$ (as in \eqref{eq:Signature}), evaluated at $z=\a\b$. Since $\min(\bg)\leq\min(0,\gamma_N)$, we know that 
$ 2^{H(\bg)}\b^{\gamma_N} (\a\b)^{-\min(\bg)}  W_{\bs(\bg)}(\a\b)$ (and hence $Z_N^{(q,\a,\b)}$ given by \eqref{eq:normalization})
is a polynomial in $\a$ and $\b$ with non-negative coefficients depending on $q$.   When $\a=0$ or $\b=0$, \eqref{eq:PNab} is well-defined with the convention that $0^0=1$.
We also note that the normalization constant $Z_N^{(q,\a,\b)}$ given by \eqref{eq:normalization} is strictly positive for any $\a,\b\geq0$, which follows from considering the horizontal path $\bg\equiv0\in\lrw_N$ and noting that $W_{\bs(\bg)}(0)>0$; see Remark \ref{rmk:Bryc is polynomial}.

\begin{theorem}[c.f. Theorem 1.1 and (3.2), (3.3) in \cite{bryc2024stationary}]\label{thm:characterization stationary measure ASEP}
Consider the open ASEP on the lattice $\llbracket1,N\rrbracket$ with jump rates $\alpha,\beta,\gamma,\delta$ and $q$ satisfying \eqref{eq:conditions open ASEP}.  Assume that Liggett's condition \eqref{eq:Liggett's condition} holds, and recall that $\a=(1-\alpha)/\alpha$ and $\b=(1-\beta)/\beta$. 
Consider the random lazy random walk path $\bg\in\lrw_N$ with law $\mathbb{P}^{(q,\a,\b)}$ defined as in \eqref{eq:PNab}. Suppose $\kappa_i$, $i\in\llbracket1,N\rrbracket$ are i.i.d. Bernoulli random variables with mean $\frac{1}{2}$ that are independent of $\bg$. 
We define for all $i\in\llbracket1,N\rrbracket$ 
\[
    \tau_i:=\begin{cases}
        1 & \mbox{ if } \gamma_i-\gamma_{i-1}=1,\\
        \kappa_i & \mbox{ if } \gamma_i-\gamma_{i-1}=0 ,\\ 
        0 & \mbox{ if } \gamma_i-\gamma_{i-1}=-1.
    \end{cases}
\]
Then $(\tau_1,\dots,\tau_N)\in\{0,1\}^N$ has the law of the stationary measure $\mu_N^{(q,\a,\b)}$ of the open ASEP on $\llbracket1,N\rrbracket$.
\end{theorem}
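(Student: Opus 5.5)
This theorem is a repackaging of Bryc's result, so the plan is to translate \cite[Theorem 1.1]{bryc2024stationary} and his formulas (3.2)--(3.3) into the notation above, not to re-derive stationarity from scratch. The starting point is Bryc's representation of the stationary measure under Liggett's condition. There the joint generating function $\mathbb{E}_{\mu_N}[\prod_i t_i^{\tau_i}]$ is expressed through the Askey--Wilson (or $q$-Hermite type) process evaluated at $\mathsf{c}=\mathsf{d}=-q$. This expression is converted into a sum over pairs of paths $(\gamma,\ldots)$ weighted by Bryc's polynomial factors. Concretely, Bryc writes the unnormalized weight of a configuration $\tau$ as a sum over lazy paths $\bg$ compatible with $\tau$. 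Each horizontal step can correspond to either $\tau_i=0$ or $\tau_i=1$, which accounts for the factor $2^{H(\bg)}$ combined with the independent fair coins $\kappa_i$. The weight is
\[
\frac{\b^{\gamma_N}}{(\a\b)^{\min(\bg)}}\,W_{\bs(\bg)}(\a\b).
\]

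The steps, in order, are as follows.

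\begin{enumerate}
\item Recall Definition~\ref{def:Bryc poly} and check that it coincides with Bryc's polynomial defined in \cite[(3.2)]{bryc2024stationary}, after evaluating his $\mathsf{c}=\mathsf{d}=-q$ via \eqref{eq:parameters under Liggett condition}. It is essential that $W$ depends only on the occupation counts $\bs(\bg)$.
\item Write, for fixed $\tau$, the probability $\mu_N(\tau)$ from \cite[Theorem 1.1]{bryc2024stationary} as $\sum_{\bg}w(\bg)\,\mathbf{1}\{\bg\sim\tau\}$. Here $\bg\sim\tau$ means that up-steps force $\tau_i=1$, down-steps force $\tau_i=0$, and flat steps allow either value. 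Bryc's (3.3) should give exactly this, possibly after shifting his path by $\min(\bg)$ so that it starts at $0$, which produces the $(\a\b)^{-\min(\bg)}$ factor.
\item Observe that the construction in the theorem gives
\[
\mathbb{P}(\tau)=\sum_{\bg\sim\tau}\mathbb{P}^{(q,\a,\b)}(\bg)\,2^{-H(\bg)}.
\]
The $2^{H}$ in \eqref{eq:PNab} cancels, so the law of $\tau$ is proportional to Bryc's weight.
\item Match the normalizations. Both sides are probability measures, so $Z_N^{(q,\a,\b)}$ is automatically the right constant.
\end{enumerate}

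The main obstacle is the bookkeeping in Step 2. Bryc's paths may be indexed by a different starting height, or use a reversed or complemented convention: up-steps could mean holes, or $\a$ and $\b$ could be swapped relative to left and right. The check is therefore that the boundary weights $\b^{\gamma_N}$ and $(\a\b)^{-\min}$ arise with the correct orientation. To confirm this, I would first sanity-check two cases. The first is $N=1$, where $\mu_1(\tau_1=1)$ can be computed directly from the two-state chain. The second is $q=0$, where $W\equiv 1$ and the formula should reduce to the known two-layer TASEP representation of \cite{bryc2024two}.
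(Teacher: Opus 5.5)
Your plan matches the paper, which gives no independent argument and simply imports Bryc's Theorem~1.1 with his (3.2)--(3.3); your Step~3 is the right way to see the $2^{H(\bg)}$ factor, since the fair coins $\kappa_i$ contribute exactly $2^{-H(\bg)}$ per compatible path. One correction to Step~1: Definition~\ref{def:Bryc poly} does not literally coincide with Bryc's polynomial $w_{\bs}$. That polynomial is his (1.4), not (3.2), and it depends only on $q$ and $\bs$, so there is no $\mathsf c,\mathsf d$ to specialize. Instead it differs from $w_{\bs}$ by the $\bs$-independent factor $(1-q)^{N+1}/(q;q)_{N+1}$ (Remark~\ref{rmk:prefactor differ}), which is absorbed into $Z_N^{(q,\a,\b)}$ exactly as in your Step~4.
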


One can observe from Theorem \ref{thm:characterization stationary measure ASEP} that the stationary measure $\mu_N^{(q,\a,\b)}$ of the open ASEP is determined from the measure $\mathbb{P}^{(q,\a,\b)}$ on lazy random walk paths $\bg\in\lrw_N$ in a simple and direct way. Specifically, at each site $i\in\llbracket1,N\rrbracket$, if $\bg$ increases at step $i$ we place a particle, if $\bg$ decreases at step $i$ we place a hole, and if $\bg$ stays put at step $i$, we flip a fair coin to determine whether we place a particle or a hole. The definition \eqref{eq:PNab} of the measure $\mathbb{P}^{(q,\a,\b)}$ is also explicit except for the Bryc polynomial term $W_{\bs(\bg)}(\a\b)$. 

We next introduce our main result, which provides a probabilistic characterization of the Bryc polynomials.

\begin{definition}\label{def:characterization measure}
Let $q\in[0,1)$, $N\in\mathbb{Z}_{\geq0}$, and $\boldsymbol{\mathsf{S}}=(\mathsf{S}(0),\dots,\mathsf{S}(N))\in\{0,1\}^{N+1}$ be a $0$-$1$ sequence. We define a time-inhomogeneous birth-death Markov chain $X^{\boldsymbol{\mathsf{S}}}=\{X^{\boldsymbol{\mathsf{S}}}(i)\}_{i=0}^{N+1}$ indexed by this $0$-$1$ sequence $\boldsymbol{\mathsf{S}}$ through $X^{\boldsymbol{\mathsf{S}}}(0)=X^{\boldsymbol{\mathsf{S}}}(1)=0$ and that for each $i\in\llbracket1,N\rrbracket$ and $j\in\llbracket0,i-1\rrbracket$,
\begin{enumerate}
    \item [(\romannum{1})] if $\mathsf{S}(i)=1$, we let $X^{\boldsymbol{\mathsf{S}}}(i+1)-X^{\boldsymbol{\mathsf{S}}}(i)\in\{-1,0\}$ with 
    \begin{equation}\label{eq:etget1}
    \mathbb{P}\left(X^{\boldsymbol{\mathsf{S}}}(i+1)=j-1 \Big{|}X^{\boldsymbol{\mathsf{S}}}(i)=j\right)=\frac{1-q^{j}}{1-q^{i+1}},
    \end{equation}
    \item [(\romannum{2})] if $\mathsf{S}(i)=0$, we let $X^{\boldsymbol{\mathsf{S}}}(i+1)-X^{\boldsymbol{\mathsf{S}}}(i)\in\{0,1\}$ with 
    \begin{equation}\label{eq:etget2}
    \mathbb{P}\left(X^{\boldsymbol{\mathsf{S}}}(i+1)=j+1 \Big{|}X^{\boldsymbol{\mathsf{S}}}(i)=j\right)=\frac{q^{j+1}-q^{i+1}}{1-q^{i+1}}.
    \end{equation}
\end{enumerate} 
We mention that the right sides of \eqref{eq:etget1} and \eqref{eq:etget2} lie in $[0,1]$ since $j\in\llbracket0,i-1\rrbracket$ and $q\in[0,1)$. 
By an induction argument, we have $X^{\boldsymbol{\mathsf{S}}}(i)\in\llbracket0,i-1\rrbracket$ for all $i\in\llbracket1,N+1\rrbracket$ and $\boldsymbol{\mathsf{S}}$, and hence this Markov chain is well-defined. When $q=0$, we use again the convention $0^0=1$.
\end{definition}

\begin{definition}\label{def:associated 01 seq}
    Let $N\in\mathbb{Z}_{\geq0}$ and $\bs=(\sigma_0,\dots,\sigma_r)$, $r\in\mathbb{Z}_{\geq0}$ be a composition of $N+1$. 
We define the $0$-$1$ sequence associated to composition $\bs$ to be $\boldsymbol{\mathsf{S}}^{\bs}=(\mathsf{S}^{\bs}(0),\dots,\mathsf{S}^{\bs}(N))\in\{0,1\}^{N+1}$ such that,  for  $i\in\llbracket0,N\rrbracket$,
\begin{equation}\label{eq:01seq}
  \mathsf{S}^{\bs}(i)=\begin{cases}
        1 &\quad\mbox{ if }i\in\{\sigma_r,\sigma_r+\sigma_{r-1},\dots,\sigma_r+\dots+\sigma_1\},\\  
        0 &\quad\mbox{ otherwise}.
    \end{cases} 
\end{equation}
\end{definition}

We have the following probabilistic characterization of the Bryc polynomials. 

\begin{theorem}\label{thm:prob interpretation of Bryc polynomials}
Let $q\in[0,1)$, $N\in\mathbb{Z}_{\geq0}$ and $\bs=(\sigma_0,\dots,\sigma_r)$, $r\in\mathbb{Z}_{\geq0}$ be a composition of $N+1$. We have
    \begin{equation}\label{eq:rep of Bryc polynomial}
    W_{\bs}(z)=\mathbb{E}\left[ z^{X^{\boldsymbol{\mathsf{S}}^{\bs}}(N+1)}\right]. 
    \end{equation}
Here, on the left side, $W_{\bs}(z)$ is given by Definition \ref{def:Bryc poly}; and on the right side,
$\boldsymbol{\mathsf{S}}^{\bs}$ is the $0$-$1$ sequence associated to the composition $\bs$ as in Definition \ref{def:associated 01 seq}, and $X^{\boldsymbol{\mathsf{S}}^{\bs}}=\{X^{\boldsymbol{\mathsf{S}^{\bs}}}(i)\}_{i=0}^{N+1}$ is the birth-death Markov chain indexed by this $0$-$1$ sequence $\boldsymbol{\mathsf{S}}^{\bs}$ as in Definition \ref{def:characterization measure}. 
\end{theorem}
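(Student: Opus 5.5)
The plan is to prove \eqref{eq:rep of Bryc polynomial} by showing that both sides satisfy the same recursion in $N$, where the step from $N$ to $N+1$ appends one time index to the $0$-$1$ sequence. First I would rewrite the right side as a product of transfer matrices. Let $\pi_i$ be the law of $X^{\boldsymbol{\mathsf{S}}}(i)$, viewed as a row vector on $\llbracket 0,i-1\rrbracket$, and let $P_i^{(1)}$ and $P_i^{(0)}$ be the one-step kernels given by \eqref{eq:etget1} and \eqref{eq:etget2}. Then
\[
\mathbb{E}\bigl[z^{X^{\boldsymbol{\mathsf{S}}}(N+1)}\bigr]=e_0^{\top}\,P_1^{(\mathsf{S}(1))}P_2^{(\mathsf{S}(2))}\cdots P_N^{(\mathsf{S}(N))}\,\mathbf{z},
\qquad \mathbf{z}=(z^j)_{j\ge0}.
\]
The entry $\mathsf{S}(0)$ plays no role, which is consistent with $\mathsf{S}^{\bs}(0)=0$ always. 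Clearing the common denominators $1-q^{i+1}$, each kernel becomes $(1-q^{i+1})^{-1}$ times a bidiagonal matrix. This matrix has entries $1-q^{j}$ and $q^{j}-q^{i+1}$ in the down/stay case, and $q^{j+1}-q^{i+1}$ and $1-q^{j+1}$ in the up/stay case. So the right side equals $\prod_{i=1}^N(1-q^{i+1})^{-1}$ times a weighted lattice path sum.

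Next, I would unpack Definition~\ref{def:Bryc poly} from Section~\ref{sec:TwoLayer}. I expect it to present $W_{\bs}(z)$ either as a matrix product of the same kind (a $q$-deformed tridiagonal representation in Bryc's framework) or as an explicit sum over $q$-binomial type weights, normalized so that $W_{\bs}(1)=1$. The structure of $\boldsymbol{\mathsf{S}}^{\bs}$ tells us how the two should be matched. Reading $\bs$ from $\sigma_r$ down to $\sigma_0$, the 1's of $\boldsymbol{\mathsf{S}}^{\bs}$ sit exactly at the partial sums $\sigma_r,\sigma_r+\sigma_{r-1},\dots$. These are the times at which one passes from one level of the path to the next. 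So I would show that Bryc's construction factorizes as one operator per time index. This operator should be a ``level change'' operator at the boundaries between consecutive blocks $\sigma_k$, and a ``same level'' operator inside each block. I would then identify these two operators with $P_i^{(1)}$ and $P_i^{(0)}$ after the normalization above.

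The identification would go by induction on $N$. For $N=0$ both sides equal $1$. For the inductive step, I would either increase the last part $\sigma_0$ by one, which appends a $0$ to $\boldsymbol{\mathsf{S}}$, or append a new part $\sigma_{r+1}=1$ at the front. After reindexing, the second option appends a $1$ at the appropriate place. If Bryc's polynomial is naturally built from the other end of the composition, I would instead run the chain backwards using the adjoint kernels; the argument is the same.

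The main obstacle is matching Bryc's definition, which is presumably stated through $q$-Pochhammer/$q$-binomial sums or Askey--Wilson-type moments, with the normalized bidiagonal kernels. In particular, the normalizing factors $(1-q^{i+1})$ must telescope correctly. My first attempt would be to verify the one-step recursion coefficientwise in $z^j$: I would check that Bryc's coefficients $w_j$ satisfy $w'_{j}=\frac{1-q^{j+1}}{1-q^{i+1}}w_{j+1}+\frac{q^{j}-q^{i+1}}{1-q^{i+1}}w_j$ in the down case (and analogously in the up case), using the $q$-Pascal identity $\binom{n}{k}_q=\binom{n-1}{k-1}_q+q^k\binom{n-1}{k}_q$. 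As sanity checks I would confirm $W_{\bs}(1)=1$, and at $q=0$ confirm that the chain becomes deterministic, so that $W_{\bs}(z)$ reduces to a monomial in agreement with the open TASEP case.
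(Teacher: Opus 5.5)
Your skeleton matches the paper's: induction on $N$, one operator per time index, a level-change step at each block boundary and a same-level step inside each block. However, the proposal leaves out the step that constitutes the proof, which you yourself list as the main obstacle.

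Definition~\ref{def:Bryc poly} is neither a $q$-binomial sum nor an Askey--Wilson moment. It is a word in the Jackson operators $\mathcal{D}_q$ and $\mathcal{E}_q=\mathcal{D}_q\circ z$, acting on rational functions and applied to $1/(1-z)$, then multiplied by $(z;q)_{N+2}$ and $(1-q)^{N+1}/(q;q)_{N+1}$. So there are no explicit coefficients for a $q$-Pascal check to act on. The intermediate objects $T_i\circ\dots\circ T_1[1/(1-z)]$ have poles at $1,q^{-1},\dots$, so they cannot be matched directly with laws of the chain. Clearing the denominators $1-q^{i+1}$ in the kernels does not address this.

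The missing idea is to distribute the prefactors over the steps. Define
\[
\mathsf{D}_q^{(i)}f=\frac{1-q}{1-q^i}(z;q)_{i+1}\,\mathcal{D}_q\Bigl[\frac{f}{(z;q)_i}\Bigr],\qquad
\mathsf{E}_q^{(i)}f=\frac{1-q}{1-q^i}(z;q)_{i+1}\,\mathcal{E}_q\Bigl[\frac{f}{(z;q)_i}\Bigr].
\]
Since $(1-q)^{N+1}/(q;q)_{N+1}=\prod_{i=1}^{N+1}\frac{1-q}{1-q^i}$ and the factors $(z;q)_i$ telescope, $W_{\bs}$ is the same word in these operators applied to the constant $1$. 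Then use $(z;q)_{i+1}/(z;q)_i=1-q^iz$ and $(z;q)_{i+1}/(qz;q)_i=1-z$ to compute
\[
\mathsf{D}_q^{(i)}[z^j]=\frac{q^j-q^i}{1-q^i}z^j+\frac{1-q^j}{1-q^i}z^{j-1},\qquad
\mathsf{E}_q^{(i)}[z^j]=\frac{q^{j+1}-q^i}{1-q^i}z^{j+1}+\frac{1-q^{j+1}}{1-q^i}z^{j}.
\]
These are exactly the actions on generating functions of the kernels \eqref{eq:etget1} and \eqref{eq:etget2} at the step $i-1\mapsto i$. The $i$-th operator from the right is $\mathcal{D}_q$ exactly when $\mathsf{S}^{\bs}(i-1)=1$, or when $i=1$, where $\mathsf{D}_q^{(1)}[1]=1$ is consistent with $X(1)=0$. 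With these identities, your transfer-matrix product is literally the operator word applied to $1$, and the induction becomes a formality. Without them, nothing in your plan connects the two sides.

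You should also fix the direction of the induction. The word in Definition~\ref{def:Bryc poly} is read right to left in forward chain time: the $\sigma_r$-block is innermost and the $\sigma_0$-block outermost. So the inductive step must act on $\sigma_0$. Either take $\sigma_0\mapsto\sigma_0+1$, which appends a $0$ and applies $\mathsf{E}_q^{(N+2)}$. Or pass to $(1,\sigma_0,\dots,\sigma_r)$, which appends a $1$ and applies $\mathsf{D}_q^{(N+2)}$. Equivalently, as in the paper, peel off the outermost operator by reducing or removing $\sigma_0$.

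Appending a new part $\sigma_{r+1}=1$ would instead insert a $1$ at index $1$ of $\boldsymbol{\mathsf{S}}^{\bs}$. That shifts every later time index of the time-inhomogeneous kernels, so no one-step recursion results. The fallback of running the chain backwards with adjoint kernels is unnecessary, because the definition already composes operators in forward time.
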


Theorem \ref{thm:prob interpretation of Bryc polynomials} will be restated as Theorem \ref{thm:prob interpretation of Bryc polynomials again} which will be proved in Section \ref{sec:TwoLayer}. The representation of the Bryc polynomial as a birth-death chain allows us to control the moments of the Bryc polynomial when the $0$-$1$ sequence $\boldsymbol{\mathsf{S}}^{\bs}$ as chosen randomly as in  Definition \ref{def:characterization measure}. 

\begin{proposition}\label{prop:moment bound Bryc MAIN}
Assume $q\in(0,1)$, $\b > \max(\a,1)$, $\a\b>1$, $z>1$, and $k\in\mathbb{N}$. For each $N\in\mathbb{N}$, consider the random path $\bg$ with law $\mathbb{P}^{(0,\a,\b)}$ on $\lrw_N$. Let $\bs(\bg)$ be the composition of $N+1$ associated to $\bg$ as in \eqref{eq:Signature}, and let $W_{\bs(\bg)}(z)$ be the Bryc polynomial indexed by the composition $\bs(\bg)$ as in Definition \ref{def:Bryc poly}.  Then,  
\[
    \sup_{N\in\mathbb{N}}\mathbb{E}\bigl[W_{\bs(\bg)}(z)^k\bigr]<\infty
\] 
is a finite constant depending only on $q$, $\a$, $\b$, $z$, and $k$.
\end{proposition}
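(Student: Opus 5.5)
Our plan is to combine the birth--death representation of Theorem~\ref{thm:prob interpretation of Bryc polynomials} with tail estimates for the TASEP path measure $\mathbb{P}^{(0,\a,\b)}$. By Jensen, $W_{\bs}(z)^k=\mathbb{E}[z^{X}]^k\le \mathbb{E}[z^{kX}]$, where $X=X^{\boldsymbol{\mathsf{S}}^{\bs}}(N+1)$. It therefore suffices to bound $\mathbb{E}[z^{kX}]$ uniformly over $N$ and over the random composition $\bs(\bg)$, with $z$ replaced by $z^k>1$. So we first derive a deterministic bound on $\mathbb{E}[w^{X}]$ for a fixed $0$-$1$ sequence and $w>1$, in terms of simple statistics of $\bs$. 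Then we average this bound over $\bg$.

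\textbf{Step 1 (deterministic bound).} In the chain of Definition~\ref{def:characterization measure}, an up-step from level $j$ at a time $i$ with $\mathsf{S}(i)=0$ has probability $(q^{j+1}-q^{i+1})/(1-q^{i+1})\le q^{j+1}/(1-q)$. This is geometrically small in the current height. A down-step at a time with $\mathsf{S}(i)=1$ has probability $(1-q^j)/(1-q^{i+1})\ge 1-q^j$. Hence the chain is pushed toward $0$ between the $1$-times, and up-moves are rare once $X\ge 1$.

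The number of $1$'s in $\boldsymbol{\mathsf{S}}^{\bs}$ is $r=\max(\bg)-\min(\bg)$, and they sit at the partial sums $\sigma_r,\sigma_r+\sigma_{r-1},\dots$. Between two consecutive $1$'s there are $\sigma_\ell-1$ zero-times. During such a stretch, the number of up-steps is dominated by a sum of independent Bernoulli$(q^{j+1}/(1-q))$ variables. Because the probabilities decay geometrically in the level, the maximum gain over a stretch has a tail like $q^{m^2/2}\sigma_\ell^m$. We would formalize this through a Lyapunov-type recursion: for $f(j)=w^j$ and each $0$-time,
\[
\mathbb{E}[f(X(i+1))\mid X(i)=j]\le f(j)\bigl(1+(w-1)q^{j+1}/(1-q)\bigr).
\]
Iterating this over a stretch of length $\sigma$ and then absorbing one down-step should give a bound of the form $\mathbb{E}[w^X]\le \prod_\ell (1+C\sigma_\ell)^{C}$, or more crudely $\exp\bigl(C\sum_\ell \log(1+\sigma_\ell)\bigr)$. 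The hope is that this is sharpened by the fact that the chain typically sits at level $0$, where gains are bounded.

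An alternative, which we would try first, is a cruder estimate: $X(N+1)$ is at most the number of up-steps occurring after the last $1$-time. That last stretch has length $\sigma_0-1$. This would give a bound depending essentially only on $\sigma_0$ and a few other level counts.

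\textbf{Step 2 (averaging over $\bg$).} In the high-density shock phase, the path under $\mathbb{P}^{(0,\a,\b)}$ drifts upward linearly, so $\min(\bg)$ is tight and every level set $\sigma_\ell$ is tight uniformly in $N$. These facts follow from the open TASEP two-layer analyses \cite{bryc2024two,bryc2024limitCoex}, or directly from comparing $\mathbb{P}^{(0,\a,\b)}$ with a random walk with positive drift conditioned via the factor $(\a\b)^{-\min}$. Concretely, we need exponential tails $\mathbb{P}(\sigma_\ell\ge m)\le Ce^{-cm}$, uniform in $N$ and $\ell$. We also need the contributions from distinct levels to decouple well enough that the product bound from Step~1 has finite expectation. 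The levels are visited essentially in sequence by a transient walk, so a renewal-type argument should give $\mathbb{E}\prod_\ell(1+C\sigma_\ell)^C\le \prod_\ell(1+\epsilon_\ell)$. This is fine only if the factors for typical levels are close to $1$.

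\textbf{Main obstacle.} The crux is making Step~1 sharp enough that the per-level factor equals $1+O(\text{small})$ rather than a constant larger than $1$; otherwise the product over $r\approx cN$ levels blows up. The key point is that for the chain sitting at $0$ with an immediate return mechanism, a long stretch of $0$-times raises $X$ only with probability $O(q\,\sigma_\ell)$ per stretch. Moreover, each $1$-time resets the chain down with probability close to $1-q^j$. So we would prove $\mathbb{E}[w^X]\le \mathbb{E}\bigl[\prod_\ell (1+Cq\,\sigma_\ell\, \mathbf 1\{\sigma_\ell\ge2\})\bigr]$-type bounds. We would then use that $\mathbb{E}[\sigma_\ell-1]$ decays for the upward-drifting path, or simply that the number of levels with $\sigma_\ell\ge2$ has exponential moments of all orders uniformly in $N$. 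This last claim is the step most likely to need a more refined argument.
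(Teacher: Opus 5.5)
\textbf{There is a genuine gap: the product-form bound cannot be uniform in $N$.} Your Jensen reduction to bounding $\mathbb{E}[w^{X}]$ with $w=z^k$ is exactly how the paper starts, but the mechanism you propose in Steps 1--2 cannot work.

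A bound of the form $\prod_\ell(1+\epsilon_\ell)$ comes from a per-block inequality $\mathbb{E}[w^{X'}\mid X]\le(1+\epsilon_\ell)w^{X}$. Evaluating this at $X=0$ forces $\epsilon_\ell\ge c>0$ for every block containing a zero-time. From level $0$ a zero-run lifts the chain with probability of order $q$, and the following $1$-time fails to bring it back with probability of order $q$. Your fix then needs the number of levels with $\sigma_\ell\ge2$ to have exponential moments uniformly in $N$, and this is false. Under $\mathbb{P}^{(0,\a,\b)}$ the path takes horizontal steps with probability $\nu_{\b}(0)=2/(\b+\b^{-1}+2)>0$ and has $r\asymp N$ levels. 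So a positive fraction of levels have $\sigma_\ell\ge2$, and the product is $e^{\Theta(N)}$; likewise $\mathbb{E}[\sigma_\ell-1]$ does not decay in the bulk.

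The ``crude alternative'' also fails. A $1$-time lowers $X$ by at most one, and only with probability $(1-q^{j})/(1-q^{i+1})$. Height accumulated before the last $1$-time is therefore not erased, and $X(N+1)$ is not controlled by the final run of length $\sigma_0-1$.

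\textbf{The missing idea is that the $1$-times act as a contraction.} This gives an affine (Foster--Lyapunov) recursion rather than a product. First dominate $X^{\boldsymbol{\mathsf{S}}}$ by the homogeneous chain $\mathscr{X}^{\boldsymbol{\mathsf{S}}}$, with up-probability $q^{j+1}$ and down-probability $(1-q)\mathbf{1}_{j\ge1}$. Then a block of $h$ zeros followed by a one satisfies $\mathbb{E}[w^{Z'}\mid Z]\le\rho\,w^{Z}+C(1+h^{C})$ with $\rho<1$, for three reasons. The terminal one-step multiplies $w^{Z}$ by $q+(1-q)/w<1$ when $Z\ge1$. The additive term absorbs level $0$, where no contraction is available. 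The zero-run from height $y$ contributes a factor at most $1+Cq^{y}h^{C}$; this is where your $q^{m^2/2}h^m$ tail estimate enters. Iterating gives $\rho^{\ell}w^{x}+C\sum_{i}\rho^{\ell-i}(1+h_i^{C})$, so only a geometrically discounted sum of polynomial moments of the most recent block lengths matters.

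To average this over the path you need block lengths that are independent with exponential tails, not merely tight level sets. The paper obtains them in three steps:
\begin{itemize}
\item It passes to the replacement measure $\mathrm{P}^{(0,\a,\b)}$, whose total-variation error is $e^{-cN}$. The discrepancy event is handled by a deterministic a priori bound of order $w^{N^{3/4}}$ and Cauchy--Schwarz.
\item It uses regeneration times $m_i$ of the path. Each forces $\mathsf{S}^{\bs(\bg)}(N+1-m_i)=1$, and their gaps are independent and subexponential, i.i.d.\ for $i\ge2$.
\item It discards all other ones via monotonicity of $\mathscr{X}^{\boldsymbol{\mathsf{S}}}$ in $\boldsymbol{\mathsf{S}}$.
\end{itemize}
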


Proposition~\ref{prop:moment bound Bryc MAIN} will be restarted as Proposition~\ref{prop:moment bound Bryc} and proved in Section~ \ref{sec:MomentBryc}.  Intuitively, Proposition~\ref{prop:moment bound Bryc MAIN} shows that a typical Bryc polynomial has only mild affect on the stationary measure of the open ASEP. This result provides a general framework in order to transfer results on the stationary measure of the open TASEP to the open ASEP stationary measuret.

\subsection{Approximation of the stationary measure of the open ASEP}

As our main application of the above insights on the two-layer representation of the stationary measure of the open ASEP, we provide sharp approximation results in the high density and the low density phase of the shock region. 
As a measure of distance, we consider for two probability measures $\nu,\nu'$ on a common probability space $\Omega$ the total variation distance
\begin{equation}\label{def:TVdistance}
    \TV{\nu - \nu'} := \frac{1}{2}\sum_{\omega \in \Omega} | \nu(\omega) - \nu'(\omega)| .  
\end{equation}

\begin{theorem}\label{thm:ApproxHighLow}
Consider the open ASEP on the lattice $\llbracket1,N\rrbracket$ with jump rates $\alpha,\beta,\gamma,\delta$ and $q$ satisfying \eqref{eq:conditions open ASEP}. Assume Liggett's condition \eqref{eq:Liggett's condition} holds and that the open ASEP is in the shock region $\a\b>1$. Then there exist constants $C,c>0$ depending only on $q$, $\a$ and $\b$, such that for each $N\geq m\geq1$, the following hold. 
\begin{enumerate}
    \item [(\romannum{1})] In the high density phase $\b > \max(\a,1)$, we have
\begin{equation}
    \TV{ \mu_N^{(q,\a,\b)}\left( (\tau_i)_{i \in\llbracket m,N\rrbracket} \in   \cdot\, \right) - \ber_{\llbracket m,N\rrbracket}\left(\b/(1+\b) \right) }\leq Ce^{-cm}.
\end{equation} 
\item [(\romannum{2})] In the low density phase $\a > \max(\b,1)$, we have      
\begin{equation}
    \TV{ \mu_N^{(q,\a,\b)}\left( (\tau_i)_{i \in\llbracket 1,m\rrbracket} \in   \cdot\, \right) - \ber_{\llbracket 1,m\rrbracket}\left(1/(1+\a) \right) }\leq Ce^{-c(N-m)}.
\end{equation} 
\end{enumerate}
\end{theorem}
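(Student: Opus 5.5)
The strategy is to transfer the approximation from open TASEP ($q=0$) to open ASEP through the two-layer representation, with the Bryc polynomial treated as a density. By Theorem~\ref{thm:characterization stationary measure ASEP} and \eqref{eq:PNab}, the law $\mathbb{P}^{(q,\a,\b)}$ is absolutely continuous with respect to $\mathbb{P}^{(0,\a,\b)}$, with
\[
\frac{d\mathbb{P}^{(q,\a,\b)}}{d\mathbb{P}^{(0,\a,\b)}}(\bg)=\frac{W^{(q)}_{\bs(\bg)}(\a\b)}{\mathbb{E}^{(0,\a,\b)}\bigl[W^{(q)}_{\bs(\bg)}(\a\b)\bigr]}.
\]
This uses that $W^{(0)}_{\bs}\equiv 1$ at $q=0$: by Theorem~\ref{thm:prob interpretation of Bryc polynomials}, with the convention $0^0=1$, the chain never moves, since the step probabilities in \eqref{eq:etget1}--\eqref{eq:etget2} vanish for $j\ge1$ or equal zero at $j=0$. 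I would treat the high density case (i) and obtain (ii) by the particle--hole plus left--right symmetry of open ASEP, which swaps $\a\leftrightarrow\b$, maps sites $i\mapsto N+1-i$, and sends $\llbracket 1,m\rrbracket$ to $\llbracket N+1-m,N\rrbracket$.

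\textbf{Step 1 (TASEP input).} For $q=0$ in the high density phase of the shock region, I would use the known results on open TASEP (via \cite{bryc2024two,bryc2024limitCoex} or \cite{nestoridi2024approximating}). They give a coupling, or directly a total variation bound, under which the path increments $(\gamma_i-\gamma_{i-1})_{i\ge m}$ under $\mathbb{P}^{(0,\a,\b)}$ are within $Ce^{-cm}$ of i.i.d.\ increments. Once the coins $\kappa_i$ are filled in, these increments produce $\ber(\b/(1+\b))$. Concretely, under $\mathbb{P}^{(0,\a,\b)}$ the path drifts upward, its minimum is attained within $O(1)$ steps with exponential tails, and after the minimum the path is a random walk with the $\b$-tilted increment law.

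\textbf{Step 2 (the Bryc density barely depends on the far part of the path).} The key structural claim is that $W_{\bs(\bg)}(\a\b)$ depends essentially only on the portion of $\bg$ near its minimum and on the level-set sizes near $\min(\bg)$. More precisely, I would write $W=W^{\le m/2}+R$, where $W^{\le m/2}$ is measurable with respect to $\bg|_{\llbracket 0,m/2\rrbracket}$ together with a good event $G_m$, and $\mathbb{E}|R|\le Ce^{-cm}$. The birth--death chain of Theorem~\ref{thm:prob interpretation of Bryc polynomials} helps here. For an upward-drifting path the composition has $\sigma_0,\sigma_1,\dots$ small except near level $\gamma$ at time $N$ (reversed order in Definition~\ref{def:associated 01 seq}), and the chain's jump probabilities are of order $q^{j}$ or $q^{i-j}$. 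The chain therefore stays near a boundary with geometric tails, and its terminal value is determined, up to exponentially small probability, by the pattern of ones in $\boldsymbol{\mathsf{S}}$ in the last segment, which corresponds to the low levels near the minimum.

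\textbf{Step 3 (assembling the bound).} Writing $F$ for the law of $(\tau_i)_{i\ge m}$, I would bound
\[
\TV{\mu_N^{(q)}(F)-\ber}\le \TV{\mu_N^{(0)}(F)-\ber}+\frac{\mathbb{E}^{(0)}\bigl[\,|W-\mathbb{E}[W\mid \mathcal{F}_{\le m/2}]|\,\bigr]}{\mathbb{E}^{(0)}W}.
\]
For the second term I would combine Step~1's approximate independence of $\mathcal{F}_{\ge m}$ from $\mathcal{F}_{\le m/2}$ with Cauchy--Schwarz and Proposition~\ref{prop:moment bound Bryc MAIN} (with $k=2$, $z=\a\b$). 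The conditional-expectation error is controlled on $G_m^c$ by $\mathbb{P}(G_m^c)^{1/2}\sup_N\mathbb{E}[W^2]^{1/2}\le Ce^{-cm}$. The denominator is at least $W_{\bs}(0)$-type constants, which are positive.

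\textbf{Main obstacle.} Step~2 is where I expect the difficulty: making precise the decoupling that the Bryc weight is exponentially insensitive to the path beyond time $m/2$. To do this I would couple two birth--death chains driven by $0$-$1$ sequences that agree except on the block coming from early times and high levels, and show that they coalesce quickly because the transition probabilities are near $0$ or $1$ away from the diagonal.
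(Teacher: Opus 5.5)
Your change of measure and the symmetry reduction for (ii) are fine. The gap is Step~2. That step carries the whole proof, and the mechanism you propose for it does not work. To make $W=W_{\bs(\bg)}(\a\b)$ depend only on the tail $\boldsymbol{\mathsf S}_{N,\ell}$, you must show the chain forgets its state at chain time $N-\ell$. (The tail is a function of $\sigma_0,\dots,\sigma_{\ell+1}$, hence of the prefix outside an exponentially small event.) Forgetting needs two inputs you do not have.

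First, that state must be small. You need uniform-in-$N$ exponential moments of $X^{\boldsymbol{\mathsf S}^{\bs(\bg)}}(i)$ at intermediate times $i$. Proposition~\ref{prop:moment bound Bryc MAIN} only controls the endpoint $i=N+1$, and the state at time $N-\ell$ is driven by the upper levels of the composition, so it does not give this. The paper proves the intermediate bound separately (Proposition~\ref{lem:moment bound simpler chain}).

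Second, ``transition probabilities near $0$ or $1$'' does not produce coalescence. Under the monotone coupling, chains at states $j<k$ move closer at a one with probability $(q^j-q^k)/(1-q^{i+1})$. At a zero the probability is $(q^{j+1}-q^{k+1})/(1-q^{i+1})$. So nearly deterministic dynamics preserve the gap, and contraction only happens when the lower chain is near $0$. What is really needed is that the dominating simplified chain $\mathscr X^{\boldsymbol{\mathsf S}_{N,\ell},x}$, started from $x\le\varepsilon\ell$, hits $0$ within $\ell$ steps with probability at least $1-Ce^{-c\ell}v^{x}$, averaged over the path. This is false for general $0$-$1$ sequences, because the chain can only step down at ones and is pushed up along runs of zeros. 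It holds here because the tail of $\boldsymbol{\mathsf S}^{\bs(\bg)}$ has ones at the positions $N+1-m_k$ given by regeneration times of the path. These have independent subexponential gaps under $\mathrm{P}^{(0,\a,\b)}$ (Lemma~\ref{lem:regeneration indices MAIN}), which feed a block Lyapunov and hitting argument (Lemmas~\ref{lem:block moment bound simpler chain arbitrary start} and~\ref{lem:simpler chain hitting 0}). None of this appears in your sketch.

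Step~3 also has two fixable slips.
\begin{itemize}
\item Your display drops the term $\bigl|\mathrm{Cov}\bigl(\mathbb{E}[W\mid\mathcal F_{\le m/2}],\mathbf 1_B\bigr)\bigr|/\mathbb{E}W$. Controlling it uniformly in $B$ needs a conditional decorrelation bound $\mathbb{E}^{(0,\a,\b)}\bigl[\TV{\mathbb{P}^{(0,\a,\b)}(\Delta\in\cdot\mid\mathcal F_{\le m/2})-\nu_{\b}^{\otimes(N-m)}}\bigr]\le Ce^{-cm}$, where $\Delta=(\gamma_i-\gamma_{i-1})_{i\in\llbracket m+1,N\rrbracket}$. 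This is stronger than the marginal bound of Step~1. The paper gets it from the Denisov decomposition by conditioning on $T_N$ and on the height gained by time $m$.
\item The normalization bound should be $W_{\bs}(z)\ge1$ for $z\ge1$, which is immediate from Theorem~\ref{thm:prob interpretation of Bryc polynomials}. $W_{\bs}(0)$ is not bounded below uniformly in $\bs$.
\end{itemize}

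If Step~2 were actually proved, your assembly would be a valid alternative to the paper's. You approximate the density in $L^1$ by a prefix-measurable variable and then use decorrelation. The paper instead compares the laws of $X^{\boldsymbol{\mathsf S}^{\bs(\bg)}}(N+1)$ under $\mathbb{P}^{(0,\a,\b)}(\cdot\mid A)$ and $\mathbb{P}^{(0,\a,\b)}$ when $\mathbb{P}^{(0,\a,\b)}(A)\ge e^{-\delta m}$, and handles smaller $A$ by Cauchy--Schwarz. Your route avoids that case split but needs the same ingredients.
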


Let us mention that the above results are sharp by comparing the densities at a single site \cite{sasamoto2000density}. Moreover, we remark that similar approximation results were achieved in \cite{nestoridi2024approximating} for the three-parameter open ASEP where $\gamma=\delta=0$ in the fan region, and in \cite{yang2025limits} for the stationary measure of the open ASEP near in the boundary. As a further consequence, we obtain the following moderate deviation bounds on the number of particles under the stationary measure of the open ASEP.  

 \begin{proposition}\label{thm:moderate deviation}
Consider the open ASEP on the lattice \(\llbracket1,N\rrbracket\) with jump rates \(\alpha,\beta,\gamma,\delta\) and \(q\) satisfying
\eqref{eq:conditions open ASEP}. Assume that Liggett's condition \eqref{eq:Liggett's condition} holds and that the open ASEP is in the shock region of the high and low density phases, i.e.,  \(\a\b>1\) and $\a\neq\b$, where $\a=(1-\alpha)/\alpha$ and $\b= (1-\beta)/\beta$. We denote its stationary measure by \(\mu_N^{(q,\a,\b)}\). Define $\rho= \b/(1+\b)$ in the high density phase $\b>\max(\a,1)$ and  $\rho= 1/(1+\a)$ in the low density phase $\a>\max(\b,1)$. Then there exist constants $C,c\in(0,\infty)$, depending only on \(q,\a,\b\), such that, for every \(N\in\mathbb N\), every nonempty set $I  \subseteq\llbracket1,N\rrbracket$ and every  \(x\ge0\),   
\begin{equation*} 
\mu_N^{(q,\a,\b)}
\left(
\left|
\sum_{i\in I}\tau_i-\rho|I|
\right|
\ge x\sqrt{|I|}
\right)
\le
C e^{-cx^2 } .
\end{equation*} 
\end{proposition}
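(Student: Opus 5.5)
We prove Proposition~\ref{thm:moderate deviation} in the high density phase $\b>\max(\a,1)$. The low density phase follows from the particle--hole symmetry combined with the space reversal $i\mapsto N+1-i$, which maps $\mu_N^{(q,\a,\b)}$ to $\mu_N^{(q,\b,\a)}$ and $\rho=1/(1+\a)$ to $1-\rho=\a/(1+\a)$. Throughout, $C,c$ denote constants depending only on $q,\a,\b$. It suffices to treat $x\le c_0\sqrt{|I|}$ for a small constant $c_0$. For larger $x$, note that $|\sum_{i\in I}\tau_i-\rho|I||\le|I|$, so the event is empty once $x>\sqrt{|I|}$. In the intermediate range $c_0\sqrt{|I|}\le x\le\sqrt{|I|}$, the claim reduces to a large deviation bound $e^{-c|I|}$, and this bound is produced by the same argument below.

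\textbf{Step 1: split $I$ according to distance from the left boundary.} Fix a cutoff $m=\lceil K x^2\rceil$ with $K$ large, and write $I=I_1\cup I_2$, where $I_1=I\cap\llbracket1,m-1\rrbracket$ and $I_2=I\cap\llbracket m,N\rrbracket$. The trivial bound $|\sum_{i\in I_1}(\tau_i-\rho)|\le m$ is not good enough when $x^2\gtrsim\sqrt{|I|}$. So instead we choose $m=\lceil\varepsilon x\sqrt{|I|}\rceil$ with $\varepsilon$ small, which is at most $\varepsilon c_0|I|$ and thus an admissible site index. With this choice, the contribution of $I_1$ is at most $\varepsilon x\sqrt{|I|}$, i.e.\ half of the allowed deviation once $\varepsilon\le 1/2$. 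The remaining sum over $I_2$ must then deviate by at least $\tfrac12 x\sqrt{|I|}$.

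\textbf{Step 2: coupling with Bernoulli and Hoeffding.} By Theorem~\ref{thm:ApproxHighLow}(i), the law of $(\tau_i)_{i\in\llbracket m,N\rrbracket}$ is within total variation distance $Ce^{-cm}$ of $\ber_{\llbracket m,N\rrbracket}(\rho)$. Hence
\[
\mu_N\Bigl(\Bigl|\sum_{i\in I_2}(\tau_i-\rho)\Bigr|\ge \tfrac12 x\sqrt{|I|}\Bigr)\le 2e^{-x^2|I|/(2|I_2|)}+Ce^{-c\varepsilon x\sqrt{|I|}},
\]
where the first term comes from Hoeffding's inequality for the Bernoulli product measure. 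Since $|I_2|\le|I|$, the first term is at most $2e^{-x^2/2}$. Since $x\le c_0\sqrt{|I|}$, we have $x\sqrt{|I|}\ge x^2/c_0\ge x^2$, so the second term is at most $Ce^{-c'x^2}$. Combining the two gives the claim, and enlarging $C$ covers small $x$.

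\textbf{Step 3 and main obstacle.} The only delicate point is the bookkeeping of the cutoff $m$. It must be small enough that the sites near the left boundary, where Theorem~\ref{thm:ApproxHighLow} gives no information, cost less than half of the allowed deviation. It must also be large enough that the total variation error $e^{-cm}$ is of order $e^{-cx^2}$. The choice $m\asymp x\sqrt{|I|}$ satisfies both requirements precisely because $x\sqrt{|I|}\ge x^2$ in the relevant range. In the regime where $m$ would exceed $N$, $x$ is of order $\sqrt{|I|}$, and the desired bound $e^{-c|I|}$ follows from the same inequality with $m$ capped at $\min(N,\varepsilon|I|)$. Apart from this, the argument uses only Theorem~\ref{thm:ApproxHighLow} and standard concentration for product measures.
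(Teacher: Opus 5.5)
Your proof is correct and follows essentially the paper's route: discard a boundary piece of $I$ whose contribution is trivially a fraction of $x\sqrt{|I|}$, then combine Theorem~\ref{thm:ApproxHighLow} with Hoeffding on the rest. The differences are minor. The paper discards the first $\lfloor x^2/4\rfloor$ elements of $I$ by rank, so the cost is at most $x^2/4\le x\sqrt{|I|}/4$ and the total-variation error is at most $C_0e^{-c_0x^2/8}$, whereas you cut at the site $m\asymp x\sqrt{|I|}$. Your case split at $c_0\sqrt{|I|}$ and the discussion of $m>N$ are unnecessary, because $x\le\sqrt{|I|}$ already gives $m\le\lceil\varepsilon|I|\rceil\le N$ and $x\sqrt{|I|}\ge x^2$. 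Also, contrary to your remark in Step~1, a site cutoff $m=\lceil x^2/4\rceil$ would have worked for all $x\le\sqrt{|I|}$.
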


Let us point out that the set $I$ in Proposition~\ref{thm:moderate deviation} does not need to be given as a segment.

\subsection{Applications for the  open ASEP with light particles}

For another application of the main approximation results, we consider the open ASEP with a finite number of light particles from \cite{SY:openLight}. In this model, we denote the state space for $r$ light particles on the segment $\llbracket N \rrbracket$ by 
\[
\widehat{\Omega}_{N,r}:=\left\{\tau=(\tau_1,\dots,\tau_N)\, \colon \, \tau_i\in\{0, 1, 2\} \mbox{ for } i\in\llbracket N \rrbracket,\quad \sum_{i=1}^{N} \mathds{1}_{\{ \tau_i = 2 \} } = r\right\}.
\] 
We have the following dynamics with parameters 
\begin{equation}
    \alpha,\beta>0,\quad\gamma,\delta\geq0,\quad0\leq q<1 ,
\end{equation}
which models the evolution of particles on the lattice $\lBr N \rBr=\{1,\dots,N\}$. 
Particles move in the bulk with the following rates:
\begin{equation}
    2 0 \overset{1}{\ral}02,\quad\quad 1 0 \overset{1}{\ral}01,\quad\quad 1 2 \overset{1}{\ral}2 1,
\end{equation}
\begin{equation}
   2 0 \overset{q}{\lal}02,\quad\quad 1 0 \overset{q}{\lal}01,\quad\quad 12\overset{q}{\lal}21. 
\end{equation}
At the open boundaries, only ordinary particles are allowed to enter or exit while light particles of type $2$ are prohibited from entering or exiting. Specifically, at the left boundary, (ordinary) particles enter the system with rate $\alpha$ and exit the system with rate $\gamma$; similarly, at the right boundary, (ordinary) particles enter the system with rate $\delta$ and exit the system with rate $\beta$. 
We prohibit the entry of an (ordinary) particle if the target site is already occupied, whether by an (ordinary) particle or a light particle.
We let $\mu_{N,r}$ denote the unique stationary measure of the system, which is a probability measure on $\widehat{\Omega}_{N,r}$ and let the locations of the $r$ light particles within the lattice be given by 
\[
1\leq\lc_1<\dots<\lc_r\leq N.
\]
In particular, the number of light particles in the system is preserved over time. The following result extends Theorem 1.3 in \cite{SY:openLight} to the shock region.  
\begin{theorem}[]\label{thm:mainHighLowLightIntro} Fix $r\in \mathbb{N}$ and suppose that $\a \b >1$. Moreover, assume that Liggett's condition \eqref{eq:Liggett's condition} holds.
  Suppose $(a_N)_{N=1,2,\dots}$ and $(b_N)_{N=1,2,\dots}$ are two arbitrary sequences of integers such that  $1\leq a_N<b_N \leq N$ and $a_N,N-b_N\rightarrow\infty$. Then in the high density shock region where $\a \b >1$ and $\a\geq 1$, we get 
\begin{equation}
    \lim_{N\rightarrow\infty}\mu_{N,r}(\lc_1,\dots,\lc_r\in \llbracket 1,a_N \rrbracket )=1. 
\end{equation}
Then in the low density shock region where $\a \b >1$ and $\b\geq 1$,
 \begin{equation}
          \lim_{N\rightarrow\infty}\mu_{N,r}( \lc_1,\dots,\lc_r\in \llbracket b_N,N\rrbracket )=1.
 \end{equation}
\end{theorem}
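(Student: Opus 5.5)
The plan is to rerun the proof of Theorem~1.3 in \cite{SY:openLight}, which was written for the fan region, and replace its one phase-dependent input with Theorem~\ref{thm:ApproxHighLow} and Proposition~\ref{thm:moderate deviation}. By particle--hole symmetry combined with space reversal $i\mapsto N+1-i$, the open ASEP with light particles is mapped to the same model with $(\a,\b)$ exchanged and the lattice reflected. This exchanges the two assertions, so I will only prove the first one, under $\a\b>1$ and $\a\geq1$.

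\textbf{Step 1.} I will use the structural input from \cite{SY:openLight}: the stationary measure $\mu_{N,r}$ is expressed through stationary measures of the ordinary open ASEP with the same $(q,\a,\b)$ (a matrix product or two-layer decomposition). From this one obtains a bound on the relative weight of light-particle positions. Concretely, I expect a comparison of the form
\[
\frac{\mu_{N,r}(\lc_1=\ell+1,\dots)}{\mu_{N,r}(\lc_1=\ell,\dots)}\approx \frac{\mathbb{E}_{\mu_N}\bigl[\text{local weight at }\ell+1\bigr]}{\mathbb{E}_{\mu_N}\bigl[\text{local weight at }\ell\bigr]}.
\]
The ratio of local weights is controlled by the density of ordinary particles around $\ell$. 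Heuristically, a light particle steps right at rate $1-\rho$ and left at rate $\rho$, so its bias is $(1-q)(1-2\rho)$.

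\textbf{Step 2.} In the fan region, \cite{SY:openLight} obtained this local density from the product-measure approximations of \cite{nestoridi2024approximating}. Here I will substitute Theorem~\ref{thm:ApproxHighLow}: away from the relevant boundary, the law of $(\tau_i)$ on a window agrees with a Bernoulli product measure up to an error $Ce^{-cm}$. The condition $\a\geq1$ together with $\a\b>1$ must be matched with the phase in which this approximation yields bulk density strictly above $\tfrac12$. That phase is where the window weight ratio stays uniformly below $1-\varepsilon$, so the light particles feel a leftward bias.

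\textbf{Step 3.} I will bound the exponential moments of the particle counts on blocks between consecutive light particles using Proposition~\ref{thm:moderate deviation}. This gives a geometric-type tail: the $\mu_{N,r}$-probability that some $\lc_j$ exceeds $a_N$ is at most a sum over positions $\ell>a_N$ of terms $C\theta^{\ell}$, with $\theta<1$, plus error terms $Ce^{-cm}$. Choosing $m$ of order $a_N$ makes the whole sum tend to $0$ as $a_N\to\infty$. Induction over the $r$ light particles, with $r$ fixed, finishes the proof.

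The main obstacle is Step 2, namely checking that the windows used in \cite{SY:openLight} are exactly the marginals controlled by Theorem~\ref{thm:ApproxHighLow}. The difficulty is that the approximation holds only away from one boundary, while the light particles must be controlled all the way from $a_N$ to $N$. Near the far boundary I will use Proposition~\ref{thm:moderate deviation}, which holds for arbitrary sets $I$, in place of the product approximation.
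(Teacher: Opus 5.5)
Your symmetry reduction to the first assertion is fine, but Steps~1--3 rest on a comparison you never establish. The display in Step~1 is a guess, and it is not what \cite{SY:openLight} does. Nor is it automatic:
\begin{itemize}
\item The ordinary particles alone do not evolve as an open ASEP. A light particle at site $1$ or $N$ blocks boundary entries and is never removed, so there is no identity expressing $\mu_{N,r}$ through $\mu_N$.
\item The environment seen from a light (second-class) particle is not the bulk product measure.
\item The ``local weights'' are never specified.
\end{itemize}
Hence neither the ratio bound below $1-\varepsilon$ nor the geometric tail $\sum_{\ell>a_N}C\theta^{\ell}$ is actually derived. Likewise, Proposition~\ref{thm:moderate deviation} is a statement about $\mu_N$. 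It gives no information on particle counts between light particles under $\mu_{N,r}$.

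Your description of \cite{SY:openLight} is also inaccurate. Its phase-dependent input is not a static density ratio but Lemma~4.11 there, a \emph{dynamical} linear-speed estimate for the light particles. That lemma is the only place where $\a\b\le 1$ is used.

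The missing idea is to reprove that dynamical lemma in the shock region, as the paper does in Lemma~\ref{lem:NewLinearSpeed}. Read the first assertion in the high density phase $\b>\max(\a,1)$, so that $\rho=\b/(1+\b)>\tfrac12$ and $\kappa=(1-q)(1-2\rho)<0$. The argument runs as follows.
\begin{enumerate}
\item Take $\eta\sim\mu_N$ and insert light particles at $s_1<\dots<s_r$.
\item Use Theorem~\ref{thm:ApproxHighLow}\,(i) to couple $\eta$ with a Bernoulli$(\rho)$ configuration on $[\tfrac32 t,N-\tfrac32 t]$, up to failure probability $e^{-ct}$.
\item Compare with an open ASEP with light particles whose two effective boundary densities both equal $\rho$.
\item Propagate the coupling to $[\tfrac14 t,N-\tfrac14 t]$ up to time $t$ via the basic coupling and finite speed of propagation.
\item The known estimate for the Bernoulli-started process then gives $\lc_r(t)\le\max(s_r-|\kappa|t/2,4t)$ with probability at least $1-e^{-ct^{1/8}}$.
\end{enumerate}
Substituting this for Lemma~4.11, the rest of \cite{SY:openLight} goes through unchanged, and it also yields the mixing-time bound.

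Your ``main obstacle'' is not one. In the high density phase, Theorem~\ref{thm:ApproxHighLow}\,(i) already covers $\llbracket m,N\rrbracket$ all the way to the right boundary. Only a neighbourhood of the left boundary is excluded, which is where the light particles should accumulate, and that region is absorbed by the cutoff $4t$.
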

In particular, this resolves a conjecture from \cite{SY:openLight}; see Remark 1.4 therein. 
In \cite{SY:openLight},  the speed of convergence to the stationary measure $\widehat{\mu}_{N,r}$ of the open ASEP with light particles is quantified in terms of total variation mixing times. We define the $\varepsilon$-mixing time for the open ASEP with light particles for all $\varepsilon \in (0,1)$ as
 \begin{equation}\label{def:MixingTime}
     t^N_{\textup{mix}}(\varepsilon) := \inf \left\{ t \geq 0 \, \colon \, \max_{\tilde{\tau} \in \widehat{\Omega}_{N,r}} \TV{ \mathbb{P}(\tau(t) \in \, \cdot \, | \, \tau(0)=\tilde{\tau}) - \widehat{\mu}_{N,r} } \leq \varepsilon \right\},
 \end{equation}
where $\tau(t)=\lb\tau_1(t),\dots,\tau_N(t)\rb$ denotes the occupation variables for the process at time $t\geq0$.  The following result is the analogue of Theorem 1.6 in \cite{SY:openLight} for the shock region. 
\begin{theorem}\label{thm:MainMixingTimeIntro} Fix $r\in \mathbb{N}$ and suppose that $\a \b >1$. Assume that Liggett's condition \eqref{eq:Liggett's condition} holds.
Then there exist $c_0,C_0>0$, depending only on $\a$ and $\b$,  such that for every $\varepsilon\in(0,1)$,
\begin{equation}\label{eq:MixingEquation}
   c_0 \leq  \liminf_{N \rightarrow \infty} \frac{t^N_{\textup{mix}}(\varepsilon)}{N} \leq   \limsup_{N \rightarrow \infty} \frac{t^N_{\textup{mix}}(\varepsilon)}{N} \leq C_0 . 
\end{equation} 
\end{theorem}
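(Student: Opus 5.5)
We would follow the strategy of Theorem 1.6 in \cite{SY:openLight}, which handles the fan region, and replace its stationary input by the shock-region estimates obtained here. The upper bound $\limsup_{N\to\infty} t^N_{\textup{mix}}(\varepsilon)/N\le C_0$ and the lower bound $\liminf_{N\to\infty} t^N_{\textup{mix}}(\varepsilon)/N \geq c_0$ are proved separately. By symmetry (particle–hole exchange together with space reversal), it suffices to treat the high density shock region, $\b>1$ and $\a\b>1$. In that case Theorem~\ref{thm:mainHighLowLightIntro} places the light particles near the left boundary under $\widehat{\mu}_{N,r}$, with bulk density $\b/(1+\b)>\frac12$.

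\textbf{Upper bound.} We use the standard coupling under the basic (attractive) coupling of the ordinary particles. Light particles act as holes for ordinary particles on their right and as particles for those on their left, so the projection that regards light particles as particles, and the one that regards them as holes, are both open ASEPs with single-species dynamics. The steps are as follows.
\begin{enumerate}
\item By the known $O(N)$ mixing of open ASEP in the shock region (through second class particle/disagreement arguments, as in \cite{SY:openLight}), the two projections started from arbitrary states couple in time $O(N)$ with high probability.
\item In the high density phase, a light particle in the bulk sees a net drift to the left. It exchanges with ordinary particles to its left at rate $q$ in the other direction and overtakes holes at rate $1$, and the hole density $1/(1+\b)<\frac12$ combined with the particle density above $\frac12$ yields a negative velocity of order $1-2\rho$ times $(1-q)$. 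Using Theorem~\ref{thm:ApproxHighLow}(i), once the environment is near stationary, each light particle therefore reaches a window $\llbracket 1,K\rrbracket$ within time $O(N)$ with high probability.
\item Inside that window, all $r$ light particles are coupled in $O(1)$ additional time, since the chain restricted to a finite window is irreducible and the window is positive recurrent.
\item Combining these stages gives $t_{\textup{mix}}\le C_0N$.
\end{enumerate}

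\textbf{Lower bound.} Start from the configuration with all light particles at the right end $\llbracket N-r+1,N\rrbracket$. Light particles move with speed at most $1+q$, since they jump only through exchanges driven by Poisson clocks. Hence at time $c_0N$ with $c_0$ small, by a Poisson large deviation bound, they remain in $\llbracket N/2,N\rrbracket$ with high probability. Under $\widehat{\mu}_{N,r}$, however, Theorem~\ref{thm:mainHighLowLightIntro} with $a_N=N/2$ shows that this event has vanishing probability. The total variation distance therefore exceeds $\varepsilon$ for large $N$.

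\textbf{Main obstacle.} The hardest step is the drift estimate in step~2 of the upper bound. It requires control of the local environment seen by a light particle, which is not a stationary environment. We would handle it by sandwiching the light particle's motion between second-class-particle-type bounds in the projected systems and using Theorem~\ref{thm:ApproxHighLow} and Proposition~\ref{thm:moderate deviation} to bound particle counts over mesoscopic blocks uniformly in time after a burn-in of order $N$.
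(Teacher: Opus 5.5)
Your top-level plan matches the paper: rerun the proof of Theorem~1.6 in \cite{SY:openLight} with the shock-region stationary input. Your lower bound, from the bounded jump rates of light particles together with Theorem~\ref{thm:mainHighLowLightIntro}, is fine. The upper bound, however, you rebuild yourself, and it does not go through as written.

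Step~1 is false. The light-as-particle and light-as-hole projections are open ASEPs only in the bulk. Only ordinary particles cross the boundary, so a light particle on site $1$ suppresses the entry at rate $\alpha$ that the light-as-hole projection would perform. Likewise, a light particle on site $N$ suppresses the exit at rate $\beta$ that the light-as-particle projection would perform; the rates $\gamma,\delta$ behave the same way. Hence neither projection is Markov, and the open-ASEP mixing results do not apply. This defect is not negligible: in the high density phase the light particles accumulate near the left boundary, exactly where the $\alpha$-entry is suppressed. Moreover, a configuration in $\widehat{\Omega}_{N,r}$ is determined by its two projections, since the light particles sit exactly where they differ. So coupling both projections \emph{is} coupling the chain, and Step~1, if it were available, would already be the whole upper bound.

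The paper does not rebuild this part at all. The assumption $\a\b\le 1$ enters the proof in \cite{SY:openLight} only through its Lemma~4.11, the linear leftward speed of light particles inserted into a stationary environment. Everything else is reused verbatim once that lemma is reproved, which is Lemma~\ref{lem:NewLinearSpeed}.

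That lemma is your Step~2, which you correctly flag as the main obstacle, but the route you sketch does not supply it. Theorem~\ref{thm:ApproxHighLow} and Proposition~\ref{thm:moderate deviation} describe $\mu_N$, the stationary law \emph{without} light particles. They give no control of the environment a light particle sees at later times, which is neither stationary nor independent of the light particle's position. Block counts accurate only to order $\sqrt{|I|}$ also do not by themselves track an $O(1)$ second-class discrepancy.

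The missing idea is to use the approximation only at time $0$ and then import the fan-region estimate:
\begin{enumerate}
\item On time scale $t$, bounded jump rates, the basic coupling and a Chernoff bound show that the evolution on $[\tfrac14 t,N-\tfrac14 t]$ depends only on the initial configuration on $[\tfrac32 t,N-\tfrac32 t]$, up to probability $e^{-ct}$.
\item After reducing to $s_1\ge 2t$, Theorem~\ref{thm:ApproxHighLow}\,(i) couples $\eta\sim\mu_N$ on that window with a Bernoulli product configuration of density $\b/(1+\b)$, up to error $e^{-ct}$.
\item For the system started from this product configuration with the same light particles, the fan-region analysis of \cite{SY:openLight} already provides the linear-speed bound.
\end{enumerate}
No new drift or second-class-particle analysis is needed. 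Your Steps~1 and~3 should simply be replaced by the remaining arguments of \cite{SY:openLight}.
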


Further applications of the above probabilistic characterization of the Bryc polynomials, in particular on the stationary measures of the open KPZ equation and on large deviations for the open ASEP in the shock region, are discussed in Section~\ref{sec:ScopePerspective}.

\subsection{Related work}

The stationary measure of the open ASEP has been extensively studied over the past decades. The phase diagram in Figure~\ref{fig:Phase} of the open ASEP was determined by Liggett in \cite{liggett1975ergodic} under the assumption that the boundary parameters satisfy \eqref{eq:Liggett's condition}. In \cite{derrida1993exact}, Derrida, Evans, Hakim, and Pasquier use the matrix product ansatz to characterize the stationary measure of the open TASEP; see \cite{blythe2007nonequilibrium} for an introduction to the matrix product ansatz technique. For general boundary parameters, Uchiyama, Sasamoto, and Wadati construct in \cite{uchiyama2004asymmetric} a representation for the matrix product ansatz using Askey--Wilson orthogonal polynomials from \cite{askey1985some}. Their connection of the open ASEP with orthogonal polynomials led to a series of results on determining the stationary height fluctuations for the open ASEP through Askey--Wilson processes in the fan region, and through Askey--Wilson signed measures in the shock region  \cite{Bryc2017,bryc2019limit,wang2023askey}. Let us stress that solutions to the matrix product ansatz are in general not unique, and other representations include the Enaud--Derrida construction \cite{enaud2004large}, as well as combinatorial descriptions for example through bi-colored Motzkin paths and staircase tableaux \cite{brak2006combinatorial,corteel2011tableaux}. Moreover, while the matrices in the matrix product ansatz are in general of infinite dimension, special finite-dimensional representations in the shock region  were found by Mallick and Sandow in \cite{mallick1997finite}, closely related to Bernoulli shock measures  \cite{schuetz2022reverse}.

More recently, the stationary measure of the open TASEP was investigated through its two-layer representation in \cite{bryc2024two,bryc2024limitCoex}, following the two-layer Gibbs framework for integrable models on a strip introduced in \cite{barraquand2024stationary}. Similar path representations for the open ASEP appeared earlier, for example in \cite{derrida2002large,brak2006combinatorial}. Of particular relevance to the present work is an extension of the above results by Bryc in \cite{bryc2024stationary} to the open ASEP under Liggett's condition. In \cite{bryc2024stationary}, the recursions on the stationary measure from \cite{liggett1975ergodic,brak2006combinatorial} via basic weight functions are use to identify the polynomial weight encoding the effect of the asymmetry parameter $q$ for the open ASEP.

Complementing the above macroscopic and fluctuation results, it is well-known that the stationary measure of the open ASEP is locally well-approximated by (a mixture of) Bernoulli product measures \cite{liggett1999stochastic}. This approximation of the stationary measure by Bernoulli product measures was extended to larger scales in \cite{nestoridi2024approximating,yang2025limits}. For the three-parameter ASEP, with $\gamma=\delta=0$, \cite{nestoridi2024approximating} establishes in the fan region a Bernoulli product approximation in the bulk, building on a representation of the stationary measure of the open ASEP in terms of reweighted bi-colored Motzkin paths from \cite{brak2006combinatorial}. Similarly, the stationary  measure of the open ASEP near the boundaries is described in \cite{yang2025limits} through Askey--Wilson half-line measures. Finally, let us also mention that the microscopic density profiles and correlations for the stationary measure of the open ASEP were studied for example in \cite{sasamoto2000density,uchiyama2005correlation}.

\subsection{Results in the fan region and the coexistence line}

The present paper is the first in a series of three articles on approximation results for the stationary measure of the open ASEP. For convenience of the reader, let us briefly summaries the main results from the companion works \cite{SchmidYangCoexistence} and \cite{SchmidYangFan}.

In \cite{SchmidYangCoexistence}, we provide sharp approximation results of the stationary measure on the coexistence line of the open ASEP. In this special case $\a=\b>1$, we see the presence of a non-vanishing shock traveling inside the segment. While the overall strategy builds on the two-layer representation from \cite{bryc2024stationary} and the probabilistic characterization of the Bryc polynomials developed in this work, several technical difficulties arise on the way to achieve similarly sharp approximation results as in the high and low density phase. In particular, we require sharp tail estimates for the simultaneous regeneration times of biased random walks, as well as a precise control of the shock location, which we approximate in total variation with respect to the uniform measure on the segment. 

In \cite{SchmidYangFan}, we provide sharp approximation results on the stationary measure of the open ASEP in the fan region. While the analysis and techniques developed in the present paper for the shock region of the open ASEP  are in principle also applicable in the fan region, we take a different route, allowing for general boundary parameters $\alpha,\beta,\gamma,\delta \geq 0$, in particular without assuming  Liggett's condition. In summary, building on the Enaud-Derrida solution to the matrix product ansatz from \cite{enaud2004large}, and its two-layer formulation from \cite{hegde2024large}, we express the height function of a stationary open ASEP using suitable measures on the space of bi-colored Motzkin paths, allowing for sharp approximation results on the stationary measure in terms of certain random walk path measures.

\subsection{Scope and perspective}\label{sec:ScopePerspective}

The results of this paper provide effective approximation results on the
stationary open ASEP in the shock region. Earlier approaches in the shock region were based on finite-dimensional representations of
the matrix product algebra by
Mallick and Sandow~\cite{mallick1997finite}, closely related
to Bernoulli shock measures, and only applied for a very specific subset of boundary parameters; see Section~6 in \cite{nestoridi2024approximating}.  A particular feature of our estimates is that they control joint measures on arbitrary sets of sites under essentially optimal assumptions on the distance from the boundaries. This is particularly useful for fluctuation and moderate
deviation estimates, where the relevant spatial scales vary with
the system size. In this respect, our approach complements also exact
formulas based on Askey--Wilson polynomials and measures \cite{bryc2019limit,wang2023askey}.
Such formulas yield detailed information about stationary
observables, but extracting estimates for a growing collection
of sites from the multiple-integral representations
in~\cite{uchiyama2004asymmetric} requires a delicate analysis. Our methods
instead control the stationary measure directly, at the cost of replacing
exact identities by quantitative approximations.

Our main tool is the probabilistic characterization of the polynomial weights found by Bryc in  \cite{bryc2024stationary}, expressing the polynomial weights arising in the stationary measure of the open ASEP by moments of a birth-death chain. Let us point out that the birth-death chain presentation is a Markovian construction. Hence, the Markov property allows us to compare weights associated with
different path lengths and to localize the effect of the weights. This construction is reminiscent of Liggett's
recursive description of the stationary measure of the open ASEP in \cite{liggett1975ergodic}. However, a precise
relation between the two recursions remains to be
understood.

More broadly, the two-layer representation of the open ASEP fits into the study of stationary
measures through two-layer representations for integrable models
with boundaries. In such representations, summing over an auxiliary
height offset, or zero mode, produces an effective weight on the
remaining path variables. For geometric last passage percolation
and the log-gamma polymer, the resulting stationary measures has an explicit two-layer Gibbs representation
\cite{barraquand2024stationary}. For the open ASEP, the Bryc polynomials are an additional ingredient. The birth-death chain representation makes this
structure useful for asymptotic analysis, but its underlying
algebraic meaning is still to be understood. In particular,
the presence of this polynomial correction factor can not be solely explained by positive
temperature, since for example the log-gamma polymer already provides
a positive-temperature model with an explicit two-layer Gibbs representation without polynomial correction terms as shown in \cite{barraquand2024stationary}. 
Several further questions arise from this perspective. 
The open stochastic six-vertex model from \cite{yang2022stationary} is a
natural setting in which to investigate the appearance of Bryc polynomials and
possible analogues of our approximation results. Moreover, extensions to
higher-spin vertex models and related
harmonic models are of independent interest and left to future work.

Controlling the Bryc polynomials suggests a route for transferring
stationary asymptotic results from the open TASEP to the open ASEP.
For the totally asymmetric case, the two-layer representation takes a particularly simple form, as shown in \cite{bryc2024two}, and can be studied using Denisov's random walk decomposition \cite{bryc2024limitCoex}.
Moreover, it benefits from its connections with
last passage percolation from \cite{rost1981non}, for which additional probabilistic tools
are available; see Section 7 in \cite{nestoridi2024approximating} for further discussions.
Our probabilistic characterization also allows for various natural applications. In
\cite{hegde2024large}, a mild control on the Bryc
polynomials is used for large deviation results on the open ASEP in the shock region. 
Another application concerns the weakly asymmetry regime, where the asymmetry parameter $q$
depends on the system size. The weakly asymmetric regime is investigated
in~\cite{SchmidYangOpenKPZ}, where the probabilistic description of the Bryc polynomials is used to characterize the stationary measures of the open KPZ equation.

Finally, let us mention two questions on the scope and precision of the
present estimates. The first is whether the sharp approximation
results can be extended beyond Liggett's condition. The second
is whether one can identify the optimal exponential decay rates
in place of the bounds proved here. The finer subdivisions of
the phase diagram appearing in the analysis of stationary density
profiles~\cite{sasamoto2000density} suggest that these rates may
contain information not visible in our current estimates.
Determining their dependence on the parameters is promising to connect 
our approximation results with a more refined
description of boundary effects.

\subsection{Overview of the proof ideas}

Our main results are two-folded. 
First, we introduce a general framework in order to compare the stationary measure of the open TASEP to the stationary measure of the open ASEP. To this end, we consider the polynomial factors from \cite{bryc2024stationary} in the Radon-Nikodym derivative with respect to the two-layer representation of the open TASEP, to which we refer as Bryc polynomials, indexed by compositions of $N$ derived from the local times of the underlying random walk paths. We provide a probabilistic characterization of the Bryc polynomials.
More precisely, we show that the Bryc polynomials can be expressed by the moment generating function for a certain inhomogeneous birth-death chain run for $N$ steps. Here, the $q$-difference operators in Bryc's characterization in \cite{bryc2024stationary} give the transition probabilities of our birth-chain, allowing us to conclude this characterization by induction. 
In return, we study this birth-death chain in depth and control its exponential moments, and hence the coefficients in the Bryc polynomials, for typical compositions. 

For the analysis of the Bryc polynomials, we require a fine understanding of the height function of the open TASEP where $\gamma=\delta=q=0$. To this end, we use the Denisov decomposition determined in  \cite{bryc2024limitCoex} together with classical random walk estimates. The major part lies in justifying that the Bryc polynomial is well-behaved. In particular, we require precise lower bounds on the local times for the open TASEP height function, and sharp estimates on regeneration times for biased random walks. These regeneration times allow us to give a effective lower bounds on the steps for the death rate of the birth-death chain after a suitable approximation by infinite biased random walk measures. In particular, this allows us to provide Gaussian tail bounds on the involved birth-death chains for different starting states.

As our main application, we show that the aforementioned characterization of the Bryc polynomials can be used to obtain a sharp quantitative approximation of the stationary measure in terms of homogeneous Bernoulli product measures for the open ASEP in the high and low density phase in the shock region. We prove our results for the two-layer representation of the open TASEP, and then control the Bryc polynomials, allowing us to transfer the results to an approximation of the stationary measure of the open ASEP. 
Moreover, we demonstrate that our approximation results can be used to resolve questions on the number of particles and on the open ASEP with light particles from \cite{SY:openLight}. In these applications, a key tool is a sharp comparison between the stationary measure of the open ASEP and Bernoulli product measures, allowing to achieve the results previously only known in the fan region now also in the shock region. 

\subsection{Outline of the paper}

In Section~\ref{sec:TwoLayer}, we prove the birth-death chain characterization for the Bryc polynomials from Theorem \ref{thm:prob interpretation of Bryc polynomials}. In Section~\ref{sec:Prelims}, we provide preliminaries on the two-layer representation of the open TASEP and introduce suitable replacement measures for the construction of the Bryc polynomials. In Section \ref{sec:MomentBryc}, we show Proposition~\ref{prop:moment bound Bryc MAIN} stating that all finite moments for a typical Bryc polynomial are uniformly bounded. In Section~\ref{sec:Approx}, we show that the stationary measure of the open ASEP is well approximated by homogeneous Bernoulli product measures; see Theorem \ref{thm:ApproxHighLow}. In Section~\ref{sec:FurtherApplications}, we prove moderate deviations estimates on the number of particles under the stationary open ASEP (Theorem \ref{thm:moderate deviation}) as well as approximation and mixing time bounds for the open ASEP with light particles (Theorem \ref{thm:mainHighLowLightIntro} and Theorem \ref{thm:MainMixingTimeIntro}). In Appendix~\ref{sec:Appendix}, we provide sharp tail estimates on regeneration times of biased random walks.

\subsection*{Acknowledgments} 
We thank Ivan Corwin, Nina Gantert and Mehtaab Sawhney for helpful discussions. 
D.S. was partially supported by the Packard Foundation via Amol Aggarwal's Packard Fellowships for Science and Engineering.
Z.Y. was partially supported by the Miller Institute for Basic Research in Science at the University of California, Berkeley, Ivan Corwin's NSF grants DMS-1811143, DMS-2246576, and the Fernholz Foundation's ``Summer Minerva Fellows'' program.
Both authors were supported by Ivan Corwin's Simons Foundation Grant 929852.

\subsection*{AI declaration} 

This project was initiated in fall 2024, and the proofs were written down by the end of July 2026. The authors used AI to assist with editing the manuscript, performing some computations, and checking arguments. All proof ideas in this article were developed by the authors. 
%
\section{A birth-death chain characterization of the Bryc polynomials}  \label{sec:TwoLayer}

The goal of this section is to provide a probabilistic characterization of the Bryc polynomial from \cite[(1.4)]{bryc2024stationary}. 
We begin by introducing some notation.
We call $f(z)$ a rational function (over $\mathbb{R}$) if it can be written as $f(z)=g(z)/h(z)$, where $g(z)$ and $h(z)$ are polynomials in $z$ with coefficients in $\mathbb{R}$, and $h\neq0$. Note that, as a function in $z$, $f(z)$ is defined on $\mathbb{R}\setminus \{p_1,\dots,p_k\}$, where $k\in\mathbb{Z}_{\geq0}$ and $p_1,\dots,p_k\in\mathbb{R}$ are the real roots of $h(z)$.
For $q\in(0,1)$ and a rational function $f(z)$, we will write $\mathcal{D}_qf$ and $\mathcal{E}_qf $ to denote the rational functions 
\[
(\mathcal{D}_qf)(z) =\frac{f(z)-f(qz)}{z(1-q)}\quad\mbox{ and }\quad (\mathcal{E}_qf)(z) =\frac{f(z)-qf(qz)}{ 1-q } .
\]
We will view $\mathcal{D}_q$ and $\mathcal{E}_q$ as operators acting on the space of rational functions, and we mention that $\mathcal{D}_q$ is known by the name of the Jackson derivative or the $q$-differentiation operator; see Jackson \cite{jackson1909xi}, and, for example, monographs \cite[Chapter 1]{koekoek2010hypergeometric} and \cite{gasper2011basic}. Note that the operator $\mathcal{E}_q$ can be seen as $\mathcal{D}_q\circ z$, where $z$ denotes the operator to multiply $z$ to a rational function $f(z)$. Here and below, for $n\in\mathbb{Z}_{\geq0}\cup\{\infty\}$, we use the $q$-Pochhammer symbol $(z;q)_n=\prod_{j=0}^{n-1}\,(1-z q^j)$. 

\begin{definition}\label{def:Bryc poly}
For $q\in[0,1)$ and a composition $\bs=(\sigma_0,\dots,\sigma_r)$ of $N+1$, where $N\in\mathbb{Z}_{\geq0}$, we define a rational function $W_{\bs}(z)$ as follows.
When $q=0$ we set $W_{\bs}(z)=1$, and when $q\in(0,1)$ we set
    \begin{equation}\label{eq:original def of Bryc polynomial}
       W_{\bs}(z)=  \frac{(1-q)^{N+1} }{ (q;q)_{N+1}}  (z;q)_{N+2}  \cdot \left( \underbrace{\mathcal{E}_q\circ\dots \circ\mathcal{E}_q}_{\sigma_0-1}\circ\mathcal{D}_q \right)\circ\dots\circ \left( \underbrace{\mathcal{E}_q\circ\dots \circ\mathcal{E}_q}_{\sigma_r-1}\circ\mathcal{D}_q\right)   \left[\frac{1}{(z;q)_1 }\right].
    \end{equation}  
Here, we apply $\sigma_r+\dots+\sigma_0=N+1$ many operators $\mathcal{D}_q$ or $\mathcal{E}_q$ sequentially on $1/(z;q)_1=1/(1-z)$ and finally multiply with the prefactors $(z;q)_{N+2}$ and $(1-q)^{N+1}/ (q;q)_{N+1}$.
An empty composition of operators is the identity.
\end{definition}
\begin{remark}\label{rmk:prefactor differ}
    We mention that our Definition \ref{def:Bryc poly} of $W_{\bs}(z)$ differs from the $w_{\bs}(z)$ defined in \cite[(1.4)]{bryc2024stationary} by the prefactor $(1-q)^{N+1}/(q;q)_{N+1}$. We choose to include this prefactor for convenience of notation.
\end{remark}
\begin{remark}\label{rmk:Bryc is polynomial}
The rational function $W_{\bs}(z)$ as in Definition \ref{def:Bryc poly} is actually a polynomial in $z$ with non-negative coefficients depending on $q\in[0,1)$, and with constant term $W_{\bs}(0)>0$. This is not obvious from Definition~\ref{def:Bryc poly}, but is shown in \cite[Lemma 2.1]{bryc2024stationary}. We will call $W_{\bs}(z)$ a Bryc polynomial with respect to $\bs$.  We will provide an alternative proof of this fact in Corollary~\ref{cor:Bryc is polynomial}, which follows from our birth-death chain characterization.
\end{remark}

We next prove Theorem \ref{thm:prob interpretation of Bryc polynomials}. For clarity, we restate Theorem \ref{thm:prob interpretation of Bryc polynomials} below.

\begin{theorem}\label{thm:prob interpretation of Bryc polynomials again}
Let $q\in[0,1)$, $N\in\mathbb{Z}_{\geq0}$ and $\bs=(\sigma_0,\dots,\sigma_r)$, $r\in\mathbb{Z}_{\geq0}$ be a composition of $N+1$. We have
    \begin{equation}\label{eq:rep of Bryc polynomial again}
    W_{\bs}(z)=\mathbb{E}\left[ z^{X^{\boldsymbol{\mathsf{S}}^{\bs}}(N+1)}\right].
    \end{equation}
Here, on the left side, $W_{\bs}(z)$ is given by Definition \ref{def:Bryc poly}; and on the right side,
$\boldsymbol{\mathsf{S}}^{\bs}$ is the $0$-$1$ sequence associated to the composition $\bs$ as in Definition \ref{def:associated 01 seq}, and $X^{\boldsymbol{\mathsf{S}}^{\bs}}=\{X^{\boldsymbol{\mathsf{S}^{\bs}}}(i)\}_{i=0}^{N+1}$ is the birth-death Markov chain indexed by this $0$-$1$ sequence $\boldsymbol{\mathsf{S}}^{\bs}$ as in Definition \ref{def:characterization measure}. 
\end{theorem}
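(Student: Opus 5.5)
The plan is to prove \eqref{eq:rep of Bryc polynomial again} by induction over the $N+1$ operators in \eqref{eq:original def of Bryc polynomial}, applied one at a time from the inside out. The invariant we aim for is that after $k$ operators have been applied, the resulting rational function equals a constant multiple of $\mathbb{E}[z^{X(k)}]/(z;q)_{k+1}$. Here $X=X^{\boldsymbol{\mathsf{S}}^{\bs}}$.

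First we fix the order of the operators. The first operator applied to $1/(1-z)$ is the $\mathcal{D}_q$ of the $\sigma_r$-block. It is followed by $\sigma_r-1$ copies of $\mathcal{E}_q$, then the $\mathcal{D}_q$ of the $\sigma_{r-1}$-block, and so on. Labelling the applications by $k\in\llbracket0,N\rrbracket$ (the $(k+1)$-st operator applied), the operator is $\mathcal{D}_q$ exactly when $k=0$ or $k\in\{\sigma_r,\sigma_r+\sigma_{r-1},\dots,\sigma_r+\dots+\sigma_1\}$. For $k\ge1$ this is exactly the condition $\mathsf{S}^{\bs}(k)=1$ from \eqref{eq:01seq}, and for $k\ge 1$ the operator is $\mathcal{E}_q$ otherwise. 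The step $k=0$ needs no label: both $\mathcal{D}_q$ and the chain act trivially there, as checked below.

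The core of the argument is a one-step computation on monomials. For $g(z)=z^j/(z;q)_{k+1}$ we have
\[
g(z)=\frac{z^j(1-q^{k+1}z)}{(z;q)_{k+2}},\qquad g(qz)=\frac{q^jz^j(1-z)}{(z;q)_{k+2}}.
\]
Substituting these into the definitions gives
\[
(\mathcal{E}_qg)(z)=\frac{z^j\big[(1-q^{j+1})+(q^{j+1}-q^{k+1})z\big]}{(1-q)(z;q)_{k+2}},\qquad
(\mathcal{D}_qg)(z)=\frac{z^{j-1}\big[(1-q^{j})+(q^{j}-q^{k+1})z\big]}{(1-q)(z;q)_{k+2}}.
\]
Factoring out $(1-q^{k+1})/(1-q)$, the bracketed expressions become exactly the one-step probability generating functions in \eqref{eq:etget2} and \eqref{eq:etget1} with $i=k$. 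The $\mathcal{E}_q$ case gives an up-step with probability $(q^{j+1}-q^{k+1})/(1-q^{k+1})$. The $\mathcal{D}_q$ case gives a down-step with probability $(1-q^j)/(1-q^{k+1})$.

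Some care is needed at the boundary $j=0$. In the $\mathcal{D}_q$ case the $z^{-1}$ term has coefficient $1-q^0=0$, so the result remains a polynomial over $(z;q)_{k+2}$. At $k=0$, $j=0$, the function $\mathcal{D}_q[1/(1-z)]$ equals $1/(z;q)_2$, which matches $X(0)=X(1)=0$ with factor $(1-q)/(1-q)=1$.

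The operators are linear, so the monomial computation extends to the full expectation. The Markov property then gives $f_k(z)=c_k\,\mathbb{E}[z^{X(k)}]/(z;q)_{k+1}$ with $c_{k+1}=c_k(1-q^{k+1})/(1-q)$ and $c_0=1$. After all $N+1$ operators this yields
\[
c_{N+1}=\prod_{k=0}^{N}\frac{1-q^{k+1}}{1-q}=\frac{(q;q)_{N+1}}{(1-q)^{N+1}}.
\]
Multiplying by $(z;q)_{N+2}$ and by the prefactor $(1-q)^{N+1}/(q;q)_{N+1}$ leaves exactly $\mathbb{E}[z^{X(N+1)}]$.

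The case $q=0$ is handled separately. There the up-probability $(0^{j+1}-0)/1$ vanishes. The down-probability $(1-0^j)$ vanishes at $j=0$. Hence $X\equiv0$ and $\mathbb{E}[z^{X(N+1)}]=1=W_{\bs}(z)$.

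The main obstacle is bookkeeping rather than analysis. We must match the order of operator composition with the time indexing of $\boldsymbol{\mathsf{S}}^{\bs}$, including the special initial $\mathcal{D}_q$ at step $0$. We must also make sure the conditions $j\in\llbracket0,i-1\rrbracket$ keep all transition probabilities in $[0,1]$. Both are settled by the explicit identities above.
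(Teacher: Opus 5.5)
Your proposal is correct and is essentially the paper's proof: the paper absorbs your constants $c_k$ and the factors $(z;q)_{k+1}$ into normalized operators $\mathsf{D}_q^{(i)},\mathsf{E}_q^{(i)}$, computes their action on monomials $z^j$ exactly as you do, and identifies the results with the one-step generating functions of the chain. The only cosmetic difference is direction: the paper inducts on $N$ by removing the outermost operator (cases $\sigma_0>1$ and $\sigma_0=1$), whereas you run the same induction forward over the applied operators and match the operator at step $k$ directly with $\mathsf{S}^{\bs}(k)$.
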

\begin{proof} 
When $q=0$, we can observe that $X^{\boldsymbol{\mathsf{S}}^{\bs}}(i)=0$ for $i\in\llbracket0,N+1\rrbracket$, hence \eqref{eq:rep of Bryc polynomial again} holds since both sides are equal to $1$. In the rest of this proof, we assume $q\in(0,1)$.
We claim that \eqref{eq:original def of Bryc polynomial} can be written alternatively as
\begin{equation}\label{eq:etrtr}
    W_{\bs}(z)= \left(\underbrace{\mathsf{E}_q^{(N+1)}\circ\dots \circ\mathsf{E}_q^{(N-\sigma_0+3)}}_{\sigma_0-1}\circ\mathsf{D}_q^{(N-\sigma_0+2)}\right)\circ\dots\circ\left(\underbrace{\mathsf{E}_q^{(\sigma_r)}\circ\dots\circ \mathsf{E}_q^{(2)}}_{\sigma_r-1}\circ\mathsf{D}_q^{(1)}\right)[1],
\end{equation} 
where for each $i\in\mathbb{N}$, we define the operators $\mathsf{D}_q^{(i)}$ and $\mathsf{E}_q^{(i)}$ acting on rational functions by
\begin{equation}\label{eq:D and E alternative}
 \left(\mathsf{D}_q^{(i)}f\right)(z) =\frac{1-q}{1-q^i}(z;q)_{i+1}\cdot\mathcal{D}_q\left[\frac{f(z)}{(z;q)_i}\right]  \quad\mbox{ and }\quad
 \left(\mathsf{E}_q^{(i)}f\right)(z) =\frac{1-q}{1-q^i}(z;q)_{i+1}\cdot\mathcal{E}_q\left[\frac{f(z)}{(z;q)_i}\right] .
\end{equation} 
We explain the proof of \eqref{eq:etrtr} as follows. Define $F_0=1$, and reading the operators in \eqref{eq:original def of Bryc polynomial} sequentially from right to left, let $T_i\in\{\mathcal{D}_q,\mathcal{E}_q\}$ be the $i$-th operator for $i\in\llbracket1,N+1\rrbracket$. Set 
\[
F_i(z)= \frac{(1-q)^{i} }{ (q;q)_{i}}  (z;q)_{i+1}  \cdot T_i\circ\dots\circ T_1 \left[\frac{1}{(z;q)_1 }\right].
\]
One can check from the definition of the operators $\mathcal{D}_q$ and $\mathcal{E}_q$ that  $F_i= \mathsf{D}_q^{(i)}F_{i-1}$ if $T_i=\mathcal{D}_q$ and similarly  $F_i= \mathsf{E}_q^{(i)}F_{i-1}$ if $T_i=\mathcal{E}_q$. Taking $i=N+1$ we conclude \eqref{eq:etrtr}.

To prove \eqref{eq:rep of Bryc polynomial again}, it suffices to show 
\begin{equation}\label{eq:only need to show rep of Bryc}
 \mathbb{E}\left[ z^{X^{\boldsymbol{\mathsf{S}}^{\bs}}(N+1)}\right]=\left(\underbrace{\mathsf{E}_q^{(N+1)}\circ\dots \circ\mathsf{E}_q^{(N-\sigma_0+3)}}_{\sigma_0-1}\circ\mathsf{D}_q^{(N-\sigma_0+2)}\right)\circ\dots\circ\left(\underbrace{\mathsf{E}_q^{(\sigma_r)}\circ\dots\circ \mathsf{E}_q^{(2)}}_{\sigma_r-1}\circ\mathsf{D}_q^{(1)}\right)[1].
\end{equation}
For each $j\in\mathbb{Z}_{\geq0}$, we compute from \eqref{eq:D and E alternative} that
\begin{equation}\label{eq:Jackson 1}
\begin{split}
\mathsf{D}_q^{(i)}\left[z^j\right]&=\frac{1-q}{1-q^i}\mathcal{D}_q\left[\frac{z^j}{(z;q)_i}\right](z;q)_{i+1} 
=\frac{1-q}{1-q^i}\frac{\frac{z^j}{(z;q)_i}-\frac{z^jq^j}{(zq;q)_i}}{z(1-q)}(z;q)_{i+1} \\
&=\frac{z^j(1-zq^i)-z^jq^j(1-z)}{z(1-q^i)}  =\frac{q^j-q^i}{1-q^i}z^j+ \frac{1-q^j}{1-q^i}z^{j-1},
\end{split}
\end{equation} 
and
\begin{equation}\label{eq:Jackson 2}
\begin{split}
\mathsf{E}_q^{(i)}\left[z^j\right]&=\frac{1-q}{1-q^i}\mathcal{E}_q\left[\frac{z^j}{(z;q)_i}\right](z;q)_{i+1} =\frac{1-q}{1-q^i}\frac{\frac{z^j}{(z;q)_i}-\frac{q^{j+1}z^j}{(zq;q)_i}}{1-q}(z;q)_{i+1} \\
&=\frac{z^j(1-zq^i)-q^{j+1}z^j(1-z)}{1-q^i} =\frac{q^{j+1}-q^i}{1-q^i}z^{j+1}+ \frac{1-q^{j+1}}{1-q^i}z^{j}.
\end{split}
\end{equation} 

We recall from Definitions \ref{def:characterization measure} and \ref{def:associated 01 seq} that $X^{\boldsymbol{\mathsf{S}^{\bs}}}(0)=X^{\boldsymbol{\mathsf{S}^{\bs}}}(1)=0$ and for each $i\in\llbracket1,N\rrbracket$ and $j\in\llbracket0,i-1\rrbracket$, 
 if $\mathsf{S}^{\bs}(i)=1$, we have $X^{\boldsymbol{\mathsf{S}^{\bs}}}(i+1)-X^{\boldsymbol{\mathsf{S}^{\bs}}}(i)\in\{-1,0\}$ with 
    \begin{equation}\label{eq:dec}
    \mathbb{P}\left(X^{\boldsymbol{\mathsf{S}^{\bs}}}(i+1)=j-1 \Big{|}X^{\boldsymbol{\mathsf{S}^{\bs}}}(i)=j\right)=\frac{1-q^{j}}{1-q^{i+1}},
   \end{equation}
 and if $\mathsf{S}^{\bs}(i)=0$, we have $X^{\boldsymbol{\mathsf{S}^{\bs}}}(i+1)-X^{\boldsymbol{\mathsf{S}^{\bs}}}(i)\in\{0,1\}$ with 
    \begin{equation}\label{eq:inc}
    \mathbb{P}\left(X^{\boldsymbol{\mathsf{S}^{\bs}}}(i+1)=j+1 \Big{|}X^{\boldsymbol{\mathsf{S}^{\bs}}}(i)=j\right)=\frac{q^{j+1}-q^{i+1}}{1-q^{i+1}},
    \end{equation} 
where $X^{\boldsymbol{\mathsf{S}}^{\bs}}(i)\in\llbracket0,i-1\rrbracket$ for all $i\in\llbracket1,N+1\rrbracket$, and
 $\boldsymbol{\mathsf{S}}^{\bs}=(\mathsf{S}^{\bs}(0),\dots,\mathsf{S}^{\bs}(N))\in\{0,1\}^{N+1}$ is given by
\begin{equation}\label{eq:01seq again}
  \mathsf{S}^{\bs}(i)=\begin{cases}
        1 &\quad\mbox{ if }i\in\{\sigma_r,\sigma_r+\sigma_{r-1},\dots,\sigma_r+\dots+\sigma_1\},\\  
        0 &\quad\mbox{ otherwise}.
    \end{cases} 
\end{equation}

We next prove \eqref{eq:only need to show rep of Bryc} by induction over $N\in\mathbb{Z}_{\geq0}$. When $N=0$, the equation is $\mathbb{E}[ z^{X^{ \boldsymbol{\mathsf{S}^{\bs}}}(1) }]=\mathsf{D}_q^{(1)}[1]$, which holds since both sides are equal to $1$. Assume now that $N\geq1$ and that the equation holds for $N-1$. We split the proof into two cases.\\

\noindent\textbf{Case 1.} Assume $\sigma_0>1$.  Set $\widetilde{\bs}=(\sigma_0-1,\sigma_1,\dots,\sigma_r)$, which is a composition of $N$. From \eqref{eq:01seq again}, we have $\mathsf{S}^{\widetilde{\bs}}(i)=\mathsf{S}^{\bs}(i)$ for $i\in\llbracket0,N-1\rrbracket$, and $\mathsf{S}^{\bs}(N)=0$. Hence the two Markov chains agree in law up to time $N$, and we have
$X^{\boldsymbol{\mathsf{S}}^{\bs}}(N)\stackrel{d}{=}X^{\boldsymbol{\mathsf{S}}^{\widetilde{\bs}}}(N)$. 
Conditioning on $X^{\boldsymbol{\mathsf{S}}^{\bs}}(N)=j$ and using $\mathsf{S}^{\bs}(N)=0$, from \eqref{eq:inc} we get
\[
\mathbb{E}\left[z^{X^{\boldsymbol{\mathsf{S}}^{\bs}}(N+1)}\,\middle|\,X^{\boldsymbol{\mathsf{S}}^{\bs}}(N)=j\right]
=\frac{q^{j+1}-q^{N+1}}{1-q^{N+1}}z^{j+1}+\frac{1-q^{j+1}}{1-q^{N+1}}z^j
=\mathsf{E}_q^{(N+1)}[z^j] \quad \mbox{ for } j\in\llbracket0,N-1\rrbracket,
\]
where the last equality follows from \eqref{eq:Jackson 2}. Therefore, by linearity of the operator $\mathsf{E}_q^{(N+1)}$,
\begin{equation}\label{eq:gergbvet1}
\mathbb{E}\left[z^{X^{\boldsymbol{\mathsf{S}}^{\bs}}(N+1)}\right]
=\mathsf{E}_q^{(N+1)}\left(\mathbb{E}\left[z^{X^{\boldsymbol{\mathsf{S}}^{\bs}}(N)}\right]\right)
=\mathsf{E}_q^{(N+1)}\left(\mathbb{E}\left[z^{X^{\boldsymbol{\mathsf{S}}^{\widetilde{\bs}}}(N)}\right]\right).
\end{equation}
By the induction hypothesis applied to $\widetilde{\bs}$, the last expectation equals the right side of \eqref{eq:only need to show rep of Bryc} with the leftmost operator $\mathsf{E}_q^{(N+1)}$ removed. Hence the right side of \eqref{eq:gergbvet1} equals the right side of \eqref{eq:only need to show rep of Bryc}. \\

\noindent\textbf{Case 2.} Assume $\sigma_0=1$.  Set $\widetilde{\bs}=(\sigma_1,\dots,\sigma_r)$, which is a composition of $N$. From \eqref{eq:01seq again}, we have $\mathsf{S}^{\widetilde{\bs}}(i)=\mathsf{S}^{\bs}(i)$ for $i\in\llbracket0,N-1\rrbracket$, and now $\mathsf{S}^{\bs}(N)=1$. Hence the two Markov chains agree in law up to time $N$, and we have
$X^{\boldsymbol{\mathsf{S}}^{\bs}}(N)\stackrel{d}{=}X^{\boldsymbol{\mathsf{S}}^{\widetilde{\bs}}}(N)$. 
Conditioning on $X^{\boldsymbol{\mathsf{S}}^{\bs}}(N)=j$ and using $\mathsf{S}^{\bs}(N)=1$, from \eqref{eq:dec} we get
\[ 
\mathbb{E}\left[z^{X^{\boldsymbol{\mathsf{S}}^{\bs}}(N+1)}\,\middle|\,X^{\boldsymbol{\mathsf{S}}^{\bs}}(N)=j\right]
=\frac{q^{j}-q^{N+1}}{1-q^{N+1}}z^{j}+\frac{1-q^{j}}{1-q^{N+1}}z^{j-1}
=\mathsf{D}_q^{(N+1)}[z^j] \quad \mbox{ for } j\in\llbracket0,N-1\rrbracket,
\]
where the last equality follows from \eqref{eq:Jackson 1}. Therefore, by linearity of $\mathsf{D}_q^{(N+1)}$,
\begin{equation}\label{eq:gergbvet2}
\mathbb{E}\left[z^{X^{\boldsymbol{\mathsf{S}}^{\bs}}(N+1)}\right]
=\mathsf{D}_q^{(N+1)}\left(\mathbb{E}\left[z^{X^{\boldsymbol{\mathsf{S}}^{\bs}}(N)}\right]\right)
=\mathsf{D}_q^{(N+1)}\left(\mathbb{E}\left[z^{X^{\boldsymbol{\mathsf{S}}^{\widetilde{\bs}}}(N)}\right]\right).
\end{equation}
By the induction hypothesis applied to $\widetilde{\bs}$, the last expectation equals the right side of \eqref{eq:only need to show rep of Bryc} with the leftmost operator $\mathsf{D}_q^{(N+1)}$ removed. Hence the right side of \eqref{eq:gergbvet2} equals the right side of \eqref{eq:only need to show rep of Bryc}. 

We conclude the proof of \eqref{eq:only need to show rep of Bryc} for all $N\in\mathbb{Z}_{\geq0}$ by induction.
\end{proof}

Next, we collect some basic properties of $W_{\bs}(z)$. These were already proven in \cite[Lemma 2.1]{bryc2024stationary} in slightly different notation. Since we rely on this result multiple times, we provide an alternative proof here. This proof follows directly from the birth-death chain characterization of $W_{\bs}(z)$ given in Theorem \ref{thm:prob interpretation of Bryc polynomials again}. Let us mention that $W_{\bs}(0)$ has an explicit formula, i.e.~$W_{\bs}(0)=(1-q)^{N+1}/(q;q)_{N+1}\cdot\prod_{j=0}^r[j+1]_q^{\sigma_j}$; see \cite[Section 1.2]{bryc2024stationary}. 

\begin{corollary}\label{cor:Bryc is polynomial}
    Let $q\in[0,1)$, $N\in\mathbb{Z}_{\geq0}$ and $\bs=(\sigma_0,\dots,\sigma_r)$, $r\in\mathbb{Z}_{\geq0}$ be a composition of $N+1$. The rational function $W_{\bs}(z)$ as in Definition \ref{def:Bryc poly} is a polynomial in $z$ with non-negative coefficients depending on $q$, with $W_{\bs}(0)>0$. 
    When $q\in(0,1)$, the polynomial $W_{\bs}(z)$ has degree $N-r$. When $q=0$, recall $W_{\bs}(z)=1$. 
\end{corollary}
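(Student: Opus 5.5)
By Theorem~\ref{thm:prob interpretation of Bryc polynomials again}, $W_{\bs}(z)=\mathbb{E}[z^{X(N+1)}]=\sum_{k}\mathbb{P}(X(N+1)=k)\,z^k$, where $X=X^{\boldsymbol{\mathsf{S}}^{\bs}}$. Since $X(N+1)$ takes values in the finite set $\llbracket0,N\rrbracket$ (Definition~\ref{def:characterization measure}), this is a polynomial whose coefficients are probabilities, hence non-negative and depending only on $q$. The case $q=0$ is immediate since $X\equiv0$. Positivity of the constant term means $\mathbb{P}(X(N+1)=0)>0$. For the degree statement we need two facts: $X(N+1)\le N-r$ almost surely, and $\mathbb{P}(X(N+1)=N-r)>0$.

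\textbf{Upper bound.} In $\boldsymbol{\mathsf{S}}^{\bs}$ the indices $i\in\llbracket1,N\rrbracket$ split as follows. There are exactly $r$ indices with $\mathsf{S}(i)=1$, namely the partial sums $\sigma_r,\dots,\sigma_r+\dots+\sigma_1$. These are distinct and lie in $\llbracket1,N\rrbracket$ because $\sigma_0\ge1$. The remaining $N-r$ indices have $\mathsf{S}(i)=0$. Up-steps can occur only at indices with $\mathsf{S}(i)=0$, so $X(N+1)\le N-r$ deterministically.

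\textbf{Lower bound: reaching $N-r$.} The maximum $N-r$ is attained exactly when every $0$-step goes up and every $1$-step stays put. The transition probabilities are products of the factors in \eqref{eq:etget1}--\eqref{eq:etget2}, so positivity only requires checking these factors along this path.
\begin{itemize}
\item An up-step at state $j$ and time $i$ has probability $(q^{j+1}-q^{i+1})/(1-q^{i+1})$, which is positive iff $j<i$. This holds since $X(i)\le i-1$.
\item A stay at a $1$-step has probability $(q^j-q^{i+1})/(1-q^{i+1})>0$, again since $j<i+1$.
\end{itemize}
So $\mathbb{P}(X(N+1)=N-r)>0$, and the degree is exactly $N-r$.

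\textbf{Lower bound: reaching $0$.} Consider the path with $X\equiv0$. A stay at a $0$-step from $j=0$ has probability $(1-q)/(1-q^{i+1})>0$. A stay at a $1$-step from $j=0$ has probability $1$, since the down-probability $(1-q^0)/(1-q^{i+1})$ equals $0$. Hence $\mathbb{P}(X(N+1)=0)>0$, i.e.\ $W_{\bs}(0)>0$.

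The only mildly delicate step is the bookkeeping in the upper bound: the $1$-indices must all lie in $\llbracket1,N\rrbracket$ (which uses $\sigma_0\ge1$), and the transitions for $i\in\llbracket1,N\rrbracket$ are the only ones that matter, since $X(0)=X(1)=0$.
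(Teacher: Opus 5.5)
Your proof is correct and matches the paper's argument step for step. The birth--death representation gives a polynomial whose coefficients are probabilities, the $N-r$ zero-indices in $\llbracket 1,N\rrbracket$ bound the degree, and positivity of the ``all up at $0$-steps, stay at $1$-steps'' path and of the constantly-zero path gives degree exactly $N-r$ and $W_{\bs}(0)>0$ (your explicit check that the $r$ one-indices are distinct and lie in $\llbracket 1,N\rrbracket$ via $\sigma_0\ge 1$ is a detail the paper leaves implicit).
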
 
\begin{proof} 
    Since $X^{\boldsymbol{\mathsf{S}}^{\bs}}(N+1)$ is a random variable taking values in $\llbracket0,N\rrbracket$, it is clear from Theorem \ref{thm:prob interpretation of Bryc polynomials again} (i.e., $W_{\bs}(z)=\mathbb{E} [ z^{X^{\boldsymbol{\mathsf{S}}^{\bs}}(N+1)} ]$) that $W_{\bs}(z)$ is a polynomial in $z$ with non-negative coefficients.  
For $q\in(0,1)$, we observe from \eqref{eq:01seq again} that $\mathsf{S}^{\bs}(i)=0$ for exactly $N-r$ many $i\in\llbracket1,N\rrbracket$. 
Since $X^{\boldsymbol{\mathsf{S}}^{\bs}}(0)=X^{\boldsymbol{\mathsf{S}}^{\bs}}(1)=0$, and $X^{\boldsymbol{\mathsf{S}}^{\bs}}(i+1)=X^{\boldsymbol{\mathsf{S}}^{\bs}}(i)+1$ is possible only when $\mathsf{S}^{\bs}(i)=0$, we have $X^{\boldsymbol{\mathsf{S}}^{\bs}}(N+1)\leq N-r$. 
When $\mathsf{S}^{\bs}(i)=0$, since $X^{\boldsymbol{\mathsf{S}}^{\bs}}(i)\in\llbracket0,i-1\rrbracket$ for $i\geq1$, we have $(q^{X^{\boldsymbol{\mathsf{S}}^{\bs}}(i)+1}-q^{i+1})/(1-q^{i+1})\in(0,1)$. By \eqref{eq:inc}, there is a positive probability such that $X^{\boldsymbol{\mathsf{S}}^{\bs}}(i+1)=X^{\boldsymbol{\mathsf{S}}^{\bs}}(i)+1$ and a positive probability such that $X^{\boldsymbol{\mathsf{S}}^{\bs}}(i+1)=X^{\boldsymbol{\mathsf{S}}^{\bs}}(i)$. When $\mathsf{S}^{\bs}(i)=1$, since $X^{\boldsymbol{\mathsf{S}}^{\bs}}(i)\in\llbracket0,i-1\rrbracket$ for $i\geq1$, we have $(1-q^{X^{\boldsymbol{\mathsf{S}}^{\bs}}(i)})/(1-q^{i+1})<1$. By \eqref{eq:dec}, there is a positive probability such that $X^{\boldsymbol{\mathsf{S}}^{\bs}}(i+1)=X^{\boldsymbol{\mathsf{S}}^{\bs}}(i)$. We conclude that the events $X^{\boldsymbol{\mathsf{S}}^{\bs}}(N+1)=N-r$ and $X^{\boldsymbol{\mathsf{S}}^{\bs}}(N+1)=0$ both have positive probability. Therefore, $\deg W_{\bs}(z)=N-r$ and $W_{\bs}(0)>0$.
\end{proof} 
%
\section{Preliminaries on the two-layer representation of the open TASEP}\label{sec:Prelims}

In this section, we will focus on the two-layer representation of the open TASEP. 
In this case, the Bryc polynomial is simply $W_{\bs}(z)=1$ and the respective two-layer measure was studied in \cite{bryc2024two}, i.e., the measure $\mathbb{P}^{(q,\a,\b)}$ given by \eqref{eq:PNab} becomes 
\begin{equation}\label{eq:PNab TASEP}
\mathbb{P}^{(0,\a,\b)}(\bg)=\frac{1}{Z_N^{(0,\a,\b)}}\frac{2^{H(\bg)}\b^{\gamma_N}}{(\a\b)^{\min(\bg)}} \quad\mbox{ for }\bg\in\lrw_N. 
\end{equation}
The probability measure $\mathbb{P}^{(0,\a,\b)}$ as in \eqref{eq:PNab TASEP} has been characterized in \cite[Section 3]{bryc2024limitCoex}, where the authors established a representation of this measure in terms of certain explicit random variables. This representation, which we recall below as Lemma~\ref{lem:decomposition TASEP}, was inspired by Denisov’s decomposition of random walks \cite{denisov1983random}.

\begin{lemma}[cf. Section 3 in \cite{bryc2024limitCoex}, especially Proposition 3.2 and (3.1), (3.2) therein]\label{lem:decomposition TASEP}
We define the observable $T_N$ on the space $\lrw_N$ by
\begin{equation}\label{def:TN}
    T_N(\bg):=\min\{i\in\llbracket0,N\rrbracket:\gamma_i=\min(\bg)\}\quad\mbox{ for } \bg\in\lrw_N.
\end{equation}
Assume $\a,\b>0$ and consider the probability measure $\mathbb{P}^{(0,\a,\b)}$ on $\lrw_N$ as in \eqref{eq:PNab TASEP}. Then 
\begin{equation}\label{eq: dist of TN}
    \mathbb{P}^{(0,\a,\b)}(T_N = m) = \begin{cases}
\displaystyle\frac{\a}{4C_{N,\a,\b}}w_{\a}^{m-1}w_{\b}^{N-m}p^{(\a)}_{m-1}p^{(\b)}_{N-m}  & \quad\mbox{ if }
m\in\llbracket1,N\rrbracket,\\ 
\displaystyle\frac{1}{C_{N,\a,\b}}w_{\b}^Np^{(\b)}_N  &\quad\mbox{ if } m = 0.
\end{cases}
\end{equation}
Here, we write
\begin{align}
       C_{N,\a,\b} &= w_{\b}^Np^{(\b)}_N+ \sum_{m=1}^{N}\frac{\a}{4} w_{\a}^{m-1}w_{\b}^{N-m}p_{m-1}^{(\a)}p_{N-m}^{(\b)} ,  \label{eq:C N a b}\\
       w_x &= \frac{1}{4}\left(x+x^{-1}+2\right)\quad\mbox{ for }x>0,\label{eq:def of wx} 
\end{align} 
and we denote by $p_{\ell}^{(x)}$ the probability that a $\nu_x$-random walk started at $0$ does not reach level $-1$ during the first $\ell$ steps, where $x>0$ and $\ell\in\mathbb{Z}_{\geq0}$.
In particular, \(\mathbb{P}^{(0,\a,\b)}(T_N=m)>0\) for every $m\in\llbracket0,N\rrbracket$.

Furthermore, for every $m\in\llbracket0,N\rrbracket$, conditionally on \(\{T_N=m\}\), the path $(\gamma_{m+i}-\gamma_m )_{i\in\llbracket0,N-m\rrbracket}$ has the law of a \(\nu_{\b}\)-random walk started at \(0\), conditioned to stay non-negative up to step \(N-m\). 
If \(m\geq1\), then conditionally on \(\{T_N=m\}\) one also has $\gamma_{m-1}-\gamma_m=1$,
and the reversed left path $(\gamma_{m-1-i}-\gamma_{m-1} )_{i\in\llbracket0,m-1\rrbracket}$
has the law of a \(\nu_{\a}\)-random walk started at \(0\), conditioned to stay non-negative up to step \(m-1\).  For \(m\geq1\), this reversed left path and the right path $(\gamma_{m+i}-\gamma_m )_{i\in\llbracket0,N-m\rrbracket}$ are conditionally independent given \(\{T_N=m\}\).  
\end{lemma}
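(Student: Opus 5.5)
The proof is a direct computation with the weights in \eqref{eq:PNab TASEP}: decompose each path at its first minimum and factor the weight into a left piece and a right piece.

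\textbf{Factorization.} Fix $m=T_N(\bg)$ and set $h=\gamma_m=\min(\bg)\le 0$. Split $\bg$ into the right part $(\gamma_{m+i}-h)_{i\in\llbracket0,N-m\rrbracket}$, which is a nonnegative lazy path started at $0$. If $m\ge1$, the step $m$ is forced to be a down-step, because $m$ is the \emph{first} time the minimum is attained; so $\gamma_{m-1}=h+1$. The remaining left part is the reversed path $(\gamma_{m-1-i}-\gamma_{m-1})_{i\in\llbracket0,m-1\rrbracket}$, started at $0$. It stays nonnegative because $\gamma_j\ge h+1$ for $j<m$, and it ends at $\gamma_0-\gamma_{m-1}=-h-1$. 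The weight then factors as
\[
2^{H(\bg)}\b^{\gamma_N}(\a\b)^{-h}
=\Bigl[2^{H_{\mathrm{left}}}\a^{-h}\Bigr]\cdot\Bigl[2^{H_{\mathrm{right}}}\b^{\gamma_N-h}\Bigr],
\]
using $\b^{\gamma_N}(\a\b)^{-h}=\a^{-h}\b^{\gamma_N-h}$. Writing $-h=1+(\text{endpoint of the left path})$, the left factor is $\a\cdot 2^{H_{\mathrm{left}}}\a^{\text{endpoint}}$, and the right factor is $2^{H_{\mathrm{right}}}\b^{\text{endpoint}}$. Conversely, every pair consisting of a nonnegative left path of length $m-1$ and a nonnegative right path of length $N-m$ reassembles to a unique $\bg$ with $T_N=m$. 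Hence the measure, restricted to $\{T_N=m\}$, is a product measure.

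\textbf{Identifying the factors as random walks.} Let $\nu_x$ be the step law giving $+1$, $0$, $-1$ with probabilities proportional to $x$, $2$, $x^{-1}$; the normalized version has probabilities $x/(4w_x)$, $2/(4w_x)$, $x^{-1}/(4w_x)$, with $w_x$ as in \eqref{eq:def of wx}. (This should match the definition of $\nu_x$ used in \cite{bryc2024limitCoex}.) Along any path of length $\ell$ from $0$ to $e$,
\[
\prod_i \bigl(\text{unnormalized step weight}\bigr)=2^{H}x^{U-D}=2^H x^{e},
\]
where $U$ and $D$ count up- and down-steps. So $2^H x^e=(4w_x)^\ell\,\mathbb P_{\nu_x}(\text{path})$. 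Summing over nonnegative paths gives the total right mass $(4w_\b)^{N-m}p^{(\b)}_{N-m}$ and the total left mass $\a(4w_\a)^{m-1}p^{(\a)}_{m-1}$.

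Dividing through by $4^N$ gives $\mathbb P(T_N=m)\propto \frac{\a}{4}w_\a^{m-1}w_\b^{N-m}p^{(\a)}_{m-1}p^{(\b)}_{N-m}$ for $m\ge1$. For $m=0$ there is no left part and $h=0$, so the weight is $w_\b^N p^{(\b)}_N$. Normalizing gives \eqref{eq: dist of TN} with $C_{N,\a,\b}$ as in \eqref{eq:C N a b}. Positivity holds because every $p^{(x)}_\ell>0$ (the all-horizontal path is admissible). The same product structure shows that, conditionally on $\{T_N=m\}$, each part has the conditioned $\nu$-walk law and the two parts are independent.

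\textbf{Main obstacle.} The only delicate step is the bookkeeping at the junction. One must use the forced down-step at $m$, which contributes the factor $\a$ through $-h=1+\text{endpoint}$, and keep track of the strict versus non-strict positivity constraints. The constraint is strict ($\gamma_j>h$) for the left part, because $m$ is the \emph{first} minimum, and non-strict ($\gamma_j\ge h$) for the right part. Once these are handled, the remaining steps are routine.
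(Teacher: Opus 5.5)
Your argument is correct, but it is not the route the paper takes. The paper does no computation. It observes that under the increment bijection $\omega_i=\gamma_i-\gamma_{i-1}$ the measure $\mathbb{P}^{(0,\a,\b)}$ coincides with $P_{\mathrm{RW},N,\a,\b}$ from \cite[(2.2)]{bryc2024limitCoex}. It then reads off \eqref{eq: dist of TN}, \eqref{eq:C N a b} and the conditional laws from \cite[Proposition 3.2]{bryc2024limitCoex}, that is, from the Denisov-type concatenation $S_N=L_{T_N-1}\odot R_{N-T_N}$ with independent $T_N$, $L$, $R$.

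You instead re-derive the content of that proposition directly. You cut at the first minimum and use the forced down-step at time $m$ to produce the factor $\a$ via $-h=1+e_L$. You then recognize $2^{H}x^{e}=(4w_x)^{\ell}\,\mathbb{P}_{\nu_x}(\text{path})$. Together with the bijection with pairs of nonnegative paths, this gives the explicit weights, positivity, the conditional laws and conditional independence all at once.

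What your version buys is self-containedness and transparency. One sees exactly why $m=0$ carries no factor $\a/4$, and why the left constraint is strict while the right one is not. It also avoids having to match the normalizations and step-law conventions of \cite{bryc2024limitCoex} with \eqref{eq:PNab TASEP}.

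The paper's citation is shorter. It directly supplies the concatenation form $S_N=L_{T_N-1}\odot R_{N-T_N}$ that is used later, e.g.\ in Definition~\ref{def:approx TASEP measure} and Lemma~\ref{lem:regeneration structure TASEP path measure}. Your product decomposition yields that same representation just as well.
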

\begin{proof} 
Under the bijection $\omega_i=\gamma_i-\gamma_{i-1}$, $i\in\llbracket1,N\rrbracket$, the law $\mathbb{P}^{(0,\a,\b)}$ on $\lrw_N$ coincides with the law
$P_{\mathrm{RW},N,\a,\b}$ on $\{-1,0,1\}^N$ as in \cite[(2.2)]{bryc2024limitCoex}.
The result follows from \cite[Proposition 3.2]{bryc2024limitCoex}. Specifically, our \eqref{eq: dist of TN}, \eqref{eq:C N a b} are exactly
\cite[(3.1), (3.2)]{bryc2024limitCoex}, and the conditional laws of the left and right paths follow from the construction
$S_N=L_{T_N-1}\odot R_{N-T_N}$, when $T_N\in\llbracket1,N\rrbracket$ and $S_N=R_N$ if $T_N=0$. Here, $\odot$ denotes the concatenation operation, and $T_n$ for $n\in\mathbb{N}$ and $L_m$ and $R_m$ for $m\in\mathbb{Z}_{\geq0}$ are independent processes, while for each $m$, $L_m$ (respectively, $R_m$) is a $\nu_{\a}$ (respectively, $\nu_{\b}$) random walk started from $0$ and conditioned to stay non-negative up to step $m$.
\end{proof}

Next, we prove a standard monotone coupling of $\nu_{\b}$-random walks, which we will use repeatedly in the sequel. 
\begin{lemma}\label{lem:basic hitting monotone coupling}
    Assume $\b>1$, $N\in\mathbb{N}\cup\{\infty\}$ and $h\in\mathbb{Z}_{\geq1}$. Let $\{S_i\}_{i=0}^N$ be the $\nu_{\b}$-random walk started at $S_0=0$. 
    Let 
    \begin{equation*}
        E_{N}^h=\{S_i\geq-h+1\mbox{ for all } i\in\llbracket0,N\rrbracket\} .
    \end{equation*} Then 
    $\mathbb{P}(E_{N}^h)\downarrow\mathbb{P}(E_{\infty}^h)=1-\b^{-2h}$ as $N\to\infty$.
    Denote by $\{S_i'\}_{i=0}^N$ the process $\{S_i\}_{i=0}^N$ conditioned on $E_{N}^{h}$. There exists a coupling such that almost surely $S'_i\geq S_i$ for all $i\in\llbracket0,N\rrbracket$.
    Furthermore, fix $\kappa\in(0,(\b-1)/(\b+1))$, then there exist constants $C,c>0$ depending only on $\b$ and $\kappa$ such that for each $N\geq m\geq1$, we have
    $\mathbb{P}(S'_{m}\leq\kappa m)\leq \mathbb{P}(S_{m}\leq\kappa m)\leq Ce^{-cm}$.
\end{lemma}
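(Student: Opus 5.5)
The plan is to make the $\nu_{\b}$-walk explicit and handle the three claims separately: an exponential martingale for the hitting probability, the FKG inequality plus Strassen's theorem for the coupling, and a Chernoff bound for the tail. From the weights $2^{H(\bg)}\b^{\gamma_N}$ I read $\nu_{\b}$ as the law of the i.i.d. increments $\omega_i=S_i-S_{i-1}$ with
\[
\mathbb{P}(\omega_i=1)=\frac{\b^2}{(1+\b)^2},\quad \mathbb{P}(\omega_i=0)=\frac{2\b}{(1+\b)^2},\quad \mathbb{P}(\omega_i=-1)=\frac{1}{(1+\b)^2}.
\]
With this reading the drift is $\mathbb{E}[\omega_i]=(\b-1)/(\b+1)>0$, which matches the range of $\kappa$ in the statement.

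\textbf{Hitting probability.} The first step is to note that $\mathbb{E}[\b^{-2\omega_i}]=(1+2\b+\b^2)/(1+\b)^2=1$, so $M_i=\b^{-2S_i}$ is a positive martingale. Let $\tau_{-h}$ be the hitting time of $-h$. Since $\omega_i\geq -1$, the walk hits $-h$ exactly. Since $\b>1$, $S_i\to+\infty$ almost surely by the law of large numbers, so $M_i\to0$. Applying optional stopping to the bounded martingale $M_{i\wedge\tau_{-h}}\le \b^{2h}$ then gives $\mathbb{P}(\tau_{-h}<\infty)\,\b^{2h}=1$. Hence $\mathbb{P}(E^h_\infty)=1-\b^{-2h}$. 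The events $E^h_N$ decrease in $N$ to $E^h_\infty$, so continuity from above yields $\mathbb{P}(E^h_N)\downarrow 1-\b^{-2h}$. In particular $\mathbb{P}(E^h_N)>0$ for all $N\in\mathbb{N}\cup\{\infty\}$, so the conditioning is well defined.

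\textbf{Coupling.} Next I view the path through its increment vector $\omega\in\{-1,0,1\}^N$, which carries a product measure, and order it coordinatewise. The event $E^h_N$ is increasing in $\omega$, because each partial sum is increasing in $\omega$. By the Harris--FKG inequality for product measures on totally ordered coordinates, $\mathbb{E}[f\mid E^h_N]\ge\mathbb{E}[f]$ for every bounded increasing $f$. Thus the conditioned increment law stochastically dominates the unconditioned one. Strassen's theorem then gives a coupling with $\omega'\ge\omega$ coordinatewise. Taking partial sums yields $S'_i\ge S_i$ for all $i$. For $N=\infty$ I would either run the same argument on the countable product using a version of FKG and Strassen for product spaces, or couple on each finite window consistently via Kolmogorov extension. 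The latter is slightly delicate, since the conditioning on $E^h_\infty$ does not factor through finite windows. The cleanest route is to apply Harris--FKG to cylinder functions and then Strassen on the Polish space $\{-1,0,1\}^{\mathbb{N}}$ with its closed coordinatewise order. I expect this infinite-$N$ case to be the only mildly technical point.

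\textbf{Tail bound.} The coupling gives $\{S'_m\le\kappa m\}\subseteq\{S_m\le\kappa m\}$, which proves the first inequality. For the second, $S_m$ is a sum of $m$ i.i.d. bounded variables with mean $(\b-1)/(\b+1)>\kappa$. Hoeffding's inequality (or a Chernoff bound) therefore gives
\[
\mathbb{P}(S_m\le\kappa m)\le \exp\bigl(-m\,((\b-1)/(\b+1)-\kappa)^2/2\bigr).
\]
This yields $C=1$ and $c=((\b-1)/(\b+1)-\kappa)^2/2$, depending only on $\b$ and $\kappa$, uniformly in $N\ge m$.
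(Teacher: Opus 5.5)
Your proof is correct, but each of its three steps takes a different route from the paper's.

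\textbf{Hitting probability.} The paper drops the lazy steps, reduces to the nearest-neighbour walk with up-probability $\b^2/(1+\b^2)$, and quotes the gambler's-ruin formula from Durrett. Your exponential martingale $\b^{-2S_i}$ with optional stopping is self-contained and gives $1-\b^{-2h}$ directly for the lazy walk.

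\textbf{Coupling.} The paper writes the conditioned walk as a Doob $h$-transform. This is a time-inhomogeneous Markov chain whose transition probabilities are the $\nu_{\b}$-weights multiplied by ratios of survival probabilities $\mathbb{P}_{x}(\tau_{-h}>N-i)$. Monotonicity of these ratios in the starting point and in the horizon shows that, at every step, the up-probability is at least $\nu_{\b}(1)$ and the down-probability is at most $\nu_{\b}(-1)$. A step-by-step quantile coupling then finishes. That construction is explicit and adapted to the natural filtration. It also treats $N<\infty$ and $N=\infty$ uniformly, since survival probabilities make sense for an infinite horizon.

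Your Harris--FKG plus Strassen argument is softer and more general: it works verbatim for conditioning on any increasing event of the increments. The price is that it is non-constructive and needs a limiting step for $N=\infty$. That step closes easily by compactness:
\begin{itemize}
\item Harris applied to $E^h_n$, with $n\to\infty$, shows that every $k$-dimensional marginal of the conditioned increment law dominates $\nu_{\b}^{\otimes k}$.
\item Finite Strassen gives a coupling that is monotone in the first $k$ coordinates. You extend it to a coupling of the full laws by attaching the conditional laws of the remaining coordinates.
\item The sets of such couplings are nonempty, weakly closed (the constraint sets are clopen) and decreasing in $k$ inside a compact space. Hence their intersection contains a coupling with $\omega'\geq\omega$ in every coordinate.
\end{itemize}

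\textbf{Tail bound.} Your Hoeffding bound gives the explicit constants $C=1$ and $c=((\b-1)/(\b+1)-\kappa)^2/2$, where the paper cites Cram\'er's theorem. Either is enough, since only some exponential rate is used later.
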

\begin{proof}
    The zero steps do not affect the event of ever hitting a level. Hence the hitting probabilities for $\{S_i\}_{i=0}^{\infty}$ is the same as that for the biased nearest-neighbor random walk with upward probability $\b^2/(1+\b^2)$ and downward probability $1/(1+\b^2)$. By \cite[Theorem 4.8.9 (c)]{durrett2019probability} we conclude the proof of the first statement on $E_{N}^h$. For the second statement, note that by Doob's h-transform, for each $x\in\mathbb{Z}_{\geq-h+1}$ and $i\in\llbracket1,N\rrbracket$,  
\begin{align*}
    &\mathbb P(S_i'=x+1\mid S_{i-1}'=x)=\nu_{\b}(1)\cdot\frac{\mathbb{P}_{x+1}(\tau_{-h}>N-i)}{\mathbb{P}_x(\tau_{-h}>N-i+1)},\\
    &\mathbb P(S_i'=x\mid S_{i-1}'=x)=\nu_{\b}(0)\cdot\frac{\mathbb{P}_{x}(\tau_{-h}>N-i)}{\mathbb{P}_x(\tau_{-h}>N-i+1)},\\
    &\mathbb P(S_i'=x-1\mid S_{i-1}'=x)=\nu_{\b}(-1)\cdot\frac{\mathbb{P}_{x-1}(\tau_{-h}>N-i)}{\mathbb{P}_x(\tau_{-h}>N-i+1)},
\end{align*}
where $\mathbb{P}_x$ denotes the law of a $\nu_{\b}$-random walk started at $x$, and $\tau_{-h}$ is the hitting time of $-h$. Note 
\[
\mathbb{P}_{x+1}(\tau_{-h}>N-i)\geq \mathbb{P}_{x}(\tau_{-h}>N-i)\geq \mathbb{P}_{x}(\tau_{-h}>N-i+1),
\]
where the first step follows from the coupling using the same increments.
Hence $\mathbb P(S_i'=x+1\mid S_{i-1}'=x)\geq \nu_{\b}(1)$. Similarly, we have $\mathbb P(S_i'=x-1\mid S_{i-1}'=x)\leq \nu_{\b}(-1)$. The monotone coupling between $\{S_i'\}_{i=0}^N$ and $\{S_i\}_{i=0}^N$ can be constructed using the quantile (monotone) coupling of Bernoulli clocks at each individual step $(i-1)\mapsto i$. The last statement follows from the monotone coupling constructed in the second statement combined with Cramér's theorem, since the mean of the measure $\nu_{\b}$ on $\{-1,0,1\}$ is $(\b-1)/(\b+1)$.
\end{proof}

We have the following control on the location $T_N$ from \eqref{def:TN} in the high density phase of the open TASEP. 

\begin{lemma}\label{lem:TN exponential}
    Assume $\b > \max(\a,1)$ and $\a\b>1$. Consider the probability measure $\mathbb{P}^{(0,\a,\b)}$ on $\lrw_N$ as in \eqref{eq:PNab TASEP}. Then there exist constants $C,c>0$ depending only on $\a$ and $\b$, such that for each $N\geq m\geq1$, we have $\mathbb{P}^{(0,\a,\b)}(T_N\geq m)\leq C e^{-cm}$. Furthermore, we have $\inf_{N\in\mathbb{N}}\mathbb{P}^{(0,\a,\b)}(T_N=0)>0$.
\end{lemma}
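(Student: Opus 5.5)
The plan is to work directly with the explicit law of $T_N$ from Lemma~\ref{lem:decomposition TASEP}. I will compare each atom $\mathbb{P}^{(0,\a,\b)}(T_N=m)$, $m\geq1$, with the atom at $m=0$. The normalization $C_{N,\a,\b}$ cancels in this ratio, and \eqref{eq: dist of TN} gives
\[
\frac{\mathbb{P}^{(0,\a,\b)}(T_N=m)}{\mathbb{P}^{(0,\a,\b)}(T_N=0)}=\frac{\a}{4w_{\b}}\Bigl(\frac{w_{\a}}{w_{\b}}\Bigr)^{m-1}\frac{p^{(\a)}_{m-1}\,p^{(\b)}_{N-m}}{p^{(\b)}_N}.
\]
Both statements of the lemma will follow once this ratio is shown to be at most $C\rho^{m-1}$ for some $\rho\in(0,1)$ and some constant $C$ depending only on $\a,\b$.

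\textbf{Step 1: the survival probabilities.} Trivially $p^{(\a)}_{m-1}\leq1$ and $p^{(\b)}_{N-m}\leq 1$. For the denominator, since $\b>1$, Lemma~\ref{lem:basic hitting monotone coupling} with $h=1$ shows that $p^{(\b)}_N$ decreases in $N$ to $1-\b^{-2}>0$. Hence $p^{(\b)}_N\geq 1-\b^{-2}$ uniformly in $N$, so the ratio of survival probabilities is bounded by $(1-\b^{-2})^{-1}$.

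\textbf{Step 2: the key inequality $w_{\a}<w_{\b}$.} This is where the phase assumptions enter. The function $x\mapsto x+x^{-1}$ is invariant under $x\mapsto 1/x$ and strictly increasing on $[1,\infty)$. By \eqref{eq:def of wx} this gives $w_{\a}=w_{\max(\a,1/\a)}$. If $\a\geq1$, then $\max(\a,1/\a)=\a<\b$. If $\a<1$, then $\max(\a,1/\a)=1/\a<\b$, because $\a\b>1$. In both cases $\max(\a,1/\a)<\b$ with both numbers in $[1,\infty)$, so $\rho:=w_{\a}/w_{\b}<1$.

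\textbf{Step 3: conclusion.} Steps 1 and 2 give
\[
\mathbb{P}^{(0,\a,\b)}(T_N=m)\leq C\rho^{m-1}\,\mathbb{P}^{(0,\a,\b)}(T_N=0)\leq C\rho^{m-1},\qquad C=\frac{\a}{4w_{\b}(1-\b^{-2})}.
\]
Summing over $k\geq m$ yields $\mathbb{P}^{(0,\a,\b)}(T_N\geq m)\leq C\rho^{m-1}/(1-\rho)$, which is of the form $C'e^{-cm}$ with $c=-\log\rho$.

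For the second statement, summing the ratio bound over all $m\in\llbracket1,N\rrbracket$ gives $1-\mathbb{P}^{(0,\a,\b)}(T_N=0)\leq \frac{C}{1-\rho}\,\mathbb{P}^{(0,\a,\b)}(T_N=0)$. Rearranging,
\[
\mathbb{P}^{(0,\a,\b)}(T_N=0)\geq\Bigl(1+\frac{C}{1-\rho}\Bigr)^{-1},
\]
uniformly in $N$, so the infimum over $N$ is positive.

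The only nontrivial point is the comparison $w_{\a}<w_{\b}$ in Step 2. It must use the symmetry $w_x=w_{1/x}$ together with $\a\b>1$ to handle the case $\a<1$; the rest is bookkeeping.
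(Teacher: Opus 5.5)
Your proposal is correct and follows essentially the same route as the paper. The paper bounds each atom $\mathbb{P}^{(0,\a,\b)}(T_N=m)$ via $C_{N,\a,\b}\geq w_{\b}^N p^{(\b)}_N$, which is the same as your comparison with the $m=0$ term, then uses $\inf_\ell p^{(\b)}_\ell>0$ and $w_{\a}/w_{\b}<1$ and sums a geometric series for both claims. Your explicit argument for $w_{\a}<w_{\b}$, via the symmetry $w_x=w_{1/x}$ together with $\a\b>1$, supplies a step the paper only asserts.
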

\begin{proof}
    By Lemma \ref{lem:basic hitting monotone coupling}, we have $\lim_{\ell\rightarrow\infty}p_{\ell}^{(\b)}=1-1/\b^2$, which is the probability that an infinite $\nu_{\b}$-random walk started at $0$ never reaches level $-1$. Hence $\inf_{\ell\in\mathbb{N}}p_{\ell}^{(\b)}>0$.  We note that for each $N\geq m\geq1$,
    \[
    \mathbb{P}^{(0,\a,\b)}(T_N = m)=\frac{\a}{4C_{N,\a,\b}}w_{\a}^{m-1}w_{\b}^{N-m}p^{(\a)}_{m-1}p^{(\b)}_{N-m}  
    \leq\frac{\a w_{\a}^{m-1}w_{\b}^{N-m}p^{(\a)}_{m-1}p^{(\b)}_{N-m}}{4w_{\b}^Np^{(\b)}_N}
    \leq\frac{\a }{4w_{\a}\inf_{\ell\in\mathbb{N}}p_{\ell}^{(\b)}}\left(\frac{w_{\a}}{w_{\b}}\right)^m,
    \]
    where in the first step we use \eqref{eq: dist of TN}, in the second step we use $C_{N,\a,\b}\geq w_{\b}^Np^{(\b)}_N$ which follows from \eqref{eq:C N a b}, and in the third step we use  $p^{(\a)}_{m-1}, p^{(\b)}_{N-m}\in[0,1]$. Since $\b >  \a$ and $\a\b>1$, we have $w_{\a}/w_{\b}\in(0,1)$. Therefore,
    \[
\mathbb{P}^{(0,\a,\b)}(T_N\ge m)
= \sum_{k=m}^{N}\mathbb{P}^{(0,\a,\b)}(T_N=k)
\le \frac{\a}{4w_{\a}\inf_{\ell\in\mathbb{N}}p_{\ell}^{(\b)}}\sum_{k=m}^{\infty}\left(\frac{w_{\a}}{w_{\b}}\right)^k
= \frac{\a}{4w_{\a}\inf_{\ell\in\mathbb{N}}p_{\ell}^{(\b)}\left(1-\frac{w_{\a}}{w_{\b}}\right)} \left(\frac{w_{\a}}{w_{\b}}\right)^m.
\] We conclude the proof of the first claim in the statement. For the second claim, we have
    \begin{equation}
    \begin{split}
         \mathbb{P}^{(0,\a,\b)}(T_N=0)&=\frac{w_{\b}^Np^{(\b)}_N}{w_{\b}^Np^{(\b)}_N+\sum_{m=1}^{N}\frac{\a}{4} w_{\a}^{m-1}w_{\b}^{N-m}p_{m-1}^{(\a)}p_{N-m}^{(\b)}}
    =\frac{1}{1+\sum_{m=1}^{N}\frac{\a p_{m-1}^{(\a)}p_{N-m}^{(\b)}}{4w_{\a}p^{(\b)}_N} \left(\frac{w_{\a}}{w_{\b}}\right)^m }\\
    &\geq \frac{1}{1+ \frac{\a }{4w_{\a}\inf_{\ell\in\mathbb{N}}p_{\ell}^{(\b)}} \sum_{m=1}^{N}\left(\frac{w_{\a}}{w_{\b}}\right)^m }\geq\frac{1}{1+ \frac{\a }{4w_{\a}\inf_{\ell\in\mathbb{N}}p_{\ell}^{(\b)}}\cdot\frac{w_{\a}/w_{\b}}{1-w_{\a}/w_{\b}} }>0,
    \end{split}
    \end{equation}
     where in the first step we use \eqref{eq: dist of TN} and \eqref{eq:C N a b}, the second step is a reorganization of the terms, in the third step we use $p^{(\a)}_{m-1}, p^{(\b)}_{N-m}\in[0,1]$, and in the fourth and fifth steps we use $w_{\a}/w_{\b}\in(0,1)$ and $\inf_{\ell\in\mathbb{N}}p_{\ell}^{(\b)}>0$. 
\end{proof}

\subsection{A replacement measure for $\mathbb{P}^{(0,\mathsf{a},\mathsf{b})}$} \label{sec:ReplacementP}
In this subsection, we construct a probability measure $\mathrm{P}^{(0,\mathsf{a},\mathsf{b})}$ on $\lrw_N$ that has a simpler structure than that of $\mathbb{P}^{(0,\mathsf{a},\mathsf{b})}$, and that has an exponentially small (in $N$) total variation distance with respect to $\mathbb{P}^{(0,\mathsf{a},\mathsf{b})}$. Therefore, the studies of $\mathbb{P}^{(0,\mathsf{a},\mathsf{b})}$ can be reduced to the studies of $\mathrm{P}^{(0,\mathsf{a},\mathsf{b})}$. 

\begin{definition}\label{def:positive lazy random walk}
Assume $\b>1$ and $N\in\mathbb{N}\cup\{\infty\}$. Consider a $\nu_{\b}$-random walk $\{S_i\}_{i=0}^{N}$ started at $S_0=0$. Let
\begin{equation}\label{eq:E}
E_N=\{S_1=1\text{ and }S_i\geq 1\text{ for all }i\in\llbracket1,N\rrbracket\}.
\end{equation}
Observe that $\mathbb{P}(E_N)>0$. 
We call the process $\{S_i\}_{i=0}^{N}$ conditioned on $E_N$ the \emph{positive $\nu_{\b}$-random walk with $N$ steps}. If $N=\infty$, we call the conditioned process the \emph{positive $\nu_{\b}$-random walk with infinitely many steps}.
\end{definition}
Instead of the observable $T_N$, it will be convenient to consider the following closely related quantity $Q_N$, denoting the last location a lazy random walk path obtains its minimum.
\begin{definition}\label{def:approx TASEP measure}
We define the observable $Q_N$ on the space $\lrw_N$ by
\begin{equation}\label{eq:btrert}
Q_N(\bg)=\max\{i\in\llbracket0,N\rrbracket:\gamma_i=\min(\bg)\}\quad\mbox{ for } \bg\in\lrw_N.
\end{equation}
We have $0\leq T_N(\bg)\leq Q_N(\bg)\leq N$ for each $\bg\in\lrw_N$.

Assume $\b > \max(\a,1)$ and $\a\b>1$. Consider the measure $\mathbb{P}^{(0,\a,\b)}$ on $\lrw_N$ as in \eqref{eq:PNab TASEP}. 
By Lemma~\ref{lem:decomposition TASEP}, conditionally on $T_N$, the law of $(\gamma_{T_N+i}-\gamma_{T_N})_{i\in\llbracket 0,N-T_N\rrbracket}$ is that of a $\nu_{\b}$-random walk started at $0$ and conditioned to stay non-negative up to step $N-T_N$.
Hence in view of \eqref{eq:btrert}, conditionally on $Q_N$, the law of $(\gamma_{Q_N+i}-\gamma_{Q_N})_{i\in\llbracket 0,N-Q_N\rrbracket}$ is that of a positive $\nu_{\b}$-random walk with $N-Q_N$ steps as in Definition \ref{def:positive lazy random walk}.

We define the probability measure $\mathrm{P}^{(0,\mathsf{a},\mathsf{b})}$ by sampling $Q_N$ and $(\gamma_i)_{i\in\llbracket0,Q_N\rrbracket}$ exactly as under the probability measure $\mathbb{P}^{(0,\mathsf{a},\mathsf{b})}$, and then replacing the conditional law of $(\gamma_{Q_N+i}-\gamma_{Q_N})_{i\in\llbracket 0,N-Q_N\rrbracket}$ by the law of the positive $\nu_{\b}$-random walk with infinitely many steps, as in Definition \ref{def:positive lazy random walk}, restricted to the first $N-Q_N$ steps.
\end{definition}

The next result shows that $\mathrm{P}^{(0,\a,\b)}$ is a good approximation of $\mathbb{P}^{(0,\a,\b)}$ using the total-variation distance from \eqref{def:TVdistance}.
\begin{lemma}\label{lem:TV distance replaced measure}
    Assume $\b > \max(\a,1)$ and $\a\b>1$. Consider the probability measures $\mathbb{P}^{(0,\a,\b)}$ and $\mathrm{P}^{(0,\mathsf{a},\mathsf{b})}$ on $\lrw_N$ as in \eqref{eq:PNab TASEP} and Definition \ref{def:positive lazy random walk}, respectively.   
    Then there exist constants $C,c>0$ depending only on $\a$ and $\b$, such that for each $N\in\mathbb{N}$,
    \begin{equation}
        \TV{\mathbb{P}^{(0,\a,\b)}-\mathrm{P}^{(0,\a,\b)}}\leq Ce^{-cN}.
    \end{equation}
\end{lemma}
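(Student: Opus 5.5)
The plan is to couple the two measures so that they share $Q_N$ and the initial segment $(\gamma_i)_{i\in\llbracket0,Q_N\rrbracket}$. They then differ only in the conditional law of the final segment of length $\ell:=N-Q_N$. Under $\mathbb{P}^{(0,\a,\b)}$ this segment is a positive $\nu_{\b}$-random walk with $\ell$ steps. Under $\mathrm{P}^{(0,\a,\b)}$ it is the first $\ell$ steps of the positive $\nu_{\b}$-random walk with infinitely many steps. Conditioning on $Q_N$ and using convexity of total variation gives
\[
\TV{\mathbb{P}^{(0,\a,\b)}-\mathrm{P}^{(0,\a,\b)}}\leq \sum_{k=0}^{N}\mathbb{P}^{(0,\a,\b)}(Q_N=k)\,\delta_{N-k},
\]
where $\delta_\ell$ denotes the total variation distance between the $\ell$-step conditioned walk (conditioned on $E_\ell$) and the $\ell$-step restriction of the walk conditioned on $E_\infty$.

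Next I bound $\delta_\ell$. Both laws are conditionings of the same $\nu_{\b}$-walk on the first $\ell$ steps. The finite one is conditioned on $E_\ell$. The infinite one has density proportional to $\mathbf 1_{E_\ell}\,h(S_\ell)$ with respect to it, where $h(x)=\mathbb{P}_x(\text{never hit }0)=1-\b^{-2x}$ by Lemma~\ref{lem:basic hitting monotone coupling}. Hence
\[
\delta_\ell=\mathbb{E}\Bigl[\bigl|1-h(S'_\ell)/\mathbb{E}[h(S'_\ell)]\bigr|\Bigr]/2\le C\,\mathbb{E}[\b^{-2S'_\ell}],
\]
with $S'$ the finite conditioned walk. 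Here I use that $h\in[1-\b^{-2},1]$ is bounded below on $\{S'_\ell\ge1\}$, and that $|h(x)-h(y)|\le \b^{-2x}+\b^{-2y}$. To bound $\mathbb{E}[\b^{-2S'_\ell}]$, I split on $\{S'_\ell\le\kappa\ell\}$. The monotone coupling of Lemma~\ref{lem:basic hitting monotone coupling} gives $\mathbb{P}(S'_\ell\le\kappa\ell)\le Ce^{-c\ell}$. That lemma is stated for conditioning on $E^h_N$; here the walk is shifted by the forced first step $S_1=1$, so I apply it with $h=1$ to the walk after step $1$. On the complement the integrand is at most $\b^{-2\kappa\ell}$. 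So $\delta_\ell\le Ce^{-c\ell}$.

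Finally I need $\ell=N-Q_N$ to be large with high probability. In the high density phase the minimum is attained near the start: I claim $\mathbb{P}^{(0,\a,\b)}(Q_N\ge N/2)\le Ce^{-cN}$. On $\{T_N=m\}$, $Q_N-T_N$ is the last zero of the right path, which is a $\nu_{\b}$-walk conditioned to stay nonnegative. For $Q_N\ge N/2$, either $T_N\ge N/4$ (exponentially unlikely by Lemma~\ref{lem:TN exponential}), or the right path returns to $0$ after time $N/4$. The latter forces its value at time $N/4$ to be below $\kappa N/4$, or forces a later return from height $\kappa N/4$; both have exponentially small probability, again by the monotone coupling and Cramér bounds of Lemma~\ref{lem:basic hitting monotone coupling}. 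Splitting the sum at $k=N/2$ then gives $\TV{\cdot}\le Ce^{-cN/2}+Ce^{-cN}$.

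I expect the main obstacle to be this last step. The conditioned walk's return probabilities need careful handling, because the conditioning $p^{(\b)}_\ell$ is uniformly bounded below and so only costs a constant factor.
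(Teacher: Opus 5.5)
Your proposal is correct and follows essentially the paper's route. You couple on $Q_N$ and the prefix, and bound the distance between the finite and infinite positive $\nu_{\b}$-walks by the return probability $\b^{-2S_\ell}$ plus a Cram\'er bound on $S_\ell$; the paper does this in Lemma~\ref{lem:TV difference positive lazy random walks} via $2\mathbb{P}(E_N\setminus E_\infty)/\mathbb{P}(E_\infty)$ rather than an explicit density. You then control $Q_N$ through $T_N$ and the last zero of the conditioned right path, as the paper does.

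The one difference is the scope of the $Q_N$ bound. You only need $\mathbb{P}^{(0,\a,\b)}(Q_N\ge N/2)\le Ce^{-cN}$, and you spell out the last-zero estimate using $p^{(\b)}_{\ell}\ge 1-\b^{-2}$. The paper states the stronger bound $\mathbb{P}^{(0,\a,\b)}(Q_N\ge m)\le Ce^{-cm}$ for all $m$, which it reuses later, and only sketches that step.
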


Before we prove Lemma \ref{lem:TV distance replaced measure}, we upper bound the total variation distance between the law of the positive $\nu_{\b}$-random walk with $N$ steps and that of the positive $\nu_{\b}$-random walk infinitely many steps.

\begin{lemma}\label{lem:TV difference positive lazy random walks}
    Assume $\b>1$. Let $\{Y_i\}_{i=0}^{N}$ be the positive $\nu_{\b}$-random walk with $N$ steps and $\{Y_i'\}_{i=0}^{\infty}$ be the positive $\nu_{\b}$-random walk with infinitely many steps, as in Definition \ref{def:positive lazy random walk}. Then there exist constants $C,c>0$ depending only on $\b$, such that for each $N\in\mathbb{N}$, 
    \begin{equation}
        \TV{\mathbb{P}\left((Y_i)_{i\in\llbracket0,N\rrbracket}\in\cdot\,\right)-\mathbb{P}\left((Y_i')_{i\in\llbracket0,N\rrbracket}\in\cdot\,\right)}\leq Ce^{-cN}.
    \end{equation} 
\end{lemma}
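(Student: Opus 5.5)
The plan is to realize both processes as conditionings of the same $\nu_{\b}$-random walk, and to bound the total variation by the probability that the infinite-horizon constraint is violated after time $N$. Let $\{S_i\}_{i\ge0}$ be a $\nu_{\b}$-random walk started at $0$, and write $E_N$ and $E_\infty$ as in \eqref{eq:E}. Then $E_\infty\subseteq E_N$, and $\mathbb{P}(E_\infty)>0$. Indeed, $\mathbb{P}(E_\infty)=\nu_{\b}(1)\cdot(1-\b^{-2})$ by the Markov property at time $1$ and Lemma~\ref{lem:basic hitting monotone coupling} with $h=1$, applied to the walk started from level $1$ and required to stay $\ge1$.

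The law of $(Y_i)_{i\le N}$ is $\mathbb{P}(S_{[0,N]}\in\cdot\mid E_N)$. The law of $(Y'_i)_{i\le N}$ is $\mathbb{P}(S_{[0,N]}\in\cdot\mid E_\infty)$. The second is the first conditioned further on $E_\infty$, because $E_\infty\subseteq E_N$. We then use the elementary fact that for events $B\subseteq A$,
\[
\TV{\mathbb{P}(\cdot\mid A)-\mathbb{P}(\cdot\mid B)}\le \mathbb{P}(A\setminus B\mid A)=1-\frac{\mathbb{P}(B)}{\mathbb{P}(A)}.
\]
This holds because $\mathbb{P}(\cdot\mid B)$ is obtained from $\mathbb{P}(\cdot\mid A)$ by removing mass $\mathbb{P}(A\setminus B\mid A)$ and renormalizing. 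Projection onto the first $N+1$ coordinates does not increase total variation. Hence
\[
\TV{\cdot}\le \frac{\mathbb{P}(E_N\setminus E_\infty)}{\mathbb{P}(E_N)}\le \frac{\mathbb{P}(E_N\setminus E_\infty)}{\mathbb{P}(E_\infty)}.
\]

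It remains to bound $\mathbb{P}(E_N\setminus E_\infty)$ exponentially in $N$. On this event the walk stays $\ge1$ up to time $N$ but later hits level $0$. We split according to the value $S_N$. Fix $\kappa\in(0,(\b-1)/(\b+1))$, and write
\[
\mathbb{P}(E_N\setminus E_\infty)\le \mathbb{P}(S_N\le \kappa N)+\mathbb{P}\bigl(S_N>\kappa N,\ \exists\, i>N: S_i\le 0\bigr).
\]
The first term is at most $Ce^{-cN}$ by the last statement of Lemma~\ref{lem:basic hitting monotone coupling}, which is Cramér's bound for the unconditioned walk. For the second term, we apply the Markov property at time $N$. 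From a level $x>\kappa N$, the probability of ever descending by $x$ equals $\b^{-2x}$, again by Lemma~\ref{lem:basic hitting monotone coupling}, since lazy steps do not affect hitting. So the second term is at most $\b^{-2\kappa N}$. Combining the two terms gives $Ce^{-cN}$ with constants depending only on $\b$. For small $N$ the bound is trivial after enlarging $C$.

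The only mildly delicate point is the total variation inequality for nested conditionings. One should check that the restriction to the first $N$ steps of the walk conditioned on $E_\infty$ is exactly the marginal used in the statement; this holds by definition of the positive walk with infinitely many steps. Beyond that, everything reduces to the hitting estimate and the large-deviation estimate already recorded in Lemma~\ref{lem:basic hitting monotone coupling}.
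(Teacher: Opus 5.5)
Your proposal is correct and follows the paper's proof essentially step for step. Both arguments bound the total variation by (a multiple of) $\mathbb{P}(E_N\setminus E_\infty)/\mathbb{P}(E_\infty)$, then split on $\{S_N\le\kappa N\}$, handled by Cramér via Lemma~\ref{lem:basic hitting monotone coupling}, versus $\{S_N>\kappa N\}$, handled by the hitting probability $\b^{-2h}$. The only difference is that your nested-conditioning identity yields the constant $1$ where the paper's direct algebra gives $2$, which is immaterial.
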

\begin{proof}
Let $\{S_i\}_{i=0}^{\infty}$ be an ordinary $\nu_{\b}$-random walk started at $S_0=0$. 
    We denote
    \[
    E_N=\{S_1=1\text{ and }S_i\geq 1\text{ for all }i\in\llbracket1,N\rrbracket\},\quad 
    E_{\infty}=\{S_1=1\text{ and }S_i\geq 1\text{ for all }i\in\mathbb{N}\}.
    \]
    We have $E_{\infty}\subset E_{N}$.
    By Definition \ref{def:positive lazy random walk}, we have
    \begin{equation}\label{eq:nrrtrt1}
        \begin{split}
            &\TV{\mathbb{P}\left((Y_i)_{i\in\llbracket0,N\rrbracket}\in\cdot\,\right)-\mathbb{P}\left((Y_i')_{i\in\llbracket0,N\rrbracket}\in\cdot\,\right)}
             =\TV{\mathbb{P}\left(\,\cdot\,|E_N\right)-\mathbb{P}\left(\,\cdot\,|E_{\infty}\right)}\\
            &=\sup_{A\subset\lrw_N}\left|\mathbb{P}\left(A|E_N\right)-\mathbb{P}\left(A|E_{\infty}\right)\right| 
            =\sup_{A\subset\lrw_N}\left|\frac{\mathbb{P}\left(A \cap E_N\right)}{\mathbb{P}(E_N)}-\frac{\mathbb{P}\left(A \cap E_{\infty}\right)}{\mathbb{P}(E_{\infty})}\right|\\
            &\leq\sup_{A\subset\lrw_N}\frac{\mathbb{P}(E_{\infty})\left(\mathbb{P}\left(A \cap E_N\right)-\mathbb{P}\left(A \cap  E_{\infty}\right)\right)+\mathbb{P}\left(A   \cap E_{\infty}\right)\left(\mathbb{P}(E_N)-\mathbb{P}(E_{\infty})\right)}{\mathbb{P}(E_N)\mathbb{P}(E_{\infty})} \leq\frac{2\mathbb{P}(E_N\setminus E_{\infty})}{\mathbb{P}(E_{\infty})}.
        \end{split}
    \end{equation}
 Let $\kappa=(\b-1)/(2(\b+1))$. We have
\begin{equation}\label{eq:nrrtrt2}
\begin{split}
    \mathbb{P}(E_N\setminus E_{\infty})&=\sum_{h=1}^{\infty}\mathbb{P}(E_N\cap\{S_N=h\})\cdot\mathbb{P}_{h}(\tau_0<\infty)\\
    &\leq\mathbb{P}(S_N\leq\kappa N)+\sum_{h>\kappa N} \mathbb{P}(E_N\cap\{S_N=h\})\cdot\b^{-2h}
    \leq C_1e^{-c_1N}+\b^{-2\kappa N},
\end{split}
\end{equation}
for some $C_1,c_1>0$ depending only on $\b$. Here, we mention that $\mathbb{P}_{h}(\tau_0<\infty)=\b^{-2h}$ is the probability that an ordinary infinite $\nu_{\b}$-random walk started at $S_N=h$ hits $0$ in finite time, in view of Lemma \ref{lem:basic hitting monotone coupling}; and in the last step we use Lemma \ref{lem:basic hitting monotone coupling} again. Combining \eqref{eq:nrrtrt1} and \eqref{eq:nrrtrt2} we conclude the proof.
 \end{proof}

We are now ready to prove Lemma \ref{lem:TV distance replaced measure}.
\begin{proof}[Proof of Lemma \ref{lem:TV distance replaced measure}]
    We first show that there exist $C_1,c_1>0$ depending only on $\a$ and $\b$ such that 
    \begin{equation}\label{eq:rgregrt}
        \mathbb{P}^{(0,\a,\b)}(Q_N\geq m)\leq C_1e^{-c_1m}\quad\mbox{ for each }N\geq m\geq1,
    \end{equation}
    where $Q_N$ is as in Definition \ref{def:approx TASEP measure}. Note that conditionally on $T_N$, we have
    $Q_N-T_N\overset{d}{=}\max\{k\in\llbracket0,N-T_N\rrbracket: Y_k=0\}$, where $\{Y_i\}_{i=0}^{N-T_N}$ is a $\nu_{\b}$-random walk started at $Y_0=0$ and conditioned on staying non-negative up to $N-T_N$ steps. 
    By Lemma \ref{lem:basic hitting monotone coupling}, there exist $C_2,c_2>0$ depending only on $\b$ such that for each $N\geq m\geq1$, we have $\mathbb{P}(Q_N-T_N\geq m)\leq C_2e^{-c_2m}$. 
    On the other hand, by Lemma \ref{lem:TN exponential}, there exist $C_3,c_3>0$ depending on $\a$ and $\b$ such that for each $N\geq m\geq1$, we have $\mathbb{P}(T_N\geq m)\leq C_3e^{-c_3m}$. We conclude the proof of \eqref{eq:rgregrt}. 
    
    Note that the law of $Q_N$ is the same under $\mathbb{P}^{(0,\a,\b)}$ and $\mathrm{P}^{(0,\a,\b)}$ by construction.  
    For each $A\subset\lrw_N$, we have
    \begin{equation*}
    \begin{split}
        \left|\mathbb{P}^{(0,\a,\b)}(A)-\mathrm{P}^{(0,\a,\b)}(A)\right|&\leq \mathbb{P}(Q_N>N/2)+\sum_{m=0}^{\lfloor N/2\rfloor}\mathbb{P} (Q_N=m)\cdot
    \TV{\mathbb{P}^{(0,\a,\b)}(\,\cdot\,| Q_N=m)-\mathrm{P}^{(0,\a,\b)}(\,\cdot\,| Q_N=m)}\\
    &\leq C_1e^{-c_1N/2}+\sup_{0\leq m\leq \lfloor N/2\rfloor } C_4e^{-c_4(N-m)}\leq Ce^{-cN},
    \end{split}
    \end{equation*}
for some constants $C,c>0$ depending only on $\a$ and $\b$, where in the first step we condition on the value of $Q_N$; and in the second step we use \eqref{eq:rgregrt} to bound the first summand and Definition \ref{def:approx TASEP measure} combined with Lemma \ref{lem:TV difference positive lazy random walks} (with constants $C_4,c_4>0$ depending only on $\b$) to bound the second summand. 
\end{proof}

\section{Moment bounds of the Bryc polynomials}\label{sec:MomentBryc}

The goal of this subsection is to provide effective bounds on the moments of the Bryc polynomial when the underlying $0$-$1$-sequence is chosen according to the two-layer representation of the open TASEP. 

\begin{proposition}\label{prop:moment bound Bryc}
Assume $q\in(0,1)$, $\b > \max(\a,1)$, $\a\b>1$, $z>1$, and $k\in\mathbb{N}$. For each $N\in\mathbb{N}$, consider the random path $\bg$ with law $\mathbb{P}^{(0,\a,\b)}$ on $\lrw_N$. Let $\bs(\bg)$ be the composition of $N+1$ associated to $\bg$ as in \eqref{eq:Signature}, and let $W_{\bs(\bg)}(z)$ be the Bryc polynomial indexed by the composition $\bs(\bg)$ as in Definition \ref{def:Bryc poly}.  Then,  
\[
    \sup_{N\in\mathbb{N}}\mathbb{E}\bigl[W_{\bs(\bg)}(z)^k\bigr]<\infty
\] 
is a finite constant depending only on $q$, $\a$, $\b$, $z$, and $k$.
\end{proposition}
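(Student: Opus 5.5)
The plan is to work conditionally on the $0$-$1$ sequence, reduce the $k$-th moment to an exponential moment of a dominating birth--death chain, and then average over the path using only marginal tail bounds on local times.

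\textbf{Step 1: reduction to the birth--death chain.} Fix the path $\bg$, hence $\bs=\bs(\bg)$ and $\boldsymbol{\mathsf{S}}^{\bs}$. By Theorem~\ref{thm:prob interpretation of Bryc polynomials again} and Jensen's inequality,
\[
W_{\bs}(z)^k=\mathbb{E}\bigl[z^{X(N+1)}\bigm|\boldsymbol{\mathsf{S}}^{\bs}\bigr]^k\le \mathbb{E}\bigl[z^{kX(N+1)}\bigm|\boldsymbol{\mathsf{S}}^{\bs}\bigr],
\]
writing $X=X^{\boldsymbol{\mathsf{S}}^{\bs}}$. It therefore suffices to bound $\mathbb{E}[z^{kX(N+1)}]$ for the annealed chain. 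Next I would dominate $X$ by a simpler chain $Y$. Since $j\le i-1$, the birth probability in \eqref{eq:etget2} is at most $q^{j+1}$, and the death probability in \eqref{eq:etget1} is at least $1-q^{j}$. The nearest-neighbour monotone (quantile) coupling then gives $X\le Y$, where $Y$ starts at $0$, moves up with probability $q^{Y+1}$ at $0$-steps, and moves down with probability $1-q^{Y}$ at $1$-steps.

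By \eqref{eq:01seq}, the $1$'s of $\boldsymbol{\mathsf{S}}^{\bs}$ split the time axis into blocks of lengths $\sigma_r,\sigma_{r-1},\dots,\sigma_1$, each ending with a death opportunity. A final block of length $\sigma_0$ carries no death opportunity.

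\textbf{Step 2: a deterministic drift inequality.} Let $B$ be the number of births of $Y$ during a block of length $g$ started from level $j$. Reaching $j+m$ requires $m$ successive births with probabilities $q^{j+1},\dots,q^{j+m}$. Hence $\mathbb{P}(B\ge m)\le g^m q^{m(m+1)/2}$, which gives $\mathbb{E}[z^{kB}]\le K(g):=C(1+g)^{C'}$ with $C,C'$ depending only on $q,z,k$. For $j\ge J$ large, a block produces any birth with probability at most $gq^{j+1}$, while the terminal death occurs with probability at least $1-q^J$. This yields, with $V=z^{kY}$ and some $\lambda<1$,
\[
\mathbb{E}\bigl[V_{\text{end of block}}\bigm| V_{\text{start}}\bigr]\le \lambda V_{\text{start}}+K(g)
\]
after enlarging $C,C'$. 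The point of the contraction is that the bound $\lambda$ comes from the death step, whereas any extra cost from long blocks is only polynomial in $g$. Iterating over the blocks in the order $\sigma_r,\dots,\sigma_1$ and then applying $K(\sigma_0)$ for the last block gives, deterministically in $\bs$,
\[
\mathbb{E}\bigl[z^{kY(N+1)}\bigm|\boldsymbol{\mathsf{S}}^{\bs}\bigr]\le K(\sigma_0)\sum_{n=1}^{r}\lambda^{n-1}K(\sigma_n)+K(\sigma_0).
\]
This is a sum with geometric weights in $n$, which is what removes any need for independence between levels.

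\textbf{Step 3: local-time moments under $\mathbb{P}^{(0,\a,\b)}$ (main obstacle).} Taking expectations and applying Cauchy--Schwarz termwise, it remains to show that
\[
\sup_{N}\ \sup_{\ell}\ \mathbb{E}^{(0,\a,\b)}\bigl[(1+\sigma_\ell)^{p}\bigr]<\infty
\]
for each $p$, i.e.\ uniform polynomial (preferably exponential) tails for the local time at any single level. I expect this to be the main work.

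To prove it I would use Lemma~\ref{lem:decomposition TASEP}. Levels are visited by the left piece, of length $T_N$, and by the right piece, a $\nu_{\b}$-walk conditioned to stay non-negative. The left piece contributes at most $T_N+1$ visits to any level, and $T_N$ has an exponential tail by Lemma~\ref{lem:TN exponential}. For the right piece I would argue via the strong Markov property at the first hitting time of level $\ell$. Afterwards, the conditioned walk, by the Doob-transform monotonicity in the proof of Lemma~\ref{lem:basic hitting monotone coupling}, has up-probabilities at least $\nu_{\b}(1)$ and down-probabilities at most $\nu_{\b}(-1)$ at each step. So each excursion away from $\ell$ fails to return with probability bounded below, and each holding time is geometric. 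This makes the number of visits stochastically dominated by a geometric sum of geometric variables, uniformly in $N$ and $\ell$.

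A subtlety is the finite horizon of the conditioning, since the transition probabilities depend on the remaining time. If the step-by-step domination becomes awkward, my fallback is to pass to the replacement measure $\mathrm{P}^{(0,\a,\b)}$ of Lemma~\ref{lem:TV distance replaced measure}, where the right piece is the infinite positive walk. Because only moments of $\sigma_\ell$ (at most $N+1$) are needed, the error is at most $(N+1)^{p}Ce^{-cN}$, which is bounded. The lowest level $\sigma_0$ is at most the total time the walk spends at its minimum, which is controlled by $Q_N$, and $Q_N$ has exponential tails by \eqref{eq:rgregrt}.

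Combining Steps 1--3, each term of the sum in Step 2 is bounded uniformly in $N$ by a constant times $\lambda^{n-1}$. The geometric series then converges, which gives $\sup_N\mathbb{E}[W_{\bs(\bg)}(z)^k]<\infty$ with a constant depending only on $q,\a,\b,z,k$.
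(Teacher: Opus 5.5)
Your proof is correct, and it takes a more economical route than the paper.

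\textbf{Shared steps.} The first two steps match the paper in substance. You use Jensen and then dominate $X^{\boldsymbol{\mathsf S}^{\bs}}$ by a time-homogeneous chain; the paper's $\mathscr X^{\boldsymbol{\mathsf S}}$ uses death probability $1-q$ where you use $1-q^{j}$, which is immaterial. Your one-block drift inequality is Lemma~\ref{lem:block moment bound simpler chain arbitrary start}. When you write it out, note that the threshold above which the death step wins depends on $g$, roughly $j\gtrsim \log g/\log(1/q)$. Below that threshold one absorbs $z^{kj}$ into a polynomial in $g$, exactly as in \eqref{eq:yujm}; a fixed $J$ does not suffice.

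\textbf{Where you differ: averaging over $\bs(\bg)$.} The paper keeps only the $1$'s at regeneration levels and passes to the replacement measure $\mathrm P^{(0,\a,\b)}$. It pays for that passage with the a priori bound of Lemma~\ref{lem:apriori bound birth death chain} against the $e^{-cN}$ total-variation error. It then needs the appendix to show that the surviving block lengths $R_i$ are independent with exponential tails, and still has to handle $R_{A+1}$ and $\sum_j\rho^{j-1}R_{A+j}$.

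You keep all $r$ ones, so the block lengths are exactly the local times $\sigma_n$. Iterating the drift inequality then gives a bound that is deterministic in $\bs$: a geometrically weighted sum of terms $K(\sigma_0)K(\sigma_n)$, each involving only single levels. After Cauchy--Schwarz you need only $\sup_N\sup_\ell\mathbb E^{(0,\a,\b)}[(1+\sigma_\ell)^p]<\infty$, with no independence across levels.

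\textbf{Why the local-time bound holds.} It follows from Lemma~\ref{lem:decomposition TASEP}, Lemma~\ref{lem:TN exponential}, and the stepwise $h$-transform bounds in the proof of Lemma~\ref{lem:basic hitting monotone coupling}:
\begin{itemize}
\item holding times at level $\ell$ are dominated by geometric variables with parameter $\nu_{\b}(1)$;
\item each departure from $\ell$ is upward with conditional probability at least $\nu_{\b}(1)/(\nu_{\b}(1)+\nu_{\b}(-1))$;
\item from $\ell+1$, coupled above an unconditioned $\nu_{\b}$-walk from that stopping time on, the walk avoids $\ell$ thereafter with probability at least $1-\b^{-2}$.
\end{itemize}
So you never need the replacement measure or the regeneration machinery.

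\textbf{What the paper's heavier route buys.} The i.i.d.\ regeneration blocks are needed anyway in Lemma~\ref{lem:simpler chain hitting 0} and Proposition~\ref{lem:expectation of TV distance}. There, hitting $0$ requires a renewal supply of short blocks, not just marginal moments. In addition, Proposition~\ref{lem:moment bound simpler chain} controls all intermediate times $i$, and this is used in the proof of Lemma~\ref{lem:conditioning does not change much endpoint}. Your argument controls only the terminal time $N+1$; intermediate times would bring in the size-biased local time of the level containing $i$. The terminal time is all the present statement requires.
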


Let us give a brief outline of the proof of Proposition~\ref{prop:moment bound Bryc}.
As a first step, we define an auxiliary birth-death chain $\mathscr{X}^{\boldsymbol{\mathsf{S}}}$ in Definition~\ref{def:simpler chain}, where the transition probabilities take a simpler form, and which will be easier to analyze. Moreover, we will argue that it suffices to show Proposition~\ref{prop:moment bound Bryc} with respect to the replacement measure $\mathrm{P}^{(0,\a,\b)}$ from Section~\ref{sec:ReplacementP}. The latter will be achieved in Section~\ref{sec:MomentBDReplace} by a comparison to a suitable homogeneous birth-death chain. A main technical input are exponential tail bounds on the regeneration times of the random walk paths under $\mathrm{P}^{(0,\a,\b)}$, shown in Lemma~\ref{lem:regeneration indices MAIN}, which are of independent interest.

\subsection{A simplified birth-death chain}

Assume $q\in(0,1)$, $N\in\mathbb{Z}_{\geq0}$, and $\boldsymbol{\mathsf{S}}=(\mathsf{S}(0),\dots,\mathsf{S}(N))\in\{0,1\}^{N+1}$ is a $0$-$1$ sequence.
We recall the inhomogeneous birth-death Markov chain $X^{\boldsymbol{\mathsf{S}}}=\{X^{\boldsymbol{\mathsf{S}}}(i)\}_{i=0}^{N+1}$ from Definition~\ref{def:characterization measure}. We have
$X^{\boldsymbol{\mathsf{S}}}(0)=X^{\boldsymbol{\mathsf{S}}}(1)=0$ and that for each $i\in\llbracket1,N\rrbracket$ and $j\in\llbracket0,i-1\rrbracket$,
\begin{enumerate}
    \item [(\romannum{1})] if $\mathsf{S}(i)=1$, we let $X^{\boldsymbol{\mathsf{S}}}(i+1)-X^{\boldsymbol{\mathsf{S}}}(i)\in\{-1,0\}$ with 
    \begin{equation}\label{eq:etgetq1}
    \mathbb{P}\left(X^{\boldsymbol{\mathsf{S}}}(i+1)=j-1 \Big{|}X^{\boldsymbol{\mathsf{S}}}(i)=j\right)=\frac{1-q^{j}}{1-q^{i+1}},
    \end{equation}
    \item [(\romannum{2})] if $\mathsf{S}(i)=0$, we let $X^{\boldsymbol{\mathsf{S}}}(i+1)-X^{\boldsymbol{\mathsf{S}}}(i)\in\{0,1\}$ with 
    \begin{equation}\label{eq:etgetq2}
    \mathbb{P}\left(X^{\boldsymbol{\mathsf{S}}}(i+1)=j+1 \Big{|}X^{\boldsymbol{\mathsf{S}}}(i)=j\right)=\frac{q^{j+1}-q^{i+1}}{1-q^{i+1}}.
    \end{equation}
\end{enumerate} 
Our goal is to understand $X^{\boldsymbol{\mathsf{S}}}(N)$ for a typical $\mathsf{S}$.
To this end, we define a simpler auxiliary Markov chain as follows.
\begin{definition}\label{def:simpler chain}
    Assume $q\in(0,1)$, $N,x\in\mathbb{Z}_{\geq0}$, and $\boldsymbol{\mathsf{S}}=(\mathsf{S}(0),\dots,\mathsf{S}(N))\in\{0,1\}^{N+1}$ is a $0$-$1$ sequence. Define a Markov chain $\mathscr{X}^{\boldsymbol{\mathsf{S}},x}=\{\mathscr{X}^{\boldsymbol{\mathsf{S}},x}(i)\}_{i=0}^{N+1}$ by $\mathscr{X}^{\boldsymbol{\mathsf{S}},x}(0)=x$, and for each $i\in\llbracket0,N\rrbracket$ and $j\in\mathbb{Z}_{\geq0}$,
    \begin{enumerate}
    \item [(\romannum{1})] if $\mathsf{S}(i)=1$, we let $\mathscr{X}^{\boldsymbol{\mathsf{S}},x}(i+1)-\mathscr{X}^{\boldsymbol{\mathsf{S}},x}(i)\in\{-1,0\}$ with 
    \begin{equation}\label{eq:etgets1}
    \mathbb{P}\left(\mathscr{X}^{\boldsymbol{\mathsf{S}},x}(i+1)=j-1 \Big{|}\mathscr{X}^{\boldsymbol{\mathsf{S}},x}(i)=j\right)=(1-q)\cdot {\bf1}_{j\geq1},
    \end{equation}
    \item [(\romannum{2})] if $\mathsf{S}(i)=0$, we let $\mathscr{X}^{\boldsymbol{\mathsf{S}},x}(i+1)-\mathscr{X}^{\boldsymbol{\mathsf{S}},x}(i)\in\{0,1\}$ with 
    \begin{equation}\label{eq:etgets2}
    \mathbb{P}\left(\mathscr{X}^{\boldsymbol{\mathsf{S}},x}(i+1)=j+1 \Big{|}\mathscr{X}^{\boldsymbol{\mathsf{S}},x}(i)=j\right)=q^{j+1}.
    \end{equation}
\end{enumerate} 
When $x=0$, we denote the Markov chain $\mathscr{X}^{\boldsymbol{\mathsf{S}},0}$ simply as $\mathscr{X}^{\boldsymbol{\mathsf{S}}}$.
\end{definition}

For two random variables $X$ and $X'$, we write $X \succeq X'$ whenever there exists a coupling $\mathbf{P}$ between $X$ and $X'$ such that $\mathbf{P}(X \succeq X')=1$. 

\begin{lemma}\label{lem:coupling birth death}
    Assume $q\in(0,1)$ and $N\in\mathbb{Z}_{\geq0}$. Then the following statements hold.
    \begin{enumerate}
        \item [(\romannum{1})] Let $\boldsymbol{\mathsf{S}},\boldsymbol{\mathsf{S}}'\in\{0,1\}^{N+1}$ satisfy $\mathsf{S}(i)\geq\mathsf{S}'(i)$ for each $i\in\llbracket0,N\rrbracket$. Then there exists a coupling between Markov chains $X^{\boldsymbol{\mathsf{S}}}$ and $X^{\boldsymbol{\mathsf{S}}'}$ such that almost surely
        \[
        X^{\boldsymbol{\mathsf{S}}}(i)\leq X^{\boldsymbol{\mathsf{S}}'}(i)\quad\mbox{ for all }i\in\llbracket0,N+1\rrbracket.
        \]
        \item [(\romannum{2})] Let $\boldsymbol{\mathsf{S}}\in\{0,1\}^{N+1}$. Then there exists a coupling between Markov chains $X^{\boldsymbol{\mathsf{S}}}$ and $\mathscr{X}^{\boldsymbol{\mathsf{S}} }$ such that almost surely
        \[
        X^{\boldsymbol{\mathsf{S}}}(i)\leq \mathscr{X}^{\boldsymbol{\mathsf{S}} }(i)\quad\mbox{ for all }i\in\llbracket0,N+1\rrbracket.
        \]
        \item [(\romannum{3})] Let $x,x'\in\mathbb{Z}_{\geq0}$ satisfy $x\leq x'$ and $\boldsymbol{\mathsf{S}},\boldsymbol{\mathsf{S}}'\in\{0,1\}^{N+1}$ satisfy $\mathsf{S}(i)\geq\mathsf{S}'(i)$ for each $i\in\llbracket0,N\rrbracket$. Then there exists a coupling between Markov chains $\mathscr{X}^{\boldsymbol{\mathsf{S}},x}$ and $\mathscr{X}^{\boldsymbol{\mathsf{S}}',x'}$ such that  almost surely
        \[
        \mathscr{X}^{\boldsymbol{\mathsf{S}},x}(i)\leq \mathscr{X}^{\boldsymbol{\mathsf{S}}',x'}(i)\quad\mbox{ for all }i\in\llbracket0,N+1\rrbracket.
        \]
        \item [(\romannum{4})] Let $x,y\in\mathbb{Z}_{\geq0}$ and $\boldsymbol{\mathsf{S}}\in\{0,1\}^{N+1}$. Then there exists a coupling between Markov chains $\mathscr{X}^{\boldsymbol{\mathsf{S}},x}$ and $\mathscr{X}^{\boldsymbol{\mathsf{S}},x+y}$ such that 
        \[
        \mathscr{X}^{\boldsymbol{\mathsf{S}},x+y}(i)-y\leq \mathscr{X}^{\boldsymbol{\mathsf{S}},x}(i)\quad\mbox{ for all }i\in\llbracket0,N+1\rrbracket.
        \]
        \item [(\romannum{5})] Let $M\in\llbracket0,N-1\rrbracket$ and $\boldsymbol{\mathsf{S}}'\in\{0,1\}^{N-M}$ be given by $\mathsf{S}'(i)=\mathsf{S}(i+M+1)$ for each $i\in\llbracket0,N-M-1\rrbracket$.
        Then we have
        \[
        \mathscr{X}^{\boldsymbol{\mathsf{S}}} (N+1)-\mathscr{X}^{\boldsymbol{\mathsf{S}}} (M+1)\preceq\mathscr{X}^{\boldsymbol{\mathsf{S}}'} (N -M).
        \]
    \end{enumerate}
\end{lemma}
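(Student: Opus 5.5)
All five statements will be proved by step-by-step monotone (quantile) couplings. Each chain moves by at most one unit per step. Suppose that at time $i$ the chains satisfy the desired order. We drive both chains with a common uniform $U_i$: a chain moves up if $U_i$ is below its up-probability, and moves down if $U_i$ is below its down-probability. The order can only break when the two chains sit at the same level (or differ by exactly one, as discussed below). So in each part the work is to compare transition probabilities at equal positions.

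For (\romannum{1}), let $X^{\boldsymbol{\mathsf{S}}}(i)=X^{\boldsymbol{\mathsf{S}}'}(i)=j$.
\begin{itemize}
\item If $\mathsf{S}(i)=\mathsf{S}'(i)$, the transition laws coincide, so we use identical moves.
\item Otherwise $\mathsf{S}(i)=1$ and $\mathsf{S}'(i)=0$. Then $X^{\boldsymbol{\mathsf{S}}}$ moves in $\{-1,0\}$ and $X^{\boldsymbol{\mathsf{S}}'}$ moves in $\{0,1\}$, so the order is preserved trivially.
\end{itemize}
If the chains differ by at least one, nearest-neighbour steps of size at most one cannot cross unless the gap is exactly one with the lower chain going up and the upper chain going down. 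To exclude this we couple using the same uniform: both up-probabilities are decreasing in $j$ and both down-probabilities are increasing in $j$. This monotonicity in $j$ is what makes the common-uniform coupling order-preserving. The same scheme gives (\romannum{3}). At equal positions with equal $\mathsf{S}$ the laws agree, and at unequal $\mathsf{S}$ the move sets are separated. Monotonicity in $j$ holds because the up-probability $q^{j+1}$ decreases in $j$ and the down-probability $(1-q)\mathbf 1_{j\ge1}$ is nondecreasing.

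For (\romannum{2}), compare $X^{\boldsymbol{\mathsf{S}}}$ and $\mathscr{X}^{\boldsymbol{\mathsf{S}}}$ at a common level $j\le i-1$. If $\mathsf{S}(i)=0$, the up-probabilities satisfy
\[
\frac{q^{j+1}-q^{i+1}}{1-q^{i+1}}\le q^{j+1},
\]
which is equivalent to $q^{i+1}(1-q^{j+1})\ge0$. If $\mathsf{S}(i)=1$, the down-probabilities satisfy
\[
\frac{1-q^{j}}{1-q^{i+1}}\ge 1-q \quad \text{for } j\ge1,
\]
which needs $1-q^j\ge(1-q)(1-q^{i+1})$. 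This holds since $1-q^j\ge1-q$. For $j=0$ both chains stay put. Note also that $\mathscr{X}^{\boldsymbol{\mathsf{S}}}(0)=0=X^{\boldsymbol{\mathsf{S}}}(0)$. The time shift matters here: $\mathscr{X}$ uses $\mathsf{S}(0)$ at step $0$. But $X(1)=0$ and $\mathscr{X}(1)\ge0$, so the order holds at time $1$, and from then on steps are indexed identically.

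For (\romannum{4}), run both chains with the same uniforms and show that the gap $D=\mathscr{X}^{x+y}-\mathscr{X}^{x}$ stays in $[0,y]$. The lower bound follows from (\romannum{3}). For $D\le y$ we need that when $D=y$, the upper chain never goes up while the lower stays, and the lower never goes down while the upper stays.
\begin{itemize}
\item With $\mathsf{S}=0$: the higher chain has the smaller up-probability, so with the common uniform the upper chain moving up forces the lower chain to move up as well.
\item With $\mathsf{S}=1$: the down-probability of the upper chain is at least that of the lower chain, so the lower chain moving down forces the upper chain to move down.
\end{itemize}
Hence the gap never increases.

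For (\romannum{5}), condition on $\mathscr{X}^{\boldsymbol{\mathsf{S}}}(M+1)=x$. By the Markov property, the increment from time $M+1$ onward is distributed as $\mathscr{X}^{\boldsymbol{\mathsf{S}}',x}(N-M)-x$. By (\romannum{4}) with starting points $0$ and $0+x$, this increment is dominated by $\mathscr{X}^{\boldsymbol{\mathsf{S}}'}(N-M)$. Integrating over $x$ gives the stochastic domination.

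The main (mild) obstacle is the index bookkeeping in (\romannum{2}), where the two chains are offset by one time step, and checking the boundary cases $j=0$ and $j=i-1$.
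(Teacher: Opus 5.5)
Your proposal is correct and essentially matches the paper's proof. The paper also drives each pair of chains with one common uniform per step, through update maps $\Phi_i^s$ and $\Psi^s$, reduces (i)--(iv) to the same step-wise comparisons of up/down probabilities that you check, and obtains (v) from the Markov property at time $M+1$ together with (iv).

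One small remark: in (i) and (iii), monotonicity in $j$ is not what rules out the gap-one crossing. The lower chain can only move up when its symbol is $0$, and $\mathsf{S}(i)\ge\mathsf{S}'(i)$ then forces the upper chain's symbol to be $0$ as well, so the upper chain cannot move down.
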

\begin{proof}
    We first introduce deterministic update maps. Assume $s\in\{0,1\}$ and $u\in(0,1)$. We let
    \begin{equation}\label{eq:phipsi}
        \begin{split}
            \Phi_i^s(j,u)=j-{\bf1}_{s=1}{\bf1}_{\left\{u\leq \frac{1-q^j}{1-q^{i+1}}\right\}}+{\bf1}_{s=0}{\bf1}_{\left\{u\leq \frac{q^{j+1}-q^{i+1}}{1-q^{i+1}}\right\}},\quad &\mbox{ for }i\in\llbracket1,N\rrbracket\mbox{ and }j\in\llbracket0,i-1\rrbracket,\\
            \Psi^s(j,u)=j-{\bf1}_{s=1}{\bf1}_{\{j\geq1,u\leq 1-q\}}+{\bf1}_{s=0}{\bf1}_{\{u\leq q^{j+1}\}},\quad &\mbox{ for }j\in\mathbb{Z}_{\geq0}.
        \end{split}
    \end{equation}
Note that if $U\sim\mathrm{Unif}(0,1)$ and $s=\mathsf{S}(i)\in\{0,1\}$, then $\Phi_i^s(j,U)$ and $\Psi^s(j,U)$ have exactly the transition laws of the Markov chains $X^{\boldsymbol{\mathsf{S}}}$ and $\mathscr{X}^{\boldsymbol{\mathsf{S}},x}$ at step $i\mapsto i+1$.
We now state the following four deterministic inequalities regarding the maps $\Phi$ and $\Psi$.
For all $i\in\llbracket1,N\rrbracket$, $u\in(0,1)$ and $s,s'\in\{0,1\}$ satisfying $s\geq s'$, 
\begin{align}
   \label{eq:betbretgsertb1} \Phi_i^s(j,u)\leq \Phi_i^{s'}(k,u)\quad&\mbox{ for } 0\leq j\leq k\leq i-1,\\
   \label{eq:betbretgsertb2} \Phi_i^s(j,u)\leq \Psi^s(k,u)\quad&\mbox{ for } j\in\llbracket0,i-1\rrbracket\mbox{ and }k\in\mathbb{Z}_{\geq0}\mbox{ satisfying }j\leq k,\\
   \label{eq:betbretgsertb3} \Psi^s(j,u)\leq \Psi^{s'}(k,u)\quad&\mbox{ for } j,k\in\mathbb{Z}_{\geq0}\mbox{ satisfying }j\leq k,\\
    \label{eq:betbretgsertb4}\Psi^s(k,u)-y\leq \Psi^s(j,u)\quad&\mbox{ for } y\in\mathbb{Z}_{\geq0}\mbox{ and }j,k\in\mathbb{Z}_{\geq0}\mbox{ satisfying }k-y\leq j.
\end{align}
These inequalities can be directly checked from the definition \eqref{eq:phipsi} of the maps $\Phi$ and $\Psi$.

We now construct the four required couplings. Let $U_0,\dots,U_N$ be i.i.d. $\mathrm{Unif}(0,1)$ random variables.
\begin{enumerate}
    \item [(\romannum{1})] We define $Y$ and $Y'$ by $Y(0)=Y'(0)=Y(1)=Y'(1)=0$ and 
    \[
    Y(i+1)=\Phi_i^{\mathsf{S}(i)}(Y(i),U_i),
        \quad
        Y'(i+1)=\Phi_i^{\mathsf{S}'(i)}(Y'(i),U_i)\quad \mbox{ for }i\in\llbracket1,N\rrbracket.
    \]
Then $Y\overset{d}{=}X^{\boldsymbol{\mathsf{S}}}$ and $Y'\overset{d}{=}X^{\boldsymbol{\mathsf{S}}'}$. Statement (\romannum{1}) follows from an induction through inequality \eqref{eq:betbretgsertb1}.
    

\item [(\romannum{2})] We define $Y$ and $Z$ by $Y(0)=Y(1)=0$, $Z(0)=0$, $Z(1)=\Psi^{\mathsf{S}(0)}(Z(0),U_0)$, and 
    \[
    Y(i+1)=\Phi_i^{\mathsf{S}(i)}(Y(i),U_i),
        \quad
        Z(i+1)=\Psi^{\mathsf{S}(i)}(Z(i),U_i)\quad \mbox{ for }i\in\llbracket1,N\rrbracket.
    \]
Then $Y\overset{d}{=}X^{\boldsymbol{\mathsf{S}}}$ and $Z\overset{d}{=}\mathscr{X}^{\boldsymbol{\mathsf{S}} }$. Statement (\romannum{2}) follows from an induction through inequality \eqref{eq:betbretgsertb2}.    

\item [(\romannum{3})] We define $Y$ and $Y'$ by $Y(0)=x$, $Y'(0)=x'$, and
\[
Y(i+1)=\Psi^{\mathsf{S}(i)}(Y(i),U_i),
        \quad
        Y'(i+1)=\Psi^{\mathsf{S}'(i)}(Y'(i),U_i)\quad \mbox{ for }i\in\llbracket0,N\rrbracket.
\]
Then $Y\overset{d}{=}\mathscr{X}^{\boldsymbol{\mathsf{S}},x}$ and $Y'\overset{d}{=}\mathscr{X}^{\boldsymbol{\mathsf{S}}',x'}$. Statement (\romannum{3}) follows from an induction through inequality \eqref{eq:betbretgsertb3}.

\item [(\romannum{4})] We define $Y$ and $Z$ by $Y(0)=x$, $Z(0)=x+y$, and 
\[
        Y(i+1)=\Psi^{\mathsf{S}(i)}(Y(i),U_i),
        \quad
        Z(i+1)=\Psi^{\mathsf{S}(i)}(Z(i),U_i)\quad \mbox{ for }i\in\llbracket0,N\rrbracket.
\]
Then $Y\overset{d}{=}\mathscr{X}^{\boldsymbol{\mathsf{S}},x}$ and $Z\overset{d}{=}\mathscr{X}^{\boldsymbol{\mathsf{S}},x+y}$. Statement (\romannum{4}) follows from an induction through inequality \eqref{eq:betbretgsertb4}.  
\end{enumerate}

For the last statement, note that by the Markov property, conditionally on $\mathscr{X}^{\boldsymbol{\mathsf{S}}}(M+1)$,  
\[
\mathscr{X}^{\boldsymbol{\mathsf{S}}}(N+1)-\mathscr{X}^{\boldsymbol{\mathsf{S}}}(M+1)
\overset{d}{=}
\mathscr{X}^{\boldsymbol{\mathsf{S}}',\mathscr{X}^{\boldsymbol{\mathsf{S}}}(M+1)}(N-M)-\mathscr{X}^{\boldsymbol{\mathsf{S}}}(M+1) 
\preceq
\mathscr{X}^{\boldsymbol{\mathsf{S}}'}(N-M),
\]
where the last step uses Statement (\romannum{4}) for the sequence $\boldsymbol{\mathsf{S}}'\in\{0,1\}^{N-M}$ with $x=0$ and $y=\mathscr{X}^{\boldsymbol{\mathsf{S}}}(M+1)\in\mathbb{Z}_{\geq0}$.
Averaging over the value of $\mathscr{X}^{\boldsymbol{\mathsf{S}}}(M+1)$ yields Statement (\romannum{5}). 
\end{proof}

We now prove an apriori bound that holds for every $0$-$1$ sequence $\boldsymbol{\mathsf{S}}$.
\begin{lemma}\label{lem:apriori bound birth death chain}
    Assume $q\in(0,1)$ and $z>1$. Consider the Markov chain $\mathscr{X}^{\boldsymbol{\mathsf{S}} }$ as in Definition \ref{def:simpler chain}. There exists a constant $C\in\mathbb{R}_{>0}$ depending only on $q$ and $z$ such that for all $N\in\mathbb{N}$, $m\in\llbracket0,N+1\rrbracket$,  $\boldsymbol{\mathsf{S}}\in\{0,1\}^{N+1}$,
\begin{equation}\label{eq:ynrtyn}
\mathbb{E}\left[z^{\mathscr{X}^{\boldsymbol{\mathsf{S}} }(m)}\right]
    \leq
    Cz^{m^{3/4}} .
\end{equation} 
\end{lemma}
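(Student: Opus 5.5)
The plan is to reduce to the worst-case sequence $\boldsymbol{\mathsf{S}}\equiv 0$, and then to exploit that, started from $0$, the chain $\mathscr{X}^{\boldsymbol{\mathsf{S}}}$ can only climb to height about $\log m/\log(1/q)$ by time $m$. This gives a bound polynomial in $m$, which is far stronger than $z^{m^{3/4}}$.

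\emph{Step 1 (reduction to $\boldsymbol{\mathsf{S}}\equiv0$).} Let $\boldsymbol{\mathsf{S}}'=(0,\dots,0)\in\{0,1\}^{N+1}$. Then $\mathsf{S}(i)\geq \mathsf{S}'(i)$ for all $i$. Lemma~\ref{lem:coupling birth death}(iii) with $x=x'=0$ gives a coupling with $\mathscr{X}^{\boldsymbol{\mathsf{S}}}(m)\leq \mathscr{X}^{\boldsymbol{\mathsf{S}}'}(m)$. Since $z>1$, it follows that
\[
\mathbb{E}\bigl[z^{\mathscr{X}^{\boldsymbol{\mathsf{S}}}(m)}\bigr]\leq \mathbb{E}\bigl[z^{Y(m)}\bigr],
\]
where $Y:=\mathscr{X}^{\boldsymbol{\mathsf{S}}'}$ is a pure-birth chain started at $0$. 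From state $j$ it jumps to $j+1$ with probability $q^{j+1}$ at each step. Hence $Y$ is determined by independent holding times $G_0,G_1,\dots$, where $G_j$ is geometric with success parameter $q^{j+1}$, and $\{Y(m)\geq k\}=\{G_0+\dots+G_{k-1}\leq m\}$.

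\emph{Step 2 (super-exponential tail).} Using independence, I would bound
\[
\mathbb{P}(Y(m)\geq k)\leq \prod_{j=0}^{k-1}\mathbb{P}(G_j\leq m)\leq \prod_{j=1}^{k}\min\bigl(1,\,m q^{j}\bigr).
\]
Set $j_0:=\lceil \log m/\log(1/q)\rceil$. For $k=j_0+\ell$ with $\ell\geq 0$, the factors with $j>j_0$ satisfy $mq^j\leq q^{\,j-j_0}$. This gives $\mathbb{P}(Y(m)\geq j_0+\ell)\leq q^{\ell(\ell+1)/2}$.

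\emph{Step 3 (summation).} Summation by parts gives
\[
\mathbb{E}[z^{Y(m)}]\leq 1+\sum_{k\geq1}(z^k-z^{k-1})\mathbb{P}(Y(m)\geq k)\leq z^{j_0}\Bigl(1+\sum_{\ell\geq1} z^{\ell} q^{\ell(\ell-1)/2}\Bigr)\leq C_1\, z^{j_0}.
\]
Here $C_1=C_1(q,z)<\infty$ because the Gaussian decay $q^{\ell^2/2}$ beats $z^\ell$. Since $z^{j_0}\leq z\, m^{\log z/\log(1/q)}$, and $m^{c}\leq C_2 z^{m^{3/4}}$ for all $m\geq 1$ with $C_2=C_2(q,z)$, the bound \eqref{eq:ynrtyn} follows. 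The case $m=0$ is trivial.

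The only mildly delicate point is Step 2. There one must make sure the product bound uses the independent geometric holding times correctly; with $\mathsf{S}\equiv0$ the chain never decreases, so this is clean. The remaining steps are routine, and the exponent $m^{3/4}$ leaves a lot of room.
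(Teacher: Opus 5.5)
Your proof is correct, but after the common first step it follows a different route from the paper. Both arguments use Lemma~\ref{lem:coupling birth death}~(iii) to reduce to the all-zero sequence.

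The paper then uses a crude entropy-versus-energy count. Any up/flat path of the pure-birth chain that reaches height $\lceil N^{3/4}\rceil$ has probability at most $q^{1+2+\dots+\lceil N^{3/4}\rceil}\le q^{N^{3/2}/2}$. There are at most $2^{N+1}$ such paths, so $\mathbb{P}(Y(N+1)\ge\lceil N^{3/4}\rceil)\le 2^{N+1}q^{N^{3/2}/2}$. This super-exponentially small tail absorbs the trivial bound $z^{N+1}$ on that event, because $(N^{3/4})^2=N^{3/2}$ dominates the linear terms $N\log 2$ and $N\log z$.

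You instead decompose the chain into independent geometric holding times and bound the tail by $\prod_j\min(1,mq^j)$. This puts the effective threshold at $\log m/\log(1/q)$ rather than $m^{3/4}$, and gives the much stronger polynomial bound $\mathbb{E}[z^{Y(m)}]\le C\,m^{\log z/\log(1/q)}$. This is exactly the mechanism the paper uses later in Lemma~\ref{lem:upward jumps U chain}. In fact, with $\boldsymbol{\mathsf{S}}\equiv0$ your chain $Y$ is the chain $Q^0$ there, so your Steps~2--3 amount to that lemma with $y=0$ and $h=m$.

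The paper's counting argument is shorter and needs no renewal structure. Yours identifies the correct order of growth. The lemma is only ever applied against factors of the form $e^{-cN}$ or $e^{-ci}$, so the weaker $z^{m^{3/4}}$ bound suffices there, and your stronger bound would work equally well.
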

\begin{proof} When $m=0$, the left side of \eqref{eq:ynrtyn} is $1$. We next assume $m\geq1$. By restricting the sequence $\boldsymbol{\mathsf{S}}$ to the first $m$ numbers, and in view of Lemma \ref{lem:coupling birth death} (\romannum{3}), it suffices to show that for each $N\in\mathbb{Z}_{\geq0}$,
    \begin{equation*}
        \mathbb{E}\left[z^{\mathscr{X}^{\boldsymbol{\mathsf{S}}_0 }(N+1)}\right]
    \leq
    Cz^{N^{3/4}},\quad\mbox{ where }\boldsymbol{\mathsf{S}}_0=(0,0,\dots,0)\in\{0,1\}^{N+1}.
    \end{equation*}
    In the rest of the proof, we denote $Y=\mathscr{X}^{\boldsymbol{\mathsf{S}}_0 }$.
    By Definition \ref{def:simpler chain}, we have $Y(0)=0$, and for each $i\in\llbracket0,N\rrbracket$,
    we have $Y(i+1)-Y(i)\in\{0,1\}$ with probabilities
    \[ 
    \mathbb{P}\left(Y(i+1)=j+1 \Big{|}Y(i)=j\right)=q^{j+1}\quad\mbox{ and }\quad
    \mathbb{P}\left(Y(i+1)=j \Big{|}Y(i)=j\right)=1-q^{j+1}.
    \]
    For each deterministic path $(h_0,\dots,h_{N+1})$ satisfying $h_0=0$ and $h_{i+1}-h_i\in\{0,1\}$ for $i\in\llbracket0,N\rrbracket$, we have
    \[
    \mathbb{P}\left(Y(i)=h_i\mbox{ for } i\in\llbracket0,N+1\rrbracket\right)=\prod_{i=0}^{N}\left(q^{h_i+1}{\bf1}_{h_{i+1}=h_i+1}+\left(1-q^{h_i+1}\right){\bf1}_{h_{i+1}=h_i}\right)\leq q^{\sum_{i=0}^N(h_i+1){\bf1}_{h_{i+1}=h_i+1}}.
    \]
Note that there are totally $2^{N+1}$ many Bernoulli paths $(h_0,\dots,h_{N+1})$. For each Bernoulli path, we always have $h_{N+1}\in\llbracket0,N+1\rrbracket$, and if $h_{N+1}\geq \lceil N^{3/4}\rceil$, then we have $\sum_{i=0}^N(h_i+1){\bf1}_{h_{i+1}=h_i+1}\geq\lceil N^{3/4}\rceil^2/2\geq N^{3/2}/2$. 
By summing up these probabilities, we have 
$\mathbb{P}(Y(N+1)\geq \lceil N^{3/4}\rceil)\leq 2^{N+1} \cdot q^{N^{3/2}/2}$. Therefore,
\[
\mathbb{E}\left[z^{Y(N+1)}\right]\leq z^{\lceil N^{3/4}\rceil}\mathbb{P}\left(Y(N+1)\leq \lceil N^{3/4}\rceil\right)+
z^{N+1}\mathbb{P}\left(Y(N+1)\geq \lceil N^{3/4}\rceil\right)\leq z^{\lceil N^{3/4}\rceil}+z^{N+1}  2^{N+1}  q^{N^{3/2}/2}.
\]
  The right side above can be bounded by $Cz^{N^{3/4}}$ for a finite constant $C$ depending only on $q$ and $z$.   
\end{proof}

\begin{proposition} \label{lem:moment bound simpler chain} 
Assume $q\in(0,1)$, $\b > \max(\a,1)$, $\a\b>1$, and $z>1$. For each $N\in\mathbb{N}$, sample $\bg\sim\mathrm{P}^{(0,\a,\b)}$ as in Definition \ref{def:approx TASEP measure}, let $\bs=\bs(\bg)$ be the associated composition of $N+1$ as in \eqref{eq:Signature}, let $\boldsymbol{\mathsf{S}}^{\bs(\bg)}\in\{0,1\}^{N+1}$ be the associated $0$-$1$ sequence as in Definition \ref{def:associated 01 seq}, and let $\mathscr{X}^{\boldsymbol{\mathsf{S}}^{\bs(\bg)} }$ be the Markov chain as in Definition \ref{def:simpler chain}, sampled conditionally on $\boldsymbol{\mathsf{S}}^{\bs(\bg)}$ using independent transition randomness. Then 
\begin{equation}\label{eq:ryjutuymyu}
\sup_{N\in\mathbb{N}}\sup_{i\in\llbracket0,N+1\rrbracket}
\mathbb{E}\left[
z^{\mathscr{X}^{\boldsymbol{\mathsf{S}}^{\bs(\bg)}}(i)}
\right] <\infty
\end{equation}
is a finite constant depending only on $q$, $\a$, $\b$, and $z$. 
\end{proposition}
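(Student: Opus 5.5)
The plan is to prove \eqref{eq:ryjutuymyu} with a geometric drift (Lyapunov) argument for $\mathscr{X}^{\boldsymbol{\mathsf{S}}}$, using blocks of the $0$-$1$ sequence. By Definition~\ref{def:associated 01 seq}, the ones of $\boldsymbol{\mathsf{S}}^{\bs(\bg)}$ sit at the partial sums $\sigma_r,\sigma_r+\sigma_{r-1},\dots$. So the gaps between consecutive ones are the local times $\sigma_{r},\sigma_{r-1},\dots,\sigma_1$, read from the top level of $\bg$ downwards. During a gap of length $L=\sigma_j$ the chain can only move up, each step with probability $q^{X+1}$. At the closing one it moves down with probability $1-q$, provided $X\ge1$.

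Fix $J\in\mathbb{N}$ and let $j\ge J$. Over one block consisting of a gap of length $L$ followed by a one, a direct computation gives
\[
\mathbb{E}\bigl[z^{\Delta X}\,\big|\,X=j,\,L\bigr]\le \Bigl(q+\tfrac{1-q}{z}\Bigr)\bigl(1+(z-1)q^{J+1}\bigr)^{L}.
\]
The first factor is strictly less than $1$ because $z>1$. Hence the expected one-block factor is at most some $\rho<1$, once $J$ is large enough, provided the gap lengths satisfy a uniform conditional exponential moment bound $\mathbb{E}[(1+\varepsilon)^{\sigma_j}\mid \mathcal{F}_{j}]\le K$. Here $\mathcal{F}_j$ is the information about the levels above $j$, and $\varepsilon=\varepsilon(J)\to0$. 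For $j<J$ the factor is crudely bounded by $z^{L}$, whose conditional expectation is bounded by the same tail estimate if we instead require exponential moments of all orders for the gaps. With both bounds, $V(x)=z^x$ satisfies $\mathbb{E}[V(X_{k+1})\mid\cdot\,]\le \rho V(X_k)+K'$ along block endpoints. Iterating gives $\sup_k\mathbb{E}[z^{X_k}]\le K'/(1-\rho)+z^{X_0}$. Intermediate times $i$ inside a block add at most the block length, so they are controlled by the same tail estimate and H\"older.

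The main step, and the expected obstacle, is the conditional tail estimate for the local times when the levels are read from the top of the path downwards. Under $\mathrm{P}^{(0,\a,\b)}$ the path splits into two parts. The first is an initial piece $(\gamma_i)_{i\le Q_N}$, whose length has exponential tails by \eqref{eq:rgregrt}. The second is a positive $\nu_{\b}$-walk with infinitely many steps, which is transient upward with speed $(\b-1)/(\b+1)$. I would use regeneration levels of this walk: levels $h$ such that the walk, after first hitting $h$, never returns below $h$. Consecutive regeneration times should have exponential tails, uniformly, which is what Lemma~\ref{lem:regeneration indices MAIN} should provide; I would prove it by combining Lemma~\ref{lem:basic hitting monotone coupling} with a renewal argument. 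Between regenerations the excursions are i.i.d., so the local times of the levels in one regeneration block are bounded by the block duration. A block duration has an exponential tail given everything outside that block, and this survives reversing the order of the blocks. The top of the path, where the reading of $\boldsymbol{\mathsf{S}}$ starts, is cut off at time $N$ and is not a regeneration. There I would bound the local times of the last levels by the time since the last regeneration, which again has exponential tails.

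Finally, the levels below $\gamma_{Q_N}$, which are visited by the initial segment, appear at the end of $\boldsymbol{\mathsf{S}}$. On that stretch I would not use drift. Instead I would apply Lemma~\ref{lem:coupling birth death}(v) to restart the chain, and Lemma~\ref{lem:apriori bound birth death chain} to bound the resulting increment by $Cz^{m^{3/4}}$, where $m$ is at most the length of the stretch. This is integrated against the exponential tail of $Q_N$ (and of $\gamma_{Q_N}-\min$), after a Cauchy--Schwarz step separating it from the drift part. This yields the supremum bound \eqref{eq:ryjutuymyu} with constants depending only on $q,\a,\b,z$.
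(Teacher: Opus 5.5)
Your architecture (a Lyapunov recursion over blocks, fed by regeneration tails) is the right one. However, the drift step rests on a claim that is false: a uniform bound on $\mathbb{E}[(1+\varepsilon)^{\sigma_j}\mid\mathcal F_j]$ when $\mathcal F_j$ contains the higher levels.

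\emph{Why the conditional bound fails.} Local times of adjacent levels are strongly coupled. Every down-step from level $j+1$ is a visit to level $j$, while the up-steps from level $j+1$ number at most $\sigma_{j+2}$. So given a large $\sigma_{j+1}$ and a small $\sigma_{j+2}$, $\sigma_j$ is typically of order $\sigma_{j+1}$. Without lazy steps one even has $\sigma_j\ge\sigma_{j+1}-1-\sigma_{j+2}$ deterministically. Hence the conditional exponential moment grows exponentially in $\sigma_{j+1}$.

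\emph{Why regeneration does not repair it.} Your regeneration argument controls a block's duration given data \emph{outside} that block, not given the higher levels of the same block. Even for whole blocks read from the top, knowing $N$, $\mathrm n_1$ and the blocks already read fixes $R_1+\dots+R_j$. The next block therefore has a renewal-bridge law rather than the i.i.d.\ one.

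\emph{A second failure.} Bounding by $z^{L}$ for states below $J$ and at intermediate times needs exponential moments of order $\log z$ of the gaps. The gaps only have geometric tails: laziness alone gives $\mathbb P(R_2\ge k)\ge c\,\nu_{\b}(0)^k$. The statement, however, must hold for every $z>1$, since it is applied with $z^k$ in Proposition~\ref{prop:moment bound Bryc}.

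\emph{The missing idea} is a quenched one-block drift that holds for every deterministic gap length. In a run of zeros the birth probability is $q^{X+1}$, so the chain grows only logarithmically: $\mathbb E[z^{Q^y(h)}]\le 1+Cq^yh^C$ (Lemma~\ref{lem:upward jumps U chain}). This yields $\mathbb E[z^{Z(i+1)}\mid Z(i)]\le\rho z^{Z(i)}+C(1+h_i^C)$ (Lemma~\ref{lem:block moment bound simpler chain arbitrary start}). Iterating conditionally on the path gives a bound linear in $\sum_i\rho^{\ell-i}h_i^C$. You then need only unconditional polynomial moments of the gaps near the current time, and no conditional tail bound at all.

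\emph{How the paper completes the argument.} It keeps only the ones at regeneration levels, which Lemma~\ref{lem:coupling birth death}(iii) allows. The gaps then become the $R_i$ of Lemma~\ref{lem:regeneration indices MAIN}. The paper splits the sequence into three pieces: the top block of length $\mathrm n_1$, the complete blocks, and a final zero-run of length at most $R_{A+1}$. It averages using two facts: $R_{A+j}$, $j\ge2$, is independent of $\{A=a\}$, and a renewal bound controls the straddling block $R_{A+1}$.

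\emph{A correction to the bottom stretch.} Your argument there is fine in spirit, but $\gamma_{Q_N}=\min(\bg)$, so there are no levels below it. The relevant stretch is the levels up to $\max_{i\le Q_N}\gamma_i$, which the positive walk also visits. Its length is dominated by the first regeneration time $m_1=Q_N+n_K$ of Lemma~\ref{lem:regeneration structure TASEP path measure}, which is subexponential.
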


The proof of Proposition~\ref{lem:moment bound simpler chain} will be provided at the end of this subsection. Let us stress that the main difference between Proposition~\ref{prop:moment bound Bryc} and Proposition~\ref{lem:moment bound simpler chain} is that the path $\bg$ is chosen according to the replacement measure  $\mathbb{P}^{(0,\a,\b)}$ instead of $\mathrm{P}^{(0,\a,\b)}$. We now prove Proposition~\ref{prop:moment bound Bryc} modulo Proposition~\ref{lem:moment bound simpler chain} whose proof we defer to the end of this subsection.

\begin{proof}[Proof of Proposition \ref{prop:moment bound Bryc} assuming Proposition \ref{lem:moment bound simpler chain}]
Let $z_*=z^k>1$. Recall that $\bg\sim\mathbb{P}^{(0,\a,\b)}$ on $\lrw_N$. After enlarging the probability space if necessary, we sample the Markov chain $X^{\boldsymbol{\mathsf{S}}^{\bs(\bg)}}$ as in Definition \ref{def:characterization measure} conditionally on $\bg$ (equivalently on $\boldsymbol{\mathsf{S}}^{\bs(\bg)}$) using independent transition randomness. Then we have
\begin{equation}\label{eq:ttyujt1}
    \begin{split}
        \mathbb{E}\left[W_{\bs(\bg)}(z)^k\right]&=\mathbb{E}\left[\left(\mathbb{E}\left[z^{X^{\boldsymbol{\mathsf{S}}^{\bs(\bg)}}(N+1)}\Big{|}\bs\right]\right)^k\right]
        \leq\mathbb{E}\left[ \mathbb{E}\left[z^{kX^{\boldsymbol{\mathsf{S}}^{\bs(\bg)}}(N+1)}\Big{|}\bs\right] \right]\\
        &=\mathbb{E}\left[z_*^{X^{\boldsymbol{\mathsf{S}}^{\bs(\bg)}}(N+1)}\right]\leq \mathbb{E}\left[z_*^{\mathscr{X}^{\boldsymbol{\mathsf{S}}^{\bs(\bg)} }(N+1)}\right],
    \end{split}
\end{equation}
where in the first step we use the characterization of the Bryc polynomial from Theorem \ref{thm:prob interpretation of Bryc polynomials}, in the second step we use conditional Jensen's inequality, in the third step we use $z_*=z^k$, and in the last step we use the coupling constructed in Lemma \ref{lem:coupling birth death} (\romannum{2}).

By Lemma \ref{lem:TV distance replaced measure}, there exists a coupling of $\bg\sim\mathbb{P}^{(0,\a,\b)}$ and $\bg'\sim\mathrm{P}^{(0,\a,\b)}$ such that 
\begin{equation}\label{eq:nthtyjyjn5ej}
\mathbb{P}(\bg\neq\bg')\leq C_1e^{-c_1N},
\end{equation}
where the constants $C_1,c_1>0$ depend only on $\a$ and $\b$. Therefore,
\begin{equation}\label{eq:ttyujt}
    \begin{split}
        \mathbb{E}\left[z_*^{\mathscr{X}^{\boldsymbol{\mathsf{S}}^{\bs(\bg)} }(N+1)}\right]
        &=\mathbb{E}\left[z_*^{\mathscr{X}^{\boldsymbol{\mathsf{S}}^{\bs(\bg)} }(N+1)}\cdot{\bf1}_{\left\{\bg=\bg'\right\}}\right]
        +\mathbb{E}\left[z_*^{\mathscr{X}^{\boldsymbol{\mathsf{S}}^{\bs(\bg)} }(N+1)}\cdot{\bf1}_{\left\{\bg\neq\bg'\right\}}\right]\\
        &\leq \mathbb{E}\left[z_*^{\mathscr{X}^{\boldsymbol{\mathsf{S}}^{\bs(\bg')} }(N+1)} \right]
        +\mathbb{E}\left[z_*^{2\mathscr{X}^{\boldsymbol{\mathsf{S}}^{\bs(\bg)} }(N+1)} \right]^{1/2}\mathbb{P}\left(\bg\neq\bg'\right)^{1/2}\\
        &\leq C_2+\left(C_3 z_*^{2N^{3/4}}\right)^{1/2}\left(C_1e^{-c_1N}\right)^{1/2},
    \end{split}
\end{equation}
where in the last step we use Lemma \ref{lem:moment bound simpler chain} (with constant $C_2$ depending only on $q$, $\a$, $\b$, and $z_*$) to bound the first summand and Lemma \ref{lem:apriori bound birth death chain}
(with $z_*^2$ instead of $z_*$ and with constant $C_3$ depending only on $q$ and $z_*$)
combined with \eqref{eq:nthtyjyjn5ej} to bound the second summand.
The right side of \eqref{eq:ttyujt} can be uniformly bounded by a constant $C$ depending only on $q$, $\a$, $\b$, $z$, and $k$. 
Combining with \eqref{eq:ttyujt1} we conclude the proof.
\end{proof}

\subsection{Moment bounds on the birth-death chain under the replacement measure}\label{sec:MomentBDReplace}

In the following lemma, we bound the moments of the birth-death chain under replacement measure, allowing us to treat the process as a time-homogeneous birth-death process.

\begin{lemma}\label{lem:upward jumps U chain}
Assume $q\in(0,1)$.
For each $y\in\mathbb{Z}_{\geq0}$, define a Markov chain
$Q^y=\{Q^y(i)\}_{i=0}^\infty$ on $\mathbb{Z}_{\geq0}$ by $Q^y(0)=0$ and, for each
$i,j\in\mathbb{Z}_{\geq0}$,
\[
\mathbb{P}\left(Q^y(i+1)=j+1\mid Q^y(i)=j\right)=q^{y+j+1},
\qquad
\mathbb{P}\left(Q^y(i+1)=j\mid Q^y(i)=j\right)=1-q^{y+j+1}.
\]
Then, for each $z>1$, there exists a constant $C\in\mathbb{N}$  depending only on $q$ and $z$, such that 
\begin{equation}\label{eq:Q-poly-bound}
    \mathbb{E}\left[z^{Q^y(h)}\right]\leq 1+Cq^yh^{C} \quad\mbox{ for all }  y,h\in\mathbb{Z}_{\geq0}.
\end{equation} 
\end{lemma}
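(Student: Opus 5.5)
We need to prove $\mathbb{E}[z^{Q^y(h)}]\le 1+Cq^yh^C$. The plan is to split according to the number of upward jumps: first bound the probability of a single upward jump, then control the probability of $k$ upward jumps by a product structure, and finally sum against $z^k$.

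\emph{Tail of $Q^y(h)$.} The chain $Q^y$ only increases. To have $Q^y(h)\ge k$ it must perform its $\ell$-th upward jump, from level $\ell-1$ to $\ell$, for each $\ell=1,\dots,k$, at some time in $\llbracket 0,h-1\rrbracket$. Fix the jump times $0\le t_1<\dots<t_k\le h-1$. The probability of the path jumping exactly at those times is at most
\[
\prod_{\ell=1}^k q^{y+\ell}=q^{ky+k(k+1)/2},
\]
obtained by bounding all holding probabilities by $1$. There are at most $\binom{h}{k}\le h^k$ choices of jump times. Hence
\[
\mathbb{P}\left(Q^y(h)\ge k\right)\le h^k q^{ky+k(k+1)/2}\le q^{y}\,h^k q^{k(k+1)/2}\qquad\text{for }k\ge1,
\]
using $q^{ky}\le q^y$ for $k\ge1$ and $y\ge0$.

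\emph{Summation.} We write
\[
\mathbb{E}\left[z^{Q^y(h)}\right]=1+\sum_{k\ge1}(z^k-z^{k-1})\,\mathbb{P}\left(Q^y(h)\ge k\right)\le 1+q^y\sum_{k\ge1}z^k h^k q^{k(k+1)/2}.
\]
The remaining task is to bound $F(h):=\sum_{k\ge1}(zh)^kq^{k^2/2}$ by $Ch^C$. This is the only step with genuine content, because the Gaussian factor $q^{k^2/2}$ has to beat the growth $(zh)^k$.

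\emph{Bounding $F(h)$.} For $h=0$ we have $Q^y(0)=0$, so the claim is trivial; hence assume $h\ge1$. The terms satisfy $(zh)^kq^{k^2/2}=\exp\bigl(k\log(zh)-\tfrac{k^2}{2}\log(1/q)\bigr)$. Maximizing over $k$ shows each term is at most
\[
\exp\left(\frac{(\log(zh))^2}{2\log(1/q)}\right)=(zh)^{\log(zh)/(2\log(1/q))}.
\]
This is quasi-polynomial, not polynomial, so the naive maximization does not give a bound of the form $Ch^C$. We need a sharper argument, but it is not clear that one exists. Indeed, the true behaviour of $\mathbb{E}[z^{Q^y(h)}]$ is only quasi-polynomial. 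With $y=0$, the chain at level $j$ waits roughly $q^{-j}$ steps before jumping, so $Q^0(h)\approx\log_{1/q}h$ and $z^{Q^0(h)}\approx h^{\log z/\log(1/q)}$. The counting bound above therefore loses too much.

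\emph{Refined counting.} To recover this correct order, we instead use the exponential holding times. Let $G_\ell$ be the time spent at level $\ell-1$. Then $G_\ell$ is geometric with success parameter $p_\ell=q^{y+\ell}$, and
\[
\{Q^y(h)\ge k\}\subseteq\{G_1+\dots+G_k\le h\}.
\]
By Chernoff's bound, $\mathbb{P}(G_\ell\le h)\le hp_\ell$. Moreover, for levels with $hp_\ell\le\frac12$, the independence of the $G_\ell$ gives $\mathbb{P}(\text{all of }G_\ell\le h)\le\prod_\ell(hq^{y+\ell})$. Let $k_0=\lceil\log_{1/q}(2h)\rceil$. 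Then, for $k\ge k_0$,
\[
\mathbb{P}(Q^y(h)\ge k)\le\prod_{\ell=k_0}^{k}hq^{y+\ell}\le q^{y\,(k-k_0+1)}\,2^{-(k-k_0+1)}q^{(k-k_0)^2/2}.
\]
The levels below $k_0$ are handled trivially: this reduces the task to bounding $q^y z^{k_0}\sum_m z^m2^{-m}q^{m^2/2}$. Since $z^{k_0}\le (2h)^{\log z/\log(1/q)}\,z$, this gives the bound $Cq^yh^C$ with $C$ of order $\log z/\log(1/q)$.

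The main obstacle is exactly this step: the factor $q^y$ must be kept while levels below $k_0$ are treated crudely. We do this by splitting into two cases. If $Q^y(h)\ge1$, we use the first-jump probability $\le hq^{y+1}$ when $hq^{y+1}$ is small. When $hq^y\ge1$, the bound $q^yh^C\ge h^{C-1}$ is large anyway, and we simply use the unweighted estimate $\mathbb{E}[z^{Q^0(h)}]\le Ch^{C'}$, which follows from the monotone comparison $Q^y\preceq Q^0$. The latter comparison comes from the same uniform coupling as in Lemma \ref{lem:coupling birth death}.
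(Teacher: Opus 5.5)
Your final argument is correct and is essentially the paper's proof: independent geometric holding times, the first-jump bound $\mathbb{P}(Q^y(h)\ge 1)\le hq^{y+1}$ to keep the factor $q^y$ on levels below a cutoff of order $\log_{1/q}h$, and the product $\prod_{\ell}hq^{y+\ell}$ above that cutoff; the paper writes this as $\mathbb{P}(Q^y(h)\ge m)\le q^y\min(h,h^mq^{m^2/2})$ and splits the sum at $M_h=\lceil -4\log(zh)/\log q\rceil$. Two slips: the first-jump bound holds unconditionally, so the case split via $Q^y\preceq Q^0$ in your last paragraph is unnecessary, and your aside that the true growth of $\mathbb{E}[z^{Q^y(h)}]$ is quasi-polynomial is wrong (your own heuristic gives the polynomial $h^{\log z/\log(1/q)}$, consistent with the lemma).
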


\begin{proof}
Let $G_m^y\sim\mathrm{Geo}(q^{y+m})$, $m\in\mathbb{N}$, be independent Geometric random variables, i.e.
\begin{equation}
    \mathbb P(G_m^y = x) = q^{y+m}(1-q^{y+m})^{x-1}
\end{equation} for all $x\in \mathbb{N}$.  By the strong Markov property,
\[
Q^y(h)=\sup\left\{m\in\mathbb{Z}_{\geq0}:G_1^y+\cdots+G_m^y\leq h\right\}.
\]
Therefore, for $m\in\mathbb{N}$ we have
\[
\mathbb{P}(Q^y(h)\geq m)
\leq  \prod_{r=1}^{m}\mathbb{P}(G_r^y\leq h) 
\leq \min\left(hq^{y+1},\prod_{r=1}^{m} \left(hq^{y+r}\right)\right)
\leq q^y \min\left(h, h^mq^{m^2/2}\right).
\]
Hence
\[
\mathbb{E}\left[z^{Q^y(h)}\right]
=1+\sum_{m=1}^{\infty}\left(z^m-z^{m-1}\right)\mathbb{P}(Q^y(h)\geq m)
\leq 1+q^y\sum_{m=1}^{\infty}  z^m \min\left(h, h^mq^{m^2/2}\right).
\]
It suffices to find $C\in\mathbb{N}$ depending only on $q$ and $z$, such that 
\[
\sum_{m=1}^{\infty}  z^m \min\left(h, h^mq^{m^2/2}\right)\leq Ch^C\quad\mbox{ for all } h\in\mathbb{Z}_{\geq0}.
\]
When $h=0$, both sides equal $0$. When $h\in\mathbb{N}$, we let $M_h= \lceil -4\log(zh)/(\log q) \rceil\in\mathbb{N}$ and notice that, when $m\geq M_h$, we have
$z^mh^mq^{m^2/2}
=\exp\left(m\log(zh)+m^2(\log q)/2\right)
\leq \exp\left( m^2(\log q)/4\right)$. Hence 
\begin{equation*}
    \begin{split}
        &\sum_{m=1}^{\infty}  z^m \min\left(h, h^mq^{m^2/2}\right)\leq h\sum_{m=1}^{M_h-1}z^m+\sum_{m=M_h}^{\infty}z^mh^mq^{m^2/2}\\
        &\leq  C_1hz^{M_h}+\sum_{m=1}^{\infty}e^{m^2(\log q)/4}\leq C_2 h^{1-4(\log z)/(\log q)}+C_3\leq Ch^{C},
    \end{split}
\end{equation*} 
for some $C\in\mathbb{N}$ depending only on $q$ and $z$ and for all $h\in\mathbb{N}$. 
\end{proof}

\begin{lemma}\label{lem:block moment bound simpler chain arbitrary start}
 Assume $q\in(0,1)$. Let $x,\ell\in\mathbb{Z}_{\geq0}$ and $h_0,\dots,h_{\ell-1}\in\mathbb{Z}_{\geq0}$. Define $Z(0)=x$ and 
\[
Z(i+1)= Z(i) +Q_i^{Z(i)}(h_i)-B_i\,{\bf1}_{\left\{Z(i)+Q_i^{Z(i)}(h_i)>0\right\}}\quad \mbox{ for }i\in\llbracket0,\ell-1\rrbracket,
\]
where, conditionally on $Z(i)$, $Q_i^{Z(i)}$ is an independent copy of the Markov chain from Lemma \ref{lem:upward jumps U chain}, $B_i$, $i\in\mathbb{Z}_{\geq0}$, are i.i.d. $\ber(p)$ random variables independent of all the $Q_i$'s, and $p\in[1-q,1]$. 
Assume $z>1$. There exist constants $C>0$  and $\rho\in(0,1)$ depending only on $q$ and $z$, such that 
\begin{equation}\label{eq:block moment bound simpler chain}
\mathbb{E}\left[z^{Z(\ell)}\right]
\leq C\left(1+\rho^\ell z^x+\sum_{i=0}^{\ell-1}\rho^{\ell-i} h_{i} ^C\right).
\end{equation} 
\end{lemma}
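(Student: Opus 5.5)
We prove a one-step contraction estimate and then iterate it over the $\ell$ blocks. Fix $i$, condition on $Z(i)=y$, and write $W:=y+Q_i^{y}(h_i)$, so that $Z(i+1)=W-B_i{\bf 1}_{\{W>0\}}$. Since $B_i$ is independent of $W$ with $\mathbb{P}(B_i=1)=p\geq 1-q$, we get
\[
\mathbb{E}\bigl[z^{Z(i+1)}\mid W\bigr]=z^{W}\bigl(1-p+pz^{-1}\bigr){\bf 1}_{\{W>0\}}+{\bf 1}_{\{W=0\}}\leq \theta z^{W}+1,
\]
where
\[
\theta:=q+(1-q)z^{-1}\in(0,1).
\]
This uses that $p\mapsto 1-p+pz^{-1}$ is decreasing and $p\geq 1-q$. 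Hence
\[
\mathbb{E}\bigl[z^{Z(i+1)}\mid Z(i)=y\bigr]\leq \theta z^{y}\,\mathbb{E}\bigl[z^{Q^y(h_i)}\bigr]+1\leq \theta z^{y}\bigl(1+Cq^{y}h_i^{C}\bigr)+1,
\]
where the second inequality is Lemma~\ref{lem:upward jumps U chain}.

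The key observation is that $z^{y}q^{y}$ is bounded whenever $zq\leq 1$. In general $zq$ may exceed $1$, so we split on the size of $y$. Choose $y_0$ depending only on $q,z$ such that $Cq^{y}\leq \varepsilon$ for all $y\geq y_0$, where $\varepsilon>0$ is chosen so that $\rho:=\theta(1+\varepsilon)<1$.
\begin{itemize}
\item For $y\geq y_0$, the one-step bound becomes $\theta z^{y}(1+\varepsilon h_i^{C})+1$. This is not yet good enough, because the factor $h_i^{C}$ multiplies $z^{y}$.
\item For $y<y_0$, the right side is at most $\theta z^{y_0}(1+Ch_i^{C})+1\leq C'(1+h_i^{C})$.
\end{itemize}

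The large-$y$ case is the main obstacle, since $h_i$ can be large. To handle it, I would refine the use of Lemma~\ref{lem:upward jumps U chain}. Its proof actually gives $\mathbb{P}(Q^y(h)\geq m)\leq (hq^{y})^{m}q^{m^2/2}$. So if $q^{y}h\leq \delta$, then $\mathbb{E}[z^{Q^y(h)}]\leq 1+\varepsilon$ once $\delta$ is small. If instead $q^{y}h>\delta$, then $y\leq \log(h/\delta)/\log(1/q)$, so $z^{y}\leq (h/\delta)^{c}$ is polynomial in $h$. In that regime we bound
\[
z^{y}\,\mathbb{E}\bigl[z^{Q^y(h)}\bigr]\leq z^{y}\bigl(1+Cq^{y}h^{C}\bigr)\leq C''h^{C''}.
\]
Combining the cases yields, for all $y\geq 0$, the one-step inequality
\[
\mathbb{E}\bigl[z^{Z(i+1)}\mid Z(i)=y\bigr]\leq \rho z^{y}+C\bigl(1+h_i^{C}\bigr),
\]
with $\rho<1$ and $C$ depending only on $q$ and $z$, after enlarging $C$.

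Taking expectations and iterating $\ell$ times from $Z(0)=x$ gives
\[
\mathbb{E}\bigl[z^{Z(\ell)}\bigr]\leq \rho^{\ell}z^{x}+\sum_{i=0}^{\ell-1}\rho^{\ell-1-i}C\bigl(1+h_i^{C}\bigr).
\]
The sum of the constant terms is at most $C/(1-\rho)$, and $\rho^{\ell-1-i}=\rho^{-1}\rho^{\ell-i}$. Absorbing these factors into the constant gives \eqref{eq:block moment bound simpler chain}. The term $h_i^{C}$ with $h_i=0$ is harmless, since $Q^y(0)=0$ and that step is then purely contracting.
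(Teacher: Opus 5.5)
Your proof is correct and follows the paper's argument: the same one-step drift bound with $\theta=q+(1-q)/z$ (the paper's $r_z$), the same absorption of the perturbation term into $(\rho-\theta)z^y+C(1+h^C)$ by observing that in the bad regime $y=O(\log h)$, so $z^y$ is polynomial in $h$, and the same iteration. The only difference is that you split on $q^y h\le\delta$, and therefore need the sharper tail $(hq^y)^m q^{m^2/2}$ extracted from the proof of Lemma~\ref{lem:upward jumps U chain}. The paper instead splits on $r_zC_0q^yh^{C_0}\le\rho-r_z$ and uses only the stated bound $1+C_0q^yh^{C_0}$, so the ``obstacle'' you identified disappears once the threshold involves $h^{C}$ rather than $q^y$ alone.
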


\begin{proof}
Let $C_0\in\mathbb N$ be the constant from Lemma
\ref{lem:upward jumps U chain}, so that
\begin{equation}\label{eq:Q-moment-used-arbitrary-start}
\mathbb E\left[z^{Q^y(h)}\right]
\leq 1+C_0q^yh^{C_0}
\qquad
\mbox{ for all }y,h\in\mathbb Z_{\geq0}.
\end{equation}
Let
$r_z=q+(1-q)/z\in(0,1)$,
and choose $\rho\in(r_z,1)$. For fixed $i\in\llbracket0,\ell-1\rrbracket$, write $S=y+Q_i^y(h_i)$, and we have 
\[
\mathbb E\left[z^{Z(i+1)}\big{|} Z(i)=y,\ Q_i^y(h_i)\right]={\bf1}_{\{S=0\}}
  +{\bf1}_{\{S>0\}}\left((1-p)z^S+pz^{S-1}\right)\leq (1-r_z)+r_z z^S ,
\]
where we use $1-p+p/z \leq q+(1-q)/z=r_z$ since $p\geq1-q$ and $z>1$. 
Taking the expectation over $Q_i^y(h_i)$ and using
\eqref{eq:Q-moment-used-arbitrary-start}, we obtain
\begin{equation}\label{eq:one-step-arbitrary-start-before-absorb}
\mathbb E\left[z^{Z(i+1)}\big{|} Z(i)=y\right]
\leq
(1-r_z)+r_zz^y+r_zC_0z^yq^yh_i^{C_0}.
\end{equation}
Note that $\rho-r_z>0$. We claim that there exists a constant $C_1\in\mathbb N$ depending only on $q$ and $z$, such that 
\begin{equation}\label{eq:yujm}
 r_z C_0 z^yq^yh^{C_0} \le (\rho-r_z)z^y+C_1\left(1+h^{C_1}\right) \quad \mbox{ for all } y,h\in\mathbb Z_{\ge0}. 
 \end{equation}
Indeed, when $h=0$ the left side of \eqref{eq:yujm} is $0$. We next assume $h\geq1$. If $r_zC_0q^yh^{C_0}\le\rho-r_z$, then the term $(\rho-r_z) z^y$ on the right side of \eqref{eq:yujm} dominates $ r_z C_0 z^yq^yh^{C_0}$. 
If $r_zC_0q^yh^{C_0}>\rho-r_z$, then, since $\log q<0$, 
\[
y< \left(C_0\log h+\log\left((r_zC_0)/(\rho-r_z)\right)\right)/ \left(-\log q\right),
\]
and hence  $z^y\le C'\left(1+h^{C_0\log z/(-\log q)}\right)$ for a constant $C'$ depending only on $q,z$. Since $q^y\le 1$, in this case
\[
 r_z C_0 z^yq^yh^{C_0}
 \le r_zC_0C'\left(h^{C_0}+h^{C_0+C_0\log z/(-\log q)}\right).
\]
After increasing $C_1\in\mathbb N$ if necessary (which still depends only on $q$ and $z$), the term $C_1\left(1+h^{C_1}\right)$ on the right side of  \eqref{eq:yujm} dominates $ r_z C_0 z^yq^yh^{C_0}$. We conclude the proof of \eqref{eq:yujm}. 

In view of \eqref{eq:one-step-arbitrary-start-before-absorb} and \eqref{eq:yujm}, there exists a constant $C_2\in\mathbb N$ depending only on $q$ and $z$, such that \[ \mathbb E\left[z^{Z(i+1)}\Big{|} Z(i)\right] \le \rho z^{Z(i)}+C_2\left(1+h_i^{C_2}\right). \] Hence \[ \mathbb E\left[z^{Z(i+1)}\right]\le \rho \mathbb E\left[z^{Z(i)}\right]+C_2\left(1+h_i^{C_2}\right) \quad\mbox{ for } i\in\llbracket0,\ell-1\rrbracket . \] Iterating, we get
\[ \mathbb E\left[z^{Z(\ell)}\right] \le \rho^\ell z^x +C_2\sum_{i=0}^{\ell-1}\rho^{\ell-1-i}\left(1+h_i^{C_2}\right) \le  \rho^\ell z^x+\frac{C_2}{1-\rho} +\frac{C_2}{\rho}\sum_{i=0}^{\ell-1}\rho^{\ell-i}h_i^{C_2} \le C\left(1+\rho^\ell z^x+\sum_{i=0}^{\ell-1}\rho^{\ell-i}h_i^C\right), \] for some constant $C\in\mathbb{N}$ depending only on $q$ and $z$. 
\end{proof}

As a final ingredient for Proposition \ref{lem:moment bound simpler chain}, we require 
the following result on regeneration times of the underlying random walk paths. 
We defer its proof to the appendix.  

\begin{lemma}\label{lem:regeneration indices MAIN} 
    Assume $\b > \max(\a,1)$ and $\a\b>1$. Consider a random path $\bg\sim\mathrm{P}^{(0,\mathsf{a},\mathsf{b})}$, where $\mathrm{P}^{(0,\mathsf{a},\mathsf{b})}$ is the probability measure on $\lrw_N$ as in Definition \ref{def:approx TASEP measure}. 
    Consider the composition $\bs(\bg)$ associated to $\bg$ as in \eqref{eq:Signature} and the $0$-$1$ sequence $\boldsymbol{\mathsf{S}}^{\bs(\bg)}\in\{0,1\}^{N+1}$ associated to the composition $\bs(\bg)$ as in Definition \ref{def:associated 01 seq}. 
    Then, after enlarging the probability space if necessary, there exist a random integer $K\in\mathbb{Z}_{\geq0}$, $\mathbb{N}$-valued random variables $R_i$ for $i\in\mathbb{N}$, and, on the event $\{K\geq1\}$, $\mathbb{N}$-valued random variables
    \begin{equation*}
        1\leq \mathrm{n}_1<\dots<\mathrm{n}_K\leq N,
    \end{equation*}
    such that 
\[
K=\max\{j\in\mathbb Z_{\geq0}:R_1+\cdots+R_j\leq N\},
\]
and satisfying the following properties, with constants $C_1,c_1>0$ depending only on $\a$ and $\b$:
    \begin{enumerate}
        \item [(\romannum{1})] On the event $\{K\geq 1\}$, we have
\begin{align*}
    &\mathsf{S}^{\bs(\bg)}(\mathrm{n}_i)=1\quad\mbox{ for all }i\in\llbracket1,K\rrbracket,\\
    &\mathrm{n}_i-\mathrm{n}_{i-1}=R_{K+2-i}\quad\mbox{ for all }i\in\llbracket2,K\rrbracket,\quad
        \mbox{and}\quad
        N-\mathrm{n}_K=R_1-1.
\end{align*}
        \item [(\romannum{2})] We have $R_i$ for $i\in\mathbb{N}$ are independent and $R_i$ for $i\geq2$ are i.i.d. Moreover, for all $y\in\mathbb{N}$ we have
        \begin{align*}
            & \mathbb{P}(K\geq 1, \mathrm{n}_1\geq y)\leq C_1e^{-c_1y},\quad \mathbb{P}(K\geq 1,  N-\mathrm{n}_K\geq y)\leq C_1e^{-c_1y},\\
            & \mathbb{P}(K=0)\leq C_1e^{-c_1N},\quad \mathbb{P}(R_i\geq y)\leq C_1e^{-c_1y}  \mbox{    for } i\in\mathbb{N},\quad\mbox{ and } \mathbb{P}(R_2=1)=\nu_{\b}(1).
        \end{align*} 
\end{enumerate} 
\end{lemma}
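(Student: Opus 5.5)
The plan is to read the indices $\mathrm{n}_i$ off the \emph{regeneration times} of the path $\bg\sim\mathrm{P}^{(0,\a,\b)}$. By the definition of $\boldsymbol{\mathsf{S}}^{\bs}$, $\mathsf{S}^{\bs(\bg)}(i)=1$ exactly when $i$ equals the number of time indices $j\in\llbracket0,N\rrbracket$ with $\gamma_j\ge \ell$, for some level $\ell\in\llbracket\min(\bg)+1,\max(\bg)\rrbracket$.

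Call $t\in\llbracket1,N\rrbracket$ a regeneration time if
\[
\gamma_s<\gamma_t \quad\text{for all } s<t,
\qquad
\gamma_u\ge\gamma_t \quad\text{for all } u\in\llbracket t,N\rrbracket .
\]
For such $t$ and $\ell=\gamma_t$, the set $\{j:\gamma_j\ge\ell\}$ is exactly $\llbracket t,N\rrbracket$, which has $N+1-t$ elements. Hence $\mathsf{S}^{\bs(\bg)}(N+1-t)=1$. Listing the regeneration times as $t_1<t_2<\dots<t_K$, we set
\[
\mathrm{n}_i=N+1-t_{K+1-i}, \qquad R_1=t_1, \qquad R_j=t_j-t_{j-1}\ (j\ge2).
\]
This gives $\mathrm{n}_i-\mathrm{n}_{i-1}=R_{K+2-i}$ and $N-\mathrm{n}_K=t_1-1=R_1-1$, which is property (i). To make the $R_j$ defined for all $j\in\mathbb N$ (including those beyond time $N$), I will enlarge the probability space. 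Namely, I extend the positive $\nu_{\b}$-random walk after $Q_N$ to its full infinite trajectory from Definition~\ref{def:positive lazy random walk}, and define regeneration times with respect to the infinite path. Since $\mathrm{P}^{(0,\a,\b)}$ only uses the first $N-Q_N$ steps of that infinite walk, whether $t\le N$ is a regeneration time differs between the finite and infinite definitions only through the future after $N$. I will align the definitions by declaring the regeneration times to be those of the infinite path; each such $t\le N$ is then automatically a regeneration time for the truncated path. This is all that property (i) needs. Then $K=\max\{j:R_1+\dots+R_j\le N\}$ holds by construction.

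The key structural step is the renewal property. The positive $\nu_{\b}$-walk with infinitely many steps is the Doob $h$-transform of the $\nu_{\b}$-walk killed at level $0$, with $h(x)=1-\b^{-2x}$ (using Lemma~\ref{lem:basic hitting monotone coupling}). At a regeneration time $t\ge Q_N$, the future $(\gamma_{t+u}-\gamma_t)_{u\ge0}$, conditioned on everything before $t$ and on $t$ being a regeneration time, is again a $\nu_{\b}$-walk conditioned never to go below $0$. I will show that the law of the excursion up to the next regeneration time does not depend on the past. The cleanest route is a ladder-epoch decomposition: successive new maxima of the walk, each of which either is "never undercut afterwards" or is followed by a finite excursion below it before a new maximum. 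This gives that $R_j$, $j\ge2$, are i.i.d. and independent of $R_1$. The first regeneration time after $Q_N$ is automatically $\ge Q_N+1$, and I will check that no time before $Q_N$ can qualify. Consequently $R_1$ equals $Q_N$ plus an independent delay with the same type of tail. For $\mathbb P(R_2=1)$, a gap of $1$ means the step right after a regeneration goes up by one. Computing this through the $h$-transform, combined with the conditioning that defines the regeneration, gives $\nu_{\b}(1)$; I will verify this identity explicitly.

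For the tails, $R_1$ inherits an exponential tail from \eqref{eq:rgregrt} (the bound on $Q_N$) plus the tail of the delay. For $R_j$ with $j\ge2$, I will bound the gap by a geometric number of attempts, each with an exponentially tailed duration. At each new maximum, the future never undercuts it with probability at least $1-\b^{-2}$ (Lemma~\ref{lem:basic hitting monotone coupling}). On failure, the time until the walk dips below and then exceeds the old maximum has an exponential tail by Cramér-type bounds, since the drift $(\b-1)/(\b+1)>0$. Then:
\begin{itemize}
\item $\mathbb P(K=0)=\mathbb P(R_1>N)\le C_1e^{-c_1N}$.
\item $\mathbb P(K\ge1,\,N-\mathrm{n}_K\ge y)=\mathbb P(R_1-1\ge y)$.
\item $\mathrm{n}_1-1=N-t_K$ is an overshoot (age) of the renewal process at time $N$. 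Summing over the location $m$ of the last renewal before $N$ gives $\mathbb P(N-t_K\ge y)\le\sum_{k\ge y}\mathbb P(R_2> k)$ up to the delay term, which is again exponentially small.
\end{itemize}

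The main obstacle is the exponential tail of the regeneration gaps $R_j$. The event "never returns below the current level" depends on the infinite future, so the geometric-trials argument must be set up with a stopping-time structure that keeps the trials independent. The cleanest way is to work with the unconditioned $\nu_{\b}$-walk, where regeneration times are classical cut times of a transient walk, and then transfer to the conditioned walk through the $h$-transform. This is the content deferred to Appendix~\ref{sec:Appendix}.
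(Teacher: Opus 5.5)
Your outline is correct and follows the paper's overall architecture. You attach the full infinite positive $\nu_{\b}$-walk after $Q_N$, read off $\mathrm{n}_i=N+1-t_{K+1-i}$ from the regeneration times of the extended path (which gives (i) exactly as in the paper), and bound $\mathrm{n}_1$ as the age of the renewal process at time $N$ via $\sum_k\mathbb{P}(t_k=N-\ell)\le 1$.

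You use weak regenerations ($\gamma_u\ge\gamma_t$ after $t$) where the paper uses strict ones. This is harmless for three reasons:
\begin{itemize}
\item $\#\{j:\gamma_j\ge\gamma_t\}=N+1-t$ still holds;
\item the future after a weak regeneration is the $\nu_{\b}$-walk conditioned to stay $\ge 0$;
\item $\mathbb{P}(S_1=1,\,S_i\ge 1\ \forall i\ge1)/\mathbb{P}(S_i\ge0\ \forall i)=\nu_{\b}(1)$, so $\mathbb{P}(R_2=1)=\nu_{\b}(1)$ still holds.
\end{itemize}

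The genuine difference is how the exponential tail of the gaps is obtained.
\begin{itemize}
\item \emph{The paper} never sets up a sequence of trials. It bounds the regeneration \emph{level} $Y_{n_1}$ by an induction over blocks of $d$ levels. At each stage it conditions on the first regeneration level $V^y$ of the walk stopped at $\tau_y$, a quantity determined at a stopping time, and uses the return probabilities $u_0,u_1$. It then converts this to a time tail via $\{Y_{n_1}\ge y\}=\{n_1\ge \tau_y\}$ and linear growth. The i.i.d.\ structure is proved separately by decomposing the conditioned path at $n_1$. This sidesteps the fact that regeneration times are not stopping times.
\item \emph{Your route} is the classical geometric-trials scheme. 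New maxima and backtrack times serve as stopping times of the unconditioned walk, the number of trials is geometric with success probability $1-\b^{-2}$, and conditioning on an event of probability $1-\b^{-2}$ costs only a factor $(1-\b^{-2})^{-1}$. This also works and gives time tails directly.
\end{itemize}

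The step you left implicit is the exponential-moment bookkeeping. Trial durations are correlated with trial outcomes, so you need, for some small $\lambda>0$ and uniformly in the past,
$$\mathbb{E}\bigl[e^{\lambda T}\mathbf{1}_{\{\text{trial fails}\}}\mid \text{past}\bigr]\le\rho<1 .$$
This follows from the failure probability $\b^{-2}$ together with exponential tails for the backtrack duration and depth on the failure event.

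One statement needs correcting: $R_1-Q_N$ is not an independent delay. The first regeneration of the extended path must clear the prefix maximum, i.e.\ occur at relative level above $L_N=\max_{i\le Q_N}\gamma_i-\gamma_{Q_N}$, and $L_N$ is correlated with $Q_N$. The tail estimate survives for the following reasons:
\begin{itemize}
\item given $L_N$, the delay is at most a sum of $L_N+1$ regeneration gaps of the attached walk;
\item these gaps are independent of $L_N$ and exponentially tailed;
\item $L_N\le Q_N$ is exponentially tailed, so a random-sum bound applies.
\end{itemize}
This is how the paper treats $m_1=Q_N+n_K$. There $K$ is a stopping time for $\sigma(Q_N,L_N,\text{first } i \text{ gaps})$, and this same fact is what yields independence of $R_1$ from $(R_j)_{j\ge 2}$. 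Your argument should derive that independence this way rather than from the delay being independent.
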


\begin{proof} [Proof of Proposition \ref{lem:moment bound simpler chain}]
When $i=0$, the left side of \eqref{eq:ryjutuymyu} equals $1$. In the rest of the proof, we assume $i\geq1$ and the notation from  Lemma \ref{lem:regeneration indices MAIN}. 
For $i\in\llbracket1,N+1\rrbracket$, we let $A=\max\left\{j\in\mathbb{Z}_{\geq0}: R_1+\dots+R_j\leq N+1-i\right\}\leq K$ and $J=K-A\in\mathbb Z_{\geq0}$. Note that the random variables $A$ and $J$ depend on $i$, and that on the event $\{K\geq1\}$, the indices among $\mathrm n_1,\dots,\mathrm n_K$ which lie in $\llbracket0,i-1\rrbracket$ are exactly $\mathrm n_1,\dots,\mathrm n_J$. 
For clarity, we split the rest of the proof into two steps.\\

\noindent\textbf{Step 1.} In this step, we conclude the proof of the lemma assuming the following claim: for each $C_0\in\mathbb N$ and $\rho\in(0,1)$, there exists a finite
constant $C_2$ depending only on $\a,\b,C_0,\rho$, such that for $N\in\mathbb N$,
$i\in\llbracket1,N+1\rrbracket$,
\begin{equation}\label{eq:assumestep1bound}
    \mathbb E\left[{\bf1}_{\{J\geq1\}}R_{A+1}^{C_0}\right]\leq C_2,\quad \mathbb E\left[
{\bf1}_{\{J\geq1\}}\sum_{j=2}^{J}\rho^{j-1}R_{A+j}^{C_0}
\right]\leq C_2.
\end{equation} 

We begin by defining a random $0$-$1$ sequence  $\bar{\boldsymbol{\mathsf{S}}}=(\bar{ \mathsf S}(j))_{j \in \llbracket 1,N\rrbracket}\in\{0,1\}^{N+1}$. 
On the event $\{K=0\}$, we set $\bar{\mathsf{S}}(j)=0$ for $j\in\llbracket0,N\rrbracket$. On the event $\{K\geq1\}$, we set
\[
\bar{\mathsf{S}}(j)={\bf1}_{\{j\in\{\mathrm n_1,\dots,\mathrm n_K\}\}}\quad\mbox{ for }j\in\llbracket0,N\rrbracket.
\]
By the first part of Lemma~\ref{lem:regeneration indices MAIN}, we have $\mathsf{S}^{\bs(\bg)}(j)\geq \bar{\mathsf{S}}(j)$ for $j\in\llbracket0,N\rrbracket$. By Lemma \ref{lem:coupling birth death} (\romannum{3}), \begin{equation}\label{eq:ytujmndt}\mathscr{X}^{\boldsymbol{\mathsf{S}}^{\bs(\bg)} }(i)\preceq \mathscr{X}^{\bar{\boldsymbol{\mathsf{S}}} }(i).\end{equation}
We define random variables $Y_1,Y_2,Y_3$. On $\{J=0\}$, set $Y_1=Y_2=Y_3=0$. On $\{J\geq1\}$, set $\bar{\boldsymbol{\mathsf S}}^{(1)}\in\{0,1\}^{\mathrm n_1+1}$ 
\[
\bar{\mathsf S}^{(1)}(j)={\bf1}_{\{j=\mathrm n_1\}}\quad\mbox{ for }j\in\llbracket0,\mathrm n_1\rrbracket. 
\]
If $J=1$, set $\bar{\boldsymbol{\mathsf S}}^{(2)}=\emptyset$; if $J\geq2$, define $\bar{\boldsymbol{\mathsf S}}^{(2)}\in\{0,1\}^{\mathrm n_J-\mathrm n_1}$ by
\[
\bar{\mathsf S}^{(2)}(j)={\bf1}_{\{j\in\{\mathrm n_2-\mathrm n_1-1,\dots,\mathrm n_J-\mathrm n_1-1\}\}} 
\quad\mbox{ for } j\in\llbracket0,\mathrm n_J-\mathrm n_1-1\rrbracket.
\]
If $i-\mathrm n_J-1=0$, set $\bar{\boldsymbol{\mathsf S}}^{(3)}=\emptyset$; if $i-\mathrm n_J-1\geq1$, define $\bar{\boldsymbol{\mathsf S}}^{(3)}\in\{0,1\}^{i-\mathrm n_J-1}$ to be the all-zero sequence. We use the convention that $\mathscr X^{\emptyset}(0)=0$ for the empty sequence. Conditionally on the regeneration data
\begin{equation*}
    \mathcal{G}=\sigma(K,\mathrm{n}_1,\dots,\mathrm{n}_K) , 
\end{equation*}
and on the event $\{J\geq1\}$, let $Y_1,Y_2,Y_3$ have marginal laws
\[
Y_1\overset{d}{=}\mathscr{X}^{\bar{\boldsymbol{\mathsf S}}^{(1)}}(\mathrm n_1+1),\quad
Y_2\overset{d}{=}\mathscr{X}^{\bar{\boldsymbol{\mathsf S}}^{(2)}}(\mathrm n_J-\mathrm n_1),\quad
Y_3\overset{d}{=}\mathscr{X}^{\bar{\boldsymbol{\mathsf S}}^{(3)}}(i-\mathrm n_J-1),
\]
and couple these variables with $\mathscr{X}^{\bar{\boldsymbol{\mathsf S}}}(i)$ using the corollary of Lemma \ref{lem:coupling birth death} (\romannum{4}) so that
\begin{equation}\label{eq:ntynhymjtu}
\mathscr{X}^{\bar{\boldsymbol{\mathsf S}}}(i)
\leq
Y_1+Y_2+Y_3\quad\mbox{ on }\{J\geq1\}.
\end{equation}
Therefore,
\begin{equation}\label{eq:betbsetesbhe}
    \begin{split}
        &\mathbb{E}\left[z^{\mathscr{X}^{\boldsymbol{\mathsf{S}}^{\bs(\bg)} }(i)}\right] \leq
        \mathbb{E}\left[z^{\mathscr{X}^{\bar{\boldsymbol{\mathsf{S}}} }(i)}\right]
        =\mathbb{E}\left[z^{\mathscr{X}^{\bar{\boldsymbol{\mathsf{S}}} }(i)}{\bf1}_{\{J=0\}}\right]+
        \mathbb{E}\left[z^{\mathscr{X}^{\bar{\boldsymbol{\mathsf{S}}} }(i)}{\bf1}_{\{J\geq1\}}\right] \\
        &\leq \mathbb{E}\left[z^{2\mathscr{X}^{\bar{\boldsymbol{\mathsf{S}}} }(i)} \right]^{1/2}\mathbb{P}(J=0)^{1/2}  
     +\mathbb{E}\left[z^{3Y_1}{\bf1}_{\{J\geq 1\}}\right]^{1/3}
         \mathbb{E}\left[z^{3Y_2}{\bf1}_{\{J\geq 1\}}\right]^{1/3}
         \mathbb{E}\left[z^{3Y_3}{\bf1}_{\{J\geq 1\}}\right]^{1/3}.
    \end{split}
\end{equation}
where the first step uses \eqref{eq:ytujmndt}, the second step is a splitting, the third step uses H\"older's inequality and \eqref{eq:ntynhymjtu}. 

It remains to bound the four factors on the right side of \eqref{eq:betbsetesbhe}. For the first factor, 
\begin{equation}\label{eq:nyjdrynse5y0}
\mathbb E\left[
z^{2\mathscr{X}^{\bar{\boldsymbol{\mathsf S}}}(i)}
\right]
\mathbb P(J=0) \leq \mathbb E\left[
z^{2\mathscr{X}^{\bar{\boldsymbol{\mathsf S}}}(i)}
\right]\left(\mathbb P(K=0)+\mathbb P(K\geq1,\mathrm n_1\geq i)\right) \leq
C_3 z^{2i^{3/4}} \left(C_1e^{-c_1N}+C_1e^{-c_1i}\right)\leq C_4,
\end{equation}
where $C_4$ depends only on $q$, $\a$, $\b$, and $z$. Here, in the first step we use $\{K\geq1,J=0\}\subset\{\mathrm n_1\geq i\}$, and in the second step we use Lemma \ref{lem:apriori bound birth death chain}   with $z^2$ in place of $z$, combined with the second part of Lemma~\ref{lem:regeneration indices MAIN}.

The next two block factors on the right side of \eqref{eq:betbsetesbhe} (involving $Y_1$ and $Y_2$, respectively) will be bounded using Lemma \ref{lem:block moment bound simpler chain arbitrary start}. Consider a deterministic block made of \(h_0\) zeros followed by a one, then \(h_1\) zeros followed by a one, and so on, ending after \(h_{\ell-1}\) zeros followed by a one. If the current state at the beginning of a zero run is \(y\), the evolution through \(h\) zeros has the same law as \(y+Q^y(h)\). Therefore the endpoint of such a block has the same law as \(Z(\ell)\) in Lemma \ref{lem:block moment bound simpler chain arbitrary start} with \(p=1-q\) and with the corresponding initial state. 

Next, we will use Lemma \ref{lem:block moment bound simpler chain arbitrary start} for $x=0$, $p=1-q$, with $z^3$ in place of $z$, and with constants $C_5\in\mathbb{N}$ and $\rho\in(0,1)$. For the first block on \(\{J\geq1\}\), there are \(\mathrm n_1\) zeros followed by a terminal one. We get that
\[
\mathbb E\left[
z^{3\mathscr{X}^{\bar{\boldsymbol{\mathsf S}}^{(1)}}(\mathrm n_1+1)}
\,\Big{|}\, \mathcal G
\right]
\leq C_5\left(1+ \mathrm{n}_1^{C_5}\right)
\quad\mbox{ on }\{J\geq1\}.
\]
Combining with \(\{J\ge1\}\subset \{K\ge1\}\) and that $\mathrm{n}_1$ has exponential tails from Lemma~\ref{lem:regeneration indices MAIN}, we obtain
\begin{equation}\label{eq:nyjdrynse5y1}
   \mathbb{E}\left[z^{3Y_1}{\bf1}_{\{J\geq 1\}}\right] \leq C_6,
\end{equation}
for a finite constant $C_6$ depending only on $q$, $\a$, $\b$, and $z$.

For the second block, the endpoint is \(0\) when \(J=1\).  When \(J\geq2\),
the successive zero-run lengths before the ones in
\(\bar{\boldsymbol{\mathsf S}}^{(2)}\) are $\mathrm n_2-\mathrm n_1-1,\mathrm n_3-\mathrm n_2-1,\dots,\mathrm n_J-\mathrm n_{J-1}-1$. 
For \(j\in\llbracket2,J\rrbracket\),  
$\mathrm n_j-\mathrm n_{j-1}=R_{K+2-j}$.
Thus,  
\begin{equation*} 
\mathbb E\left[
z^{3\mathscr{X}^{\bar{\boldsymbol{\mathsf S}}^{(2)}}(\mathrm n_J-\mathrm n_1)}
\Bigm| \mathcal G
\right] \leq
C_5\left(
1+\sum_{j=2}^{J}\rho^{J-j+1}R_{K+2-j}^{C_5}
\right)
=
C_5\left(
1+\sum_{j=2}^{J}\rho^{j-1}R_{A+j}^{C_5}
\right) \quad\mbox{ on }\{J\geq1\} ,
\end{equation*}
where the first step uses Lemma \ref{lem:block moment bound simpler chain arbitrary start} and the second step uses \(J=K-A\) and change of variables \(j\mapsto J+2-j\). Using the claim \eqref{eq:assumestep1bound}, we obtain 
\begin{equation}\label{eq:nyjdrynse5y2}
\mathbb E\left[
z^{3Y_2} {\bf1}_{\{J\geq1\}}
\right]\leq C_7,
\end{equation}
for a finite constant $C_7$ depending only on $q$, $\a$, $\b$, and $z$.  

For the third block, the sequence is all zeros.
Observe that $\mathscr{X}^{\bar{\boldsymbol{\mathsf S}}^{(3)}}(i-\mathrm n_J-1)
\overset{d}{=}Q^0(i-\mathrm n_J-1)$ conditionally on \(\mathcal G\) on the event \(\{J\geq1\}\), where \(Q^0\) is the chain from Lemma
\ref{lem:upward jumps U chain}. We have
\begin{equation}
\mathbb E\left[
z^{3\mathscr{X}^{\bar{\boldsymbol{\mathsf S}}^{(3)}}(i-\mathrm n_J-1)}
\Bigm| \mathcal G
\right]
\leq
1+C_8(i-\mathrm n_J-1)^{C_8}
\leq
1+C_8R_{A+1}^{C_8} 
\quad\mbox{ on }\{J\geq1\},
\end{equation} 
where the first step uses Lemma
\ref{lem:upward jumps U chain} with \(z^3\) in place of \(z\) and with constant $C_8\in\mathbb{N}$, and the second step uses the fact that $i-\mathrm n_J-1\leq R_{A+1}$, which follows from the definition of $A$ and $J$.
Using the claim \eqref{eq:assumestep1bound}, 
 \begin{equation}\label{eq:nyjdrynse5y3}
\mathbb E\left[
z^{3Y_3} {\bf1}_{\{J\geq1\}}
\right]\leq C_9,
\end{equation}
for a finite constant $C_9$ depending only on $q$, $\a$, $\b$, and $z$.   

Plugging \eqref{eq:nyjdrynse5y0}, \eqref{eq:nyjdrynse5y1}, \eqref{eq:nyjdrynse5y2}, and \eqref{eq:nyjdrynse5y3} into \eqref{eq:betbsetesbhe}, we obtain
\[
\mathbb{E} \left[z^{\mathscr{X}^{\boldsymbol{\mathsf{S}}^{\bs(\bg)} }(i)} \right]
\leq C_4^{1/2}+C_6^{1/3}C_7^{1/3}C_9^{1/3}\quad\mbox{ for all }N\in\mathbb{N},\ i\in\llbracket1,N+1\rrbracket,
\]
where the right side is a constant depending only on $q$, $\a$, $\b$, and $z$. Taking the supremum, we conclude the first step.\\

\noindent\textbf{Step 2.} In this step, we prove claim \eqref{eq:assumestep1bound}, which we restate as follows:  for each $C_0\in\mathbb N$ and $\rho\in(0,1)$, there exists a finite
constant $C_2$ depending only on $\a,\b,C_0,\rho$, such that for all $N\in\mathbb N$ and
$i\in\llbracket1,N+1\rrbracket$,
\begin{equation}\label{eq:assumestep1bound again}
    \mathbb E\left[{\bf1}_{\{J\geq1\}}R_{A+1}^{C_0}\right]\leq C_2,\quad \mathbb E\left[
{\bf1}_{\{J\geq1\}}\sum_{j=2}^{J}\rho^{j-1}R_{A+j}^{C_0}
\right]\leq C_2,
\end{equation} 
where we recall $A=\max\left\{j\in\mathbb{Z}_{\geq0}: R_1+\dots+R_j\leq N+1-i\right\}$ and $J=K-A$.
In the rest of the proof, we denote $d=N+1-i$ and $m_j=R_1+\dots+R_j$ for $j\in\mathbb{Z}_{\geq0}$. Then we have $0=m_0<m_1<\dots$ and $A=\max\left\{j\in\mathbb{Z}_{\geq0}: m_j\leq d\right\}$. We begin by proving the first inequality in \eqref{eq:assumestep1bound again}. For each $y\in\mathbb{N}$, we have
\begin{equation*}
    \begin{split}
        &\mathbb P(J\geq1,\ R_{A+1}\geq y)
        \leq\sum_{a=0}^{\infty}\mathbb P(m_a\leq d<m_{a+1},\ R_{a+1}\geq y)  \\
& \leq C_1e^{-c_1y}+\sum_{a=1}^{\infty}\sum_{\ell=0}^{\infty}
\mathbb P\left(m_a=d-\ell,\ R_{a+1}\geq \max(y,\ell+1)\right)\\
&\leq C_1e^{-c_1y}+
C_1\sum_{\ell=0}^{\infty}
e^{-c_1\max(y,\ell+1)}
\sum_{a=1}^{\infty}\mathbb P(m_a=d-\ell) \\
& \leq C_1e^{-c_1y}+
C_1\sum_{\ell=0}^{\infty}
e^{-c_1\max(y,\ell+1)}  \\
 &\leq C_{2}e^{-c_{2}y},
    \end{split}
\end{equation*}
for some constants $C_{2},c_{2}>0$ depending only on $\a,\b$. Here, the first step uses $A+1=\min\{j\in\mathbb N:m_j>d\}$; 
the second step uses the tail bounds from Lemma~\ref{lem:regeneration indices MAIN}; the third step uses change of variables $\ell=d-m_a$; the fourth step uses that $m_a$ is $\sigma(R_1,\ldots,R_a)$-measurable,  
$R_{a+1}$ is independent of $\sigma(R_1,\ldots,R_a)$, and again Lemma~\ref{lem:regeneration indices MAIN}; the fifth step uses the fact that $(m_a)_{a\in\mathbb{Z}_{\geq0}}$ is strictly increasing; and the sixth step is a change of the identity form. Therefore,
\[ 
\mathbb E\left[{\bf1}_{\{J\geq1\}}R_{A+1}^{C_0}\right]
=
\sum_{y=1}^{\infty}y^{C_0}\mathbb P(J\geq1,\ R_{A+1}=y)\\
\leq
\sum_{y=1}^{\infty}y^{C_0}\mathbb P(J\geq1,\ R_{A+1}\geq y)\\
\leq
C_{2}\sum_{y=1}^{\infty}y^{C_0} e^{-c_{2}y}<\infty ,
\]
where the right side is finite and depend only on $C_0$, $\a$ and $\b$. We conclude the first inequality of \eqref{eq:assumestep1bound again}.

We next prove the second inequality of \eqref{eq:assumestep1bound again}.
We have
\begin{equation}
    \begin{split}
       & \mathbb E\left[
{\bf1}_{\{J\geq1\}}\sum_{j=2}^{J}\rho^{j-1}R_{A+j}^{C_0}
\right]
\leq
\sum_{j=2}^{\infty}\rho^{j-1}
\mathbb E\left[{\bf1}_{\{J\geq1\}}R_{A+j}^{C_0}\right]
=\sum_{j=2}^{\infty}\rho^{j-1}
\sum_{a=0}^{\infty}
\mathbb E\left[{\bf1}_{\{J\geq1\}\cap\{A=a\}}R_{a+j}^{C_0}\right]\\
&=\sum_{j=2}^{\infty}\rho^{j-1}
\sum_{a=0}^{\infty}\mathbb P(J\geq1,\ A=a)\,\mathbb E\left[R_2^{C_0}\right]
=\sum_{j=2}^{\infty}\rho^{j-1}\mathbb P(J\geq1)\,\mathbb E\left[R_2^{C_0}\right]\\
&\leq \frac{\rho}{1-\rho}\cdot \sum_{y=1}^{\infty}y^{C_0}\mathbb P(R_2= y)\leq \frac{\rho}{1-\rho}\cdot \sum_{y=1}^{\infty}y^{C_0} C_1e^{-c_1y}<\infty,
    \end{split}
\end{equation}
where the right side is finite and depend only on $C_0$, $\rho$, $\a$ and $\b$. Here, in the first step we use linearity of expectation; in the second step we decompose according to the value of $A$; in the third step we note that the event $\{J\geq1\}\cap\{A=a\}$ belongs to
$\sigma(R_1,\ldots,R_{a+1})$, while $R_{a+j}$ is independent of this $\sigma$-field and has the same law as $R_2$; in the  fourth step we sum over $a$; in the fifth step we decompose according to the value of $R_2$, and in the sixth step we use Lemma~\ref{lem:regeneration indices MAIN}. We conclude the second inequality of \eqref{eq:assumestep1bound again}.
\end{proof}

\section{Approximation of the stationary distribution of the open ASEP}\label{sec:Approx}

In this section, we prove Theorem \ref{thm:ApproxHighLow}, which is our main approximation result of the stationary measure of open ASEP in the high and low density phases of the shock region. 
For clarity, we restate Theorem \ref{thm:ApproxHighLow} below. Recall that we write $\mu_N^{(q,\a,\b)}$ for the stationary distribution of the open ASEP. 
\begin{theorem}\label{thm:ApproxHighLow again}
Consider the open ASEP on the lattice $\llbracket1,N\rrbracket$ with jump rates $\alpha,\beta,\gamma,\delta$ and $q$ satisfying \eqref{eq:conditions open ASEP}. Assume Liggett's condition \eqref{eq:Liggett's condition} holds and that the open ASEP is in the shock region $\a\b>1$, where $\a=(1-\alpha)/\alpha$ and $\b=(1-\beta)/\beta$. Then there exist constants $C,c>0$ depending only on $q$, $\a$ and $\b$, such that for each $N\geq m\geq1$, the following hold. 
\begin{enumerate}
    \item [(\romannum{1})] In the high density phase $\b > \max(\a,1)$, we have
\begin{equation}\label{eq:approx HD}
    \TV{ \mu_N^{(q,\a,\b)}\left( (\tau_i)_{i \in\llbracket m,N\rrbracket} \in \cdot \,\right) - \ber_{\llbracket m,N\rrbracket}\left(\b/(1+\b) \right) }\leq Ce^{-cm}.
\end{equation} 
\item [(\romannum{2})] In the low density phase $\a > \max(\b,1)$, we have      
\begin{equation}\label{eq:approx LD}
    \TV{ \mu_N^{(q,\a,\b)}\left( (\tau_i)_{i \in\llbracket 1,m\rrbracket} \in  \cdot \,\right) - \ber_{\llbracket 1,m\rrbracket}\left(1/(1+\a) \right) }\leq Ce^{-c(N-m)}.
\end{equation} 
\end{enumerate}
\end{theorem}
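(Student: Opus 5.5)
We will prove part (i) and then obtain part (ii) from it by particle–hole symmetry. Reflecting space via $i\mapsto N+1-i$ and exchanging particles with holes maps the open ASEP with parameters $(\alpha,\beta,\gamma,\delta)$ to the one with $(\beta,\alpha,\delta,\gamma)$. This preserves Liggett's condition \eqref{eq:Liggett's condition}, swaps $\a$ and $\b$, and carries $\ber(\b/(1+\b))$ to $\ber(1/(1+\a))$. Site $m$ becomes site $N+1-m$, so the error $Ce^{-cm}$ becomes $Ce^{-c(N+1-m)}$, which is of the required form after adjusting $C$.

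\textbf{Step 1: change of measure.} We work under the high density phase. By Theorem \ref{thm:characterization stationary measure ASEP}, $\mu_N^{(q,\a,\b)}$ is the image of $\mathbb{P}^{(q,\a,\b)}$ under the coin-flip map. Comparing \eqref{eq:PNab} with \eqref{eq:PNab TASEP} gives
\[
\frac{d\mathbb{P}^{(q,\a,\b)}}{d\mathbb{P}^{(0,\a,\b)}}(\bg)=\frac{W_{\bs(\bg)}(\a\b)}{\mathbb{E}^{(0,\a,\b)}[W_{\bs(\bg)}(\a\b)]}.
\]
Since $\a\b>1$, Proposition \ref{prop:moment bound Bryc} with $z=\a\b$ and $k=2$ bounds the second moment of the numerator uniformly in $N$. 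The denominator is at least $W_{\bs}(0)$-type quantities; more simply, Jensen's inequality gives $\mathbb{E}[W]\geq \mathbb{E}[z^{\mathbb{E}X}]\ge1$, since $z>1$ and $X\ge0$. Hence for any event $B$, Cauchy–Schwarz yields $\mathbb{P}^{(q,\a,\b)}(B)\le C\,\mathbb{P}^{(0,\a,\b)}(B)^{1/2}$. This transfer step is cheap; the whole difficulty lies in the TASEP estimate.

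\textbf{Step 2: TASEP coupling.} Under $\mathbb{P}^{(0,\a,\b)}$, or equivalently under $\mathrm{P}^{(0,\a,\b)}$ up to $Ce^{-cN}$ by Lemma \ref{lem:TV distance replaced measure}, we use the Denisov decomposition from Lemma \ref{lem:decomposition TASEP}. After $Q_N$ the path is a $\nu_{\b}$-walk conditioned to stay $\geq1$, and $Q_N$ has exponential tails by \eqref{eq:rgregrt}. We couple this conditioned walk with an unconditioned $\nu_{\b}$-walk started at the same point. The two agree on the event that the free walk stays above its starting level after the first step, and on the complementary event the conditioning is felt only before the free walk escapes; this is the content of the argument in Lemma \ref{lem:TV difference positive lazy random walks}. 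We then define $G_m=\{Q_N< m/2\}$. On $G_m$, we use the following local regeneration argument: the conditioned walk from time $Q_N$ has increments after time $m$ that are close in total variation to i.i.d. $\nu_{\b}$ increments, with error at most $\mathbb{P}_h(\tau<\infty)\le\b^{-2h}$ where $h\approx \kappa m/2$ is the height reached by time $m$ (Lemma \ref{lem:basic hitting monotone coupling}). More precisely, conditioning an infinite positive walk on its path up to time $m$ with height $h$ leaves the future distributed as a free walk conditioned to avoid $0$, whose total variation distance to a free walk is at most $\b^{-2h}$. Applying the coin-flip map to i.i.d. $\nu_{\b}$ increments, with $\nu_{\b}(1)\propto\b$, $\nu_{\b}(0)\propto 2$, $\nu_{\b}(-1)\propto\b^{-1}$, produces exactly $\ber(\b/(1+\b))$. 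Therefore $\mathbb{P}^{(0,\a,\b)}(D)\le Ce^{-cm}$, where $D$ is the failure event of a coupling between $(\tau_i)_{i\ge m}$ and the Bernoulli product measure.

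\textbf{Step 3: transfer to ASEP (main obstacle).} The total variation distance bound does not directly transfer through the inequality $\mathbb{P}^{(q)}(B)\le C\mathbb{P}^{(0)}(B)^{1/2}$. The reason is that the Bernoulli target is not a law on the same path space, and the coupling randomness must be independent of the Bryc weight. To handle this, we realize the coupling as a function of $\bg$ together with auxiliary randomness $U$ whose conditional law given $\bg$ does not depend on $q$; this holds because the Doob-transform coupling only uses the path. The resulting output $\eta$ has law $\ber$ under $\mathbb{P}^{(0)}\otimes U$. Under the $q$-tilt, however, $\eta$'s law changes. We therefore need a further argument: the tilt $W$ depends on the path after time $m$ only weakly. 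Conditioning on $\mathcal F$, the $\sigma$-field generated by the path up to time $m$ and the event $\{Q_N<m/2\}$, we must show that $W/\mathbb{E}[W\mid\mathcal F]$ is close to $1$ in $L^1$. This should follow from the Markov, contractive structure of the birth–death chain. By the coupling of Lemma \ref{lem:coupling birth death}(iv) and the geometric forgetting rate $\rho$ in Lemma \ref{lem:block moment bound simpler chain arbitrary start}, chains started from different states coalesce at regeneration times $\mathrm n_i$ at exponential speed, so $W$ factorizes as a left part times a right part up to an error $e^{-cm}$. Reading the composition from the top level $\sigma_r$ down corresponds to the part of the path near $\gamma_N$. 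If $\mathsf{S}$ is indexed from the top, the initial segment of the chain sees the high part of the path, that is, the right end; but near $T_N$ the levels are visited by both ends. Making this factorization rigorous, uniformly in $N$, is where I expect the main work to lie. My first attempt would be to bound $\mathbb{E}|W-W'|$, where $W'$ is computed from a modified path whose post-$m$ portion is resampled, using the moment bounds of Section \ref{sec:MomentBryc}.
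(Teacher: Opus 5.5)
Your reduction steps are sound and coincide with the paper's: particle--hole symmetry for (ii), the pushforward of $\nu_{\b}$ to $\ber(\b/(1+\b))$, and the TASEP estimate via the Denisov decomposition plus the $\b^{-2h}$ bound for the walk conditioned to avoid a level (this is Lemma~\ref{lem:approx of paths HD TASEP}). You also correctly see that $\mathbb{P}^{(q,\a,\b)}(B)\le C\,\mathbb{P}^{(0,\a,\b)}(B)^{1/2}$ only controls events of small probability and cannot transfer a total variation estimate.

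But the entire $q>0$ case then rests on Step~3, which you say ``should follow'' and do not prove. As written, the proposal proves the theorem only for $q=0$. What you need is that the Bryc weight $W=W_{\bs(\bg)}(\a\b)$ decorrelates from $\sigma((\gamma_i-\gamma_{i-1})_{i\in\llbracket m+1,N\rrbracket})$, for instance $\mathbb{E}^{(0,\a,\b)}\bigl|W-\mathbb{E}^{(0,\a,\b)}[W\mid \gamma_0,\dots,\gamma_m]\bigr|\le Ce^{-cm}$. Together with H\"older and your Step~2 this would give the theorem, and it would even spare the paper's case split $\mathbb{P}^{(0,\a,\b)}(A)\gtrless e^{-\delta m}$ in the proof of Proposition~\ref{prop:suffices to prove}.

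Your heuristic for why it holds is not the right mechanism, though: $W$ does not factorize into a left part times a right part. Instead, the endpoint $X^{\boldsymbol{\mathsf S}^{\bs}}(N+1)$ forgets the \emph{initial} portion of $\boldsymbol{\mathsf S}^{\bs}$ altogether; that portion encodes the top levels, i.e.\ the path after time $m$. So $W$ is approximately a function of the last $\ell\approx\eta m$ symbols of $\boldsymbol{\mathsf S}^{\bs}$. These are determined by $(\sigma_0,\dots,\sigma_{\ell})$, and hence, off an event of probability $e^{-cm}$, by the prefix $(\gamma_i)_{i\le m}$.

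Proving this forgetting is the real work, and the tools you cite do not supply it. Lemma~\ref{lem:block moment bound simpler chain arbitrary start} is a Lyapunov-type contraction of $\mathbb{E}[z^{Z}]$, and Lemma~\ref{lem:coupling birth death}(iv) is a shift domination; neither gives coalescence of chains started from different states. What is needed:
(a) uniform-in-$N$ exponential moments of the \emph{intermediate} state $X^{\boldsymbol{\mathsf S}^{\bs(\bg)}}(N-\ell)$ (Proposition~\ref{lem:moment bound simpler chain} at all times $i$, transferred from $\mathrm{P}^{(0,\a,\b)}$ to $\mathbb{P}^{(0,\a,\b)}$), so that this state is at most $\varepsilon\ell$ except with probability $O(2^{-\varepsilon\ell})$;
(b) a monotone coupling of the time-inhomogeneous tail chains started at $x\le\varepsilon\ell$ and at $0$, dominated by the simplified chain $\mathscr{X}$ started at $\varepsilon\ell$, so that they merge once $\mathscr{X}$ hits $0$ (Lemma~\ref{lem:bound the TV distance by hitting});
(c) an estimate that $\mathscr{X}$ hits $0$ within the last $\ell$ symbols except with probability $e^{-c\ell}$. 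This requires that those symbols contain order $\ell$ ones at regeneration times with i.i.d.\ exponentially tailed gaps (the appendix), a drift-plus-descent-event argument (Lemma~\ref{lem:simpler chain hitting 0}), and $\varepsilon$ small enough that the cost $v^{\varepsilon\ell}$ is beaten (Proposition~\ref{lem:expectation of TV distance});
(d) the fact that $(\sigma_0,\dots,\sigma_{\lfloor\kappa m\rfloor})$ agrees with its prefix version off an event of probability $e^{-cm}$, while the prefix is nearly independent of the later increments (Lemma~\ref{lem:conditioning does not affect first compositions}).

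Finally, turning total variation closeness of the endpoint laws into closeness of $W=\mathbb{E}[z^{X}]$ needs Cauchy--Schwarz against the $(\a\b)^2$-moment bound. These steps make up Lemma~\ref{lem:conditioning does not change much endpoint} and its supporting results. Without them your Step~3 remains a plan rather than a proof.
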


We show that Theorem \ref{thm:ApproxHighLow again} can be reduced to the following approximation result of the path measure $\mathbb{P}^{(q,\a,\b)}$ on $\lrw_N$ as in \eqref{eq:PNab}. Recall that we write $\nu_{\b}$ for $\b>0$ to denote the probability measure on $\{-1,0,1\}$ given by
    \[
    \nu_{\b}(1)=\frac{\b}{\b+\b^{-1}+2},\quad\nu_{\b}(0)=\frac{2}{\b+\b^{-1}+2},\quad \nu_{\b}(-1)=\frac{\b^{-1}}{\b+\b^{-1}+2},
    \]
and $\nu_{\b}^{\otimes k}$ for $k\in\mathbb{N}$ to denote the product measure (of $k$ independent copies of $\nu_{\b}$) on $\{-1,0,1\}^k$.

\begin{proposition}\label{prop:suffices to prove}
    Assume $q\in[0,1)$, $\b > \max(\a,1)$ and $\a\b>1$. Consider the probability measure 
    \begin{equation}\label{eq:PNab again}
\mathbb{P}^{(q,\a,\b)}(\bg)=\frac{1}{Z_N^{(q,\a,\b)}}\frac{2^{H(\bg)}\b^{\gamma_N}}{(\a\b)^{\min(\bg)}} W_{\bs(\bg)}(\a\b)\quad\mbox{ for }\bg\in\lrw_N. 
\end{equation}
    Then there exist constants $C,c>0$ depending only on $q$, $\a$ and $\b$, such that for each $N\geq m\geq1$, 
\begin{equation}\label{eq:approx of path measure HD}
     \TV{ \mathbb{P}^{(q,\a,\b)}\left( (\gamma_{i}-\gamma_{i-1})_{i \in\llbracket m+1,N\rrbracket} \in \cdot \,\right) - \nu_{\b}^{\otimes (N-m)}    }\leq Ce^{-cm}.
\end{equation} 
\end{proposition}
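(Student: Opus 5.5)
We first treat the totally asymmetric case and then transfer it to general $q$ through the Bryc polynomials. For $q=0$ we argue with the replacement measure $\mathrm{P}^{(0,\a,\b)}$ of Definition \ref{def:approx TASEP measure}. By Lemma \ref{lem:TV distance replaced measure} this costs $Ce^{-cN}\le Ce^{-cm}$ in total variation. Under $\mathrm{P}^{(0,\a,\b)}$ the path after $Q_N$ is a positive $\nu_{\b}$-walk with infinitely many steps. By \eqref{eq:rgregrt}, $Q_N\le m/2$ except with probability $Ce^{-cm}$. On $\{Q_N=k\}$ with $k\le m/2$, the increments on $\llbracket m+1,N\rrbracket$ are those of a $\nu_{\b}$-walk started at $1$ and conditioned never to return to level $0$, observed after time $m-k\ge m/2$. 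We compare this with the unconditioned walk. Take $S$ to be a $\nu_{\b}$-walk with $S_1=1$, and let $E_\infty$ be the event that it never returns to $0$, as in Lemma \ref{lem:TV difference positive lazy random walks}. The conditioned increments after time $t=m-k$ differ in law from i.i.d. $\nu_{\b}$ by at most $\mathbb{P}(E_\infty^c\cap\{\text{no return before } t\})/\mathbb{P}(E_\infty)$ plus the effect of the conditioning on the future. A cleaner formulation is as follows. Conditionally on $S_t=h$ and on no return before $t$, the future increments have the law $\nu_{\b}^{\otimes}$ conditioned on a walk from $h$ never hitting $0$. This event has probability $1-\b^{-2h}$ by Lemma \ref{lem:basic hitting monotone coupling}, so the total variation distance to $\nu_{\b}^{\otimes}$ is at most $\b^{-2h}/(1-\b^{-2h})$. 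Combined with $\mathbb{P}(S_t\le\kappa t)\le Ce^{-ct}$ from Lemma \ref{lem:basic hitting monotone coupling}, this gives a bound of $Ce^{-cm}$. This proves \eqref{eq:approx of path measure HD} for $q=0$, and in fact for the joint law together with the event $\{Q_N\le m/2\}$.

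For $q\in(0,1)$ we write $\mathbb{P}^{(q,\a,\b)}(\bg)=W_{\bs(\bg)}(\a\b)\,\mathbb{P}^{(0,\a,\b)}(\bg)/\mathbb{E}^{(0)}[W_{\bs(\bg)}(\a\b)]$. Since $W\ge W(0)>0$ and $\mathbb{E}^{(0)}W\ge$ something, the normalization is controlled: $\mathbb{E}^{(0)}W$ is bounded above by Proposition \ref{prop:moment bound Bryc} and bounded below by $W_{\bs}(0)$, which we need uniformly in $N$. Instead of relying on the explicit $W_{\bs}(0)$, it is simpler to keep the ratio form. For a test set $A$ of increments on $\llbracket m+1,N\rrbracket$ we need
\[
\Bigl|\mathbb{E}^{(0)}[W\,\mathbf 1_A]-\nu_{\b}^{\otimes(N-m)}(A)\,\mathbb{E}^{(0)}[W]\Bigr|\le Ce^{-cm}\,\mathbb{E}^{(0)}[W].
\]
The key step is to decouple $W$ from the increments far to the right. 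By Theorem \ref{thm:prob interpretation of Bryc polynomials}, $W$ is the generating function of $X^{\boldsymbol{\mathsf S}}(N+1)$. The sequence $\mathsf{S}^{\bs}$ reads the path from the top level downward, so in the high density phase the portion of the path after time $m/2$ determines the early entries of $\mathsf S$, whose effect on $X(N+1)$ is damped. We plan to approximate $W$ by a quantity $\widetilde W$ that depends only on $(\gamma_i)_{i\le m/2}$ together with the law of the future walk. Concretely, we condition on the $\sigma$-field of the first $m/2$ steps and compare the conditional law of the increments on $\llbracket m+1,N\rrbracket$, tilted by $W$, with the untilted one.

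We expect this decoupling to be the main obstacle, since the tilt $W$ is a global functional of the local times. The plan is to use the regeneration structure of Lemma \ref{lem:regeneration indices MAIN} together with the contraction of Lemma \ref{lem:block moment bound simpler chain arbitrary start}, under which the chain forgets its initial state at geometric rate $\rho^\ell$. Through the couplings of Lemma \ref{lem:coupling birth death}, this should bound $|W-\widetilde W|$ in $L^1$ by $Ce^{-cm}$ times a quantity with bounded moments. Cauchy–Schwarz with Proposition \ref{prop:moment bound Bryc} ($k=2$) then controls the errors. A fallback is to split on the event of $\mathbb{P}^{(0)}$-probability at least $1-Ce^{-cm}$ on which $W$ is close to its conditional version, and to handle the complement by Hölder with the moment bound.
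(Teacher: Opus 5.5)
Your $q=0$ argument is correct. It is the paper's proof of Lemma \ref{lem:approx of paths HD TASEP} run through $Q_N$ and the replacement measure $\mathrm{P}^{(0,\a,\b)}$ instead of through $T_N$ and the finitely conditioned walk, and it rests on the same hitting estimate from Lemma \ref{lem:basic hitting monotone coupling}. The reduction for $q\in(0,1)$ to bounding $\bigl|\mathbb{E}^{(0)}[W\mathbf{1}_A]-\nu_{\b}^{\otimes(N-m)}(A)\,\mathbb{E}^{(0)}[W]\bigr|$ is also fine. One correction: the lower bound on the normalization should not come from $W_{\bs}(0)$, which is not bounded below uniformly in $\bs$ (for the horizontal path it is of order $(1-q)^N$). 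It comes for free from $W_{\bs}(\a\b)=\mathbb{E}\bigl[(\a\b)^{X^{\boldsymbol{\mathsf S}^{\bs}}(N+1)}\bigr]\ge 1$, since $\a\b>1$.

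The genuine gap is the decoupling step. It is the entire difficulty of the case $q>0$, and you only sketch it. The mechanism you invoke does not deliver it. Lemma \ref{lem:block moment bound simpler chain arbitrary start} is a drift (Lyapunov) bound: it shows that exponential moments of a chain started high come down. It does not show that two copies of the chain on the same $\boldsymbol{\mathsf S}$, started from different states, become close in law. Likewise, the couplings of Lemma \ref{lem:coupling birth death} give domination, never coalescence.

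To get $\mathbb{E}^{(0)}|W-\widetilde W|\le Ce^{-cm}$ for a prefix-measurable $\widetilde W$ (for instance, the generating function of the tail chain started from $0$ at time $N-\ell$, $\ell\asymp m$, with the tail of $\boldsymbol{\mathsf S}$ read off from the prefix), you need four things.

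(i) A monotone coupling of $X$ from $x$ and from $0$ at time $N-\ell$, sandwiched under a dominating $\mathscr X$ from $h\ge x$, so that the two copies coalesce once $\mathscr X$ hits $0$ (Lemma \ref{lem:bound the TV distance by hitting}).

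(ii) A bound $Ce^{-c\ell}v^{h}$ on the probability that $\mathscr X$ avoids $0$ during the last $\ell$ steps. This needs a minorization step on top of the drift bound: runs of consecutive successful down-steps at the ones of $\boldsymbol{\mathsf S}$, each run having uniformly positive probability. It must then be averaged over the random tail of $\boldsymbol{\mathsf S}$ via the regeneration structure (Lemma \ref{lem:simpler chain hitting 0} and Proposition \ref{lem:expectation of TV distance}).

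(iii) Uniform-in-$N$ exponential moments of the chain at the intermediate time $N-\ell$, so that its state there is at most $\varepsilon\ell$ with high probability and $v^{\varepsilon\ell}$ is beaten by $e^{-c\ell}$. Proposition \ref{prop:moment bound Bryc} only controls time $N+1$; you need Proposition \ref{lem:moment bound simpler chain} transferred to $\mathbb{P}^{(0,\a,\b)}$.

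(iv) Three structural facts, as in Lemma \ref{lem:conditioning does not affect first compositions}: $\mathsf S(N-\ell),\dots,\mathsf S(N)$ is a function of the bottom parts $(\sigma_0,\dots,\sigma_{\ell+1})$ of $\bs(\bg)$; these agree with the bottom local times of the prefix outside an exponentially small event; and the prefix is nearly independent of the increments on $\llbracket m+1,N\rrbracket$.

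Your heuristic about which entries of $\boldsymbol{\mathsf S}$ matter is the right one, but none of (i)--(iv) is supplied. If they were, your $L^1$ formulation would actually be slightly cleaner than the paper's route. It is uniform in $A$, so it avoids the paper's split into $\mathbb{P}^{(0)}(A)\ge e^{-\delta m}$ (handled by the conditional total variation bound of Lemma \ref{lem:conditioning does not change much endpoint}) and $\mathbb{P}^{(0)}(A)<e^{-\delta m}$ (handled by Cauchy--Schwarz).
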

\begin{proof}[Proof of Theorem \ref{thm:ApproxHighLow again} assuming Proposition \ref{prop:suffices to prove}]
By Theorem \ref{thm:characterization stationary measure ASEP}, the stationary measure $\mu_N^{(q,\a,\b)}$ of open ASEP on $\{0,1\}^N$ is a pushforward of the measure $\mathbb{P}^{(q,\a,\b)}$ on $\lrw_N$. Specifically, at each site $i\in\llbracket1,N\rrbracket$, we set $\tau_i=1$ if the path $\bg$ takes a step $+1$ at time $i$; $\tau_i=0$ if $\bg$ takes a step $-1$ at time $i$; and $\tau_i$ to be a Bernoulli random variable with mean $1/2$ if $\bg$ takes a step $0$ at time $i$. Since the pushforward of $\nu_{\b}$ equals $\ber \left(\b/(1+\b) \right)$, we conclude \eqref{eq:approx HD}, hence Theorem \ref{thm:ApproxHighLow again} holds in the high density phase of the shock region.  
By the particle-hole duality of the open ASEP; see for example \cite[Lemma 2.7]{yang2025limits} and \cite[Section 4.2]{wang2023askey}, the stationary measures $\mu_N^{(q,\a,\b)}$ and $\mu_N^{(q,\b,\a)}$ are related by a transformation of occupation variables
\[
\widetilde{\tau}_i=1-\tau_{N+1-i}\quad\mbox{ for } i\in\llbracket1,N\rrbracket.
\] 
This transformation swaps the high density phase with the low density phase, and swaps $\ber \left(\b/(1+\b) \right)$ with $\ber \left(1/(1+\a)\right)$. We conclude \eqref{eq:approx LD}, and Theorem \ref{thm:ApproxHighLow again} holds in the low density phase of the shock region.  
\end{proof}

Hence, in the remainder of this section, we will focus on proving Proposition \ref{prop:suffices to prove}. 

\subsection{Path measure approximation for the open TASEP}

This subsection is devoted to establishing the following approximation result on the path measure for the open ASEP.  
\begin{lemma}\label{lem:approx of paths HD TASEP}
    Assume $\b > \max(\a,1)$ and $\a\b>1$. Consider the probability measure $\mathbb{P}^{(0,\a,\b)}$ on $\lrw_N$ as in \eqref{eq:PNab TASEP}. Then there exist constants $C,c>0$ depending only on $\a$ and $\b$, such that for each $N\geq m\geq1$, 
    \begin{equation}\label{eq:approx of path measure HD TASEP}
     \TV{ \mathbb{P}^{(0,\a,\b)}\left( (\gamma_{i}-\gamma_{i-1})_{i \in\llbracket m+1,N\rrbracket} \in  \cdot\, \right) - \nu_{\b}^{\otimes (N-m)}    }\leq Ce^{-cm}.
\end{equation}
\end{lemma}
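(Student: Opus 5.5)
We would work with the replacement measure $\mathrm{P}^{(0,\a,\b)}$ from Definition~\ref{def:approx TASEP measure} rather than with $\mathbb{P}^{(0,\a,\b)}$. By Lemma~\ref{lem:TV distance replaced measure} the two measures are within $Ce^{-cN}\le Ce^{-cm}$ in total variation. It therefore suffices to prove \eqref{eq:approx of path measure HD TASEP} with $\mathbb{P}^{(0,\a,\b)}$ replaced by $\mathrm{P}^{(0,\a,\b)}$.

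Under $\mathrm{P}^{(0,\a,\b)}$ the structure is as follows. The time $Q_N$ has exponential tails by \eqref{eq:rgregrt}. Conditionally on $Q_N=k$, the increments $(\gamma_{k+i}-\gamma_{k+i-1})_{i\ge1}$ are the increments of a positive $\nu_{\b}$-random walk with infinitely many steps, truncated at $N-k$. We split according to $\{Q_N< m/2\}$ and its complement; the complement has probability at most $C_1e^{-c_1m/2}$.

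On $\{Q_N=k\}$ with $k<m/2$, we must compare the law of the increments of the infinite positive walk $Y'$ over the window $\llbracket m-k+1,N-k\rrbracket$ with $\nu_{\b}^{\otimes(N-m)}$. Here $Y'$ is the $\nu_{\b}$-walk $S$ conditioned on $E_\infty=\{S_1=1,\ S_i\ge1\ \forall i\ge1\}$, and $\mathbb{P}(E_\infty)>0$ by Lemma~\ref{lem:basic hitting monotone coupling}. Write $\ell=m-k\ge m/2$, and let $A$ be any event depending only on the increments after time $\ell$. The key step is a decoupling via the Markov property at time $\ell$:
\[
\mathbb{P}(A\cap E_\infty)=\mathbb{E}\big[\mathbf 1_{E_\ell}\,\mathbb{P}_{S_\ell}(A\cap\{\text{never hit }0\})\big],
\]
where $E_\ell$ is the event from \eqref{eq:E} with $N=\ell$. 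Since the increments after $\ell$ are i.i.d.\ $\nu_{\b}$ and independent of $\mathcal F_\ell$, we have
\[
\mathbb{P}_{h}(A\cap\{\text{never hit }0\})=\nu_{\b}^{\otimes}(A)-\mathbb{P}_h(A\cap\{\text{hit }0\}),
\]
and the last term lies in $[0,\b^{-2h}]$. Hence
\[
\Big|\mathbb{P}(A\cap E_\infty)-\nu_{\b}^{\otimes}(A)\,\mathbb{P}(E_\ell)\Big|\le \mathbb{E}\big[\mathbf 1_{E_\ell}\b^{-2S_\ell}\big]\le \mathbb{P}(S_\ell\le\kappa\ell)+\b^{-2\kappa\ell}\le C e^{-c\ell},
\]
using Lemma~\ref{lem:basic hitting monotone coupling} with $\kappa=(\b-1)/(2(\b+1))$. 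The same bound with $A$ the full space gives $|\mathbb{P}(E_\infty)-\mathbb{P}(E_\ell)|\le Ce^{-c\ell}$. Dividing by $\mathbb{P}(E_\infty)$, which is bounded below, yields
\[
\sup_A|\mathbb{P}(A\mid E_\infty)-\nu_{\b}^{\otimes}(A)|\le Ce^{-c\ell}\le Ce^{-cm/2}.
\]
The truncation at step $N-k$ only marginalizes, so it does not affect this bound.

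Averaging over $k$ and adding the contribution of $\{Q_N\ge m/2\}$ together with the replacement error gives \eqref{eq:approx of path measure HD TASEP}. All constants depend only on $\a,\b$. We do not expect a genuine obstacle. The only care needed is the uniform lower bound on $\mathbb{P}(E_\infty)$ and the bookkeeping that the window starts at least $m/2$ steps after $Q_N$. In the alternative route that works directly with $\mathbb{P}^{(0,\a,\b)}$, the corresponding difficulty is the conditioning on the finite horizon $N-Q_N$; the replacement measure avoids this.
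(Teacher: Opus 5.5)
Your proof is correct, but it takes a different route from the paper's.

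\textbf{The paper's route.} The paper works directly with $\mathbb{P}^{(0,\a,\b)}$ and decomposes on the \emph{first} minimum location $T_N$, not the last one $Q_N$. Lemma~\ref{lem:decomposition TASEP} then reduces the problem to a $\nu_{\b}$-walk conditioned to stay non-negative up to the finite horizon $N-T_N$. That finite-horizon case is Lemma~\ref{lem:appro conditioned random walk}, proved in three moves:
\begin{itemize}
\item condition on $Y_m=h$;
\item bound the total variation cost of the remaining conditioning by $\b^{-2h}/(\b^2-1)$;
\item use the monotone coupling of Lemma~\ref{lem:basic hitting monotone coupling} to make $Y_m\ge\kappa m$ likely.
\end{itemize}

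\textbf{Your route.} You first pay $Ce^{-cN}$ to pass to $\mathrm{P}^{(0,\a,\b)}$, using Lemma~\ref{lem:TV distance replaced measure} and the tail bound \eqref{eq:rgregrt} for $Q_N$. You then handle the infinite positive walk by a decoupling identity for the \emph{unconditioned} walk at time $\ell$, and divide by $\mathbb{P}(E_\infty)=\nu_{\b}(1)(1-\b^{-2})$ only at the end. This dispenses with the monotone coupling and needs only the large-deviation bound for the free walk. The price is an extra dependence on the replacement-measure machinery of Section~\ref{sec:ReplacementP}.

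\textbf{What is shared.} The mechanism is the same in both arguments. Once the walk sits at height $h$, the residual conditioning perturbs the future increments by $O(\b^{-2h})$. The walk's linear drift makes $h\gtrsim m$ with exponentially high probability.

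\textbf{On your closing remark.} It overstates the difficulty of the finite horizon. Your decoupling works verbatim for the event $\{S_i\ge 0 \text{ for } i\le N-k\}$ arising under $\mathbb{P}^{(0,\a,\b)}$ given $T_N=k$, for two reasons:
\begin{itemize}
\item from height $h\ge0$, hitting $-1$ before a finite time has probability at most $\b^{-2(h+1)}$;
\item the conditioning event has probability $p^{(\b)}_{N-k}\ge 1-\b^{-2}$.
\end{itemize}
So the detour through $\mathrm{P}^{(0,\a,\b)}$ is harmless but not needed.
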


Lemma \ref{lem:approx of paths HD TASEP} will serve as a main input for the proof of Proposition \ref{prop:suffices to prove} in Section~\ref{sec:PropProof}, where we combine the approximation of the open TASEP path measure with suitable tail bounds on the endpoint of the underlying birth-death chain in the probabilistic characterization of the Bryc polynomial. As a preparation for the proof of Lemma \ref{lem:approx of paths HD TASEP}, we start with the following approximation result for the $\nu_{\b}$-random walk with $N$ steps.

\begin{lemma}\label{lem:appro conditioned random walk}
Assume $\b>1$. Let $\{Y_i\}_{i=0}^N$ be a $\nu_{\b}$-random walk with $N$ steps started at $Y_0=0$, conditioned to stay non-negative. 
Then there exist constants $C,c>0$ depending only on $\b$, such that for each $N\geq m\geq1$, 
    \begin{equation}
        \TV{ \mathbb{P}((Y_{i}-Y_{i-1})_{i\in\llbracket m+1,N\rrbracket} \in \cdot\, ) -\nu_{\b}^{\otimes (N-m)}} \leq Ce^{-cm}.
    \end{equation} 
\end{lemma}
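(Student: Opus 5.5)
The strategy is to compare the conditioned walk with an unconditioned $\nu_{\b}$-random walk $\{S_i\}_{i=0}^N$ started at $S_0=0$, using the Markov property at time $m$. Let $E_N^1=\{S_i\geq 0 \mbox{ for all } i\in\llbracket0,N\rrbracket\}$, so that $\{Y_i\}$ is $\{S_i\}$ conditioned on $E_N^1$. By Lemma \ref{lem:basic hitting monotone coupling}, $\mathbb{P}(E_N^1)\geq \mathbb{P}(E_\infty^1)=1-\b^{-2}>0$, uniformly in $N$.

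By the Markov property at time $m$, conditionally on $\mathcal{F}_m=\sigma(S_0,\dots,S_m)$ and on $\{S_i\geq0,\ i\le m\}$ with $S_m=h$, the law of the future increments $(S_i-S_{i-1})_{i\in\llbracket m+1,N\rrbracket}$ under the conditioning is that of $\nu_{\b}^{\otimes(N-m)}$ conditioned on the event $F_h=\{h+\text{partial sums}\geq0 \mbox{ up to step } N-m\}$. Hence the law of $(Y_i-Y_{i-1})_{i\in\llbracket m+1,N\rrbracket}$ is the mixture over $h$ of the measures $\nu_{\b}^{\otimes(N-m)}(\,\cdot\mid F_h)$, with mixing weights given by the law of $Y_m$. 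By convexity of total variation, it suffices to bound
\[
\sum_h \mathbb{P}(Y_m=h)\,\TV{\nu_{\b}^{\otimes(N-m)}(\,\cdot\mid F_h)-\nu_{\b}^{\otimes(N-m)}}.
\]
Each term satisfies $\TV{\nu(\cdot\mid F)-\nu}=\nu(F^c)\le \mathbb{P}_h(\tau_{-1}<\infty)=\b^{-2(h+1)}$, again by Lemma \ref{lem:basic hitting monotone coupling}. The identity $\TV{\nu(\cdot\mid F)-\nu}=\nu(F^c)$ is elementary: the conditioned measure exceeds $\nu$ exactly on $F$, by the factor $1/\nu(F)$.

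It remains to show that $\mathbb{E}[\b^{-2Y_m}]$ is exponentially small in $m$. Fix $\kappa=(\b-1)/(2(\b+1))$ and split according to whether $Y_m\le\kappa m$. On $\{Y_m>\kappa m\}$ the contribution is at most $\b^{-2\kappa m}$. For the complementary event, note that $\{Y_i\}$ is exactly the process $S'$ of Lemma \ref{lem:basic hitting monotone coupling} with $h=1$. The last statement of that lemma then gives $\mathbb{P}(Y_m\le\kappa m)\le Ce^{-cm}$.

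Combining the two pieces yields the bound $Ce^{-cm}+\b^{-2\kappa m}$, with constants depending only on $\b$. The only step needing care is the mixture decomposition at time $m$, which requires checking that conditioning on $E_N^1$ factorizes through $\mathcal{F}_m$ and the event $F_{S_m}$. This follows directly from the Markov property, since $E_N^1=\{S_i\ge0,\ i\le m\}\cap\{\text{future stays}\ge0\}$. I expect no real obstacle beyond this bookkeeping.
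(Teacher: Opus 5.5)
Your proof is correct and is essentially the paper's argument: you decompose on $Y_m=h$ via the Markov property, bound each conditional total variation by the probability that a $\nu_{\b}$-walk started at $h$ ever drops below $0$, and use the monotone coupling of Lemma~\ref{lem:basic hitting monotone coupling} to show that $Y_m\ge\kappa m$ with exponentially high probability. The only difference is that you use the exact identity $\TV{\nu(\cdot\mid F)-\nu}=\nu(F^{\complement})$ where the paper uses the bound $\nu(F^{\complement})/\nu(F)$, which is a harmless and slightly sharper variation.
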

\begin{proof}
Let $\kappa=(\b-1)/(2(\b+1))$.
We write $\{S_i\}_{i=0}^N$ to denote an unconditioned $\nu_{\b}$-random walk started at $S_0=0$. By Lemma \ref{lem:basic hitting monotone coupling}, there exist $C_1,c_1>0$ depending only on $\b$
such that for each $N\ge m\ge1$, we have 
$\mathbb{P}(Y_m<\kappa m)\leq\mathbb{P}\left(S_m<\kappa m \right) 
\le C_1e^{-c_1m}$.
Therefore, for each $A\subset\{-1,0,1\}^{N-m}$, we have
\begin{equation}\label{eq:brbtgrb1}
    \begin{split}
        &\mathbb{P}\left((Y_{i}-Y_{i-1})_{i\in\llbracket m+1,N\rrbracket} \in A\right)-\nu_{\b}^{\otimes (N-m)}(A)\\
        &=\sum_{h\geq\lceil\kappa m\rceil}\mathbb{P}(Y_m=h)\cdot\left(\mathbb{P}\left((Y_{i}-Y_{i-1})_{i\in\llbracket m+1,N\rrbracket} \in A\big{|}Y_m=h\right)-\nu_{\b}^{\otimes (N-m)}(A)\right)\\
        &\quad+\mathbb{P}\left(\{Y_m <\kappa m\}\cap\{(Y_{i}-Y_{i-1})_{i\in\llbracket m+1,N\rrbracket} \in A\}\right)-\mathbb{P}( Y_m< \kappa m )\cdot\nu_{\b}^{\otimes (N-m)}(A) \\
&\leq\sup_{h\geq\lceil\kappa m\rceil}
        \TV{\mathbb{P}\left((Y_{i}-Y_{i-1})_{i\in\llbracket m+1,N\rrbracket}\in\cdot\,\big{|}Y_m=h\right)-\nu_{\b}^{\otimes (N-m)}}+C_1e^{-c_1m}.
    \end{split}
\end{equation}
For $h\in\mathbb{N}$, we note that the law of $(Y_{i}-Y_m)_{i\in\llbracket m,N\rrbracket}$ conditioned on $Y_m=h$ is that of a $\nu_{\b}$-random walk started at 0 and conditioned to stay at or above level $-h$ up to step $N-m$. Denote the conditioned event by $E_h$, then we have 
$\mathbb{P}(E_h)\geq1-\b^{-2h-2}$ in view of Lemma \ref{lem:basic hitting monotone coupling}. Therefore, 
\begin{equation}\label{eq:brbtgrb2}
    \begin{split}
        &\TV{\mathbb{P}\left((Y_{i}-Y_{i-1})_{i\in\llbracket m+1,N\rrbracket}\in\cdot\,\big{|}Y_m=h\right)-\nu_{\b}^{\otimes (N-m)}}\\
        &=\TV{\nu_{\b}^{\otimes (N-m)}(\,\cdot\,|E_h)-\nu_{\b}^{\otimes (N-m)}}
        \le \frac{\nu_{\b}^{\otimes (N-m)} (E_h^{\complement} )}{\nu_{\b}^{\otimes (N-m)}(E_h)}
        \le \frac{\b^{-2h-2}}{1-\b^{-2h-2}}
        \le \frac{1}{(\b^2-1)\b^{2h}}.
    \end{split}
\end{equation}
Combining \eqref{eq:brbtgrb1} and \eqref{eq:brbtgrb2}, we conclude the proof.
\end{proof}

We are now ready to prove Lemma \ref{lem:approx of paths HD TASEP}.
\begin{proof}[Proof of Lemma \ref{lem:approx of paths HD TASEP}]
    Let $\bg\sim\mathbb{P}^{(0,\a,\b)}$. Recall from Lemma \ref{lem:decomposition TASEP} that, conditionally on $T_N=k$, the law of $(\gamma_{k+i}-\gamma_{k})_{i\in\llbracket 0,N-k\rrbracket}$ is that of a $\nu_{\b}$-random walk started at $0$ and conditioned to stay non-negative up to step $N-k$. Therefore, for each $A\subset\{-1,0,1\}^{N-m}$, we have
    \begin{equation*}
        \begin{split}
            &\mathbb{P} \left((\gamma_{i}-\gamma_{i-1})_{i\in\llbracket m+1,N\rrbracket} \in A\right)-\nu_{\b}^{\otimes (N-m)}(A)\\
&=\sum_{k\leq\lfloor m/2\rfloor}\mathbb{P}(T_N=k)\cdot\left(\mathbb{P}\left((\gamma_{i}-\gamma_{i-1})_{i\in\llbracket m+1,N\rrbracket} \in A\big{|}T_N=k\right)-\nu_{\b}^{\otimes (N-m)}(A)\right)\\
        &\quad+\mathbb{P}\left(\{T_N > m/2\}\cap\{(\gamma_{i}-\gamma_{i-1})_{i\in\llbracket m+1,N\rrbracket} \in A\}\right)-\mathbb{P}( T_N>m/2 )\cdot\nu_{\b}^{\otimes (N-m)}(A) \\ 
        &\leq\max_{0\leq k\leq\lfloor m/2\rfloor}
        \TV{\mathbb{P}\left(\left( Y^{(N-k)}_i-Y^{(N-k)}_{i-1} \right)_{i\in\llbracket m+1-k,N-k\rrbracket}\in\cdot\,\right)- \nu_{\b}^{\otimes (N-m)}}
        +\mathbb{P}\left(T_N > m/2\right)\\
        &\leq \max_{0\leq k\leq\lfloor m/2\rfloor}C_1e^{-c_1(m-k)}+C_2e^{-c_2m/2}\leq Ce^{-cm}, 
        \end{split}
    \end{equation*}
    for some $C,c>0$ depending only on $\a$ and $\b$. 
    Here, we write $Y_i^{(N-k)}$, $i\in\llbracket0,N-k\rrbracket$ to denote a $\nu_{\b}$-random walk with $N-k$ steps started at $Y^{(N-k)}_0=0$, conditioned to stay non-negative. The first step follows from the definition of conditional probability; the second step uses the fact that conditionally on $T_N=k$, the law of $(\gamma_{k+i}-\gamma_{k})_{i\in\llbracket 0,N-k\rrbracket}$ is that of $(Y_i^{(N-k)})_{i\in\llbracket 0,N-k\rrbracket}$, and hence the law of $(\gamma_{i}-\gamma_{i-1})_{i\in\llbracket m+1,N\rrbracket}$ is that of $( Y^{(N-k)}_i-Y^{(N-k)}_{i-1} )_{i\in\llbracket m+1-k,N-k\rrbracket}$; the third step uses Lemma \ref{lem:appro conditioned random walk} (with $C_1,c_1>0$ depending on $\b$) to bound the first summand and Lemma \ref{lem:TN exponential} (with $C_2,c_2>0$ depending on $\a,\b$) to bound the second summand.  
\end{proof}

\subsection{Endpoints under the birth-death chain characterization: Proof of Proposition~\ref{prop:suffices to prove}}\label{sec:PropProof}

In the following, we utilize the probabilistic characterization of the Bryc polynomial in order conclude the proof of Proposition~\ref{prop:suffices to prove}. The next result on the change of endpoints of the birth-death chain in the probabilistic characterization of the Bryc polynomial is our main input.

\begin{lemma}\label{lem:conditioning does not change much endpoint}
Assume \(q\in(0,1)\), \(\b>\max(\a,1)\), and \(\a\b>1\). Then there exist
constants \(C,c,\delta>0\), depending only on \(q,\a,\b\), such that the
following holds. For every \(N,m\in\mathbb N\) with \(N\ge m\), and every event
\[
A\in\sigma\left((\gamma_i-\gamma_{i-1})_{i\in\llbracket m+1,N\rrbracket}\right)
\quad\mbox{ satisfying }\quad
\mathbb P^{(0,\a,\b)}(A)\ge e^{-\delta m},
\]
we have
\[
\TV{
\mathbb{P}^{(0,\a,\b)}\left(
X^{\boldsymbol{\mathsf{S}}^{\bs(\bg)}}(N+1)\in\cdot\,\Big|\,A
\right)
-
\mathbb{P}^{(0,\a,\b)}\left(
X^{\boldsymbol{\mathsf{S}}^{\bs(\bg)}}(N+1)\in\cdot
\right)
}
\le Ce^{-cm}.
\]
Here, in both probabilities, one first samples
\(\bg\in\lrw_N\) under the indicated law, forms the composition
\(\bs(\bg)\) as in \eqref{eq:Signature}, forms the associated \(0\)-\(1\)
sequence \(\boldsymbol{\mathsf S}^{\bs(\bg)}\) as in Definition
\ref{def:associated 01 seq}, and then samples the Markov chain
\(X^{\boldsymbol{\mathsf S}^{\bs(\bg)}}\) from Definition
\ref{def:characterization measure} using transition randomness independent of
\(\bg\).
\end{lemma}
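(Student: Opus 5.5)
The plan is a forgetting (coupling) argument. Conditioning on $A$ only changes the path increments after time $m$. The birth-death chain, read from the end, should forget this information quickly once it passes a regeneration index where $\mathsf S=1$ and the chain sits near zero. Recall that by \eqref{eq:01seq} the sequence $\boldsymbol{\mathsf S}^{\bs(\bg)}$ at index $i$ is determined by the local times counted from the top level $\max(\bg)$ downward. Hence the early portion of $\boldsymbol{\mathsf S}$, i.e.\ indices $i\le m/2$, is determined by the levels of $\bg$ near its maximum. Under $\mathbb P^{(0,\a,\b)}$ these are visited at times near $N$, and by Lemma~\ref{lem:decomposition TASEP} the path near its maximum looks like a $\nu_\b$-walk. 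The late portion is determined by the levels near $\min(\bg)$, visited before time $T_N\lesssim m$ and shortly after.

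\textbf{Step 1 (decoupling the sequence).} Split $\boldsymbol{\mathsf S}$ into the part at indices $\le N+1-L$ and the tail at indices $>N+1-L$, with $L=\lfloor \epsilon m\rfloor$. The tail is a function of local times at levels $\le \min(\bg)+L'$. By Lemma~\ref{lem:TN exponential}, Lemma~\ref{lem:basic hitting monotone coupling} and the positive drift, with probability $1-Ce^{-cm}$ these levels are visited only before time $m/2$. Therefore the tail depends only on $(\gamma_i)_{i\le m/2}$, up to an event of probability $Ce^{-cm}$. Under conditioning on $A$ this bad event has probability at most $Ce^{-cm}/\mathbb P(A)\le Ce^{-(c-\delta)m}$, and we choose $\delta<c/2$. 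Moreover, combining Lemma~\ref{lem:approx of paths HD TASEP} with the decomposition of Lemma~\ref{lem:decomposition TASEP} at time $\approx m/2$ (the conditioned walk has gained height $\gtrsim m$, so it is approximately Markov with $\nu_\b$ increments), the law of $(\gamma_i)_{i\le m/2}$ given $A$ and its unconditioned law differ in total variation by $Ce^{-cm}/\mathbb P(A)$. For this step we argue via the conditional independence in Lemma~\ref{lem:decomposition TASEP} given $\gamma_{m/2}$, together with Lemma~\ref{lem:basic hitting monotone coupling}. We should therefore be able to couple the conditioned and unconditioned paths so that their $\boldsymbol{\mathsf S}$-tails coincide except on an event of probability $Ce^{-c'm}$.

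\textbf{Step 2 (coalescence of the chains).} Run the two chains $X^{\boldsymbol{\mathsf S}}$ and $X^{\boldsymbol{\mathsf S}'}$ with the possibly different prefixes up to time $N+1-L$. From then on they share the same tail sequence and the same uniforms via the maps $\Phi$ in \eqref{eq:phipsi}. By \eqref{eq:betbretgsertb1} the coupled chains are ordered when started ordered, and both are dominated by $\mathscr X$ (Lemma~\ref{lem:coupling birth death}(ii)). Once both chains hit $0$ at the same time they coincide forever. To force this, use the regeneration indices of Lemma~\ref{lem:regeneration indices MAIN} lying in the tail window. There are $\gtrsim L$ such indices with exponential gaps, and at each one the death probability is at least about $1-q^{j}$ for current height $j$. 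The drift argument of Lemma~\ref{lem:block moment bound simpler chain arbitrary start} (geometric contraction $\rho$) shows that the larger chain hits $0$ during the window with failure probability $\le C\rho^{cL}\,\mathbb E[z^{X(N+1-L)}]$. The last factor is bounded by Proposition~\ref{lem:moment bound simpler chain} (transferred via Lemma~\ref{lem:TV distance replaced measure}) and, under the conditioning, by a Cauchy--Schwarz step paying $\mathbb P(A)^{-1/2}\le e^{\delta m/2}$. By monotonicity the lower chain is then at $0$ as well.

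The main obstacle is Step 2. Showing that hitting $0$ actually happens requires a lower bound on death probabilities when $X$ is small, where $1-q^j$ is small, not large. The fix I would try first is to use the fact that at height $1$ the death probability is at least $(1-q)/(1-q^{i+1})\ge 1-q$, while births near $0$ occur with probability at most $q$. This gives a uniformly positive chance, per pair of consecutive regeneration indices with a short zero run between them (probability $\nu_\b(1)$ via $\mathbb P(R_2=1)$), of reaching $0$. The overall estimate is then a geometric trial count over $\asymp L$ independent gaps, yielding $Ce^{-cm}$ after choosing $\epsilon,\delta$ small.
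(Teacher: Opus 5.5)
Your proposal is correct and follows essentially the paper's route. The window of $\boldsymbol{\mathsf S}^{\bs(\bg)}$ near index $N$ is a function of the lowest-level local times $(\sigma_0,\sigma_1,\dots)$, which are fixed by the path prefix and so are barely affected by conditioning on $A$; this is the paper's lemma on stability of the lower level compositions. The chain then forgets its state at the start of that window by hitting $0$ at the regeneration indices, which the paper makes rigorous through the block-survival lemma for $\mathscr X$ and the proposition comparing the tail kernels started from $x$ and from $0$.

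The paper organizes the coalescence step by comparing both laws with the chain restarted from $0$ at time $N-\ell$. It truncates on $\{X(N-\ell)\le\varepsilon\ell\}$ and uses monotonicity to reduce to a fixed starting height. This truncation is also what you need to make your bound $C\rho^{cL}\,\mathbb E[z^{X(N+1-L)}]$ rigorous. The hitting estimate holds only on average over the tail sequence, and that sequence is correlated with $X(N+1-L)$, so the bound does not factor as written without this step.
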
 

The proof of Lemma~\ref{lem:conditioning does not change much endpoint} will be provided in Section~\ref{sec:LemmaSupSup}. We first argue that assuming Lemma \ref{lem:conditioning does not change much endpoint}, we can conclude the proof of Proposition \ref{prop:suffices to prove}. 

\begin{proof}[Proof of Proposition \ref{prop:suffices to prove} assuming Lemma \ref{lem:conditioning does not change much endpoint}] 
For any $q\in[0,1)$, $\b > \max(\a,1)$ and $\a\b>1$, we need to show that there exist constants $C,c>0$ depending only on $q$, $\a$ and $\b$, such that for each $N\geq m\geq1$, 
\begin{equation}\label{eq:approx of path measure HD again}
     \TV{ \mathbb{P}^{(q,\a,\b)}\left( (\gamma_{i}-\gamma_{i-1})_{i \in\llbracket m+1,N\rrbracket} \in \cdot \,\right) - \nu_{\b}^{\otimes (N-m)}    }\leq Ce^{-cm},
\end{equation} 
where we recall that
    \begin{equation}\label{eq:PNab again again}
\mathbb{P}^{(q,\a,\b)}(\bg)=\frac{1}{Z_N^{(q,\a,\b)}}\frac{2^{H(\bg)}\b^{\gamma_N}}{(\a\b)^{\min(\bg)}} W_{\bs(\bg)}(\a\b)\quad\mbox{ for }\bg\in\lrw_N. 
\end{equation}
When $q=0$, using Lemma \ref{lem:approx of paths HD TASEP}, we have 
    \begin{equation}\label{eq:approx of path measure HD TASEP again}
     \TV{ \mathbb{P}^{(0,\a,\b)}\left( (\gamma_{i}-\gamma_{i-1})_{i \in\llbracket m+1,N\rrbracket} \in  \cdot\, \right) - \nu_{\b}^{\otimes (N-m)}    }\leq C_1e^{-c_1m},
\end{equation}
for some constants $C_1,c_1>0$ depending only on $\a$ and $\b$. Therefore \eqref{eq:approx of path measure HD again} holds for $q=0$. It suffices to establish \eqref{eq:approx of path measure HD again} for $q\in(0,1)$. For clarity, we split the rest of the proof into two steps. 

In the rest of the proof, we denote $\mathbb{P}^{(q,\a,\b)}=\mathbb{P}^{q}$ and $\mathbb{E}^{(q,\a,\b)}=\mathbb{E}^{q}$.
\\

\noindent\textbf{Step 1.} In this step, we conclude the proof of \eqref{eq:approx of path measure HD again} assuming the following claim: there exist constants $C_2,c_2>0$ depending only on $q$, $\a$ and $\b$, such that for each $N\geq m\geq1$ and each $A\in\sigma\left((\gamma_i-\gamma_{i-1})_{i\in\llbracket m+1,N\rrbracket}\right)$ with $\mathbb{P}^0(A)>0$, we have
\begin{equation}\label{eq:btrwbtwarvwg}
    \mathbb{P}^0(A)\cdot\left|\mathbb{E}^0\left[W_{\bs(\bg)}(\a\b)|A\right]-\mathbb{E}^0\left[W_{\bs(\bg)}(\a\b)\right] \right|\leq C_2e^{-c_2m}.
\end{equation}
Indeed, in view of \eqref{eq:PNab again again}, we have
\begin{equation}\label{eq:btmio}
\mathbb{P}^q(\bg)=\frac{W_{\bs(\bg)}(\a\b)}{\mathbb{E}^0\left[W_{\bs(\bg)}(\a\b)\right]}\mathbb{P}^0(\bg) \quad\mbox{ for }\bg\in\lrw_N. 
\end{equation}
Let $A\in\sigma\left((\gamma_i-\gamma_{i-1})_{i\in\llbracket m+1,N\rrbracket}\right)$.
If $\mathbb{P}^0(A)=0$, then $\mathbb{P}^q(A)=0$ by \eqref{eq:btmio}, and hence 
\begin{equation}\label{eq:grwtrt1}
\left|\mathbb{P}^q(A)-\mathbb{P}^0(A)\right|=0.
\end{equation} 
If $\mathbb{P}^0(A)>0$, then 
\begin{equation}\label{eq:grwtrt2}
    \begin{split}
        \left|\mathbb{P}^q(A)-\mathbb{P}^0(A)\right|
        &=\frac{\mathbb{P}^0(A)}{\mathbb{E}^0\left[W_{\bs(\bg)}(\a\b)\right]}\left|\mathbb{E}^0\left[W_{\bs(\bg)}(\a\b)\big{|}A\right]-\mathbb{E}^0\left[W_{\bs(\bg)}(\a\b)\right] \right|\\
        &\leq \mathbb{P}^0(A)\left|\mathbb{E}^0\left[W_{\bs(\bg)}(\a\b)\big{|}A\right]-\mathbb{E}^0\left[W_{\bs(\bg)}(\a\b)\right] \right|\leq C_2e^{-c_2m},
    \end{split}
\end{equation} 
where in the first step we use \eqref{eq:btmio}, in the second step we use $\a\b>1$ and the fact that 
\[
W_{\bs}(z)=\mathbb{E}\left[ z^{X^{\boldsymbol{\mathsf{S}}^{\bs}}(N+1)}\right]\geq1\quad\mbox{ for any }z\geq1\mbox{ and any composition }\bs\mbox{ of }N+1,
\]
and in the third step we use \eqref{eq:btrwbtwarvwg}. Combining \eqref{eq:grwtrt1}, \eqref{eq:grwtrt2} and take the supremum over all events in $\sigma\left((\gamma_i-\gamma_{i-1})_{i\in\llbracket m+1,N\rrbracket}\right)$, we conclude that
\[ 
     \TV{ \mathbb{P}^q\left( (\gamma_{i}-\gamma_{i-1})_{i \in\llbracket m+1,N\rrbracket} \in  \cdot\, \right) - \mathbb{P}^0\left( (\gamma_{i}-\gamma_{i-1})_{i \in\llbracket m+1,N\rrbracket} \in  \cdot\, \right)   }\leq C_2e^{-c_2m}.
\]
Combining the above bound with \eqref{eq:approx of path measure HD TASEP again}, we conclude the proof of \eqref{eq:approx of path measure HD again}.\\

\noindent\textbf{Step 2.} In this step, we prove that there exist constants $C_2,c_2>0$ depending only on $q$, $\a$ and $\b$, such that for each $N\geq m\geq1$ and each $A\in\sigma\left((\gamma_i-\gamma_{i-1})_{i\in\llbracket m+1,N\rrbracket}\right)$ with $\mathbb{P}^0(A)>0$, we have
\begin{equation}\label{eq:btrwbtwarvwg again}
    \mathbb{P}^0(A)\cdot\left|\mathbb{E}^0\left[W_{\bs(\bg)}(\a\b)|A\right]-\mathbb{E}^0\left[W_{\bs(\bg)}(\a\b)\right] \right|\leq C_2e^{-c_2m}.
\end{equation}
Using Proposition \ref{prop:moment bound Bryc} with $z=\a\b>1$, $k=2$ and $z=(\a\b)^2>1$, $k=1$, there exist $M\in(0,\infty)$ depending only on $q$, $\a$ and $\b$,  such that
\begin{equation}\label{eq:ntneyjetyjn}
\sup_{N\in\mathbb{N}}\mathbb{E}^0\left[W_{\bs(\bg)}(\a\b)^2\right]\leq M \quad\mbox{ and }\quad
\sup_{N\in\mathbb{N}}\mathbb{E}^0\left[W_{\bs(\bg)}\left((\a\b)^2\right) \right]\leq M.
\end{equation}
Choose constants $C_3,c_3,\delta>0$ depending on $q$, $\a$ and $\b$ from Lemma \ref{lem:conditioning does not change much endpoint}. 
Let $A\in\sigma\left((\gamma_i-\gamma_{i-1})_{i\in\llbracket m+1,N\rrbracket}\right)$. When $\mathbb{P}^0(A)\geq e^{-\delta m}$, by Lemma \ref{lem:conditioning does not change much endpoint}, there exists a coupling $\mathbb{P}$ between random variables 
\begin{equation}\label{eq:srthth}
X_A=X^{\boldsymbol{\mathsf{S}}^{\bs}}(N+1),\quad \bg\sim\mathbb{P}^0(\cdot\,|A)\quad \mbox{ and }\quad 
X=X^{\boldsymbol{\mathsf{S}}^{\bs}}(N+1),\quad \bg\sim\mathbb{P}^0,
\end{equation}
satisfying 
\begin{equation}\label{eq:rthrth}
    \mathbb{P}(X_A\neq X)\leq C_3e^{-c_3m}.
\end{equation}
Note also that, by the second inequality in \eqref{eq:ntneyjetyjn}, we have
\begin{equation}\label{eq:ntyyysteh}
    \mathbb{E}\left[(\a\b)^{2X }\right]=\mathbb{E}^0\left[W_{\bs(\bg)}\left((\a\b)^2\right) \right]\leq M, \quad\mbox{ and } \quad 
    \mathbb{E}\left[(\a\b)^{2X_A }\right]=\mathbb{E}^0\left[W_{\bs(\bg)}\left((\a\b)^2\right)\big{|}A \right]\leq \frac{M}{\mathbb{P}^0(A)}.
\end{equation}
Therefore,
\begin{equation}\label{eq:ukjt1}
\begin{split} 
     &\mathbb{P}^0(A)\cdot\left|\mathbb{E}^0\left[W_{\bs(\bg)}(\a\b)|A\right]-\mathbb{E}^0\left[W_{\bs(\bg)}(\a\b)\right] \right| 
      =\mathbb{P}^0(A)\cdot\left|\mathbb{E} \left[(\a\b)^{X_A}\right]-\mathbb{E} \left[ (\a\b)^X\right] \right|\\
     &\leq\mathbb{P}^0(A)\cdot \mathbb{E} \left[\left|(\a\b)^{X_A}- (\a\b)^X\right|\cdot{\bf1}_{\{X_A\neq X\}}\right]  
      \leq\mathbb{P}^0(A)\cdot \mathbb{E} \left[\left|(\a\b)^{X_A}- (\a\b)^X\right|^2\right]^{1/2}\cdot\mathbb{P}\left(X_A\neq X\right)^{1/2}   \\
     &\leq\mathbb{P}^0(A)\cdot \mathbb{E} \left[ 2(\a\b)^{2X_A}+ 2(\a\b)^{2X} \right]^{1/2}\cdot\mathbb{P}\left(X_A\neq X\right)^{1/2}   \\
     &\leq 2M^{1/2}  \cdot  \mathbb{P}^0(A)^{1/2}\cdot\mathbb{P}\left(X_A\neq X\right)^{1/2}
     \leq 2M^{1/2} C_3^{1/2}e^{-c_3m/2},
\end{split}
\end{equation}
where the first step uses Theorem \ref{thm:prob interpretation of Bryc polynomials} and definitions \eqref{eq:srthth} of $X,X_A$, the second step uses triangle inequality, the third and fourth step use the Cauchy-Schwarz inequality, the fifth step uses \eqref{eq:ntyyysteh}, and the last step uses \eqref{eq:rthrth}.
When $\mathbb{P}^0(A)\in(0,e^{-\delta m})$, 
\begin{equation}\label{eq:ukjt2}
\begin{split} 
     &\mathbb{P}^0(A)\cdot\left|\mathbb{E}^0\left[W_{\bs(\bg)}(\a\b)|A\right]-\mathbb{E}^0\left[W_{\bs(\bg)}(\a\b)\right] \right| 
      \leq \mathbb{E}^0\left[W_{\bs(\bg)}(\a\b)\cdot {\bf 1}_A\right]+\mathbb{P}^0(A)\cdot\mathbb{E}^0\left[W_{\bs(\bg)}(\a\b)\right]\\
     &\leq\mathbb{E}^0\left[W_{\bs(\bg)}(\a\b)^2\right]^{1/2}\cdot\mathbb{P}^0(A)^{1/2}
     +\mathbb{P}^0(A)\cdot\mathbb{E}^0\left[W_{\bs(\bg)}(\a\b)^2\right]^{1/2}\leq 2M^{1/2}e^{-\delta m/2},
\end{split}
\end{equation}
where the first step uses the triangle inequality, the second step uses the Cauchy-Schwarz inequality, and the last step uses  the first inequality in \eqref{eq:ntneyjetyjn} together with our assumption that $\mathbb{P}^0(A) <e^{-\delta m}$.
Combining \eqref{eq:ukjt1} and \eqref{eq:ukjt2}, we conclude the proof of \eqref{eq:btrwbtwarvwg again}.
\end{proof}

\subsection{Stability of the lower level compositions}

In remainder of this section, we work towards establishing Lemma \ref{lem:conditioning does not change much endpoint}. We have the following total-variation bound on the law of local times under the open TASEP path measures when conditioning on events $A$ of non-negligible probability.

\begin{lemma}\label{lem:conditioning does not affect first compositions}
     Assume $\b > \max(\a,1)$ and $\a\b>1$. Then there exist constants $C,c,\delta,\kappa>0$ depending only on $\a$ and $\b$, such that for each $N\geq m\geq1$ and each event 
    \begin{equation}\label{eq:gergrgegr}
    A\in\sigma\left((\gamma_i-\gamma_{i-1})_{i\in\llbracket m+1,N\rrbracket}\right)\quad\mbox{ satisfying } \quad \mathbb{P}^{(0,\a,\b)}(A)\geq e^{-\delta m},
    \end{equation}
    we have 
    \begin{equation}\label{eq:jnyt5r}
        \TV{\mathbb{P}^{(0,\a,\b)}\left(\left(\sigma_0,\dots,\sigma_{\lfloor\kappa m\rfloor}\right)\in\cdot\,|A\right)-
        \mathbb{P}^{(0,\a,\b)}\left(\left(\sigma_0,\dots,\sigma_{\lfloor\kappa m\rfloor}\right)\in\cdot\,\right)   }\leq Ce^{-cm},
    \end{equation}
    where $\bs(\bg)=(\sigma_0,\dots,\sigma_r)$ is the composition associated to $\bg$ given by \eqref{eq:Signature}, and we let $\sigma_i=0$ for all $i>r$.
\end{lemma}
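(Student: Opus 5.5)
The idea is to show that the first $\lfloor\kappa m\rfloor+1$ entries of the composition are, with overwhelming probability, determined by the path up to time $m/2$. Such a functional is asymptotically independent of the increments after time $m$.

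\textbf{Step 1: localization.} Under $\mathbb{P}^{(0,\a,\b)}$ we use Lemma~\ref{lem:TN exponential} and the proof of Lemma~\ref{lem:TV distance replaced measure}, namely \eqref{eq:rgregrt}. The last minimum time then satisfies $Q_N\le m/4$ except on an event of probability $Ce^{-cm}$. After $Q_N$ the path is a positive $\nu_{\b}$-walk started at $\min(\bg)$.

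\textbf{Step 2: the levels have stabilized by time $m/2$.} Fix $\kappa<\kappa_0/4$ with $\kappa_0\in(0,(\b-1)/(\b+1))$. Lemma~\ref{lem:basic hitting monotone coupling} together with the monotone coupling gives $\gamma_{\lfloor m/2\rfloor}-\min(\bg)\ge 2\kappa m$ with probability at least $1-Ce^{-cm}$. The positive walk started from height $\ge 2\kappa m$ above $\min(\bg)$ must afterwards return to level $\min(\bg)+\kappa m$. By the hitting estimate of Lemma~\ref{lem:basic hitting monotone coupling}, and a union bound over all $i\le N$ with $\gamma_i$ above a linear level, this has probability $\le C\b^{-2\kappa m}$. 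On the complementary good event $G$ we therefore have $G^c$ of probability $\le Ce^{-cm}$, and on $G$ the vector $(\sigma_0,\dots,\sigma_{\lfloor\kappa m\rfloor})$ equals a measurable function $F$ of $(\gamma_i)_{i\le \lfloor m/2\rfloor}$. Specifically, $F$ is given by the local times at the levels $\min_{i\le m/2}\gamma_i+j$, counted up to time $m/2$.

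\textbf{Step 3: decoupling.} For any set $B$, write
\[
\mathbb{P}(F\in B\mid A)-\mathbb{P}(F\in B)=\frac{\mathbb{P}(\{F\in B\}\cap A)-\mathbb{P}(F\in B)\mathbb{P}(A)}{\mathbb{P}(A)},
\]
up to an error $2\mathbb{P}(G^c)/\mathbb{P}(A)\le 2Ce^{-cm+\delta m}$. This error is fine once $\delta<c/2$. It remains to show approximate independence of $\sigma(\gamma_i:i\le m/2)$ and $\sigma(\gamma_i-\gamma_{i-1}: i>m)$ under $\mathbb{P}^{(0,\a,\b)}$, with covariance error $Ce^{-cm}$. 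Condition on $(\gamma_i)_{i\le m/2}$ on the event $\{T_N\le m/4\}$ together with the event that the path after $m/4$ stays above the running minimum. By Lemma~\ref{lem:decomposition TASEP}, the future from time $m/2$ is a $\nu_{\b}$-walk conditioned to stay above a level at distance $h\ge 2\kappa m$ below it. Arguing exactly as in Lemma~\ref{lem:appro conditioned random walk}, the conditional law of the increments on $\llbracket m+1,N\rrbracket$ is within total variation distance $C\b^{-2h}\le Ce^{-cm}$ of $\nu_{\b}^{\otimes(N-m)}$, uniformly in the conditioning. Hence
\[
|\mathbb{P}(\{F\in B\}\cap A)-\mathbb{P}(F\in B)\,\nu_{\b}^{\otimes}(A)|\le Ce^{-cm}.
\]
The same bound with $B$ equal to the full space gives $|\mathbb{P}(A)-\nu_{\b}^{\otimes}(A)|\le Ce^{-cm}$, which is consistent with Lemma~\ref{lem:approx of paths HD TASEP}. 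Subtracting yields covariance at most $Ce^{-cm}$, and dividing by $\mathbb{P}(A)\ge e^{-\delta m}$ gives the claim, with exponent $c-\delta>0$ after shrinking $\delta$.

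The main obstacle is Step 2. One needs uniform control over all $N$, showing that no later excursion revisits the low levels. The point needing care is that the conditioning on $T_N$ (or $Q_N$) interacts with the post-minimum walk only through the positivity constraint, which only helps, by the monotone coupling of Lemma~\ref{lem:basic hitting monotone coupling}. The subtraction of the future's dependence in Step 3 must likewise be done conditionally on the pre-$m/2$ path being in the good set.
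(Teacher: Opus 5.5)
Your proposal is correct and follows essentially the paper's route. The paper likewise shows that $(\sigma_0,\dots,\sigma_{\lfloor\kappa m\rfloor})$ agrees with the composition of the truncated path outside an event $D_m$ of probability $Ce^{-cm}$. It truncates at time $m$ rather than $m/2$ and argues via $T_N\le \lfloor m/2\rfloor$ and linear growth of the conditioned walk. It then decouples prefix and future through the auxiliary measure $\mathsf{P}^{(0,\a,\b)}_{N,m}$ (prefix kept, later increments i.i.d. $\nu_{\b}$), controlled by the bound $\b^{-2h}/(\b^2-1)$ with $h=\gamma_m-\gamma_{T_N}$, which is exactly your covariance estimate. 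The only adjustment is a constant: getting $\gamma_{\lfloor m/2\rfloor}-\min(\bg)\ge 2\kappa m$ from $m/4$ steps of drift needs roughly $\kappa\le\kappa_0/8$ rather than $\kappa<\kappa_0/4$, which is harmless since $\kappa$ may be taken small.
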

\begin{proof}
We begin by introducing some notation.
For $\bg=(\gamma_0,\dots,\gamma_N)\in\lrw_N$ and $m\in\llbracket0,N\rrbracket$, we consider the truncated path $(\gamma_0,\dots,\gamma_m)\in\lrw_m$ and the associated composition
\[
\bs\left((\gamma_i)_{i\in\llbracket0,m\rrbracket}\right)=\left(\sigma_0^{(m)},\dots,\sigma_{r_m}^{(m)}\right),
\]
which is a composition of $m+1$. We extend it by letting $\sigma_i^{(m)}=0$ for all $i>r_m$.
By \eqref{eq:Signature},  we have 
\begin{equation}\label{eq:rhtrbpoi}
\sigma_i=\#\left\{j\in\llbracket0,N\rrbracket:\gamma_j=\min_{k\in\llbracket0,N\rrbracket}\gamma_k+i\right\},  \quad 
\sigma_i^{(m)}=\#\left\{j\in\llbracket0,m\rrbracket:\gamma_j=\min_{k\in\llbracket0,m\rrbracket}\gamma_k+i\right\}\quad\mbox{ for all  }i\in\mathbb{Z}_{\geq0}.
\end{equation}
We will prove the lemma for $\kappa=(\b-1)/(3(\b+1))>0$, which will be fixed throughout the proof.
Denote 
\begin{equation}\label{eq:def of Dm}
D_m=\left\{\left(\sigma_0,\dots,\sigma_{\lfloor\kappa m\rfloor}\right)\neq\left(\sigma^{(m)}_0,\dots,\sigma^{(m)}_{\lfloor\kappa m\rfloor}\right)\right\}.
\end{equation}

We will also need a probability measure $\mathsf{P}_{N,m}^{(0,\a,\b)}$ on $\lrw_N$ for $m\in\llbracket0,N\rrbracket$ defined as follows: first sample the prefix $(\gamma_i)_{i\in\llbracket0,m\rrbracket}$ with its law under $\mathbb P^{(0,\a,\b)}$, and then, conditionally on this prefix, sample the increments $(\gamma_{m+i}-\gamma_m)_{i\in\llbracket0,N-m\rrbracket}$ as an independent $\nu_{\b}$-random walk started from $0$.

For clarity, we split the rest of the proof into three steps. In the first step, we conclude the proof of the result assuming the exponential bounds of the probability of event $D_m$ and that of the total variation distance between $\mathbb{P}^{(0,\a,\b)}$ and $\mathsf{P}_{N,m}^{(0,\a,\b)}$. These two bounds will then be proven in Steps 2 and 3, respectively. 

In the rest of the proof, we will use the notation $\mathbb{P}^{0}=\mathbb{P}^{(0,\a,\b)}$ and $\mathsf{P}_{N,m}^0=\mathsf{P}_{N,m}^{(0,\a,\b)}$.
\\ 

\noindent\textbf{Step 1.} 
In this step, we conclude the proof of the result assuming the following claim:
there exist constants $C_1,c_1,C_2,c_2>0$ depending only on $\a$ and $\b$, such that for each $N\geq m\geq1$, we have
\begin{equation}\label{eq:kty7uityi}
\mathbb{P}^0(D_m)\leq C_1e^{-c_1m}\quad\mbox{ and }\quad \TV{\mathbb{P}^0-\mathsf{P}_{N,m}^0}\leq C_2e^{-c_2m}.
\end{equation}
Indeed, let $B\subset\mathbb{Z}_{\geq0}^{\lfloor\kappa m\rfloor+1}$ be an arbitrary subset, and consider the events 
\[
E=\left\{\left(\sigma_0,\dots,\sigma_{\lfloor\kappa m\rfloor}\right)\in B\right\}\quad\mbox{ and }\quad
E^{(m)}=\left\{\left(\sigma_0^{(m)},\dots,\sigma^{(m)}_{\lfloor\kappa m\rfloor}\right)\in B\right\}.
\]
In view of \eqref{eq:def of Dm}, we have $E\setminus D_m=E^{(m)}\setminus D_m$, and hence
\begin{equation}\label{eq:sdfafawef}
\left|\mathbb{P}^0(E|A)-\mathbb{P}^0(E)\right|\leq \left|\mathbb{P}^0\left(E^{(m)}\big{|}A\right)-\mathbb{P}^0\left(E^{(m)}\right)\right|+\mathbb{P}^0(D_m|A)+\mathbb{P}^0(D_m).
\end{equation}
Combining now \eqref{eq:gergrgegr} and \eqref{eq:kty7uityi}, we have
\begin{equation}\label{eq:sdfafawef2}
    \mathbb{P}^0(D_m|A)+\mathbb{P}^0(D_m)\leq 2\mathbb{P}^0(D_m)/\mathbb{P}^0(A)\leq 2  C_1e^{-c_1m}e^{\delta m}.
\end{equation}
By the definition of the probability measure $\mathsf{P}_{N,m}^0$ and the fact that
\[
E^{(m)}\in\sigma\left((\gamma_i)_{i\in\llbracket1,m\rrbracket}\right),\quad A\in\sigma\left((\gamma_i-\gamma_{i-1})_{i\in\llbracket m+1,N\rrbracket}\right),
\]
we know that the events $E^{(m)}$ and $A$ are independent under $\mathsf{P}_{N,m}^0$, and that $\mathsf{P}_{N,m}^0(E^{(m)})=\mathbb{P}^0(E^{(m)})$. 
Hence $\mathsf{P}_{N,m}^0(E^{(m)}\cap A)=\mathbb{P}^0(E^{(m)})\mathsf{P}_{N,m}^0(A)$, and consequently
\begin{equation}\label{eq:sdfafawef3}
    \begin{split}
        &\left|\mathbb{P}^0\left(E^{(m)}\big{|}A\right)-\mathbb{P}^0\left(E^{(m)}\right)\right| 
         =\left|\frac{\mathbb{P}^0\left(E^{(m)}\cap A\right)}{\mathbb{P}^0(A)}-\mathbb{P}^0\left(E^{(m)}\right)\right|\\
        &\leq \frac{\left| \mathbb{P}^0\left(E^{(m)}\cap A\right) - \mathsf{P}_{N,m}^0\left(E^{(m)}\cap A\right) \right|}{\mathbb{P}^0(A)}
        +\mathbb{P}^0\left(E^{(m)}\right)\frac{\left|\mathsf{P}_{N,m}^0(A)-\mathbb{P}^0(A)\right|}{\mathbb{P}^0(A)}\\
        &\leq 2 \TV{\mathbb{P}^0-\mathsf{P}_{N,m}^0}\cdot \mathbb{P}^0(A)^{-1}\leq 2C_2e^{-c_2m} e^{\delta m},
    \end{split}
\end{equation}
where the last step uses again \eqref{eq:gergrgegr} and \eqref{eq:kty7uityi}.

Combining \eqref{eq:sdfafawef}, \eqref{eq:sdfafawef2}, and \eqref{eq:sdfafawef3}, we conclude that 
$\left|\mathbb{P}^0(E|A)-\mathbb{P}^0(E)\right|\leq2  C_1e^{-c_1m}e^{\delta m}+2C_2e^{-c_2m} e^{\delta m}$. Taking the maximum over all $B\subset\mathbb{Z}_{\geq0}^{\lfloor\kappa m\rfloor+1}$ and let $C=2C_1+2C_2$, $c=\delta=\min(c_1,c_2)/2$, we conclude  \eqref{eq:jnyt5r}.
\\ 

\noindent\textbf{Step 2.} In this step, we prove the first claim, i.e.,  we show that there exist constants $C_1,c_1>0$ depending only on $\a$ and $\b$, such that 
\begin{equation}\label{eq:kty7uityit34t}
\mathbb{P}^0(D_m)\leq C_1e^{-c_1m}\quad\mbox{ for each }N\geq m\geq1.
\end{equation}
In view of \eqref{eq:rhtrbpoi} and \eqref{eq:def of Dm}, we have
\begin{equation}\label{eq:mfyumu}
D_m=\left\{\sigma_i\neq\sigma_i^{(m)}\mbox{ for some }i\in\llbracket0,\lfloor\kappa m\rfloor\rrbracket\right\}
\subset\left\{\gamma_j\leq \min_{k\in\llbracket0,N\rrbracket}\gamma_k+\lfloor\kappa m\rfloor\mbox{ for some } j\in\llbracket m+1,N\rrbracket\right\}.
\end{equation}
Intuitively, we bound the right-hand side using the above random walk decomposition. More precisely, 
recall the observable $T_N$ defined in Lemma \ref{lem:decomposition TASEP}.
By Lemma~\ref{lem:TN exponential},  
\begin{equation}\label{eq:tnjydy}
\mathbb{P}^0(T_N> \lfloor m/2\rfloor)\leq C_3e^{-c_3m},
\end{equation}   
where $C_3,c_3>0$ depend only on $\a$ and $\b$.
Conditioned on $T_N=x$, by Lemma \ref{lem:decomposition TASEP}, $(\gamma_{x+i}-\gamma_{x})_{i\in\llbracket 0,N-x\rrbracket}$ is a $\nu_{\b}$-random walk started at $0$ and conditioned to stay non-negative up to step $N-x$. By Lemma \ref{lem:basic hitting monotone coupling},
\begin{equation}\label{eq:tmum,kuio}
    \mathbb{P}^0\left(\gamma_{j}-\gamma_{x}\leq (j-x)\cdot 3/4\cdot(\b-1)/(\b+1)\,|\, T_N=x\right)\leq C_4e^{-c_4(j-x)}\quad\mbox{ for each }j\in\llbracket x+1,N\rrbracket,
\end{equation} 
where $C_4,c_4>0$ depend only on $\b$.  For $x\in\llbracket0,\lfloor m/2\rfloor\rrbracket$ and
$j\in\llbracket m,N\rrbracket$, our choice $\kappa=(\b-1)/(3(\b+1))$ gives that the event
$\{\gamma_j-\gamma_x\leq \lfloor\kappa m\rfloor\}$ is contained in the
event appearing on the left side of \eqref{eq:tmum,kuio}. Hence
\begin{equation}\label{eq:ykuk}
    \mathbb{P}^0\left(\gamma_{j}-\gamma_{T_N}\leq \lfloor\kappa m\rfloor\,|\,T_N\leq \lfloor m/2\rfloor\right)\leq C_4e^{-c_4(j-\lfloor m/2\rfloor)}\quad\mbox{ for each }j\in\llbracket m,N\rrbracket.
\end{equation}
Summing \eqref{eq:ykuk} over $j\in\llbracket m+1,N\rrbracket$ and combining with \eqref{eq:tnjydy}, in view of \eqref{eq:mfyumu}, we conclude \eqref{eq:kty7uityit34t}.  \\

\noindent\textbf{Step 3.} In this step, we show the second part of the claim, i.e.~that there exist constants $C_2,c_2>0$ depending only on $\a$ and $\b$, such that 
\begin{equation}\label{eq:kty7uityit34t2}
     \TV{\mathbb{P}^0-\mathsf{P}_{N,m}^0}\leq C_2e^{-c_2m}\quad\mbox{ for each }N\geq m\geq1.
\end{equation}
When $N=m$, we have $\mathsf{P}_{N,N}^0=\mathbb{P}^0$, and \eqref{eq:kty7uityit34t2} holds. We next assume $N>m$. By the definition of $\mathsf P_{N,m}^0$, 
\begin{equation}\label{eq:disintegration full TV prefix tail}
\begin{split}
   & \TV{\mathbb P^0-\mathsf P_{N,m}^0}
\le
\mathbb E^0\left[
\TV{
\mathbb P^0\left(\Delta_{m,N}\in\cdot\,\middle|\,\mathcal F_m\right)
-\nu_\b^{\otimes(N-m)}
}
\right]\\
&\leq \mathbb P^0\left(T_N>\lfloor m/2\rfloor\right)
+
\mathbb E^0\left[
{\bf 1}_{\{T_N\le \lfloor m/2\rfloor\}}
\TV{
\mathbb P^0\left(\Delta_{m,N}\in\cdot\,\middle|\,\mathcal F_m,T_N\right)
-\nu_\b^{\otimes(N-m)}
}
\right],
\end{split}    
\end{equation}
where we write $\mathcal F_m=\sigma\left((\gamma_i)_{i\in\llbracket0,m\rrbracket}\right)$ and $\Delta_{m,N}=(\gamma_j-\gamma_{j-1})_{j\in\llbracket m+1,N\rrbracket}$. Conditionally on $T_N=x$, Lemma~\ref{lem:decomposition TASEP} identifies
$(\gamma_{x+i}-\gamma_x)_{i\in\llbracket0,N-x\rrbracket}$ with a
$\nu_\b$-random walk conditioned to stay non-negative up to time $N-x$.
Therefore, denoting $h=\gamma_m-\gamma_x$ and $E_h=\left\{S_i\ge -h\mbox{ for }i\in\llbracket0,N-m\rrbracket\right\}$, we have
\begin{equation}
    \mathbb P^0\left(\Delta_{m,N}\in\cdot\,\middle|\,\mathcal F_m,T_N=x\right)
=
\nu_\b^{\otimes(N-m)}\left(\,\cdot\,\middle|\,E_h\right).
\end{equation}
By Lemma~\ref{lem:basic hitting monotone coupling}, we have $\nu_\b^{\otimes(N-m)}(E_h)\ge 1-\b^{-2h-2}$, hence 
\begin{equation}\label{eq:conditioned tail tv by height}
\TV{
\nu_\b^{\otimes(N-m)}\left(\,\cdot\,\middle|\,E_h\right)
-\nu_\b^{\otimes(N-m)}
}
\le
\ \b^{-2h-2}/(1-\b^{-2h-2})
\le  \b^{-2h}/(\b^2-1).
\end{equation}
By \eqref{eq:ykuk} with $j=m$, and since
$\gamma_m-\gamma_{T_N}\geq0$ on $\{T_N\le\lfloor m/2\rfloor\}$,
\begin{equation}\label{eq:height negative moment exponential} 
 \mathbb E^0\left[
{\bf 1}_{\{T_N\le\lfloor m/2\rfloor\}}
\b^{-2 (\gamma_m-\gamma_{T_N} )}
\right] 
 \le
\mathbb P^0\left(
\gamma_m-\gamma_{T_N}\leq \lfloor\kappa m\rfloor,\,
T_N\le\Big\lfloor \frac{m}{2}\Big\rfloor
\right)
+\b^{-2\lfloor\kappa m\rfloor} 
 \le C_4e^{-c_4\left(m-\lfloor \frac{m}{2}\rfloor\right)}
+\b^{-2\lfloor\kappa m\rfloor} . 
\end{equation} 
Combining
\eqref{eq:disintegration full TV prefix tail},  \eqref{eq:conditioned tail tv by height},
\eqref{eq:height negative moment exponential}, and \eqref{eq:tnjydy}, we
conclude \eqref{eq:kty7uityit34t2}. 
\end{proof}

\subsection{Stability of the tail of the birth-death chain}

As a next step towards Lemma \ref{lem:conditioning does not change much endpoint}, we control the tails of the birth-death chain at location $N$ when revealing the information about the trajectory only up to location $N-m$.

\begin{definition}\label{def:chain tail}
Let $q\in(0,1)$, $N\in\mathbb{N}$, and $\boldsymbol{\mathsf{S}}=(\mathsf{S}(0),\dots,\mathsf{S}(N))\in\{0,1\}^{N+1}$ be a $0$-$1$ sequence. 
For $m\in\llbracket0,N-1\rrbracket$, denote $\boldsymbol{\mathsf{S}}_{N,m}=(\mathsf{S}(N-m),\dots,\mathsf{S}(N))\in\{0,1\}^{m+1}$. 
Assume $x\in\llbracket0, N-m-1 \rrbracket$. Consider the Markov chain $\{X^{\boldsymbol{\mathsf{S}},x}_{N,m}(i)\}_{i=N-m}^{N+1}$ defined by 
$X^{\boldsymbol{\mathsf{S}},x}_{N,m}(N-m)=x$ and for $i\in\llbracket N-m,N\rrbracket$ and $j\in\llbracket0,i-1\rrbracket$,
\begin{enumerate}
    \item [(\romannum{1})] if $\mathsf{S}(i)=1$, we let $X^{\boldsymbol{\mathsf{S}},x}_{N,m}(i+1)-X^{\boldsymbol{\mathsf{S}},x}_{N,m}(i)\in\{-1,0\}$ with 
    \begin{equation}\label{eq:etget11}
    \mathbb{P}\left(X^{\boldsymbol{\mathsf{S}},x}_{N,m}(i+1)=j-1 \Big{|}X^{\boldsymbol{\mathsf{S}},x}_{N,m}(i)=j\right)=\frac{1-q^{j}}{1-q^{i+1}},
    \end{equation}
    \item [(\romannum{2})] if $\mathsf{S}(i)=0$, we let $X^{\boldsymbol{\mathsf{S}},x}_{N,m}(i+1)-X^{\boldsymbol{\mathsf{S}},x}_{N,m}(i)\in\{0,1\}$ with 
    \begin{equation}\label{eq:etget21}
    \mathbb{P}\left(X^{\boldsymbol{\mathsf{S}},x}_{N,m}(i+1)=j+1 \Big{|}X^{\boldsymbol{\mathsf{S}},x}_{N,m}(i)=j\right)=\frac{q^{j+1}-q^{i+1}}{1-q^{i+1}}.
    \end{equation}
\end{enumerate} 
The right sides of \eqref{eq:etget11} and \eqref{eq:etget21} lie in $[0,1]$ since $j\in\llbracket0,i-1\rrbracket$ and $q\in[0,1)$. 
By an induction argument, we have $X^{\boldsymbol{\mathsf{S}},x}_{N,m}(i)\in\llbracket x,x+i-1\rrbracket$  for all $i\in\llbracket N-m,N+1\rrbracket$, and hence this Markov chain is well-defined.  We denote by $K_{N,m}(\boldsymbol{\mathsf{S}};x,\cdot)$ the law of $X^{\boldsymbol{\mathsf{S}},x}_{N,m}(N+1)$ started at $X^{\boldsymbol{\mathsf{S}},x}_{N,m}(N-m)=x$.
\end{definition}

We have the following control on the above birth-chain chain.

\begin{proposition}\label{lem:expectation of TV distance}
Assume \(q\in(0,1)\), \(\b>\max(\a,1)\), and \(\a\b>1\).
For \(\bg\in\lrw_N\), let \(\bs(\bg)\) be the composition associated to
\(\bg\) as in \eqref{eq:Signature}, let
\(\boldsymbol{\mathsf S}^{\bs(\bg)}\in\{0,1\}^{N+1}\) be the associated
\(0\)-\(1\) sequence as in Definition~\ref{def:associated 01 seq}, and let
\(K_{N,m}(\boldsymbol{\mathsf S}^{\bs(\bg)};x,\cdot)\) be the transition law
from Definition \ref{def:chain tail}. Then there exist constants
\(\varepsilon,C,c>0\), depending only on \(q,\a,\b\), such that, for every
\(N\in\mathbb N\) and every \(m\in\llbracket0,N-1\rrbracket\),
\[
\mathbb E^{(0,\a,\b)}\left[
\sup_{x\in\llbracket0, \min (\lfloor\varepsilon m\rfloor, N-m-1 )\rrbracket}
\TV{
K_{N,m}\left(\boldsymbol{\mathsf{S}}^{\bs(\bg)};x,\cdot\right)
-
K_{N,m}\left(\boldsymbol{\mathsf{S}}^{\bs(\bg)};0,\cdot\right)
}
\right]
\leq Ce^{-cm}.
\]
\end{proposition}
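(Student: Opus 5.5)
We propose a coalescing coupling of the chains started from $x$ and from $0$ over the window $\llbracket N-m,N+1\rrbracket$, driven by the same uniforms $U_i$ as in the proof of Lemma~\ref{lem:coupling birth death}. The total variation distance is then bounded by the probability that the two copies have not merged by time $N+1$.

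\textbf{Monotonicity and gap.} By \eqref{eq:betbretgsertb1} the update maps $\Phi_i^s(\cdot,u)$ are monotone in the state. Hence with common uniforms the copy started at $x$ stays above the copy started at $0$. Each step changes each copy by at most one, so the gap $G(i)=X^{x}(i)-X^{0}(i)\ge0$ is nonincreasing in $i$ and is absorbed at $0$.

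Consider a step with $\mathsf S(i)=1$ and upper state $j+G$. The gap decreases by one exactly when $u\in\big((1-q^{j})/(1-q^{i+1}),(1-q^{j+G})/(1-q^{i+1})\big]$. This event has probability $(q^{j}-q^{j+G})/(1-q^{i+1})\ge q^{j}(1-q)$ whenever $G\ge1$.

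Consider a step with $\mathsf S(i)=0$. Here the lower copy may move up while the upper copy stays, which also closes the gap. We simply use that the gap never grows.

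The conclusion is that every "one" in $\boldsymbol{\mathsf S}$ reduces a positive gap with probability at least $(1-q)q^{X^0(i)}$. It therefore suffices to show two things: the lower copy $X^0$ stays bounded, and $\boldsymbol{\mathsf S}^{\bs(\bg)}$ contains at least $c'm$ ones in $\llbracket N-m,N\rrbracket$ with high probability.

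\textbf{Counting ones.} The ones of $\boldsymbol{\mathsf S}^{\bs(\bg)}$ sit at positions $\sigma_r,\sigma_r+\sigma_{r-1},\dots$. Under $\mathbb P^{(0,\a,\b)}$ the path drifts upward, so the top levels $\sigma_r,\sigma_{r-1},\dots$ correspond to the final portion of $\bg$. I would use the regeneration structure from Lemma~\ref{lem:regeneration indices MAIN}, transferred from $\mathrm P^{(0,\a,\b)}$ to $\mathbb P^{(0,\a,\b)}$ via Lemma~\ref{lem:TV distance replaced measure} at cost $Ce^{-cN}\le Ce^{-cm}$. That lemma provides indices $\mathrm n_K>\mathrm n_{K-1}>\dots$ near $N$ with i.i.d.\ exponentially-tailed spacings $R_i$, all with $\mathsf S=1$. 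A large-deviation bound for renewal processes then gives at least $c'm$ of these in $\llbracket N-m,N\rrbracket$, except on an event of probability $Ce^{-cm}$.

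\textbf{Controlling the lower chain.} For the lower chain I would invoke the comparison $X\le\mathscr X$ (Lemma~\ref{lem:coupling birth death}(ii), adapted to start at time $N-m$ with initial value $0$). Then I would use the exponential moment bound in Proposition~\ref{lem:moment bound simpler chain}, or more directly the block estimate of Lemma~\ref{lem:block moment bound simpler chain arbitrary start}, to conclude that $\mathscr X^0(i)\le L$ at a positive fraction of the regeneration ones, for a suitable constant $L$. At each such time the gap closes with probability at least $(1-q)q^{L}$.

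\textbf{Main obstacle.} The gap starts at $G(N-m)=x\le\varepsilon m$, and one needs $x$ successful decrements among about $c''m$ independent-ish trials, each succeeding with probability $\theta=(1-q)q^L$. Choosing $\varepsilon<c''\theta/2$, a Chernoff bound for a sequence of conditionally Bernoulli successes (via a martingale or stochastic-domination argument) gives failure probability $e^{-cm}$.

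Making "$X^0\le L$ at many of the ones" rigorous is the main obstacle. Exponential moment bounds of $X^0$ at fixed times must be converted into a count of good times along a random sequence. I would try Markov's inequality on $\sum_i z^{\mathscr X(\mathrm n_i)}$, with Lemma~\ref{lem:block moment bound simpler chain arbitrary start} giving geometric contraction between consecutive ones. I would then apply a supermartingale or drift argument $\mathbb E[z^{\mathscr X(\mathrm n_{i+1})}\mid\cdot]\le\rho z^{\mathscr X(\mathrm n_i)}+C(1+R^{C})$ to show that the number of times with $z^{\mathscr X}>z^L$ is at most half the count, except with exponentially small probability.

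Finally, the supremum over $x$ costs nothing: by monotonicity the worst case is $x=\lfloor\varepsilon m\rfloor$, so the bound holds uniformly in $x$ on the good event. The bad events contribute at most $Ce^{-cm}$ to the expectation.
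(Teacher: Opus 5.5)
Your argument is viable, but it reaches coalescence by a different mechanism than the paper.

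\textbf{How the paper does it.} The paper never follows the gap $X^x-X^0$. In Lemma~\ref{lem:bound the TV distance by hitting} it couples $X^0\le X^x\le Z$, where $Z=\mathscr X^{\boldsymbol{\mathsf S}_{N,m},h}$ is the simpler chain started at $h$. The two copies are then forced together as soon as $Z$ hits $0$. After passing to $\mathrm P^{(0,\a,\b)}$ and to the sparser regeneration sequence $\bar{\boldsymbol{\mathsf S}}$ via Lemma~\ref{lem:coupling birth death}(iii), the key input is Lemma~\ref{lem:simpler chain hitting 0}. This is a survival bound $Ce^{-c\ell}v^x$ over $\ell$ i.i.d.\ blocks, obtained from a single Lyapunov recursion for $P_k+(\delta_0/C_2)V_k$. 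All dependence on the starting height sits in the factor $v^{h_m+\lfloor\eta m\rfloor}$, which is beaten by $e^{-c_4\lfloor\theta m\rfloor}$ once $\varepsilon,\eta$ are small.

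\textbf{Your route.} Your gap coupling is correct: the decrement interval and the bound $(q^j-q^{j+G})/(1-q^{i+1})\ge(1-q)q^j$ are right. The gap is nonincreasing because the thresholds are nested, not merely because each copy moves by at most one. A mild advantage is that the partial block at the start of the window costs nothing for the gap. The price is two layers the paper avoids:
\begin{itemize}
\item an occupation-time estimate showing the lower chain is $\le L$ at many regeneration ones, with failure probability $e^{-cm}$;
\item an Azuma/Freedman bound for the decrements.
\end{itemize}

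\textbf{On the step you flag.} Markov's inequality applied to $\sum_k z^{\mathscr X(\mathrm n_k)}$ only gives a bound of order $z^{-L}$, not $e^{-cm}$, so it cannot be the tool. A multiplicative supermartingale works instead.
\begin{itemize}
\item Start from the one-block drift $\mathbb E[V_{k+1}\mid\cdot\,]\le\rho V_k+b$, where $V_k=z^{\mathscr X}\ge1$ and $b\ge1$. This is the intermediate inequality in the proof of Lemma~\ref{lem:block moment bound simpler chain arbitrary start}, averaged over the independent block length.
\item Fix $\rho'\in(\rho,1)$ and call a visit bad when $V_k>b/(\rho'-\rho)$. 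Let $B_k,G_k$ count bad and good visits.
\item Then $(\rho')^{-B_k}(\rho+b)^{-G_k}V_k$ is a supermartingale, so $\mathbb P(G_n\le\delta n)\le\mathbb E[V_0](\rho')^{(1-\delta)n}(\rho+b)^{\delta n}$.
\end{itemize}
This gives only a small fraction $\delta$ of good visits, not ``at least half''. That is enough once you take $\varepsilon<\delta\theta c''/2$.

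You also need the paper's device of fixing $A_m=a$ and dropping the indicator. The reason is that the blocks $R_{A_m},\dots,R_2$ inside the window are not i.i.d.\ conditionally on $A_m$.

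\textbf{A backwards heuristic.} Since $\sigma_r+\dots+\sigma_j=N+1-(\sigma_0+\dots+\sigma_{j-1})$, the ones near $N$ come from the \emph{lowest} levels. These correspond to the early regeneration times of $\bg$, which is the relation $\mathrm n_i=N+1-m_{K+1-i}$ in Lemma~\ref{lem:regeneration indices}. The lemma you invoke already encodes this correctly, so nothing breaks.
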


As a first step for the proof of Proposition~\ref{lem:expectation of TV distance}, we relate $K_{N,m}\left(\boldsymbol{\mathsf{S}}^{\bs(\bg)};x,\cdot\right)$ and $K_{N,m}\left(\boldsymbol{\mathsf{S}}^{\bs(\bg)};0,\cdot\right)$ to the chain $\mathscr{X}^{\boldsymbol{\mathsf{S}}_{N,m},x}$ from Definition \ref{def:simpler chain}.

\begin{lemma}\label{lem:bound the TV distance by hitting}
Let $q\in(0,1)$, $N\in\mathbb{N}$, and $m\in\llbracket0,N-1\rrbracket$. Let $\boldsymbol{\mathsf{S}}\in\{0,1\}^{N+1}$ be a $0$-$1$ sequence. Let  $\boldsymbol{\mathsf{S}}_{N,m}\in\{0,1\}^{m+1}$ and $K_{N,m}(\boldsymbol{\mathsf{S}};x,\cdot)$ for $x\in\llbracket0, N-m-1 \rrbracket$ be as in Definition \ref{def:chain tail}. Let the Markov chain $\mathscr{X}^{\boldsymbol{\mathsf{S}}_{N,m},x}=\{\mathscr{X}^{\boldsymbol{\mathsf{S}}_{N,m},x}(i)\}_{i=0}^{m+1}$ be as in Definition \ref{def:simpler chain}. Then for each $h\in\llbracket0,N-m-1\rrbracket$,
\begin{equation*}
    \sup_{x\in\llbracket0, h\rrbracket}
\TV{
K_{N,m}\left(\boldsymbol{\mathsf{S}} ;x,\cdot\right)
-
K_{N,m}\left(\boldsymbol{\mathsf{S}} ;0,\cdot\right)
} 
\leq
\mathbb{P}\left( \mathscr{X}^{\boldsymbol{\mathsf{S}}_{N,m},h}(i)>0\mbox{ for all }i\in\llbracket0,m+1\rrbracket \right).
\end{equation*} 
\end{lemma}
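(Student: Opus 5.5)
The plan is a coupling argument built from the deterministic update maps $\Phi_i^s$ and $\Psi^s$ introduced in the proof of Lemma~\ref{lem:coupling birth death}. Fix $h\in\llbracket0,N-m-1\rrbracket$ and $x\in\llbracket0,h\rrbracket$. Take i.i.d.\ uniforms $U_{N-m},\dots,U_N$ on $(0,1)$ and drive three processes with them, all using the same $U_i$ at step $i\mapsto i+1$:
\begin{itemize}
\item $Y^x(i+1)=\Phi_i^{\mathsf S(i)}(Y^x(i),U_i)$ with $Y^x(N-m)=x$;
\item $Y^0(i+1)=\Phi_i^{\mathsf S(i)}(Y^0(i),U_i)$ with $Y^0(N-m)=0$;
\item $Z(k+1)=\Psi^{\mathsf S(N-m+k)}(Z(k),U_{N-m+k})$ with $Z(0)=h$.
\end{itemize}
Then $Y^x(N+1)\sim K_{N,m}(\boldsymbol{\mathsf S};x,\cdot)$ and $Y^0(N+1)\sim K_{N,m}(\boldsymbol{\mathsf S};0,\cdot)$. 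Since $\mathsf S_{N,m}(k)=\mathsf S(N-m+k)$, the process $Z$ has the law of $\mathscr X^{\boldsymbol{\mathsf S}_{N,m},h}$ on $\llbracket0,m+1\rrbracket$. By the coupling characterization of total variation,
\[
\TV{K_{N,m}(\boldsymbol{\mathsf S};x,\cdot)-K_{N,m}(\boldsymbol{\mathsf S};0,\cdot)}\le \mathbb P\bigl(Y^x(N+1)\neq Y^0(N+1)\bigr).
\]

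Next I would establish three pathwise facts by induction over $i\in\llbracket N-m,N+1\rrbracket$.

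\emph{Ordering.} We have $Y^0(i)\le Y^x(i)$. This follows from \eqref{eq:betbretgsertb1} with $s=s'$, using that both chains stay in the admissible range $\llbracket0,i-1\rrbracket$ by the well-definedness remark in Definition~\ref{def:chain tail}.

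\emph{Coalescence.} If $Y^0(i)=Y^x(i)$ at some time, then the two chains agree at all later times, because they are updated by the same map with the same uniform.

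\emph{Domination.} We have $Y^x(i)\le Z(i-(N-m))$. At the initial time this is $x\le h$. The inductive step is exactly inequality \eqref{eq:betbretgsertb2}: $\Phi_i^s(j,u)\le\Psi^s(k,u)$ whenever $j\le k$.

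Combining the three facts: suppose $Z(k)=0$ for some $k\in\llbracket0,m+1\rrbracket$. Then
\[
0\le Y^0(N-m+k)\le Y^x(N-m+k)\le 0,
\]
so the two $Y$-chains have coalesced by time $N-m+k$ and hence agree at time $N+1$. Consequently
\[
\{Y^x(N+1)\ne Y^0(N+1)\}\subset\{Z(k)>0\ \text{for all } k\in\llbracket0,m+1\rrbracket\}.
\]
This right-hand event does not depend on $x$, so taking the supremum over $x\in\llbracket0,h\rrbracket$ gives the stated bound.

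The main obstacle is an indexing check rather than a conceptual one. I need \eqref{eq:betbretgsertb1} and \eqref{eq:betbretgsertb2} to hold with the time-dependent thresholds $(1-q^j)/(1-q^{i+1})$ and $(q^{j+1}-q^{i+1})/(1-q^{i+1})$ at the shifted time index $i\ge N-m$. I also need the chain started at $x\le N-m-1$ to remain in $\llbracket0,i-1\rrbracket$, so that $\Phi_i$ is defined. Both reduce to the elementary monotonicity of these thresholds in $j$, together with the comparisons $1-q\ge(1-q^j)/(1-q^{i+1})$ for $j\ge1$ and $q^{j+1}\ge(q^{j+1}-q^{i+1})/(1-q^{i+1})$, which are what make $\Psi$ dominate $\Phi$.
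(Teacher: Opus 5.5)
Your proof is correct and is essentially the paper's argument. It uses the same coupling of $Y^0$, $Y^x$ and $Z$ through $\Phi$ and $\Psi$ with shared uniforms, the ordering $Y^0\le Y^x\le Z$ (after the time shift), and coalescence of the two $Y$-chains once $Z$ hits $0$.

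One slip in your closing remark: for $\Psi^1$ to dominate $\Phi^1$ you need $(1-q^j)/(1-q^{i+1})\ge 1-q$ for $j\ge1$, so that the $\Phi$-chain moves down at least as readily, not the reverse inequality you wrote. No new check is actually required, though, since \eqref{eq:betbretgsertb1} and \eqref{eq:betbretgsertb2} already hold for all $i\in\llbracket1,N\rrbracket\supseteq\llbracket N-m,N\rrbracket$.
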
 
\begin{proof}
Fix any \(x\in\llbracket0,h\rrbracket\). We first prove the following claim: there exists a coupling between three Markov chains given by 
$\{Y^0(i)\}_{i=0}^{m+1}\overset{d}{=}\{X_{N,m}^{\boldsymbol{\mathsf{S}},0}(N-m+i)\}_{i=0}^{m+1}$,
$\{Y^x(i)\}_{i=0}^{m+1}\overset{d}{=}\{X_{N,m}^{\boldsymbol{\mathsf{S}},x}(N-m+i)\}_{i=0}^{m+1}$,
and
$\{Z(i)\}_{i=0}^{m+1}\overset{d}{=}\{\mathscr{X}^{\boldsymbol{\mathsf{S}}_{N,m},h}(i)\}_{i=0}^{m+1}$, 
satisfying 
\begin{equation}\label{eq:ntejnuyjtemu}
Y^0(i)\leq Y^x(i)\leq Z(i)
\quad\mbox{for all } i\in\llbracket0,m+1\rrbracket .
\end{equation}
Indeed, the construction is very similar to Lemma \ref{lem:coupling birth death}. Assume $s\in\{0,1\}$ and $u\in(0,1)$. As in \eqref{eq:phipsi}, we let
    \begin{equation*} 
        \begin{split}
            \Phi_i^s(j,u)=j-{\bf1}_{s=1}{\bf1}_{\left\{u\leq \frac{1-q^j}{1-q^{i+1}}\right\}}+{\bf1}_{s=0}{\bf1}_{\left\{u\leq \frac{q^{j+1}-q^{i+1}}{1-q^{i+1}}\right\}},\quad &\mbox{ for }i\in\llbracket N-m,N\rrbracket\mbox{ and }j\in\llbracket0,i-1\rrbracket,\\
            \Psi^s(j,u)=j-{\bf1}_{s=1}{\bf1}_{\{j\geq1,u\leq 1-q\}}+{\bf1}_{s=0}{\bf1}_{\{u\leq q^{j+1}\}},\quad &\mbox{ for }j\in\mathbb{Z}_{\geq0}.
        \end{split}
    \end{equation*}
As in \eqref{eq:betbretgsertb1} and \eqref{eq:betbretgsertb2}, for all \(i\in\llbracket N-m,N\rrbracket\), \(u\in(0,1)\), and \(s\in\{0,1\}\),  
\begin{equation} \label{eq:hnbytrn1} 
\begin{aligned} 
   \Phi_i^s(j,u)\leq \Phi_i^s(k,u)\quad&\mbox{ for }0\leq j\leq k\leq i-1, \\
    \Phi_i^s(j,u)\leq \Psi^s(k,u)\quad&\mbox{ for } j\in\llbracket0,i-1\rrbracket\mbox{ and }k\in\mathbb{Z}_{\geq0}\mbox{ satisfying }j\leq k.
\end{aligned}\end{equation}
Let $U_0,\dots,U_m$ be i.i.d. $\mathrm{Unif}(0,1)$ random variables. We define the processes $Y^0$, $Y^x$, and $Z$ by 
$Y^0(0)=0$, $Y^x(0)=x$, and $Z(0)=h$, and for each $i\in\llbracket0,m\rrbracket$, we let
\[
Y^0(i+1)=\Phi_{N-m+i}^{\mathsf{S}(N-m+i)}\left(Y^0(i),U_i\right),\quad
Y^x(i+1)=\Phi_{N-m+i}^{\mathsf{S}(N-m+i)}\left(Y^x(i),U_i\right),\quad
Z(i+1)=\Psi^{\mathsf{S}(N-m+i)}\left(Z(i),U_i\right).
\]
Then the three equalities in distribution in the claim follow from the above construction, and the monotonicity \eqref{eq:ntejnuyjtemu} follows from an induction through the deterministic inequalities \eqref{eq:hnbytrn1}.

We return to the proof of the lemma. If \(Z(i_0)=0\) for some \(i_0\in\llbracket0,m+1\rrbracket\), then \eqref{eq:ntejnuyjtemu} and non-negativity imply \(Y^0(i_0)=Y^x(i_0)=0\). From time \(i_0\) onward, the processes \(Y^0\) and \(Y^x\) are updated by the same maps \(\Phi\) with the same uniforms, so equality at time \(i_0\) forces equality at all later times. Hence
\[
\left\{Y^x(m+1)\neq Y^0(m+1)\right\}
\subset
\left\{Z(i)>0\mbox{ for all }i\in\llbracket0,m+1\rrbracket\right\}.
\]
For every \(A\subset\mathbb Z_{\geq0}\),
\[
\begin{split}
&\left|
K_{N,m}\left(\boldsymbol{\mathsf{S}};x,A\right)
-
K_{N,m}\left(\boldsymbol{\mathsf{S}};0,A\right)
\right|  
  =
\left|
\mathbb{P}\left(Y^x(m+1)\in A\right)
-
\mathbb{P}\left(Y^0(m+1)\in A\right)
\right|\\
&\leq
\mathbb{P}\left(Y^x(m+1)\neq Y^0(m+1)\right) 
  \leq
\mathbb{P}\left(
Z(i)>0\mbox{ for }i\in\llbracket0,m+1\rrbracket
\right) =
\mathbb{P}\left(
\mathscr{X}^{\boldsymbol{\mathsf{S}}_{N,m},h}(i)>0
\mbox{ for }i\in\llbracket0,m+1\rrbracket
\right).
\end{split}
\]
Taking the supremum over \(A\subset\mathbb Z_{\geq0}\) and then over
\(x\in\llbracket0,h\rrbracket\) concludes the proof.
\end{proof}

\begin{lemma}\label{lem:simpler chain hitting 0}
 Let \(q\in(0,1)\), $C_0,c_0>0$ and \(H\) be a \(\mathbb N\)-valued random
variable, satisfying $\mathbb{P}(H\geq y)\leq C_0e^{-c_0y}$ for all $y\in\mathbb{R}_{\geq0}$.
Let \(H_1,H_2\ldots\) be i.i.d. copies of \(H\). For $\ell\in\mathbb{N}$, consider the $0$-$1$ sequence $\boldsymbol{\mathsf S}^{H,\ell}\in\{0,1\}^{H_1+\dots+H_{\ell}}$ given by 
\[
{\mathsf S}^{H,\ell}(i)=1\quad\mbox{if and only if } i=H_1+\dots+H_j-1\mbox{ for some }j\in\llbracket1,\ell\rrbracket,
\]
Consider the Markov chain $\mathscr{X}^{\boldsymbol{\mathsf S}^{H,\ell},x}=\{\mathscr{X}^{\boldsymbol{\mathsf S}^{H,\ell},x}(i)\}_{i=0}^{H_1+\dots+H_{\ell}}$ started at $x\in\mathbb{Z}_{\geq0}$ as in Definition \ref{def:simpler chain}, sampled using transition randomness independent of  \(H_1,H_2\ldots\). Then there exist constants $C,c>0$ and $v>1$
depending only on \(q\) and on the law of \(H\), such that
\begin{equation}\label{eq:nryrbsbtn4y}
\mathbb{P}\left(\mathscr{X}^{\boldsymbol{\mathsf S}^{H,\ell},x}(i)>0\mbox{ for all }i\in\llbracket0,H_1+\dots+H_{\ell}\rrbracket\right)
\leq C e^{-c\ell}v^x\quad \mbox{for all }\ell\in\mathbb N,\ x\in\mathbb Z_{\ge0}.
\end{equation}
\end{lemma}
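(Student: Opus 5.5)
The plan is to analyse the chain block by block and use a supermartingale of the form $v^{\mathscr{X}}$, stopped at the first visit to $0$. The sequence $\boldsymbol{\mathsf S}^{H,\ell}$ splits into $\ell$ blocks. Block $j$ has $H_j-1$ zeros followed by a single one at position $H_1+\dots+H_j-1$. Within a zero run of length $h$ started from state $y$, the chain moves as $y+Q^y(h)$ in the notation of Lemma~\ref{lem:upward jumps U chain}. At the terminal one it moves down by one with probability $1-q$, provided it is positive. The endpoint after $\ell$ blocks is therefore the process $Z(\ell)$ of Lemma~\ref{lem:block moment bound simpler chain arbitrary start} with $p=1-q$ and $h_j=H_{j+1}-1$. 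The one difference is that we want the probability of never hitting $0$, so we kill the process at $0$ instead of letting it stay there.

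First I would bound the one-block drift of the killed process. Define $\tilde Z(j)=Z(j)\mathbf 1\{\text{no visit to }0\text{ up to the end of block }j\}$. Hitting $0$ inside a block can only happen at a terminal one or at the start, and reaching $0$ earlier only helps. Fix $v>1$, to be chosen. Conditioning on $\tilde Z(j)=y\ge1$ and averaging over $H$, which is independent of everything before, the computation in the proof of Lemma~\ref{lem:block moment bound simpler chain arbitrary start} combined with \eqref{eq:Q-poly-bound} gives
\[
\mathbb E\bigl[v^{\tilde Z(j+1)}\mathbf 1_{\{\tilde Z(j+1)>0\}}\bigm| \tilde Z(j)=y\bigr]\le r_v v^{y}\bigl(1+C_0q^y\,\mathbb E[H^{C_0}]\bigr),\qquad r_v=q+(1-q)/v<1 .
\]
The factor $\mathbb E[H^{C_0}]$ is finite because $H$ has exponential tails. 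For $y$ large this is at most $\rho v^y$ with some $\rho<1$.

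The main obstacle is small $y$, where $q^y$ is not small. Here I would not bound by $v^y$. Instead I would use a positive escape probability to $0$: starting from any $y\le y_0$, the block ends at $0$ with probability at least some $\eta>0$. This happens, for instance, if $H=1$ for $y$ consecutive blocks and each downward step succeeds, which has probability at least $(\mathbb P(H=1)(1-q))^{y_0}$. If $\mathbb P(H=1)=0$, I would instead use the smallest atom $h^*$ of $H$ together with the probability $\prod_{r}(1-q^{r})$ that no up-steps occur during the zero runs. This step is the delicate one.

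To combine the two regimes I would use a Lyapunov function $V(y)=v^y$ for $y\ge1$ and $V(0)=0$, and work with a multi-block step of length $L=y_0$. Large states contract by $\rho^L$ up to a constant. States $\le y_0$ have total mass multiplied by at most $1-\eta$, with some additional upward spreading that is controlled by the exponential moments from Lemma~\ref{lem:upward jumps U chain}. Taking $v$ close to $1$ should make the combined estimate $\mathbb E[V(\tilde Z(j+L))\mid \tilde Z(j)]\le \rho' V(\tilde Z(j))$ hold with $\rho'<1$ for every $y\ge1$. If this is awkward, an alternative is a geometric-trials argument. The drift bound says the chain returns to $[1,y_0]$ within a number of blocks having exponential tails, and it does so from $x$ after about $x$ blocks, which accounts for the factor $v^x$. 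Each visit to $[1,y_0]$ then gives an independent chance $\eta$ of killing over the next $y_0$ blocks.

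Iterating the contraction gives $\mathbb E[V(\tilde Z(\ell))]\le C\rho'^{\lfloor \ell/L\rfloor}v^x$. Survival over all $\ell$ blocks means $\tilde Z(\ell)\ge1$, and $V\ge 1$ on $\{\tilde Z(\ell)\ge 1\}$, so Markov's inequality yields \eqref{eq:nryrbsbtn4y} with $e^{-c}=\rho'^{1/L}$. All constants depend only on $q$ and on the law of $H$.
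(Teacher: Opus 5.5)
\textbf{There is a genuine gap in the step that combines the two regimes.} For $1\le y\le y_0$ you need
\[
\mathbb E\bigl[v^{\tilde Z(j+L)}\mathbf 1_{\{\tilde Z(j+L)>0\}}\bigm|\tilde Z(j)=y\bigr]\le \rho' v^{y},
\]
and this does not follow from your two ingredients. The lower bound $\eta$ on the killing probability controls only the surviving \emph{mass}. But $V(y)=v^y$ is smallest exactly at the small states, where the zero runs push the surviving mass upward.

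Take $H\equiv h$ with $h$ large. Started from $y=1$, a single zero run raises the state to order $\log h/\log(1/q)$ with probability close to one. The probability of being killed in that block is only $\phi=(1-q)(1-q^2)^{h-1}$, which is exponentially small in $h$. Your threshold $y_0$ is itself of order $\log h$. So with $L=y_0$, $\mathbb E[V(\tilde Z(L))\mid \tilde Z(0)=1]$ is of order $v^{c\log h}\gg v=V(1)$ for every fixed $v>1$.

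Sending $v\downarrow1$ does not rescue this. At $y=1$ the inequality needs roughly $\eta\gtrsim (v-1)\,y^*$, where $y^*$ is the typical upward displacement. The bound of Lemma~\ref{lem:upward jumps U chain} carries no factor $v-1$, while $1-r_v\approx(1-q)(v-1)$. Hence your $y_0$, and with it $L$, tends to infinity as $v\downarrow 1$. Meanwhile $\eta$ must bound the killing probability within $y_0$ blocks uniformly over $\llbracket 1,y_0\rrbracket$, and your estimate for it is of order $\phi^{y_0}$. In the example this decays far faster than $v-1$, so the parameters chase each other.

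There is also an inconsistency in the regimes. The large-state estimate $\mathbb E[v^{Z(j+L)}]\le \rho^L v^y+b/(1-\rho)$ beats $\rho' v^y$ only above some level $y_1$ well beyond $y_0$. So the ``small'' regime must extend to $y_1$. From $y_1>L=y_0$ the chain cannot be killed at all within $L$ blocks, since each block lowers the state by at most one.

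\textbf{The missing ingredient is a Lyapunov function with a mass term,} such as $\mathbf 1_{\{y\ge1\}}(1+\epsilon v^y)$, rather than $v^y$. This is what the paper's proof does. With $Y_n$ the state after $n$ blocks, set $P_k=\mathbb P(\tau_0>kT)$ and $V_k=\mathbb E[v^{Y_{kT}}\mathbf 1_{\{\tau_0>kT\}}]$. The paper shows that $P_k+(\delta_0/C_2)V_k$ contracts by the factor $1-\delta_0$ over each window of $T=m_0+M_0$ blocks:
\begin{enumerate}
\item[(a)] The first $m_0$ blocks relax the exponential moment, via $\mathbb E[v^{Y_{m+n}}\mathbf 1_A]\le C_2(\mathbb P(A)+\rho^n\mathbb E[v^{Y_m}\mathbf 1_A])$ for $A\in\mathcal F_m$, which comes from Lemma~\ref{lem:block moment bound simpler chain arbitrary start}.
\item[(b)] The next $M_0$ blocks all step down with probability $\phi^{M_0}$, where $\phi=(1-q)\mathbb E[(1-q^2)^{H-1}]$.
\item[(c)] The event of sitting above $M_0$ is paid for by Markov's inequality with $v^{-M_0}$. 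This is why the paper takes $v>1/\phi$, large rather than close to $1$.
\end{enumerate}

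Your geometric-trials fallback is also sound if written with stopping times. Let $T_1$ be the first entrance of the block chain into $\llbracket0,y_0\rrbracket$, and $T_{k+1}$ the first entrance after $T_k+y_0$. Then
\[
\mathbb P(\tau_0>\ell)\le\mathbb P(T_n\ge \ell-y_0)+(1-\eta)^n .
\]
The conditional exponential moments $\mathbb E[\rho^{-(T_{k+1}-T_k)}\mid\mathcal F_{T_k}]$ are bounded by combining two facts: the supermartingale $\rho^{-j}v^{Z(j)}$ above $y_0$, and the unconditional moment bound for $v^{Z(T_k+y_0)}$ given $Z(T_k)\le y_0$. The bound $\mathbb E[\rho^{-T_1}]\le v^x$ produces the factor $v^x$, and taking $n$ a small multiple of $\ell$ finishes. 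This version works for any $v>1$. In your proposal, however, it is only a two-sentence aside, and the route you actually develop does not close.
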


\begin{proof}
    We work on the probability space with variables \(H_1,H_2,\ldots\), an array of i.i.d. \(\mathrm{Unif}(0,1)\) variables $U_{r,s}$ for $r,s\in\mathbb N$, and i.i.d. variables $B_n\sim\ber(1-q)$ for $n\in\mathbb N$, all independent of each other. In the \(n\)-th block, the variables \(U_{n,1},\ldots,U_{n,H_n-1}\) are used for the \(H_n-1\) zero-updates, and \(B_n\) is used for the terminal \(1\)-update. A zero-update from state \(y\in\mathbb Z_{\geq0}\) sends \(y\) to \(y+1\) if and only if the
corresponding uniform variable is at most \(q^{y+1}\), while the terminal \(1\)-update sends \(y\) to \(y-B_n{\bf1}_{\{y>0\}}\).  
Let 
\[
Y_0=x\quad\mbox{and}\quad Y_n=\mathscr{X}^{\boldsymbol{\mathsf S}^{H,n},x}(H_1+\cdots+H_n)\quad\mbox{for }n\in\mathbb N.
\]
Note that $Y_n$ is the state immediately after the terminal \(1\)-update of the \(n\)-th block. 
Conditionally on the block lengths \(H_1,H_2,\ldots\), we have the transition rule
\[
Y_{n+1}=Y_n+Q_n^{Y_n}(H_{n+1}-1)-B_{n+1}{\bf1}_{\left\{Y_n+Q_n^{Y_n}(H_{n+1}-1)>0\right\}}
\quad\mbox{for } n\in\mathbb Z_{\geq0},
\]
where \(Q_n^y\) for $n\in\mathbb Z_{\geq0}$ are independent copies of the chain from Lemma \ref{lem:upward jumps U chain}.

Let $\phi=(1-q)\mathbb E [(1-q^2)^{H-1} ]$, then $\phi\in(0, 1-q]$. Choose $v>1/\phi$.  
Let \(C_1\in\mathbb N\) and \(\rho\in(0,1)\) be the constants from Lemma \ref{lem:block moment bound simpler chain arbitrary start} with \(z=v\) and \(p=1-q\). Let
\[
        C_2=C_1\left(1+\mathbb E\left[(H-1)^{C_1}\right]\frac{\rho}{1-\rho}\right),
\]
which is finite since $H$ is subexponential. Let $\mathcal F_m=\sigma (H_n,B_n,U_{n,s}:n\in\llbracket1,m\rrbracket,\ s\in\mathbb N )$
be the sigma field generated by the block lengths and transition randomness in the first \(m\) blocks. 
We claim that 
\begin{equation}\label{eq:simpler-hit-cond-moment-noatom}
\mathbb E\left[v^{Y_{m+n}}{\bf1}_A\right]
\leq C_2\left( \mathbb P(A)+\rho^n\mathbb E\left[v^{Y_m}{\bf1}_A\right] \right)\quad\mbox{for all }m,n\in\mathbb Z_{\geq0},\,A\in\mathcal F_m.
\end{equation}
Indeed, Lemma \ref{lem:block moment bound simpler chain arbitrary start} gives
\[
\mathbb E\left[v^{Y_{m+n}}\,\middle|\,\mathcal F_m,H_{m+1},\ldots,H_{m+n}\right]
\leq C_1\left(1+\rho^n v^{Y_m} + \sum_{i=0}^{n-1}\rho^{n-i}(H_{m+i+1}-1)^{C_1}\right).
\]
Averaging over $H_{m+1},\ldots,H_{m+n}$, which are independent of \(\mathcal F_m\), gives
\[
\mathbb E\left[v^{Y_{m+n}}\,\middle|\,\mathcal F_m\right]
\leq C_1\left( 1+\rho^n v^{Y_m}+\mathbb E\left[(H-1)^{C_1}\right]\sum_{i=0}^{n-1}\rho^{n-i}\right)\leq C_2\left(1+\rho^n v^{Y_m}\right).
\]
Multiplying by \({\bf1}_A\) and taking expectation proves \eqref{eq:simpler-hit-cond-moment-noatom}.

Next, we choose \(M_0\in\mathbb N\) and \(m_0\in\mathbb N\) such that
\begin{equation}\label{eq:simpler-hit-parameter-choice-noatom}
    C_2v^{-M_0}\leq \phi^{M_0}/4=\delta_0,
    \quad
    C_2\rho^{m_0}v^{-M_0}\leq \delta_0^2/C_2,
    \quad
    C_2\rho^{m_0+M_0}\leq \delta_0/2 .
\end{equation}
Indeed, since \(v\phi>1\), the first inequality holds for all sufficiently large \(M_0\); after \(M_0\) is fixed, the last two inequalities hold for all sufficiently large \(m_0\) because \(\rho\in(0,1)\). Let $T=m_0+M_0$. For \(k\in\mathbb Z_{\geq0}\), let
\[E_k=\bigcap_{n=kT+m_0+1}^{kT+m_0+M_0}\{B_n=1,\, U_{n,s}>q^2\mbox{ for } s\in\llbracket1, H_n-1\rrbracket  \}.
\]
Then \(E_k\) is independent of \(\mathcal F_{kT+m_0}\) and we have $\mathbb P(E_k)=\phi^{M_0}=4\delta_0$. On the event $E_k$, if the $n$-th block starts from a state $y\geq1$, then it sends this state to $y-1$ for $n\in\llbracket kT+m_0+1,kT+m_0+M_0\rrbracket$, since every zero-update in this block has threshold at most \(q^2\), hence leaves the state unchanged, and the terminal \(1\)-update is successful. 
Therefore, denote $\tau_0=\inf\{n\in\mathbb Z_{\geq0}:Y_n=0\}$ with the convention \(\inf\emptyset=\infty\),  we have
\begin{equation}\label{eq:simpler-hit-set-inclusion-noatom}
\{\tau_0>(k+1)T\}
\subseteq
\{\tau_0>kT\}
\cap
\left(
\{Y_{kT+m_0}>M_0\}\cup E_k^{\complement}
\right),
\end{equation}
Indeed, if \(Y_{kT+m_0}=0\), then the chain has already hit \(0\). If \(1\leq Y_{kT+m_0}\leq M_0\) and \(E_k\) occurs, then, as long as the block-starting state is positive, each of the blocks in $\llbracket kT+m_0+1,kT+m_0+M_0\rrbracket$ decreases the state by one. Hence the chain hits \(0\) by time \((k+1)T\).  
Let  
\[
        P_k=\mathbb P(\tau_0>kT) 
        \quad\mbox{and}\quad
        V_k=\mathbb E\left[v^{Y_{kT}}{\bf1}_{\{\tau_0>kT\}}\right].
\]
Then we have
\begin{equation*}
\begin{split}
P_{k+1}
&\leq
\mathbb E\left[
{\bf1}_{\left\{Y_{kT+m_0}>M_0\right\}}
{\bf1}_{\left\{\tau_0>kT\right\}}
\right]
+
\mathbb P\left(E_k^{\complement}\cap\left\{\tau_0>kT\right\}\right)                               \leq
v^{-M_0}
\mathbb E\left[
v^{Y_{kT+m_0}}{\bf1}_{\left\{\tau_0>kT\right\}}
\right]
+
\left(1-\phi^{M_0}\right)P_k                                                              \\
&\leq
\left(1-\phi^{M_0}+C_2v^{-M_0}\right)P_k
+
C_2\rho^{m_0}v^{-M_0}V_k                                                         \leq
(1-3\delta_0)P_k+\delta_0^2C_2^{-1}V_k ,
\end{split}
\end{equation*}
where the first step uses \eqref{eq:simpler-hit-set-inclusion-noatom}; the second step uses Markov's inequality, the fact that \(E_k\) is independent of \(\mathcal F_{kT+m_0}\), and $\mathbb P(E_k)=\phi^{M_0}=4\delta_0$; the third step uses \eqref{eq:simpler-hit-cond-moment-noatom} with \(m=kT\), \(n=m_0\), and \(A=\{\tau_0>kT\}\); and the fourth step uses \eqref{eq:simpler-hit-parameter-choice-noatom}.
We also have
\begin{equation*}
    \begin{split}
V_{k+1}
 =
\mathbb E\left[v^{Y_{(k+1)T}}{\bf1}_{\left\{\tau_0>(k+1)T\right\}}\right]                  \leq
\mathbb E\left[v^{Y_{(k+1)T}}{\bf1}_{\left\{\tau_0>kT\right\}}\right]                      \leq
C_2P_k+C_2\rho^T V_k
\leq
C_2P_k+ \delta_0 V_k /2,
\end{split}
\end{equation*}
where the first step uses the definition of $V_{k+1}$; the second step uses inclusion; the third step uses \eqref{eq:simpler-hit-cond-moment-noatom} with \(m=kT\), \(n=T\), and \(A=\{\tau_0>kT\}\); and the fourth step uses \eqref{eq:simpler-hit-parameter-choice-noatom}. 
Combining the last two displays,
\[ 
P_{k+1}+(\delta_0/C_2) \cdot V_{k+1}\leq 
(1-2\delta_0)P_k+3\delta_0^2 /(2C_2)\cdot V_k       
 \leq(1-\delta_0)(P_k+(\delta_0/C_2) \cdot  V_k),
\] 
where the last inequality follows from \(0<\delta_0=\phi^{M_0}/4\leq1/4\). Iterating,
and using \(P_0\leq1\) and \(V_0\leq v^x\), gives
\[
        P_k
        \leq
        P_k+(\delta_0/C_2) \cdot V_k
        \leq
        (1-\delta_0)^k\left(1+(\delta_0/C_2) \cdot v^x\right)
        \leq
        (1+ \delta_0/C_2 )(1-\delta_0)^k v^x .
\]
Let $c= -\log(1-\delta_0)/T>0$ and $C=(1+\delta_0/C_2)(1-\delta_0)^{-1}>0$, we obtain $\mathbb P(\tau_0>\ell)\leq Ce^{-c\ell}v^x$ for all  $\ell,x\in\mathbb Z_{\geq0}$,
which follows from the last display with \(k=\lfloor \ell/T\rfloor\) and using \(\{\tau_0>\ell\}\subseteq\{\tau_0>kT\}\). 
Note that
\begin{equation*} 
\left\{
\mathscr{X}^{\boldsymbol{\mathsf S}^{H,\ell},x}(i)>0
\mbox{ for all }i\in\llbracket0,H_1+\cdots+H_\ell\rrbracket
\right\}
=
\{\tau_0>\ell\},
\end{equation*}
since zero-updates never decrease the chain.   The constants \(C,c,v\) were constructed from \(q\) and the law of \(H\) only. We conclude the proof of \eqref{eq:nryrbsbtn4y}.
\end{proof}

\begin{proof}[Proof of Proposition \ref{lem:expectation of TV distance}]
If \(m=0\), then \(\min\{\lfloor\varepsilon m\rfloor,N-m-1\}=0\), and the supremum in the statement is over \(x=0\) only; hence the left side is \(0\). In the rest of the proof we assume \(m\ge1\).

In this proof, we will write
$\mathbb{P}^0=\mathbb{P}^{(0,\mathsf{a},\mathsf{b})}$, $\mathbb{E}^0=\mathbb{E}^{(0,\mathsf{a},\mathsf{b})}$,
$\mathrm{P}^0=\mathrm{P}^{(0,\mathsf{a},\mathsf{b})}$, and $\mathrm{E}^0=\mathrm{E}^{(0,\mathsf{a},\mathsf{b})}$.

    Let $\mathrm{P}^0$ be the probability measure on $\lrw_N$ as in Definition \ref{def:approx TASEP measure}. Whenever this probability space is enlarged below, we keep the same notation $\mathrm{P}^0$ and $\mathrm{E}^0$ for the enlarged probability measure and expectation; its marginal on $\lrw_N$ is the measure in Definition \ref{def:approx TASEP measure}.
    By Lemma \ref{lem:regeneration indices MAIN}, on such an enlargement, there exist constants $C_1,c_1>0$ depending only on $\a$ and $\b$; $\mathbb{N}$-valued independent random variables $R_j$ for $j\in\mathbb{N}$ such that $R_j$ for $j\geq2$ are i.i.d.; a random integer $K=\max\{j\in\mathbb Z_{\geq0}:R_1+\cdots+R_j\leq N\}$; and, on the event $\{K\geq1\}$, random variables $1\leq \mathrm{n}_1<\dots<\mathrm{n}_K\leq N$, such that on the event $\{K\geq1\}$,  
\begin{equation}\label{eq:brtsbrtb22}
    \begin{split}
    &\mathsf{S}^{\bs(\bg)}(\mathrm{n}_j)=1\quad\mbox{ for all }j\in\llbracket1,K\rrbracket,\\
    &\mathrm{n}_j-\mathrm{n}_{j-1}=R_{K+2-j}\quad\mbox{ for all }j\in\llbracket2,K\rrbracket,\quad
        \mbox{and}\quad
        N-\mathrm{n}_K=R_1-1.
\end{split}
\end{equation}
Moreover, for all $y\in\mathbb{N}$,
\begin{equation}\label{eq:nrtgrsnrsynj22}
    \begin{split}
            & \mathrm{P}^0(K\geq 1, \mathrm{n}_1\geq y)\leq C_1e^{-c_1y},\quad \mathrm{P}^0(K\geq 1,  N-\mathrm{n}_K\geq y)\leq C_1e^{-c_1y},\\
             & \mathrm{P}^0(K=0)\leq C_1e^{-c_1N},\quad \mathrm{P}^0(R_i\geq y)\leq C_1e^{-c_1y}  \mbox{    for } i\in\mathbb{N},\quad\mbox{ and } \mathrm{P}^0(R_2=1)=\nu_{\b}(1).
\end{split}
\end{equation}
Increasing $C_1$ if necessary, the tail bound for $R_i$ in \eqref{eq:nrtgrsnrsynj22} holds for all $y\in\mathbb R_{\ge0}$.

We let 
\begin{equation*}
    A_m=\max\{j\in\mathbb Z_{\ge0}:R_1+\dots+R_j\le m+1\}\leq K 
\end{equation*}
and define
\begin{equation} \label{def:prm}
    \mathrm p_0=m+1,\qquad
\mathrm p_j=m+1-(R_1+\dots+R_j) \quad\mbox{ for }j\in\llbracket1,m+1\rrbracket.
\end{equation}
By \eqref{eq:brtsbrtb22}, we have
\[
\mathrm p_j=\mathrm n_{K+1-j}-(N-m) \quad\mbox{ for } j\in\llbracket1,A_m\rrbracket.
\] 
Let
$\bar{\boldsymbol{\mathsf S}}  \in\{0,1\}^{m+1}$
be the \(0\)-\(1\) sequence defined by
\begin{equation}\label{eq:mfyujkmjkmgfjfui}
\bar{\mathsf S} (i)=1\quad\mbox{ if and only if }i=\mathrm p_j\mbox{ for some }j\in\llbracket1,A_m\rrbracket,\quad\mbox{ for }
i\in\llbracket0,m\rrbracket,
\end{equation}
with the convention that \(\bar{\boldsymbol{\mathsf S}}\) is the all-zero sequence when \(A_m=0\). 
We split the rest of the proof into two steps.\\ 

\noindent\textbf{Step 1.} In this step, we conclude the proof of the proposition  assuming the following claim: there exist constants $\varepsilon, C_2,c_2>0$ depending only on \(q,\a,\b\), such that
\begin{equation}\label{eq:sufficessrtgbhjnyr1}
    \mathrm{E}^0\left[V\right]\leq C_2e^{-c_2m},
\end{equation}
where for \(m\in\llbracket0,N-1\rrbracket\), we write $h_m=\min\{\lfloor\varepsilon m\rfloor,N-m-1\}$, and set
\[
V
=
\mathbb P \left(
\mathscr X^{\bar{\boldsymbol{\mathsf S}},h_m}(i)>0
\mbox{ for all }i\in\llbracket0,m+1\rrbracket
\right).
\]
Here, $\bar{\boldsymbol{\mathsf S}}  \in\{0,1\}^{m+1}$ is the random $0$-$1$ sequence on the enlarged probability space as in \eqref{eq:mfyujkmjkmgfjfui}, and the probability above is taken only with respect to the transition randomness of the Markov chain $\mathscr X^{\bar{\boldsymbol{\mathsf S}},h_m}$, conditioning on a fixed $\bar{\boldsymbol{\mathsf S}}$.
 
By \eqref{eq:brtsbrtb22}, 
$\mathsf{S}^{\bs(\bg)}(\mathrm{n}_j)=1$ for all $j\in\llbracket1,K\rrbracket$. In view of the construction \eqref{eq:mfyujkmjkmgfjfui} of $\bar{\boldsymbol{\mathsf S}}$, we have

\[
\mathsf S^{\bs(\bg)}_{N,m}(i)=\mathsf S^{\bs(\bg)}(N-m+i)\ge \bar{\mathsf S} (i)
\quad\mbox{for all }i\in\llbracket0,m\rrbracket,
\]
where $\boldsymbol{\mathsf S}^{\bs(\bg)}_{N,m}\in\{0,1\}^{m+1}$ is the tail sequence of $\boldsymbol{\mathsf S}^{\bs(\bg)}$ as in Definition \ref{def:chain tail}. By Lemma \ref{lem:coupling birth death} $(\textup{iii})$, there exists a coupling satisfying
$\mathscr X^{\boldsymbol{\mathsf S}^{\bs(\bg)}_{N,m},h_m}(i)\leq\mathscr X^{\bar{\boldsymbol{\mathsf S}},h_m}(i)$ for all $i\in\llbracket0,m+1\rrbracket$. 
Hence
\begin{equation}\label{eq:sufficessrtgbhjnyr2}
    G(\bg)\leq V\quad \mathrm{P}^0\mbox{-almost surely},
\end{equation}
where
\[
G (\bg)
=
\mathbb P \left(
\mathscr X^{\boldsymbol{\mathsf S}^{\bs(\bg)}_{N,m},h_m}(i)>0
\mbox{ for all }i\in\llbracket0,m+1\rrbracket
\right),
\]
and the probability above is taken only with respect to the transition randomness of $\mathscr X^{\boldsymbol{\mathsf S}^{\bs(\bg)}_{N,m},h_m}$. Define
\[
F (\bg)
=
\sup_{x\in\llbracket0,h_m\rrbracket}
\TV{
K_{N,m}\left(\boldsymbol{\mathsf{S}}^{\bs(\bg)};x,\cdot\right)
-
K_{N,m}\left(\boldsymbol{\mathsf{S}}^{\bs(\bg)};0,\cdot\right)
}.
\]
Using Lemma \ref{lem:bound the TV distance by hitting} for \(\boldsymbol{\mathsf S}=\boldsymbol{\mathsf S}^{\bs(\bg)}\) and \(h=h_m\in\llbracket0,N-m-1\rrbracket\), we obtain
\begin{equation}\label{eq:sufficessrtgbhjnyr3}
    F(\bg)\leq G(\bg) \quad\mbox{ for }\bg\in\lrw_N,
\end{equation}
Since \(F:\lrw_N\to[0,1]\), we have
\begin{equation}\label{eq:sufficessrtgbhjnyr4}
\left|\mathbb E^0[F]-\mathrm E^0[F]\right|
=
\left|
\int_0^1
\left(
\mathbb P^0(F>t)-\mathrm P^0(F>t)
\right)\,dt
\right|
\le
\TV{\mathbb P^0-\mathrm P^0} \leq C_3e^{-c_3 N},
\end{equation}
where $C_3,c_3>0$ are constants depending only on $\a,\b$, and the last step above follows from Lemma \ref{lem:TV distance replaced measure}. Combining \eqref{eq:sufficessrtgbhjnyr1}, \eqref{eq:sufficessrtgbhjnyr2}, \eqref{eq:sufficessrtgbhjnyr3}, \eqref{eq:sufficessrtgbhjnyr4},
\[
\mathbb E^0[F]\leq \mathrm E^0[F]+C_3e^{-c_3 N}\leq \mathrm E^0[G]+C_3e^{-c_3 N}\leq\mathrm E^0[V]+C_3e^{-c_3 N}\leq C_2e^{-c_2m}+C_3e^{-c_3 N}.
\]
Choosing $C=C_2+C_3$ and $c=\min(c_2,c_3)$, we conclude the proof of the lemma.\\

\noindent\textbf{Step 2.} In this step, we prove the claim \eqref{eq:sufficessrtgbhjnyr1}, which we restate as follows: there exist constants $\varepsilon,C_2,c_2>0$ depending only on \(q,\a,\b\), such that
\begin{equation}\label{eq:sufficessrtgbhjnyr167546}
    \mathrm{E}^0\left[V\right]\leq C_2e^{-c_2m},
\end{equation}
where for \(m\in\llbracket0,N-1\rrbracket\), we recall $h_m=\min\{\lfloor\varepsilon m\rfloor,N-m-1\}$, and we let
\begin{equation}
V
=
\mathbb P \left(
\mathscr X^{\bar{\boldsymbol{\mathsf S}},h_m}(i)>0
\mbox{ for all }i\in\llbracket0,m+1\rrbracket
\right),
\end{equation}
where the probability above is taken only with respect to the transition randomness of $\mathscr X^{\bar{\boldsymbol{\mathsf S}},h_m}$. To this end, we will use Lemma \ref{lem:regeneration indices MAIN}, and the ingredients from its proof in the appendix.
 
We begin by choosing $\varepsilon>0$ in the claim. 
Recall that for Lemma \ref{lem:regeneration indices MAIN}, the law of \(R_2\) is the law of the first regeneration-time increment of the positive \(\nu_\b\)-random walk from Lemma \ref{lem:regeneration}; in particular, this law is independent of \(N\) and \(m\), and depends only on \(\b\).
By Lemma \ref{lem:simpler chain hitting 0}, in view of \eqref{eq:nrtgrsnrsynj22}, there exist constants $C_4,c_4>0$ and $v>1$, depending only on $q$ and the law of $R_2$, and hence only on \(q,\a,\b\), such that,  
\begin{equation}\label{eq:simpler block-survival-applied-corrected}
\mathbb P\left(
\mathscr X^{\boldsymbol{\mathsf S}^{H,\ell},x}(i)>0
\mbox{ for all }i\in\llbracket0,H_1+\cdots+H_\ell\rrbracket
\right)
\le C_4e^{-c_4\ell}v^x\quad\mbox{ for all }\ell\in\mathbb N,\, x\in\mathbb Z_{\ge0},
\end{equation}
where \(H_1,H_2,\ldots\) are i.i.d. copies of \(H=R_2\), independent of the transition randomness of the chain. 
By \eqref{eq:nrtgrsnrsynj22}, using Lemma \ref{lem:basic exponential tails}, there exist constants $C_5\in(0,\infty)$ and $c_5\in(0,c_1)$ depending only on $C_1,c_1$,  
\begin{equation}\label{eq:moment C5}
 \mathrm{E}^0\left[e^{c_5R_1}\right]\leq C_5,\quad \mathrm{E}^0\left[e^{c_5R_2}\right]\leq C_5,
\end{equation}
We choose first $\theta\in(0,1)$ and then $\varepsilon,\eta>0$, satisfying 
\begin{equation}\label{eq:thetaepsiloneta}
    \theta\log C_5<c_5/2,\quad (\varepsilon+\eta)\log v<c_4\theta/2.
\end{equation}
Choose \(m_0\in\mathbb N\) such that $\lfloor\theta m\rfloor\geq1$ and $\lfloor\eta m\rfloor\geq1$ when \(m\ge m_0\). The finitely
many \(m<m_0\) will be absorbed into the final constant, and hence it suffices to prove \eqref{eq:sufficessrtgbhjnyr167546} for $m\geq m_0$.
Define the bad event
\[
B_m=
\{A_m\le \lfloor\theta m\rfloor\}
\cup
\left(\{A_m\ge1\}\cap\{\mathrm p_{A_m} >\lfloor\eta m\rfloor\}\right).
\]
with $\mathrm p_{A_m}$ from \eqref{def:prm}. 
The structure of the remainder of the proof is as follows. We will first bound $\mathrm{P}^0(B_m)$ and then bound \(\mathrm E^0[V\cdot {\bf1}_{B_m^{\complement}}]\). The combination of these two estimates will give us \eqref{eq:sufficessrtgbhjnyr167546}.\\

We begin by estimating the probability of the event $B_m$. First, we have
\begin{equation}\label{eq:tjdmnfmyhng1}
    \begin{split}
        & \mathrm P^0\left(A_m\le  \lfloor\theta m\rfloor\right)
        =\mathrm P^0\left( R_1+\dots+  R_{ \lfloor\theta m\rfloor+1}>m+1\right)\\
  &      \le
e^{-c_5(m+1)}
\mathrm E^0\left[e^{c_5 R_1}\right]
\left(\mathrm E^0\left[e^{c_5 R_2}\right]\right)^{\lfloor\theta m\rfloor} 
 \leq e^{- c_5(m+1)}C_5^{ \lfloor\theta m\rfloor+1}
\le
C_5 e^{-\left(c_5-\theta\log C_5\right)m}
\le
C_5 e^{-c_5 m/2},
    \end{split}
\end{equation}
where the first step uses \(\{A_m\le \lfloor\theta m\rfloor\}=\{ R_1+\dots+  R_{ \lfloor\theta m\rfloor+1}>m+1\}\), the second step uses 
Markov's inequality, \(R_i\) for $i\in\mathbb{N}$ are independent and \(R_i\) for $i\geq2$ are i.i.d., the third step uses \eqref{eq:moment C5}, the fourth step uses $c_5>0$ and $\lfloor\theta m\rfloor\leq \theta m$, and the fifth step uses \eqref{eq:thetaepsiloneta}. 
Next,  
\begin{equation}\label{eq:tjdmnfmyhng2}
    \begin{split}
        &\mathrm P^0\left(A_m\ge1,\ \mathrm p_{A_m} >\lfloor\eta m\rfloor\right)\leq  
\sum_{a=1}^{m+1}
\mathrm P^0\left(R_1+\dots+R_a\le m+1,\ R_{a+1}>\lfloor\eta m\rfloor\right)\\
&=
\sum_{a=1}^{m+1}
\mathrm E^0\left[
{\bf1}_{\{R_1+\dots+R_a\le m+1\}}
\mathrm P^0\left(R_{a+1}>\lfloor\eta m\rfloor\mid R_1,\ldots,R_a\right)
\right]\le
(m+1)C_1e^{-c_1\left(\lfloor\eta m\rfloor+1\right)}
\le C_6e^{-c_6m},
    \end{split}
\end{equation}
where the constants \(C_6,c_6>0\) depend only on \(\a,\b,\eta\), hence only on \(q,\a,\b\). 
Here, the first step decomposes according to the value of $A_m=a\in\llbracket1,m+1\rrbracket$, noting that if we also have $\mathrm p_{A_m}>\lfloor\eta m\rfloor$, then $R_{a+1}>\lfloor\eta m\rfloor$; the second step uses the basic property of conditional expectation; the third step uses  \eqref{eq:nrtgrsnrsynj22}. 

Combining \eqref{eq:tjdmnfmyhng1} and \eqref{eq:tjdmnfmyhng2}, 
there exist constants
\(C_7,c_7>0\), depending only on \(q,\a,\b\), such that
\begin{equation}\label{eq:bad-event-exp-small-corrected}
\mathrm P^0(B_m)\le C_7e^{-c_7m}
\quad\mbox{for all }m\ge m_0.
\end{equation} 

Our next goal is to estimate \(\mathrm E^0[V\cdot {\bf1}_{B_m^{\complement}}]\). For deterministic \(u_1,\ldots,u_\ell\in\mathbb N\), let
\(\boldsymbol{\mathsf S}^{u_1,\ldots,u_\ell}\in\{0,1\}^{u_1+\cdots+u_\ell}\) be the $0$-$1$ sequence given by 
\[
{\mathsf S}^{u_1,\ldots,u_\ell}(i)=1\quad\mbox{if and only if } i=u_1+\dots+u_j-1\mbox{ for some }j\in\llbracket1,\ell\rrbracket.
\]
Recall the Markov chain $\mathscr{X}^{\bar{\boldsymbol{\mathsf S}},h_m}$, where $\bar{\boldsymbol{\mathsf S}}  \in\{0,1\}^{m+1}$ is the \(0\)-\(1\) sequence given by \eqref{eq:mfyujkmjkmgfjfui}, i.e., 
\[
\bar{\mathsf S} (i)=1\quad\mbox{ if and only if }i=\mathrm p_j\mbox{ for some }j\in\llbracket1,A_m\rrbracket,\quad\mbox{ for }
i\in\llbracket0,m\rrbracket.
\]
Fix \(a\in\llbracket \lfloor\theta m\rfloor+1,m+1\rrbracket\), and set
\[
E_a=\left\{A_m=a,\ \mathrm p_a\le \lfloor\eta m\rfloor\right\}.
\]
On \(E_a\), since \(\lfloor\theta m\rfloor\ge1\), we have \(a\ge2\). 
For every realization
of the transition randomness of  $\mathscr{X}^{\bar{\boldsymbol{\mathsf S}},h_m}$,
\[
\mathscr{X}^{\bar{\boldsymbol{\mathsf S}},h_m}\left(\mathrm p_a+1\right)\le h_m+\mathrm p_a\le h_m+\lfloor\eta m\rfloor.
\]
The symbols of \(\bar{\boldsymbol{\mathsf S}} \) read from time
\(\mathrm p_a+1\) through time \(\mathrm p_1\) form the complete block sequence
\[
R_a,\ R_{a-1},\ldots,R_2,
\]
because \(\mathrm p_{j-1}-\mathrm p_j=R_j\) for \(j\in \llbracket2,a\rrbracket\). Thus, by the Markov property at time \(\mathrm p_a+1\), and then by Lemma~\ref{lem:coupling birth death}~$(\textup{iii})$ applied to the same sequence and to initial states \(y\le h_m+\lfloor\eta m\rfloor\), with the probability on the right side of the following display taken conditionally on \(R_1,R_2,\ldots\) and only over the transition randomness,
\begin{equation}\label{eq:yumtdyj}
\begin{split}
V\cdot{\bf1}_{E_a}
&\le
{\bf1}_{E_a}\cdot
\mathbb P \left(
\mathscr X^{\boldsymbol{\mathsf S}^{R_a,\ldots,R_2},\,h_m+\lfloor\eta m\rfloor}(i)>0
\mbox{ for all }i\in\llbracket0,R_2+\cdots+R_a\rrbracket
\right).
\end{split}
\end{equation}
Taking \( \mathrm E^0 \), dropping the indicator \( {\bf1}_{E_a}\), and using that \((R_a,\ldots,R_2)\) has the same distribution as \(a-1\) i.i.d. copies of \(H=R_2\), \eqref{eq:simpler block-survival-applied-corrected}
gives
\begin{equation}\label{eq:fixed-a-good-bound-corrected} \mathrm E^0\left[
V\cdot {\bf1}_{E_a}
\right] \le
\mathbb P\left(
\mathscr X^{\boldsymbol{\mathsf S}^{H,a-1},\,h_m+\lfloor\eta m\rfloor}(i)>0
\mbox{ for all }i\in\llbracket0,H_1+\cdots+H_{a-1}\rrbracket
\right)
\le C_4e^{-c_4(a-1)}v^{h_m+\lfloor\eta m\rfloor}. 
\end{equation}
On \(B_m^{\complement}\), since \(\lfloor\theta m\rfloor\ge1\), we have
\(A_m\ge \lfloor\theta m\rfloor+1\) and \(\mathrm p_{A_m}\le \lfloor\eta m\rfloor\). Hence, 
\begin{equation}\label{eq:good-event-survival-bound-corrected} 
\mathrm{E}^0\left[
V {\bf1}_{B_m^\complement}
\right]
 \le
\sum_{a=\lfloor\theta m\rfloor+1}^{m+1}
C_4e^{-c_4(a-1)}v^{h_m+\lfloor\eta m\rfloor} \le
C_8 e^{-c_4\lfloor\theta m\rfloor}v^{(\varepsilon+\eta)m} 
\le
C_9e^{-c_9m}, 
\end{equation}
where \(c_9>0\) follows from \eqref{eq:thetaepsiloneta}. 
Combining \eqref{eq:bad-event-exp-small-corrected} and \eqref{eq:good-event-survival-bound-corrected},
\[ 
\mathrm{E}^0[V]
\le
\mathrm{P}^0(B_m)
+
\mathrm{E}^0\left[V\cdot{\bf1}_{B_m^\complement}\right]
\leq C_7e^{-c_7m}+C_9e^{-c_9m}
\le C_2e^{-c_2m}\quad\mbox{ for all }m\ge m_0,
\]
where $C_2=C_7+C_9$ and $c_2=\min(c_7,c_9)$ are constants depending only on \(q,\a,\b\). Finally, since $V\in[0,1]$, one can increase $C_2$ to cover the finitely many \(m<m_0\). We conclude   \eqref{eq:sufficessrtgbhjnyr167546} for all $m\in\mathbb{N}$.
\end{proof}

\subsection{Control of endpoint under conditioning: Proof of Lemma \ref{lem:conditioning does not change much endpoint}}\label{sec:LemmaSupSup}

We have now all tools in order to provide the deferred proof of Lemma \ref{lem:conditioning does not change much endpoint}. 

\begin{proof}[Proof of Lemma \ref{lem:conditioning does not change much endpoint}]
    In this proof, we will write
$\mathbb{P}^0=\mathbb{P}^{(0,\mathsf{a},\mathsf{b})}$, $\mathbb{E}^0=\mathbb{E}^{(0,\mathsf{a},\mathsf{b})}$,
$\mathrm{P}^0=\mathrm{P}^{(0,\mathsf{a},\mathsf{b})}$, and $\mathrm{E}^0=\mathrm{E}^{(0,\mathsf{a},\mathsf{b})}$.

We start by proving the following claim: for every fixed \(z>1\),
\begin{equation}\label{eq:uniform-intermediate-X-moment new}
\sup_{N\in\mathbb N}\sup_{i\in\llbracket0,N+1\rrbracket}
\mathbb E^0\left[
\mathbb E \left[
z^{X^{\boldsymbol{\mathsf S}^{\bs(\bg)}}(i)}
\right]\right]
<\infty ,
\end{equation}
is a finite constant depending only on \(q,\a,\b,z\),
where the inner expectation is taken only with respect to the transition randomness of the Markov chain, conditioning on the sampled path \(\bg\). 
For $\bg\in\lrw_N$, we write
$D_i(\bg)=
\mathbb E \left[
z^{\mathscr X^{\boldsymbol{\mathsf S}^{\bs(\bg)}}(i)}
\right]$.
By Lemma \ref{lem:apriori bound birth death chain}, there exists a finite constant $C_1$ depending only on \(q,z\), such that
\begin{equation}\label{eq:brbwtwryn,ui}
0\le D_i(\bg)\le C_1 z^{i^{3/4}} 
\quad\mbox{for all }\bg\in\lrw_N
 \mbox{ and } i\in\llbracket0,N+1\rrbracket .
\end{equation}
Therefore,
\begin{equation}\label{eq:brbwtwryne,ui}
\mathbb E^0\left[
\mathbb E \left[
z^{X^{\boldsymbol{\mathsf S}^{\bs(\bg)}}(i)}
\right]\right]\leq 
\mathbb E^0\left[D_i(\bg)\right]
\le
\mathrm E^0\left[D_i(\bg)\right]
+
2 C_1 z^{i^{3/4}}
\TV{\mathbb P^0-\mathrm P^0} 
\le
\mathrm E^0\left[D_i(\bg)\right]
+
2C_1C_2 z^{(N+1)^{3/4}}e^{-c_2 N},
\end{equation}
where the first step uses the coupling given by Lemma~\ref{lem:coupling birth death}~$(\textup{ii})$, the second step uses  
\eqref{eq:brbwtwryn,ui}, the third step uses Lemma \ref{lem:TV distance replaced measure}, which gives the constants $C_2,c_2>0$  depending only on $\a,\b$. By Lemma \ref{lem:moment bound simpler chain},
\begin{equation}\label{eq:brbwtweryne,ui}
\sup_{N\in\mathbb N}\sup_{i\in\llbracket0,N+1\rrbracket}
\mathrm E^0 \left[D_i(\bg)\right]<\infty ,
\end{equation}
is a finite constant depending only on \(q,\a,\b,z\).
Taking the supremum over $i\in\llbracket0,N+1\rrbracket$ and then over $N\in\mathbb N$ in \eqref{eq:brbwtwryne,ui}, combining with \eqref{eq:brbwtweryne,ui}, we conclude the proof of \eqref{eq:uniform-intermediate-X-moment new}.

We return to the proof of the lemma.
Let \(C_3,c_3,\delta_3,\kappa_3>0\) be the constants from Lemma \ref{lem:conditioning does not affect first compositions}, and
\(\varepsilon_4,C_4,c_4>0\) be the constants from Proposition~\ref{lem:expectation of TV distance}. Let
\begin{equation}\label{eq:brtwytnwymi,}
\eta= \min\{\kappa_3,1\}/8,
\quad
\varepsilon= \min\{\varepsilon_4,1\}/2,
\quad 
\delta
=
\min\left\{
\delta_3,\,
 c_4\eta/8,\,
 (\varepsilon\eta\log 2)/16
\right\}.
\end{equation}
Choose \(m_0\in\mathbb N\) depending only on \(\kappa_3\) and \(\varepsilon\), such that for every \(N\geq m\ge m_0\), we have
\begin{equation}\label{eq:ell-admissible-endpoint-conditioning new}
1\le \ell=\lfloor \eta m\rfloor\le N-1,\quad
\ell+1\le \lfloor\kappa_3m\rfloor,\quad
\ell\ge  \eta m/2,\quad
\lfloor\varepsilon\ell\rfloor\ge  \varepsilon\ell/2,\quad
\lfloor\varepsilon\ell\rfloor\le N-\ell-1 .
\end{equation}
 By
\eqref{eq:uniform-intermediate-X-moment new} with \(z=2\),
\begin{equation}\label{eq:Y-ell-tail new}
\mathbb E^0\left[
\mathbb P \left(X^{\boldsymbol{\mathsf S}^{\bs(\bg)}}(N-\ell)>\lfloor\varepsilon\ell\rfloor\,\middle|\,\bg\right)
\right]
\le
M \cdot 2^{-\lfloor\varepsilon\ell\rfloor},
\end{equation}
where $M$ is the left side of \eqref{eq:uniform-intermediate-X-moment new}, which a finite constant depending only on \(q,\a,\b,z\).
Denote
\[
F_\ell(\bg)
=
\sup_{x\in\llbracket0,\lfloor\varepsilon\ell\rfloor\rrbracket}
\TV{
K_{N,\ell}\left(\boldsymbol{\mathsf S}^{\bs(\bg)};x,\cdot\right)
-
K_{N,\ell}\left(\boldsymbol{\mathsf S}^{\bs(\bg)};0,\cdot\right)
}\quad\mbox{ for }\bg\in\lrw_N.
\]
Since \(\lfloor\varepsilon\ell\rfloor\le\min\{\lfloor\varepsilon_4\ell\rfloor,N-\ell-1\}\), by Proposition~\ref{lem:expectation of TV distance} with \(\ell\) in place of \(m\), 
\begin{equation}\label{eq:F-ell-average new}
\mathbb E^0[F_\ell]\le C_4e^{-c_4\ell}.
\end{equation}
Denote the probability distributions
\[
\mu^{\bg}=\mathbb{P}\left(X^{\boldsymbol{\mathsf S}^{\bs(\bg)}}(N+1)\in\,\cdot\,\right)\quad\mbox{and}\quad
\nu^{\bg} =K_{N,\ell}\left(\boldsymbol{\mathsf S}^{\bs(\bg)};0,\cdot\right),\quad\mbox{ for fixed }\bg\in\lrw_N.
\]
By the Markov property at time \(N-\ell\),
\begin{equation}\label{eq:dtmntdm}
\TV{\mu^{\bg} - \nu^{\bg}}
\le
F_\ell(\bg)
+
\mathbb P \left(X^{\boldsymbol{\mathsf S}^{\bs(\bg)}}(N-\ell)>\lfloor\varepsilon\ell\rfloor\,\middle|\,\bg\right)\quad
\mbox{ for each }\bg\in\lrw_N.
\end{equation}
When \(\mathbb P^0(A)\ge e^{-\delta m}\), averaging \eqref{eq:dtmntdm} under \(\mathbb P^0(\cdot\,|A)\) and \(\mathbb P^0\), respectively, combining with \eqref{eq:Y-ell-tail new} and \eqref{eq:F-ell-average new}, 
\begin{align}
\TV{
\mathbb E^0[\mu^{\bg}\,|\,A]
-
\mathbb E^0[\nu^{\bg}\,|\,A]
}
&\le
e^{\delta m}\left(C_4e^{-c_4\ell}+M\cdot 2^{-\lfloor\varepsilon\ell\rfloor}\right),
\label{eq:conditioned-starting-state-error new}
\\
\TV{
\mathbb E^0[\mu^{\bg}]
-
\mathbb E^0[\nu^{\bg}]
}
&\le
C_4e^{-c_4\ell}+M\cdot 2^{-\lfloor\varepsilon\ell\rfloor}.
\label{eq:unconditioned-starting-state-error new}
\end{align}
We next compare 
\(\mathbb E^0[{\nu}^{\bg}\,|\,A]\) with \(\mathbb E^0[{\nu}^{\bg}]\). We claim that the tail sequence
\begin{equation}\label{eq:NiceSequence}
    \boldsymbol{\mathsf S}_{N,\ell}=\left(
\mathsf S^{\bs(\bg)}(N-\ell),\dots,\mathsf S^{\bs(\bg)}(N)
\right)\in\{0,1\}^{\ell+1}
\end{equation}
is a deterministic function of
$\left(\sigma_0,\dots,\sigma_{\lfloor\kappa_3m\rfloor}\right)$.
Indeed, writing
\(\bs(\bg)=(\sigma_0,\dots,\sigma_r)\) and using the convention
\(\sigma_i=0\) for \(i>r\),
by Definition \ref{def:associated 01 seq},  we have
\[
\mathsf S^{\bs(\bg)}(N-h)=1\quad\mbox{ if and only if }\sigma_0+\cdots+\sigma_{j-1}=h+1\mbox{ for some }j\in\llbracket1,r\rrbracket,
\quad\mbox{ for }h\in\llbracket0,\ell\rrbracket.
\]
If $\sigma_0+\cdots+\sigma_{j-1}=h+1$, then \(j\le h+1\le \ell+1\le \lfloor\kappa_3m\rfloor\), since the nonzero composition parts are in $\mathbb{N}$. Thus the truth value of this condition is determined by $\left(\sigma_0,\dots,\sigma_{\lfloor\kappa_3m\rfloor}\right)$. This proves the claim in \eqref{eq:NiceSequence}.
Consequently,  
\[
K_{N,\ell}\left(\boldsymbol{\mathsf S}^{\bs(\bg)};0,B\right)=f_B\left(\sigma_0,\dots,\sigma_{\lfloor\kappa_3m\rfloor}\right)\in[0,1]
\quad\mbox{ for each }B\subset\mathbb Z_{\ge0},
\]
where \(f_B \) is a measurable function depending on $B$. 
By Lemma \ref{lem:conditioning does not affect first compositions} and \(\delta\le\delta_3\),
\begin{equation*} 
    \begin{split}
        &\left|
\mathbb E^0\left[
K_{N,\ell}\left(\boldsymbol{\mathsf S}^{\bs(\bg)};0,B\right)
\,\middle|\,A
\right]
-
\mathbb E^0\left[
K_{N,\ell}\left(\boldsymbol{\mathsf S}^{\bs(\bg)};0,B\right)
\right]
\right|\\
&=\left|
\mathbb E^0\left[f_B\left(\sigma_0,\dots,\sigma_{\lfloor\kappa_3m\rfloor}\right)\mid A\right]-\mathbb E^0\left[f_B\left(\sigma_0,\dots,\sigma_{\lfloor\kappa_3m\rfloor}\right)\right]
\right|\le C_3e^{-c_3m}.
    \end{split}
\end{equation*}
Taking the supremum over \(B\subset\mathbb Z_{\ge0}\), we get
\begin{equation}\label{eq:brintu}
\TV{
\mathbb E^0[\nu^{\bg}\,|\,A]
-
\mathbb E^0[\nu^{\bg}]
}
\le C_3e^{-c_3m}.
\end{equation}
Combining \eqref{eq:brintu}, \eqref{eq:conditioned-starting-state-error new}, \eqref{eq:unconditioned-starting-state-error new}, 
we obtain
\begin{equation}\label{eq:endpoint-conditioning-combined new}
\begin{split}
&\TV{
\mathbb{P}^{0}\left(
X^{\boldsymbol{\mathsf{S}}^{\bs(\bg)}}(N+1)\in\cdot\,\middle|\,A
\right)
-
\mathbb{P}^{0}\left(
X^{\boldsymbol{\mathsf{S}}^{\bs(\bg)}}(N+1)\in\cdot
\right)
}
\le
C_3e^{-c_3m}
+
\left(1+e^{\delta m}\right)
\left(C_4e^{-c_4\ell}+M\cdot 2^{-\lfloor\varepsilon\ell\rfloor}\right).
\end{split}
\end{equation}
For \(m\ge m_0\), \eqref{eq:ell-admissible-endpoint-conditioning new} gives
\[
e^{\delta m}e^{-c_4\ell}
\le
\exp\left(-\left( c_4\eta/2-\delta\right)m\right)
\le
e^{-c_4\eta m/4},
\]
where the last inequality follows from \eqref{eq:brtwytnwymi,}.  Also,
\[
e^{\delta m}\cdot 2^{-\lfloor\varepsilon\ell\rfloor}
\le
\exp\left(-\left((\varepsilon\eta\log 2)/4-\delta\right)m\right)
\le
\exp\left(-(\varepsilon\eta\log 2)/8\cdot m\right).
\]
Hence the right side of
\eqref{eq:endpoint-conditioning-combined new} is at most \(Ce^{-cm}\) for all \(m\ge m_0\), where \(C,c>0\) depend only on \(q,\a,\b\).
Finally, we increase \(C\) if necessary so that \(1\le Ce^{-cm}\) for the finitely many \(1\le m<m_0\). We conclude the proof.
\end{proof}

\section{Proofs of the applications of the approximation result}\label{sec:FurtherApplications}

In this section, we establish several consequences of our approximation results on the stationary distribution of the open ASEP in the shock region. 

\subsection{Moderate deviation estimates: Proof of Proposition~\ref{thm:moderate deviation}}

We start with moderate deviations for the total number of particles under the stationary distribution of the open ASEP in the shock region of the high density and the low density phase, which we state below again for clarity.

\begin{proposition}\label{thm:moderate deviation again}
Consider the open ASEP on the lattice \(\llbracket1,N\rrbracket\) with jump rates \(\alpha,\beta,\gamma,\delta\) and \(q\) satisfying
\eqref{eq:conditions open ASEP}. Assume that Liggett's condition \eqref{eq:Liggett's condition} holds and that the open ASEP is in the shock region of the high and low density phases, i.e.,  \(\a\b>1\) and $\a\neq\b$, where $\a=(1-\alpha)/\alpha$ and $\b= (1-\beta)/\beta$. We denote its stationary measure by \(\mu_N^{(q,\a,\b)}\). Define $\rho= \b/(1+\b)$ in the high density phase $\b>\max(\a,1)$ and  $\rho= 1/(1+\a)$ in the low density phase $\a>\max(\b,1)$. Then there exist constants $C,c\in(0,\infty)$, depending only on \(q,\a,\b\), such that, for every \(N\in\mathbb N\), every nonempty set $I  \subseteq\llbracket1,N\rrbracket$ and every  \(x\ge0\),   
\begin{equation*} 
\mu_N^{(q,\a,\b)}
\left(
\left|
\sum_{i\in I}\tau_i-\rho|I|
\right|
\ge x\sqrt{|I|}
\right)
\le
C e^{-cx^2 } .
\end{equation*} 
\end{proposition}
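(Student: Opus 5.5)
By particle-hole duality (used in the proof of Theorem~\ref{thm:ApproxHighLow again}), it suffices to treat the high density phase $\b>\max(\a,1)$ with $\rho=\b/(1+\b)$. Under this duality a set $I$ maps to its reflection $\{N+1-i\colon i\in I\}$, and $\sum\tau_i-\rho|I|$ maps to minus the corresponding quantity with $\rho=1/(1+\a)$.

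Write $n=|I|$. If $x\ge \sqrt{n}$, the event is empty because $|\sum_{i\in I}\tau_i-\rho n|\le n$, so we may assume $x\le\sqrt n$. Fix a cutoff $m=\lceil \lambda x^2\rceil$, with a small constant $\lambda>0$ chosen later. Split $I=I_1\cup I_2$ with $I_1=I\cap\llbracket1,m-1\rrbracket$ and $I_2=I\cap\llbracket m,N\rrbracket$. The near-boundary part contributes deterministically: $|\sum_{I_1}\tau_i-\rho|I_1||\le m\le \lambda x^2+1$. Since $x\le\sqrt n$, we have $\lambda x^2\le \lambda x\sqrt n$, so this term is at most $x\sqrt n/2$ once $\lambda\le1/4$ and $x\ge x_0$. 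Small $x$ is absorbed into $C$.

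For the bulk part $I_2$, apply Theorem~\ref{thm:ApproxHighLow again}(i) with this $m$ (assume $m\le N$; otherwise $I_2$ is empty). It gives
\[
\mu_N^{(q,\a,\b)}\Bigl(\Bigl|\sum_{i\in I_2}\tau_i-\rho|I_2|\Bigr|\ge \tfrac{x\sqrt n}{2}\Bigr)\le \ber_{\llbracket m,N\rrbracket}(\rho)\Bigl(\Bigl|\sum_{i\in I_2}\tau_i-\rho|I_2|\Bigr|\ge \tfrac{x\sqrt n}{2}\Bigr)+C'e^{-c' m}.
\]
Under the Bernoulli product measure, Hoeffding's inequality bounds the first term by $2\exp(-x^2 n/(2|I_2|))\le 2e^{-x^2/2}$. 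The second term is at most $C'e^{-c'\lambda x^2}$. Combining the two parts gives $Ce^{-cx^2}$ with $c=\min(1/2,c'\lambda)$.

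The only delicate point is the choice of the cutoff $m$. It must grow like $x^2$ so that the total variation error $e^{-cm}$ is Gaussian in $x$. At the same time it must stay at most of order $x\sqrt n$ so that the discarded boundary sites do not spoil the deviation. The reduction to $x\le\sqrt n$ is exactly what makes both requirements compatible. The constants depend only on $q,\a,\b$, through Theorem~\ref{thm:ApproxHighLow again}.
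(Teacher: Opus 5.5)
Your proposal is correct and follows essentially the same argument as the paper. Both discard $O(x^2)$ boundary sites of $I$ with a deterministic bound, apply Theorem~\ref{thm:ApproxHighLow} to the remainder with a total variation error $e^{-c x^2}$, and finish with Hoeffding after reducing to $x\le\sqrt{|I|}$. The only differences are cosmetic: the paper removes the first $\lfloor x^2/4\rfloor$ elements of $I$ by rank rather than using your spatial cutoff $m=\lceil\lambda x^2\rceil$, and it invokes part (ii) of the theorem for the low density phase instead of particle-hole duality.
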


\begin{lemma}\label{lem:Bernoulli moderate}
Assume $d\in\mathbb N$, $\rho_1,\dots,\rho_d\in[0,1]$,  and $\varsigma_i\sim\ber(\rho_i)$, $i\in\llbracket1,d\rrbracket$ are independent. Then, 
\[ 
\mathbb P\left(
\left| (\varsigma_1-\rho_1)+\dots+(\varsigma_d-\rho_d )   \right|\geq s
\right)
\leq 2 e^{-2s^2/d}    \qquad\mbox{ for each } s\geq0.
\]
\end{lemma}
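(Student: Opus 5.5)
This is Hoeffding's inequality for independent bounded summands, and I will prove it directly through the Chernoff exponential-moment method. Set $Y_i=\varsigma_i-\rho_i$. Each $Y_i$ has mean zero and takes values in the interval $[-\rho_i,1-\rho_i]$, which has length $1$. Let $S=Y_1+\dots+Y_d$.

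The key input is Hoeffding's lemma: a mean-zero random variable $Y$ supported in an interval $[a,b]$ satisfies
\[
\mathbb E\left[e^{\lambda Y}\right]\le e^{\lambda^2(b-a)^2/8}\qquad\mbox{for all }\lambda\in\mathbb R.
\]
For a Bernoulli variable this is elementary. Write $\psi(\lambda)=\log\bigl(1-\rho+\rho e^{\lambda}\bigr)-\lambda\rho$. Then $\psi(0)=\psi'(0)=0$. Moreover, $\psi''(\lambda)=p_\lambda(1-p_\lambda)\le 1/4$, where $p_\lambda=\rho e^\lambda/(1-\rho+\rho e^\lambda)$. Taylor's theorem then gives $\psi(\lambda)\le\lambda^2/8$. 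I will verify this short computation, noting that the degenerate cases $\rho\in\{0,1\}$ are trivial.

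By independence, $\mathbb E[e^{\lambda S}]=\prod_{i=1}^d\mathbb E[e^{\lambda Y_i}]\le e^{d\lambda^2/8}$. Markov's inequality then yields
\[
\mathbb P(S\ge s)\le e^{-\lambda s+d\lambda^2/8}\qquad\mbox{for every }\lambda>0.
\]
Choosing $\lambda=4s/d$ gives $\mathbb P(S\ge s)\le e^{-2s^2/d}$. Applying the same argument to $-S$, whose summands $-Y_i$ are again mean-zero and lie in intervals of length $1$, gives $\mathbb P(S\le -s)\le e^{-2s^2/d}$. A union bound yields the claimed estimate $2e^{-2s^2/d}$; the case $s=0$ holds trivially.

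The only substantive step is the bound $\psi''\le 1/4$. It amounts to the inequality $p(1-p)\le 1/4$, so no real obstacle is expected. Alternatively, one may simply cite Hoeffding's inequality as standard.
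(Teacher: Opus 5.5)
Your proof is correct and follows essentially the same route as the paper: bound the cumulant generating function of $\varsigma-\rho$ by $\lambda^2/8$ via $g(0)=g'(0)=0$ and $g''\le 1/4$, then apply independence and the exponential Markov inequality with $\lambda=4s/d$, and finish with a union bound over $\pm S$.
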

\begin{proof} This is a standard result but we provide the proof for completeness.
    Let $\rho\in[0,1]$, $\varsigma\sim\ber(\rho)$, and $g(\lambda)=\log\mathbb E [e^{\lambda(\varsigma-\rho)} ]$ for $\lambda\in\mathbb R$. We have $g(0)=g'(0)=0$ and 
    $g''(\lambda)=\rho e^\lambda(1-\rho)(1-\rho+\rho e^\lambda)^{-2}\leq 1/4$. By Taylor's theorem, $g(\lambda)\leq \lambda^2/8$ for all $\lambda\in\mathbb R$.
    By independence and the exponential Markov inequality, for $\lambda\geq0$,
 $\mathbb P ( (\varsigma_1-\rho_1)+\dots+(\varsigma_d-\rho_d ) \geq s )
\leq
e^{-\lambda s+ d\lambda^2/8 }$.
Taking \(\lambda=4s/d\) gives the bound by \(e^{-2s^2/d}\). Applying the same argument to the negative of the sum and taking a union bound proves the result.
\end{proof}

\begin{proof}[Proof of Proposition~\ref{thm:moderate deviation again}]
    Let \(C_0,c_0>0\) be the constants from Theorem \ref{thm:ApproxHighLow}. Then for each $N\geq m\geq 1$,
    \begin{equation}\label{eq:approx in proof}
        \begin{split}
              \left\|
\mu_N^{(q,\a,\b)}
\left((\tau_i)_{i\in\llbracket m,N\rrbracket}\in\cdot\right)
-\ber_{\llbracket m,N\rrbracket}(\rho)
\right\|_{\mathrm{TV}}
 \leq C_0e^{-c_0m}\qquad&\mbox{ if } \b>\max(\a,1),\\
\left\|
\mu_N^{(q,\a,\b)}
\left((\tau_i)_{i\in\llbracket1,m\rrbracket}\in\cdot\right)
-\ber_{\llbracket1,m\rrbracket}(\rho)
\right\|_{\mathrm{TV}}
\leq C_0e^{-c_0(N-m)}\qquad&\mbox{ if }\a>\max(\b,1).
        \end{split}
    \end{equation}
We choose $c=\min (1,c_0/8)$, $C=\max (e^{4c},2+C_0)$ and we will prove the theorem for these constants. Fix a nonempty set $I\subseteq\llbracket1,N\rrbracket$, write $I=\{i_1<\cdots<i_n\}$, and set $S_A=\sum_{i\in A}\tau_i$ for each $A\subseteq\llbracket1,N\rrbracket$. When \(0\leq x<2\), we have 
$\mu_N^{(q,\a,\b)} (|S_I-\rho n|\geq x\sqrt n )
\leq1
\leq Ce^{-cx^2}$. When $x>\sqrt n$, the event $\{|S_I-\rho n|\geq x\sqrt n\}=\emptyset$. We next assume $2\leq x\leq\sqrt n$ and set $h=\lfloor x^2/4\rfloor$. 
Note that $x^2/8\leq h\leq x^2/4\leq n/4$. Define
\[
J=
\begin{cases}
\{i_{h+1},\ldots,i_n\}&\qquad\mbox{ if }\b>\max(\a,1),\\
\{i_1,\ldots,i_{n-h}\}&\qquad\mbox{ if }\a>\max(\b,1).
\end{cases}
\]
  In the high density phase, $\min J=i_{h+1}\geq h+1$; while in the low density phase, $N-\max J=N-i_{n-h}\geq h$. Using \eqref{eq:approx in proof} with $m=i_{h+1}$ in the high density phase and with $m=i_{n-h}$ in the low density phase,  
\[
\left\|
\mu_N^{(q,\a,\b)}
\left((\tau_i)_{i\in J}\in\cdot\right)
-\ber_J(\rho)
\right\|_{\mathrm{TV}}
\leq C_0e^{-c_0h}.
\]
Since $\tau_i\in\{0,1\}$ and $\rho\in[0,1]$, we have $|S_{I\setminus J}-\rho h|\leq h$. Hence
\[
\left\{|S_I-\rho n|\geq x\sqrt n\right\}
\subseteq
\left\{|S_J-\rho(n-h)|\geq x\sqrt n-h\right\}.
\] 
Combining the above two displayed equations and Lemma \ref{lem:Bernoulli moderate}, in view of $|J|=n-h$,
\[ 
\mu_N^{(q,\a,\b)}
\left(|S_I-\rho n|\geq x\sqrt n\right)
\leq
2\exp\left(-\frac{2(x\sqrt n-h)^2}{n-h}\right)
+C_0e^{-c_0h}\leq 2e^{-9x^2/8}+C_0e^{-c_0x^2/8}\leq Ce^{-cx^2},
\]
where we used $2(x\sqrt n-h)^2/(n-h)\geq 9x^2/8$ since $h\leq x^2/4\leq x\sqrt n/4$.
We conclude the proof.
\end{proof}

\subsection{The open ASEP with light particles}

In the following, we provide a proof of Theorem~\ref{thm:mainHighLowLightIntro} and Theorem~\ref{thm:MainMixingTimeIntro} 
for the open ASEP with a finite number of light particles. Recall that we denote the state space of $r$ light particles on the segment $\llbracket 1,N \rrbracket$ by 
\[
\widehat{\Omega}_{N,r}:=\left\{\tau=(\tau_1,\dots,\tau_N)\, \colon \, \tau_i\in\{0, 1, 2\} \mbox{ for } i\in\llbracket 1,N \rrbracket,\quad \sum_{i=1}^{N} \mathds{1}{\{ \tau_i = 2 \} } = r\right\}.
\] 
and we let $\mu_{N,r}$ denote the unique stationary measure of the open ASEP with $r$ light particles, which is a probability measure on $\widehat{\Omega}_{N,r}$. Furthermore, we write 
\[
1\leq\lc_1<\dots<\lc_r\leq N.
\]
for the locations of the $r$ light particles within the lattice. 
Intuitively, the light particles behave as particles with respect to empty sites, and are treated like empty sites by all other particles; see Section~1.2 in~\cite{SY:openLight} for a formal definition. We recall the following result from \cite{SY:openLight} on the stationary distribution of the open ASEP with light particles in the fan region of the high and low density phase. 
\begin{theorem}[Theorem 1.3 in \cite{SY:openLight}]\label{thm:mainHighLowLight} Fix $r\in \mathbb{N}$. 
  Suppose $(a_N)_{N=1,2,\dots}$ and $(b_N)_{N=1,2,\dots}$ are two arbitrary sequences of integers such that  $1\leq a_N<b_N \leq N$ and $a_N,N-b_N\rightarrow\infty$. Then in the fan region of the high density phase where $\a\geq 1$, and $\a\b \leq 1$,  
\begin{equation}
    \lim_{N\rightarrow\infty}\mu_{N,r}(\lc_1,\dots,\lc_r\in \llbracket 1,a_N \rrbracket )=1. 
\end{equation}
Moreover, in the fan region of the low density phase where $C\geq 1$, and $AC \leq 1$,  
 \begin{equation}
          \lim_{N\rightarrow\infty}\mu_{N,r}( \lc_1,\dots,\lc_r\in \llbracket b_N,N\rrbracket )=1.
 \end{equation}
\end{theorem}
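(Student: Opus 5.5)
Since the result is quoted from \cite{SY:openLight}, we only outline how we would reprove it. We treat the high density statement; the low density one follows from the particle-hole duality used in the proof of Theorem~\ref{thm:ApproxHighLow again}, which swaps the two phases and reflects the lattice.

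\textbf{Step 1: projection.} Replace every light particle $2$ by an empty site $0$. Light particles are treated as empty sites by ordinary particles and never enter or exit, so this projection is again a Markov chain, namely the ordinary open ASEP on $\llbracket1,N\rrbracket$. Hence the projection of $\mu_{N,r}$ is $\mu_N$. In the fan region of the high density phase we would invoke the Bernoulli product approximation of \cite{nestoridi2024approximating} (or the fan-region analogue from the companion work). This gives that $\mu_N$ restricted to windows at distance $\gg1$ from the boundaries is close in total variation to $\ber(\rho)$ with $\rho=\b/(1+\b)>1/2$.

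\textbf{Step 2: drift of a light particle.} Given the ordinary particles, a light particle at $x$ moves as follows. It jumps right at rate $1$ (left at rate $q$) into an empty neighbour. It is pushed left at rate $1$ by a particle jumping over it from the left, via $12\to21$. It moves right at rate $q$ by the reverse swap $21\to 12$ with a particle on its right. In a $\ber(\rho)$ environment its instantaneous drift is therefore
\[
(1-q)(1-\rho)-(1-q)\rho=(1-q)(1-2\rho)<0 .
\]
So a light particle is a random walk in a dynamic random environment with a uniform negative drift wherever the environment looks Bernoulli of density $\rho>1/2$.

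\textbf{Step 3: localization.} We would run the dynamics from stationarity, i.e. from a configuration sampled from $\mu_{N,r}$. On the event that the environment is close to $\ber(\rho)$ in a space-time box, the negative drift together with exponential martingale and Azuma-type estimates shows two things. The rightmost light particle, started anywhere, reaches $\llbracket1,a_N\rrbracket$ within time $O(N)$. Afterwards it makes excursions to the right of $a_N$ with probability $o(1)$. Stationarity of $\mu_{N,r}$ then transfers this time-averaged statement to $\mu_{N,r}(\lc_r\le a_N)\to1$. An alternative route is the stationary flux identity: in stationarity the mean net flux of light particles across every bond vanishes. Combined with the Bernoulli approximation, this forces the expected number of light particles right of $a_N$ to vanish.

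\textbf{Main obstacle.} The hard part is making Step 3 rigorous. Step 1 only controls the environment under its own stationary law at a fixed time. What Step 3 needs is control along the trajectory of the light particle, and the light particle in turn affects which sites it sees. We would first try a second-class-particle type coupling: a light particle behaves like a second class particle with respect to holes, but like a hole with respect to first class particles. This coupling would let us dominate the light particle's position by a walk with drift computed from the environment at fixed times, using a union bound over $O(N)$ time steps. We would combine this with the fact that the boundary corrections to the Bernoulli approximation are confined to an $O(1)$ neighbourhood of the boundaries.
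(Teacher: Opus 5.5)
\textbf{Verdict: genuine gap.} The paper does not prove this statement; it imports it from \cite{SY:openLight}. The paper only notes that the sole phase-dependent input there is the linear-speed estimate of Lemma~4.11 of \cite{SY:openLight}, whose shock-region version is Lemma~\ref{lem:NewLinearSpeed}. Your heuristic (a bulk Bernoulli environment plus the drift $(1-q)(1-2\rho)<0$) is the right mechanism. However, Step~3 carries the whole content of the theorem, you leave it open, and the tools you propose cannot close it.

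\textbf{Why the proposed tools for Step~3 fail.} In the bulk the light particle is \emph{exactly} a second class particle: the rules $10\to01$, $12\to21$, $20\to02$ and their rate-$q$ reversals are the two-species ASEP with priority $1\succ2\succ0$. So the suggested second-class-particle coupling adds nothing new.
\begin{itemize}
\item Its instantaneous drift is $(1-q)\bigl(\mathbf 1\{\tau_{x+1}=0\}-\mathbf 1\{\tau_{x-1}=1\}\bigr)$. This is not pointwise negative: it equals $1-q$ when both neighbours are empty.
\item An Azuma or exponential-martingale argument therefore needs the time average of the environment \emph{as seen from the moving particle}. That environment is not Bernoulli, because a second class particle biases its own neighbourhood.
\item Fixed-time, fixed-site approximations of $\mu_N$ do not control this, even combined with union bounds over space-time points.
\item The flux identity fails for the same reason. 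Zero net light-particle current is an identity about the joint law of $\{\tau_x=2\}$ and the neighbouring occupations, which a marginal approximation does not determine.
\end{itemize}

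\textbf{What is missing.} You need a quantitative speed estimate such as \eqref{eq:Progress2}. One route is to couple the bulk environment with the dynamics started from a Bernoulli-$\rho$ product, over a time window in which the light particles stay away from the boundaries, and then use the corresponding estimate for the Bernoulli-started dynamics. You also need an argument converting this dynamical bound into a statement about $\mu_{N,r}$. Since $a_N$ may grow arbitrarily slowly, that conversion must give tightness of $\lc_r$ at the left boundary, not merely localization in $\llbracket1,o(N)\rrbracket$.

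\textbf{Step~1 is also wrong as stated.} Replacing $2$ by $0$ does not give the open ASEP. A light particle at site $1$ (resp.\ $N$) blocks the entry at rate $\alpha$ (resp.\ $\delta$), whereas a hole there would not. Hence the projection is not Markov, and the ordinary-particle marginal of $\mu_{N,r}$ is in general not $\mu_N$. In the high density phase this defect sits exactly at the left boundary, where you expect the light particles to accumulate. So you cannot simply transfer the Bernoulli approximation of $\mu_N$ to the environment under $\mu_{N,r}$.

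The identification with the open ASEP holds only on time intervals during which no light particle occupies site $1$ or $N$. This is why, in Lemma~\ref{lem:NewLinearSpeed}, the light particles start at distance at least $2t$ from the boundary, the environment is drawn from $\mu_N$ at time $0$, and the comparison is run only up to time $t$.
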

Next, recall from \eqref{def:MixingTime} that we quantify the speed of convergence of the open ASEP with light particles to the stationary measure $\widehat{\mu}_{N,r}$ with respect to the total-variation distance between probability measures on $\hat{\Omega}_{n,r}$. More precisely, we consider for all $\varepsilon \in (0,1)$ the mixing time for the open ASEP with $r$ many light particles given by
 \begin{equation}
     t^N_{\textup{mix}}(\varepsilon) := \inf \left\{ t \geq 0 \, \colon \, \max_{\tilde{\tau} \in \widehat{\Omega}_{N,r}} \TV{ \mathbb{P}(\tau(t) \in \, \cdot \, | \, \tau(0)=\tilde{\tau}) - \widehat{\mu}_{N,r} } \leq \varepsilon \right\},
 \end{equation}
where $\tau(t)=\lb\tau_1(t),\dots,\tau_N(t)\rb$ denote the occupation variables for the process at time $t\geq0$. We recall the following result on the mixing time of the open ASEP with light particles in the fan region from \cite{SY:openLight}.
\begin{theorem}\label{thm:MainMixingTime}[Theorem 1.6 in \cite{SY:openLight}]
Then there exist constants $c_0,C_0>0$, depending only on $\a$ and $\b$,  such that for every $\varepsilon\in(0,1)$,
\begin{equation}\label{eq:MixingEquation2}
   c_0 \leq  \liminf_{N \rightarrow \infty} \frac{t^N_{\textup{mix}}(\varepsilon)}{N} \leq   \limsup_{N \rightarrow \infty} \frac{t^N_{\textup{mix}}(\varepsilon)}{N} \leq C_0 . 
\end{equation} 
\end{theorem}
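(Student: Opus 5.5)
Since this statement is quoted from \cite{SY:openLight}, the plan is to reconstruct the argument there in a form that uses only stationarity inputs, so that the shock-region version (Theorem~\ref{thm:MainMixingTimeIntro}) follows by swapping in Theorem~\ref{thm:ApproxHighLow} for the fan-region inputs. By particle-hole duality (as in the proof of Theorem~\ref{thm:ApproxHighLow again}) I would restrict to the high density phase, where Theorem~\ref{thm:mainHighLowLight} says the light particles sit near the left boundary.

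\emph{Lower bound.} I would start all light particles at the far end, say $\lc_j(0)=N-r+j$. Each light particle moves by nearest-neighbour jumps driven by Poisson clocks of rate at most $1+q$. A large deviation bound for these clocks shows that before time $c_0N$, with $c_0$ small, no light particle has reached $\llbracket 1,N/2\rrbracket$, except with probability $o(1)$. On the other hand, by Theorem~\ref{thm:mainHighLowLight} with $a_N=N/2$, the measure $\widehat{\mu}_{N,r}$ puts mass $1-o(1)$ on exactly that event. Hence the total variation distance stays close to $1$ up to time $c_0N$, which gives the lower bound in \eqref{eq:MixingEquation2}.

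\emph{Upper bound.} I would use a two-stage coupling.
\begin{enumerate}
\item In the projection that forgets the types of particles, the ordinary particles together with the light ones do not form an open ASEP. Instead I would use the projection that treats light particles as holes. This is a standard open ASEP, which mixes in time $O(N)$. I would invoke the known linear mixing bounds for the open ASEP under the grand coupling, running the disagreement (second class particle) argument of \cite{SY:openLight}.
\item Once the ordinary particles are coupled, the light particles see a field of ordinary particles. By Theorem~\ref{thm:ApproxHighLow} this field is Bernoulli$(\b/(1+\b))$ away from the left boundary, with exponentially small error. Light particles exchange with holes as particles do, and are overtaken by ordinary particles at rate $1$ versus $q$. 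In a density $\rho>1/2$ environment this yields a strictly negative effective drift, proportional to $(1-q)(1-2\rho)$.
\item So within time $O(N)$ every light particle reaches a region of size $O(1)$ near site $1$. There, distinct initial conditions can be coalesced by a standard local coupling in $O(N)$ additional time. Summing the two stages gives $C_0N$.
\end{enumerate}

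\emph{Main obstacle.} The delicate step is making the drift heuristic in stage 2 rigorous. The environment seen by a tagged light particle is not stationary Bernoulli; it is the dynamical open ASEP, correlated with the light particle's own position. I would first try to dominate the light-particle trajectory by a second class particle in an auxiliary open ASEP. I would then control its displacement through current estimates, namely the stationary expected current combined with the moderate deviations of Proposition~\ref{thm:moderate deviation}, applied over time windows of order $N$. I expect the non-product stationary measure near the boundary to be where most care is needed, and the exponential error in Theorem~\ref{thm:ApproxHighLow} is what controls it.
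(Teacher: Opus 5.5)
You do not supply an upper-bound argument: Stage~1 rests on a false claim about the model, and the drift estimate the argument needs is left unproved. Your lower bound is fine. A light particle moves left at rate at most $1$, so from $\lc_j(0)=N-r+j$ none reaches $\llbracket 1,N/2\rrbracket$ before time $c_0N$ ($c_0<1/2$) except with exponentially small probability. Theorem~\ref{thm:mainHighLowLight} gives that same event $\widehat{\mu}_{N,r}$-probability $1-o(1)$. For context, the paper does not reprove this statement. It imports it from \cite{SY:openLight} and only isolates its single region-dependent input, Lemma~4.11 there.

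\textbf{Stage~1.} Treating light particles as holes does \emph{not} give an open ASEP. By the definition of the model, an ordinary particle cannot enter at site $1$ (resp.\ $N$) while that site holds a light particle. So the projected process has its rate-$\alpha$ entries suppressed whenever a light particle sits at site $1$, and it is not Markov on its own. In the high density phase this is not a negligible effect, because Theorem~\ref{thm:mainHighLowLight} says the light particles accumulate at exactly that boundary. The consequences are:
\begin{enumerate}
\item You cannot quote open ASEP mixing bounds for the ordinary particles.
\item Their field is not described by $\mu_N$, so Theorem~\ref{thm:ApproxHighLow} says nothing about it. In any case that theorem, like Proposition~\ref{thm:moderate deviation}, is a shock-region result, while this statement concerns the fan region.
\item A coupling of the ordinary particles of two copies breaks each time the copies disagree about a light particle at site $1$.
\end{enumerate}
The ordering ``couple the ordinary particles, then the light particles'' is therefore circular. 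Your Stage~3 ``standard local coupling'' must in fact be a joint coalescence of both species at the left boundary, including removal of the ordinary-particle discrepancies created there. That is the core of the upper bound, and it is not supplied.

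\textbf{The drift estimate.} This is the gap you flag yourself, and it is exactly the input the paper singles out (Lemma~4.11 of \cite{SY:openLight}, extended here as Lemma~\ref{lem:NewLinearSpeed}). With ordinary particles started from $\mu_N$ and light particles at $s_1<\dots<s_r$, one has $\lc_r(t)\le\max(s_r-|\kappa|t/2,4t)$ with probability at least $1-\exp(-ct^{1/8})$. This formulation avoids your Stage~1 problem. Up to the first time a light particle reaches site $1$ or $N$, the light-as-holes projection \emph{is} an exact open ASEP, so the environment stays $\mu_N$-distributed. It can then be coupled with a Bernoulli-product-stationary system using the Bernoulli approximation and finite propagation speed; this is how Lemma~\ref{lem:NewLinearSpeed} is proved.

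Your suggested route does not yield this estimate:
\begin{enumerate}
\item Dominating by a second class particle adds nothing by itself, since a light particle in the bulk already is a second class particle.
\item Proposition~\ref{thm:moderate deviation} bounds static occupation counts under $\mu_N$. It does not control currents over time windows, or the displacement of a tagged second class particle in a non-stationary, finite-volume environment.
\end{enumerate}
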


We have the following extension of Theorems~\ref{thm:mainHighLowLight} and \ref{thm:MainMixingTime} to the shock region where $\a > \b \geq 1$ and $\a>\b$, covering Theorems~\ref{thm:mainHighLowLightIntro} and \ref{thm:MainMixingTimeIntro}. 

\begin{proposition}\label{pro:mainHighLowLightShock}
    Theorem~\ref{thm:mainHighLowLight} and Theorem~\ref{thm:MainMixingTime} continue to holds in the shock region of the high density phase where $\a > \max(1,\b)$, and $\a \b>1$ as well as in the shock region of the low density phase where $\a > \max(1,\b)$, and $\a \b>1$. 
\end{proposition}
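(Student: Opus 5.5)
The plan is to reuse the argument of \cite{SY:openLight} for the fan region, which (as we understand it) uses the two-species stationary measure $\mu_{N,r}$ only through one input: a Bernoulli-product approximation of the single-species stationary measure $\mu_N^{(q,\a,\b)}$ on the part of the segment away from the boundary where light particles should accumulate. We therefore first isolate exactly which approximation statement about the ordinary open ASEP enters the proofs of Theorem~\ref{thm:mainHighLowLight} and Theorem~\ref{thm:MainMixingTime}. We expect it to be of the following form: in the high density phase, the law of $(\tau_i)_{i\in\llbracket a_N,N\rrbracket}$ under $\mu_N$ is close in total variation to $\ber(\rho_{\textup{R}})$ as long as $a_N\to\infty$; in the low density phase, the analogous statement holds for $(\tau_i)_{i\in\llbracket1,b_N\rrbracket}$ with $N-b_N\to\infty$. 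We then replace that fan-region input by Theorem~\ref{thm:ApproxHighLow}. Part (i) gives error $Ce^{-ca_N}\to0$ in the high density phase, and part (ii) gives $Ce^{-c(N-b_N)}$ in the low density phase. These are even quantitatively stronger than what the fan-region argument needs.

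For the localization statement, Theorem~\ref{thm:mainHighLowLightIntro}, the mechanism is the projection (coloring) property. Identifying light particles with holes, respectively with particles, one obtains open ASEPs with only one species of particles. The stationary measure of $\mu_{N,r}$ is then controlled by comparing these projections, in a way that depends only on the local density being strictly above, respectively below, $\tfrac12$ away from the relevant boundary. In the high density phase the drift of a light particle in a region of density $\rho_{\textup{R}}>\tfrac12$ is to the left. Combining this with Theorem~\ref{thm:ApproxHighLow}(i) in the same estimates as in \cite{SY:openLight}, for instance a Lyapunov or current-type bound for the position of the rightmost light particle, should yield $\mu_{N,r}(\lc_r>a_N)\to0$. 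The low density case follows by particle--hole symmetry together with the reflection $i\mapsto N+1-i$, as in the proof of Theorem~\ref{thm:ApproxHighLow again}. Under this symmetry light particles stay light, $\a$ and $\b$ are swapped, and Liggett's condition is preserved.

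For the mixing time bounds, Theorem~\ref{thm:MainMixingTimeIntro}, we keep the two separate arguments from \cite{SY:openLight}. The lower bound of order $N$ comes from transport: light particles starting at the far end need linear time to travel to the absorbing region, and this uses only finite speed of propagation. The upper bound of order $N$ uses the known $O(N)$ mixing of the single-species open ASEP under Liggett's condition, followed by the drift of the light particles toward the boundary, which again relies on the density being bounded away from $\tfrac12$ in the bulk. In both places the stationary input is the Bernoulli approximation, now supplied by Theorem~\ref{thm:ApproxHighLow}, together with the moderate deviation bound of Proposition~\ref{thm:moderate deviation again} wherever particle counts over long blocks must be controlled uniformly.

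The main obstacle is the first step: checking that the arguments of \cite{SY:openLight} use no fan-region-specific structure beyond the Bernoulli approximation, for example exact formulas from the Enaud--Derrida representation or monotonicity in the boundary parameters valid only when $\a\b\le1$. If such dependence appears, we would substitute the two-layer representation of Theorem~\ref{thm:characterization stationary measure ASEP} together with the moment bound of Proposition~\ref{prop:moment bound Bryc} to recover the needed estimate directly.
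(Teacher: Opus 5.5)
Your top-level strategy is also the paper's: run the arguments of \cite{SY:openLight} and substitute Theorem~\ref{thm:ApproxHighLow} for the fan-region input. But the step you flag as the ``main obstacle'' is where all the content lies, and your guess about what must be substituted is wrong. The only place \cite{SY:openLight} uses $\a\b\le 1$ is its Lemma~4.11. That lemma is not a static statement about marginals of $\mu_N$ but a dynamical one. If the ordinary particles start from $\eta\sim\mu_N$ off the light sites $s_1<\dots<s_r$ (with $s_r\le N-4t$), then $\lc_r(t)\le\max(s_r-|\kappa|t/2,4t)$ with probability at least $1-\exp(-ct^{1/8})$. A total-variation bound for $(\tau_i)_{i\in\llbracket a_N,N\rrbracket}$ does not give this by ``the same estimates''. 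You have to carry the static approximation through the dynamics, and your proposal contains no such step.

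The paper does this in Lemma~\ref{lem:NewLinearSpeed}:
\begin{enumerate}
\item Reduce to $s_1\ge 2t$ using finite propagation speed.
\item Apply Theorem~\ref{thm:ApproxHighLow} with $m\asymp t$, uniformly in $t\le\tfrac23N$, to couple $\tau(0)$ with a Bernoulli-$\rho$ configuration $\tilde\zeta$ on $[\tfrac32t,N-\tfrac32t]$, up to error $e^{-ct}$.
\item Run an auxiliary light-particle system $\zeta$ from $\tilde\zeta$, with boundary rates chosen so that both effective densities equal $\rho$. Its parameters then lie on the curve $\a\b=1$, so the Bernoulli-$\rho$ product is the stationary measure of the corresponding single-species open ASEP. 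Hence $\zeta$ is started exactly as in Lemma~4.11 of \cite{SY:openLight}, and the fan-region results apply to it.
\item Use the basic coupling plus a Chernoff bound on the speed of discrepancies to get $\tau(t)=\zeta(t)$ on $[\tfrac t4,N-\tfrac t4]$ with probability $1-e^{-ct}$. This transfers the bound on $\lc_r(t)$ from $\zeta$ to $\tau$.
\end{enumerate}
With Lemma~4.11 replaced this way, the proofs of both theorems in \cite{SY:openLight} go through unchanged.

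Some mechanisms in your sketch are not available as stated. Identifying light particles with holes, or with particles, does not give an open ASEP here. An ordinary particle cannot enter a site occupied by a light particle, and light particles cannot exit, so the boundary rates of either projection depend on whether the boundary site carries a light particle. The actual proof never uses such a projection, so your localization sketch should not rely on it. You also do not need Proposition~\ref{thm:moderate deviation again} or the fallback to Proposition~\ref{prop:moment bound Bryc}. Once Lemma~4.11 is replaced, nothing else in \cite{SY:openLight} depends on the region, and the only new stationary input is Theorem~\ref{thm:ApproxHighLow}, used through the coupling above.
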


 We will in the following only describe the necessary adjustments in the proof of Theorems \ref{thm:mainHighLowLight} and~\ref{thm:MainMixingTime} in order to establish the corresponding results in the shock region. The key tool is to establish the following analogue of Lemma~4.11 in \cite{SY:openLight},  which controls the position of the light particles when the particles are initially sufficiently far away from the boundary. To this end, we consider an open ASEP $(\tau(t))_{t \geq 0}$ with $r$ many light particles at locations $(\lc_i(t))_{t \geq 0}$ for all $i \in \llb 1,r \rrb$. The following statement is Lemma~4.11 in \cite{SY:openLight}, but without the restriction that $\a\b \leq 1$.

\begin{lemma}\label{lem:NewLinearSpeed}
Let $\eta \sim \mu_N$ and set $\kappa=(1-q)(1-2\rho)$. Then there exists some $t_0=t_0(\a,\b,r)$ and $c=c(\a,\b,r)$ such that for all $t \geq t_0$, and all $0 < s_1 < s_2 < \dots <s_r$ with some $s_r \in \lBr 1, N-4t \rBr$, 
\begin{equation}\label{eq:Progress2}
    \mathbb{P}\left( \lc_r(t) \leq \max\left(s_r - \frac{|\kappa|}{2} t,4t\right) \, \Big| \,  \lc_i(0) = s_i \, \mbox{ }\forall i \in \llb 1,r \rrb ,\mbox{ } \,  \tau_x=\eta_x \, \mbox{ } \forall x \notin \{s_i\} \right) \geq 1  - \exp(-c t^{1/8})  ,
\end{equation} provided that $N$ is sufficiently large.
\end{lemma}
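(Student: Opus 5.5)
We would follow the proof of Lemma~4.11 in \cite{SY:openLight} essentially verbatim and isolate the single place where the fan-region assumption $\a\b\leq 1$ enters. There it is used only to know that the environment of ordinary particles seen by the rightmost light particle looks like a homogeneous Bernoulli product measure of density $\rho$ on a window of size of order $t$ around its position. In the shock region this input is now supplied by Theorem~\ref{thm:ApproxHighLow}, applied to the stationary measure $\mu_N$ of the open ASEP without light particles.

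\textbf{Step 1: reduction to a Bernoulli environment.} Starting from $\eta\sim\mu_N$ with the light particles placed at $s_1<\dots<s_r$, we compare the initial configuration, restricted to a window containing $\llbracket \max(s_r-Ct,1), \min(s_r+Ct,N)\rrbracket$, with $\ber(\rho)$. In the high density phase Theorem~\ref{thm:ApproxHighLow}(i) gives total variation error $Ce^{-cm}$ on $\llbracket m,N\rrbracket$. Since we only need the window to the right of $\max(s_r-Ct,4t)$, we take $m$ of order $t$ and obtain an error $e^{-ct}$. In the low density phase we use Theorem~\ref{thm:ApproxHighLow}(ii) and need $N-m$ of order $t$; the hypothesis $s_r\leq N-4t$ gives exactly this. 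The light particles occupy only $r$ sites, so inserting them changes the law by replacing $r$ sites, which is harmless for the argument. By finite speed of propagation (the graphical construction with Poisson clocks), the dynamics of $\lc_r$ up to time $t$ depend, except with probability $e^{-ct}$, only on the initial data and clocks in a window of size $Ct$. Hence we may replace $\eta$ by an i.i.d.\ $\ber(\rho)$ configuration, at the cost of an error $e^{-ct}$, which is much smaller than $\exp(-ct^{1/8})$.

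\textbf{Step 2: the second class particle argument.} With Bernoulli$(\rho)$ surroundings, the estimate on the rightmost light particle, which has speed $\kappa=(1-q)(1-2\rho)$, is exactly what was established in \cite{SY:openLight}. There the light particle is compared with a second class particle in ASEP on $\mathbb{Z}$ started from a product measure; this comparison is exact up to boundary effects, and boundary effects do not arrive within time $t$ because the particle is at distance at least $4t$ from the boundary. The concentration bound $1-\exp(-ct^{1/8})$ for the displacement then follows from the moderate deviation estimates used there. In the high density phase $\rho>1/2$, so $\kappa<0$ and the particle moves left at speed $|\kappa|$; the inequality $\lc_r(t)\leq\max(s_r-|\kappa|t/2,4t)$ is the statement that its displacement is at least half its mean. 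Where particle-count estimates on intervals are needed to control the light particle's environment, we replace the fan-region inputs by Proposition~\ref{thm:moderate deviation}.

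\textbf{Main obstacle.} The hardest step is checking that Step~1 is legitimate for the joint conditional law, that is, conditioning on $\lc_i(0)=s_i$ together with $\tau_x=\eta_x$ off $\{s_i\}$. Total variation control of the marginal on a window must survive inserting the $r$ light particles and must hold uniformly over $s_1<\dots<s_r$. We would handle this by coupling $\eta$ restricted to the window with a Bernoulli configuration, using the optimal coupling from Theorem~\ref{thm:ApproxHighLow}, and then running both systems with the same clocks. A secondary issue is that when $s_r-|\kappa|t/2<4t$ the window reaches near the left boundary, where Theorem~\ref{thm:ApproxHighLow}(i) gives no control. This case is covered by the $\max(\cdot,4t)$ in the statement: we only need to show that the particle ends to the left of $4t$, and for that the approximation on $\llbracket 2t,N\rrbracket$ suffices, by monotonicity of the second class particle with respect to the environment.
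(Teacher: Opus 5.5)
Your proposal follows the paper's proof. Theorem~\ref{thm:ApproxHighLow}(i) couples the initial configuration with a $\ber(\rho)$ configuration on a window at distance of order $t$ from the left boundary, the basic coupling and a linear speed bound carry this coupling up to time $t$, and the fan-region estimate of \cite{SY:openLight} is then imported; the paper makes this last step concrete by running an auxiliary open ASEP with light particles whose two effective densities both equal $\rho$, so that $\ber(\rho)$ is exactly the stationary measure of the underlying open ASEP and \cite{SY:openLight} applies verbatim.

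Two peripheral remarks. First, the paper handles the near-left-boundary case with the linear speed bound alone (reducing to $s_1\ge 2t$). Your appeal to a monotonicity of the second class particle in its environment is therefore unnecessary, and it would need its own proof, since under the basic coupling it fails pathwise once $q>0$. Second, the inequality is a high density statement: it asserts a leftward drift, $\kappa<0$. In the low density phase light particles drift to the right and one uses the mirrored statement, so part (ii) of Theorem~\ref{thm:ApproxHighLow} plays no role here.
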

\begin{proof}
By the same arguments as in the proof of Lemma 4.11 in \cite{SY:openLight}, we assume without loss of generality that $s_1 \geq 2t$, since the event $\lc_r(t) \leq \max\left(s_r -  |\kappa| t/2,4t\right)$ does with probability at least $1-\exp(-c_1 t)$ for some constant $c_1>0$ not depend on the choice of $(\tau_x(0))_{x \leq 2t}$, and the fact that particles travel at most at speed $1$.   Let $\tilde{\zeta} \in \{0,1\}^{\mathbb{Z}}$ be chosen according to a Bernoulli-$\rho$-product law on $\{ 0,1 \}^{N}$, where we set $\rho=\a/(\a+1)$. Let $(\zeta(t))_{t \geq 0}$ denote the corresponding open ASEP with light particles with effective densities $\rho$, started from $\tilde{\zeta}$ and $r$ many light particles at locations $(s_i)_{i \in \llb 1,r \rrb}$. Using Theorem \ref{thm:ApproxHighLow}, there exists a coupling $\mathbf{P}$ such that uniformly in $1 \leq t \leq \frac{2}{3}N$, 
\begin{equation}\label{eq:InitialCoupled}
    \mathbf{P}\left( \tau_x(0)=\tilde{\zeta}_x \ \forall x\in \Big[\frac{3}{2}t, N-\frac{3}{2}t\Big] \right) \geq 1 - \exp(-c_1 t) , 
\end{equation} with some constants $c_1>0$, and $N$ sufficiently large. Moreover, using the basic coupling, and a standard Chernoff bound on the (at most linear) speed of particles, we can extend \eqref{eq:InitialCoupled} to a coupling $\tilde{\mathbf{P}}$ between $(\tau(t))_{t \geq 0}$ and $(\zeta(t))_{t \geq 0}$ such that 
\begin{equation}
     \tilde{\mathbf{P}}\left( \tau_x(t)=\zeta_x(t) \ \forall  x\in \Big[\frac{1}{4}t, N-\frac{1}{4}t\Big] \right) \geq 1 - \exp(-c_2 t)  
\end{equation} for some constants $c_2,C_2$, and all $N$ large enough, allowing us to conclude the desired bound \eqref{eq:Progress2} by Theorem \ref{thm:mainHighLowLight} for the process $(\zeta(t))_{t \geq 0}$.
\end{proof}

\begin{proof}[Proof of Proposition \ref{pro:mainHighLowLightShock}]
Note that the only place where the assumption $\a\b\leq 1$ is used to derive Theorems~\ref{thm:mainHighLowLight} and~\ref{thm:MainMixingTime} is Lemma~4.11 in \cite{SY:openLight}. Using Lemma \ref{lem:NewLinearSpeed} in place of Lemma 4.11 in \cite{SY:openLight}, we conclude.
\end{proof}

\appendix

\section{Regeneration times for random walks}\label{sec:Appendix}

In this appendix, we collect several results on regeneration times for random walks. A key difference to standard references on renewal theory is that we require subexponential tail bounds for the involved regeneration times, and therefore provide a self-contained independent proof. Recall the following standard definition of a subexponential random variable.

\begin{definition}
     We will say that a probability measure $\nu$ on $\mathbb{R}_{\geq0}$ is subexponential if there exist constants $C,c>0$ such that $\nu([y,\infty))\leq Ce^{-cy}$ for all $y\in\mathbb{R}_{\geq0}$. We will say that a random variable $X$ on $\mathbb{R}_{\geq0}$ is subexponential if its distribution is subexponential. For $C,c>0$ we write $X\in\mathsf{Subexp}(C,c)$ if $\mathbb{P}(X\geq y)\leq Ce^{-cy}$ for all $y\in\mathbb{R}_{\geq0}$. 
\end{definition}

We start by collecting some basic properties for subexponential random variables.

\begin{lemma}\label{lem:basic exponential tails}
Let $X\in\mathsf{Subexp}(C,c)$ for some $C,c>0$. Then $X$ has the following properties.
    \begin{enumerate}
        \item [\textup{(\romannum{1})}] A non-negative random variable $X$ is subexponential if and only if $\mathbb{E}[e^{\theta X}]<\infty$ for some $\theta>0$. Specifically, if $X\in\mathsf{Subexp}(C,c)$, then $\mathbb E[e^{\theta X}]\leq1+ \theta C/(c-\theta)<\infty$ for every $\theta\in(0,c)$; and conversely, if $\mathbb E[e^{\theta X}]\leq M$ for some $\theta,M\in(0,\infty)$, then $X\in\mathsf{Subexp}(M,\theta)$. 
        \item [\textup{(\romannum{2})}] If $0\leq X\leq Y$ almost surely and $Y$ is subexponential, then $X$ is also subexponential.
        Specifically, if $0\leq X\leq Y$ and $Y\in\mathsf{Subexp}(C,c)$, then $X\in\mathsf{Subexp}(C,c)$. 
        \item [\textup{(\romannum{3})}] If $X$ and $Y$ are both subexponential, then $X+Y$ is subexponential. Specifically, if $X\in\mathsf{Subexp}(C_1,c_1)$ and $Y\in\mathsf{Subexp}(C_2,c_2)$, then $X+Y\in\mathsf{Subexp}(C_1+C_2,\min(c_1,c_2)/2)$. 
        \item [\textup{(\romannum{4})}] If $X_1,X_2,\dots$ are i.i.d. subexponential variables, and $K$ is a subexponential variable with values in $\mathbb{Z}_{\geq0}$, not necessarily independent of $X_1,X_2,\dots$, then $X_1+\dots+X_K$ is also subexponential. Specifically, if $X_1,X_2,\dots$ are i.i.d., $X_1\in\mathsf{Subexp}(C_0,c_0)$, and $K$ is $\mathbb{Z}_{\geq0}$-valued with $K\in\mathsf{Subexp}(C_K,c_K)$, then there exist $C,c>0$ depending only on $C_0,c_0,C_K,c_K$, such that $X_1+\dots+X_K\in\mathsf{Subexp}(C,c)$.
    \end{enumerate} 
\end{lemma}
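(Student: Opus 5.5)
We prove the four items in order; each is elementary and uses only the tail bound $\mathbb P(X\ge y)\le Ce^{-cy}$ together with the layer-cake formula.

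For (i), assume $X\in\mathsf{Subexp}(C,c)$ and $\theta\in(0,c)$. The layer-cake formula gives
\[
\mathbb E[e^{\theta X}]=1+\int_0^\infty \theta e^{\theta y}\,\mathbb P(X\ge y)\,dy\le 1+\theta C\int_0^\infty e^{-(c-\theta)y}\,dy=1+\frac{\theta C}{c-\theta}.
\]
For the converse, suppose $\mathbb E[e^{\theta X}]\le M$. The exponential Markov inequality gives
\[
\mathbb P(X\ge y)\le e^{-\theta y}\,\mathbb E[e^{\theta X}]\le Me^{-\theta y},
\]
so $X\in\mathsf{Subexp}(M,\theta)$.

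For (ii), the event $\{X\ge y\}$ is contained in $\{Y\ge y\}$, so $\mathbb P(X\ge y)\le \mathbb P(Y\ge y)\le Ce^{-cy}$.

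For (iii), use the inclusion $\{X+Y\ge y\}\subseteq\{X\ge y/2\}\cup\{Y\ge y/2\}$ and a union bound:
\[
\mathbb P(X+Y\ge y)\le C_1e^{-c_1y/2}+C_2e^{-c_2y/2}\le (C_1+C_2)\,e^{-\min(c_1,c_2)y/2}.
\]
This holds without any independence assumption.

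Item (iv) is the only step needing care, because $K$ may depend on the $X_i$. We handle this by replacing the random number of summands with a deterministic one via truncation. Fix $\lambda>0$, to be chosen below. Then
\[
\mathbb P(X_1+\dots+X_K\ge y)\le \mathbb P(K>\lambda y)+\mathbb P\bigl(X_1+\dots+X_{\lfloor\lambda y\rfloor}\ge y\bigr).
\]
The first term is at most $C_Ke^{-c_K\lambda y}$.

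For the second term the number of summands is deterministic, so we may use the i.i.d. structure. Fix $\theta=c_0/2$. By (i), $\mathbb E[e^{\theta X_1}]\le 1+C_0=:L$. The Chernoff bound then gives
\[
\mathbb P\bigl(X_1+\dots+X_{\lfloor\lambda y\rfloor}\ge y\bigr)\le e^{-\theta y}L^{\lambda y}=e^{-(\theta-\lambda\log L)y}.
\]
Now choose $\lambda=\theta/(2\log L)$, which makes the exponent $-\theta y/2$. Note that $L>1$ since $C_0>0$, so $\lambda$ is well defined and positive.

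Combining the two terms yields
\[
X_1+\dots+X_K\in\mathsf{Subexp}\bigl(C_K+1,\;\min(c_K\lambda,\theta/2)\bigr),
\]
and both constants depend only on $C_0,c_0,C_K,c_K$. The main point of (iv) is this truncation on $\{K\le\lambda y\}$, which removes the dependence between $K$ and the $X_i$.
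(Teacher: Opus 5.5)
Your proof is correct and follows the paper's argument essentially step for step. The paper likewise uses the layer-cake formula and Markov for (i), monotonicity for (ii), the $y/2$ union bound for (iii), and for (iv) the same truncation $\{S_K\ge y\}\subseteq\{K>\lambda y\}\cup\{S_{\lfloor\lambda y\rfloor}\ge y\}$ followed by a Chernoff bound. The only cosmetic difference is that you fix $\theta=c_0/2$, so that $L=1+C_0>1$ and $\lambda=\theta/(2\log L)$ is well defined, whereas the paper takes a generic pair $(\theta,M)$ from (i) and sets $\upsilon=\theta/(2(1+\log M))$ to avoid dividing by $\log M$.
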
 

\begin{proof}
 For Statement \textup{(\romannum{1})}, suppose first that $X\in\mathsf{Subexp}(C,c)$. For $\theta\in(0,c)$, 
    \[
        \mathbb{E}\left[e^{\theta X}\right]
        =
        1+\theta\int_0^\infty e^{\theta y}\mathbb{P}(X>y)\,dy
        \leq
        1+\theta C\int_0^\infty e^{-(c-\theta)y}\,dy
        =
        1+\frac{\theta C}{c-\theta}
        <\infty .
    \]
    Conversely, if $\mathbb{E}[e^{\theta X}]\leq M$ for \(\theta,M\in(0,\infty)\), then Markov's inequality gives
     $\mathbb{P}(X\geq y) 
    \leq
    Me^{-\theta y}$ for   $y\in\mathbb R_{\geq0}$.  

Statement \textup{(\romannum{2})} follows directly from the definition. 

For Statement \textup{(\romannum{3})}, since $X\in\mathsf{Subexp}(C_1,c_1)$ and $Y\in\mathsf{Subexp}(C_2,c_2)$,
\[
 \mathbb{P}(X+Y\geq y)\leq \mathbb{P}(X\geq y/2)+\mathbb{P}(Y\geq y/2)\leq C_1 e^{-c_1 y/2}+C_2 e^{-c_2 y/2}\leq (C_1+C_2)e^{-\frac12\min\{c_1,c_2\}y}\quad\mbox{for }y\in\mathbb{R}_{\geq0}.
\]  

For Statement \textup{(\romannum{4})}, let $S_0=0$, $S_n=\sum_{i=1}^n X_i$. We have $S_K=\sum_{n=0}^{\infty}S_n\mathbf{1}_{\{K=n\}}$.
Since $X_1\in\mathsf{Subexp}(C_0,c_0)$, part \textup{(\romannum{1})} gives $\theta,M\in(0,\infty)$ depending only on $C_0,c_0$ such that $\mathbb{E}[e^{\theta X_1}]\leq M$. 
Fix $y\geq0$, note that
\[
    \{S_K\geq y\}
    \subseteq
    \{K>\lfloor \upsilon y\rfloor\}\cup\{S_{\lfloor \upsilon y\rfloor}\geq y\},
\]
where we set $\upsilon= \theta/(2(1+\log M))>0$.
Since $K\in\mathsf{Subexp}(C_K,c_K)$, thus $\mathbb{P}(K>\lfloor \upsilon y\rfloor) 
    \leq
    C_Ke^{-c_K \upsilon y}$, and
\[
    \mathbb{P}\left(S_{\lfloor \upsilon y\rfloor}\geq y\right)
    =
    \mathbb{P}\left(e^{\theta S_{\lfloor \upsilon y\rfloor}}\geq e^{\theta y}\right)
    \leq
    e^{-\theta y}\mathbb{E}\left[e^{\theta S_{\lfloor \upsilon y\rfloor}}\right]\leq e^{-\theta y}M^{\lfloor \upsilon y\rfloor}\leq
    e^{-\theta y}e^{\upsilon y\log M}
    =
    e^{-\left(\theta-\upsilon \log M\right)y}.
\]
Hence $\mathbb{P}(S_K\geq y)
    \leq 
    (C_K+1)e^{-\min (c_K\upsilon,(\theta-\upsilon\log M) )y}$. Since $\theta-\upsilon\log M\geq\theta/2>0$, $S_K$ is subexponential. 
\end{proof}

\subsection{Regeneration times for the infinite biased random walk}

Recall the following standard definition of regeneration times for simple random walks.

\begin{definition}\label{def:regeneration}
Assume $N\in\mathbb{N}\cup\{\infty\}$. Let $\{Y_i\}_{i\in\llbracket0,N\rrbracket}$ be a path on $\mathbb{Z}$ with
$Y_i-Y_{i-1}\in\{-1,0,1\}$ for all $i\in\llbracket1,N\rrbracket$.
We call $n\in\llbracket0,N\rrbracket$ a \emph{regeneration time} for this path if
$Y_i<Y_n$ for all $i<n$ and $Y_i>Y_n$ for all $i>n$.
When $n\in\llbracket0,N\rrbracket$ is a regeneration time, we call $Y_n$ a \emph{regeneration level}.
\end{definition}

We have the following result on regeneration times of a positive $\nu_{\b}$-random walk. Let us stress that the overall proof strategy is reminiscent of the arguments in \cite{dembo1996tail} for random walks in random environments. However, since our regeneration times are in general not stopping times, we require a more refined analysis. 

\begin{proposition}\label{lem:regeneration}
    Assume $\b>1$. Let $\{Y_i\}_{i=0}^{\infty}$ be the positive $\nu_{\b}$-random walk started at $Y_0=0$ with infinitely many steps as in Definition \ref{def:positive lazy random walk}. Then, almost surely, there exist infinitely many regeneration times for this path given by random numbers $0=n_0<n_1<n_2<\cdots$. Furthermore, the increments $(n_i-n_{i-1},Y_{n_i}-Y_{n_{i-1}})$ for $i\in\mathbb{N}$ are i.i.d., and both coordinates are subexponential. More precisely, there exist constants $C,c>0$, depending only on $\b$, such that for each $y\in\mathbb{N}$, we have
    $\mathbb{P}(n_1\geq y)\leq Ce^{-cy}$ and $\mathbb{P}(Y_{n_1}\geq y)\leq Ce^{-cy}$.
    Moreover, we know that $\mathbb{P}((n_1, Y_{n_1})=(1,1))=\nu_{\b}(1)>0$.
\end{proposition}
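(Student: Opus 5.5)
We realize the positive $\nu_{\b}$-random walk as a transient walk with an explicit Doob transform. We then construct regeneration times through a ``fresh maximum that is never revisited'' renewal scheme, in the spirit of the argument of \cite{dembo1996tail}.

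By Lemma \ref{lem:basic hitting monotone coupling}, $\mathbb{P}_h(\tau_0=\infty)=1-\b^{-2h}$ for the unconditioned $\nu_{\b}$-walk started at $h\geq1$. Hence the positive walk $\{Y_i\}$ is the Markov chain on $\mathbb{Z}_{\geq1}$, after the forced first step $Y_1=1$, with $h$-transform $g(h)=1-\b^{-2h}$. The time $n$ is a regeneration time iff $Y_n=\max_{i\le n}Y_i$ is a strict new maximum and the future stays strictly above $Y_n$. Conditioned on $\mathcal{F}_n$ with $Y_n=h$, the future staying above $h$ has probability $p_*:=\mathbb{P}_0(S_i\geq1\ \forall i\geq1)\,/\,g(h)\cdot(\text{correction})$. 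The key observation is that by the $h$-transform this conditional probability is bounded below by a constant $p_0>0$ uniform in $h$. Indeed, it is at least the unconditioned probability $\nu_{\b}(1)(1-\b^{-2})$ of stepping up and never returning, divided by $g(h)\le1$.

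The construction proceeds in the usual way. Run the walk and, at each new strict maximum time $t$, attempt a regeneration. Either the future never goes below $Y_t+1$ after the step, so $t$ is a regeneration, or there is a first time $D$ at which it comes back to level $\le Y_t$. On failure, wait until the walk reaches a new strict maximum after $D$ and try again. The number of attempts until success is then dominated by a geometric variable with parameter $p_0$.

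Each failed attempt costs a duration equal to the excursion time $D-t$ plus the time to climb back to a new maximum. Conditional on failure, the excursion of a transient walk with positive drift $(\b-1)/(\b+1)$ that returns has exponentially decaying length, since the conditioned-to-return walk is the walk with $\b\mapsto\b^{-1}$, which has negative drift. The overshoot below the maximum is likewise exponentially bounded. The climb back then takes a subexponential time, by Cramér's bound as in Lemma \ref{lem:basic hitting monotone coupling}. The upward increments of the level are bounded by the corresponding times. Combining these pieces through Lemma \ref{lem:basic exponential tails}(iii),(iv) with a geometric number of attempts gives $n_1,Y_{n_1}\in\mathsf{Subexp}(C,c)$.

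For the i.i.d. structure, the regeneration event at $n$ with $Y_n=h$ is the event that the future path stays in $\{>h\}$. Conditioning on it, the shifted future $(Y_{n+i}-h)_{i\ge0}$ is exactly a $\nu_{\b}$-walk conditioned on $S_1=1$ and $S_i\ge1$ for all $i$. This is again the positive walk, since the past only constrains values $<h$. Formally, we decompose over $\{n_1=n,Y_n=h\}$, which is the intersection of a past event with $\{\text{future}>h\}$. Spatial homogeneity then gives independence and identical law of the subsequent increments. Finally, $(n_1,Y_{n_1})=(1,1)$ iff $Y_2\geq2$, $Y_i\geq2$ for all $i\ge2$. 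Its probability is $\nu_{\b}(1)g(1)/g(1)$ under the transform; we would verify it simplifies to $\nu_{\b}(1)$.

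The main obstacle is that regeneration times are not stopping times, so the ``attempt'' scheme must be made rigorous. The failure event depends only on the path up to $D$, which is a stopping time. We therefore plan to define the attempt sequence via stopping times $D_k$ and use the strong Markov property at the $D_k$ and at the subsequent maximum times. In doing so, we must track carefully that conditioning on failure yields exponentially tailed durations uniformly in the starting level, which uses the explicit formula for $g$.
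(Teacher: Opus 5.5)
Your proposal is correct. For the tail bounds it takes a genuinely different route from the paper. Your treatment of the i.i.d. structure and of $\mathbb{P}((n_1,Y_{n_1})=(1,1))$ is essentially the paper's Steps~3--4: you also write $\{n_1=n\}$ as a past event intersected with $\{Y_{n+i}>Y_n\ \forall i\ge1\}$.

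\textbf{Your route.} You run a Dembo--Peres--Zeitouni type attempt scheme, but at the stopping times $t_k$ (fresh strict maxima) and $D_k$ (first return to level $\le Y_{t_k}$). This sidesteps the fact that regeneration times are not stopping times. The pieces are as follows.
\begin{itemize}
\item The explicit Doob transform $g(h)=1-\b^{-2h}$ gives success probability exactly $\nu_{\b}(1)(1-\b^{-2})/g(h)\ge p_0$, so no correction factor is needed.
\item On failures of the form ``step up, then return to $h$'', the transform weight is $g(h)/g(h)=1$. The excursion is therefore a $\nu_{1/\b}$-walk, with exponentially tailed length uniformly in $h$.
\item The other failures (a lazy or downward first step) have $D_k=t_k+1$.
\item The climb back to a new maximum is dominated by the free walk via monotone coupling.
\end{itemize}
You bound $n_1$ first and then get $Y_{n_1}\le n_1$ for free.

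\textbf{The paper's route.} The paper goes the other way and bounds the level $V=Y_{n_1}$ first, by induction over blocks of $d$ levels. It kills the walk at $\tau_y$ and observes that $\{V>y\}$ forces the post-$\tau_y$ walk to revisit the killed path's first regeneration level $V^y$. By monotone coupling this costs a factor $u_0^{y-w}$ or $u_1$. Only afterwards does it pass to $n_1$, through $\{Y_{n_1}\ge y\}=\{n_1\ge\tau_y\}$ and Cramér.

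\textbf{Comparison.} The paper's argument needs only the two return probabilities $u_0,u_1<1$ and no analysis of excursions conditioned on failure. Yours is the more standard renewal argument and yields an exponential moment of $n_1$ directly, at the price of using the explicit $h$-transform.

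\textbf{One point to fix when writing it up.} The attempt costs $\Delta_k=t_{k+1}-t_k$ are neither i.i.d.\ nor independent of the number of attempts, so Lemma~\ref{lem:basic exponential tails}(iv) does not apply verbatim. Instead, show $\mathbb{E}[e^{\theta\Delta_k}\mathbf{1}_{\{D_k<\infty\}}\mid\mathcal{F}_{t_k}]\le\lambda<1$ uniformly for small $\theta>0$. This follows from H\"older applied to a uniform exponential moment and $\mathbb{P}(D_k<\infty\mid\mathcal{F}_{t_k})\le1-p_0$. Iterating the conditioning then gives $\mathbb{E}[e^{\theta n_1}]<\infty$.
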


\begin{proof} 
For clarity, we split the proof into four steps.
In the first step, we prove that the first regeneration level $Y_{n_1}$ exists and is subexponential. 
In the second step, we show that the first regeneration time $n_1$ is also subexponential. 
In the third step, we construct the regeneration times $n_i$, $i\in\mathbb{N}$ and show that the increments between $(n_i,Y_{n_i})$ are i.i.d. 
In the fourth step, we prove that the event $(n_1, Y_{n_1})=(1,1)$ has probability $\nu_{\b}(1)$.  \\

\noindent\textbf{Step 1.}
Let
\[
n_1=\inf\{n\ge 1:n\text{ is a regeneration time for }\boldsymbol Y\},
\]
with the convention that $n_1=\infty$ if this set is empty. Set
$V=Y_{n_1}$ on $\{n_1<\infty\}$ and $V=\infty$ on $\{n_1=\infty\}$.
In this step we show that there exist constants $C,c>0$ depending only on $\b$ such that
$\mathbb{P}(V\geq y)\leq Ce^{-cy}$ for $y\in\mathbb{N}$. 
Consider an ordinary $\nu_b$-random walk $\{S_i\}_{i=0}^{\infty}$ started at $S_0=0$. 
Denote $u_0=\mathbb{P}(S_i=-1\mbox{ for some }i\in\mathbb{N})=1/\b^2$ and $u_1=\mathbb{P}(S_i=0\mbox{ for some }i\in\mathbb{N})$. Then $u_0,u_1\in(0,1)$. We denote $u=\max(u_0,u_1)$ and choose $\rho\in(u,1)$.
We then choose $d\in\mathbb{N}$ sufficiently large, such that 
$\rho^{d-1}\leq (\rho-u)/(1-\rho)$ and $\rho+\rho^d<1$. 
To show that $V$ is subexponential, it suffices to prove that
\begin{equation}\label{eq:bereryhj57}
\mathbb{P}(V>kd)\leq\left(\rho+\rho^d\right)^{k-1}\quad\mbox{ for }k\in\mathbb{Z}_{\geq0}.
\end{equation}
We next prove \eqref{eq:bereryhj57} by induction.
When $k=0$, \eqref{eq:bereryhj57} holds since the right side is $>1$. 
We assume $k\geq1$ and that \eqref{eq:bereryhj57} holds for each $\ell\in\llbracket1,k-1\rrbracket$. For $y\in\mathbb{N}$, we denote by $\tau_y=\inf\{i\in\mathbb{Z}_{\geq0}: Y_i=y\}$ the hitting time of level $y$, and observe that $\tau_y<\infty$ almost surely. We next consider the process $\{Y_i\}_{i\leq\tau_y}$, i.e., the killed process after hitting level $y$, and denote its first regeneration level by $V^y$, using the same definition for the regeneration times as for the process without killing. By definition, we have $V^y\leq V$ and $V^y\in\llbracket1,y\rrbracket$. For each $w\in\llbracket1,y\rrbracket$,  we have
\begin{equation}\label{eq:ntndfy}
    \mathbb{P}(V>y|V^y=w)\leq\mathbb{P}(\exists i\in\mathbb{N} \, \colon \, Z_i=w)\leq\mathbb{P}(\exists i\in\mathbb{N} \, \colon \, S'_i=w)=u_0^{y-w}\cdot\mathbf{1}_{w\in\llbracket1,y-1\rrbracket}+u_1\cdot \mathbf{1}_{w=y},
\end{equation} 
where $\{Z_i\}_{i=0}^{\infty}$ is the $\nu_{\b}$-random walk started at $Z_0=y$ and conditioned on staying in $\mathbb{Z}_{\geq 1}$, and $\{S_i'\}_{i=0}^{\infty}$ is the ordinary $\nu_{\b}$-random walk started at $S_0'=y$.  
In the first step above, we use that conditioned on $\{V^y=w\}$, the event $\{V>y\}$ implies that the process $\{Y_i\}_{i>\tau_y}$ must visit $w$; in the second step we use the monotone coupling from Lemma \ref{lem:basic hitting monotone coupling}; and in the third step we use the definition of $u_0$ and $u_1$.
Therefore,
\begin{equation}\label{eq:reevgre}
    \begin{split}
        \mathbb{P}(V>kd)&=\sum_{\ell=1}^{kd} \mathbb{P}\left( V>kd|V^{kd}=\ell\right)\cdot \mathbb{P}\left(V^{kd}=\ell\right)\\
        &\leq\sum_{m=1}^{k-1}u_0^{kd-md}\cdot\mathbb{P}\left((m-1)d<V^{kd}\leq md\right)+u\cdot\mathbb{P}\left((k-1)d<V^{kd}\leq kd\right)\\
        &\leq\sum_{m=1}^{k-1}\rho^{kd-md}\cdot\mathbb{P}\left((m-1)d<V^{kd} \right)+u\cdot\mathbb{P}\left((k-1)d<V^{kd} \right)\\
        &\leq\sum_{m=1}^{k-1}\rho^{kd-md}\cdot \left(\rho+\rho^d\right)^{m-2}+u\cdot \left(\rho+\rho^d\right)^{k-2}\leq\left(\rho+\rho^d\right)^{k-1},
    \end{split}
\end{equation}
which completes the induction and concludes the proof that $V$ is subexponential. Here, in the first step above we condition on the value of $V^{kd}$; in the second step we use \eqref{eq:ntndfy} and that $u=\max(u_0,u_1)$; in the third step we use $u_0\leq u<\rho$; in the fourth step we use the induction hypothesis; and in the last step we use the condition $\rho^{d-1}\leq (\rho-u)/(1-\rho)$ when choosing $d$.
Specifically, when $k\geq2$ this condition implies
\[
\sum_{j=1}^{k-1}\left(\frac{\rho^d}{\rho+\rho^d}\right)^j+u\leq \sum_{j=1}^{\infty}\left(\frac{\rho^d}{\rho+\rho^d}\right)^j+u=\rho^{d-1}+u\leq \rho+\rho^d,
\]
which is equivalent to the last inequality of \eqref{eq:reevgre}; and when $k=1$, this last inequality of \eqref{eq:reevgre} is equivalent to $u\leq \rho+\rho^d$ which follows from $u<\rho$.\\ 

\noindent\textbf{Step 2.} In this step, we show that the first regeneration time $n_1$ is also subexponential.
For $y\in\mathbb{N}$, we recall that $\tau_y=\inf\{i\in\mathbb{Z}_{\geq0}: Y_i=y\}$ is the hitting time of level $y$. Observe that $\tau_y<\infty$ almost surely.
    Using that $n_1$ is a regeneration time and that each step of the path is in $\{-1,0,1\}$, one can check that $\{Y_{n_1}\geq y\}=\{n_1\geq\tau_y\}$. 
    Let $\kappa=(\b-1)/(2(\b+1))$, we have
    \[
    \mathbb{P}\left(n_1\geq y\right)\leq\mathbb{P}\left(\tau_{ \lceil\kappa y\rceil}\geq y \right)+\mathbb{P}\left(n_1\geq\tau_{ \lceil\kappa y\rceil}\right) \leq
    \mathbb{P}\left( Y_{y-1}<\lceil\kappa y\rceil \right)+\mathbb{P}\left(Y_{n_1}\geq  \lceil\kappa y\rceil\right)
    \leq C_1e^{-c_1y} + Ce^{-c\lceil\kappa y\rceil}.
    \]
    Hence we conclude that the first regeneration time $n_1$ is also subexponential.
    In the first step above, we use $\{n_1\geq y\}\subset\{\tau_{ \lceil\kappa y\rceil}\geq y\}\cup\{n_1\geq\tau_{ \lceil\kappa y\rceil}\}$; in the second step, we use $\{\tau_{ \lceil\kappa y\rceil}\geq y\}\subset\{Y_{y-1}<\lceil\kappa y\rceil\}$ (which follows from the definition of $\tau_y$) and $\{n_1\geq\tau_{ \lceil\kappa y\rceil}\}=\{Y_{n_1}\geq  \lceil\kappa y\rceil\}$; and in the third step, we use Lemma \ref{lem:basic hitting monotone coupling} to bound the first summand and the exponential tail bound of $Y_{n_1}$ from the first step to bound the second summand.\\

\noindent\textbf{Step 3.} 
In this step, we show that there almost surely exist infinitely many regeneration times $0=n_0<n_1<n_2<\cdots$, and that the increments $(n_i-n_{i-1},Y_{n_i}-Y_{n_{i-1}})$ for $i\in\mathbb{N}$ are i.i.d.
For $m\in\mathbb Z_{\ge0}$, define the shift operator $\Theta_m(\boldsymbol{\omega})=(\omega_{m+i}-\omega_m)_{i\ge0}$ for a sequence $\boldsymbol{\omega}=(\omega_i)_{i\geq0}$. We next show that $\Theta_{n_1}\boldsymbol{Y}$ is independent of the $\sigma$-field $\mathcal G=\sigma (n_1,(Y_i)_{i\in\llbracket0, n_1\rrbracket} )$ and has the same law as $\boldsymbol{Y}=(Y_i)_{i\geq0}$.
Let $\boldsymbol{S}=(S_i)_{i\geq0}$ be the ordinary $\nu_{\b}$-random walk started at $S_0=0$. Recall
$E_\infty=\{S_1=1 \text{ and } S_i\ge1\mbox{ for } i\in\mathbb{N}\}$, and note that by Definition~\ref{def:positive lazy random walk}, the law of $\boldsymbol{Y}=(Y_i)_{i\geq0}$ is the law of $\boldsymbol{S}=(S_i)_{i\geq0}$ conditioned on the event $E_{\infty}$.
For each measurable set $F$ (for the $\sigma$-algebra generated by the walk) and each $n\in\mathbb{N}$, $\bg\in\lrw_n$ satisfying 
$A_{n,\bg}=\{n_1=n,\, (Y_i)_{i\in\llbracket0,n\rrbracket}=\bg\}\neq\emptyset$,
we have
\begin{equation*}
    \begin{split}
        &\mathbb{P}\left( n_1=n,\, (Y_i)_{i\in\llbracket0,n\rrbracket}=\bg,\,\Theta_{n_1}\boldsymbol{Y}\in F \right)
        =\mathbb{P}\left( (S_i)_{i\in\llbracket0,n\rrbracket}=\bg ,\,\Theta_n \boldsymbol{S}\in E_\infty\cap F\right)/\mathbb P(E_\infty)\\
        &=\mathbb{P}\left( (S_i)_{i\in\llbracket0,n\rrbracket}=\bg \right)\cdot\mathbb{P}\left( \boldsymbol{S}\in E_\infty\cap F\right)/\mathbb P(E_\infty)
        =\mathbb P(n_1=n,\, (Y_i)_{i\in\llbracket0,n\rrbracket}=\bg)\cdot\mathbb P(\boldsymbol{Y}\in F).
    \end{split}
\end{equation*}
In the first step above we use the fact that 
\[
\{n_1(\boldsymbol{S})=n,\, (S_i)_{i\in\llbracket0,n\rrbracket}=\bg\}\cap E_\infty
=
\{(S_i)_{i\in\llbracket0,n\rrbracket}=\bg\}\cap\{\Theta_n \boldsymbol{S}\in E_\infty\},
\]
which follows from $A_{n,\bg}\neq\emptyset$, since the prefix $\bg$ already determines that $n$ is the first positive regeneration time, except for the requirement that after time $n$ the walk stays strictly above $\gamma_n$; in the second step we use that $\Theta_n \boldsymbol{S}$ is independent of $(S_i)_{i\in\llbracket0,n\rrbracket}$ and has the same law as $\boldsymbol{S}$; and in the third step we use 
\[
\mathbb P(n_1=n,\, (Y_i)_{i\in\llbracket0,n\rrbracket}=\bg)=\mathbb{P}\left((S_i)_{i\in\llbracket0,n\rrbracket}=\bg,\Theta_n\boldsymbol{S}\in E_{\infty}\right)/\mathbb P(E_\infty)
=\mathbb{P}\left( (S_i)_{i\in\llbracket0,n\rrbracket}=\bg \right),
\]
and $\mathbb{P}\left( \boldsymbol{S}\in E_\infty\cap F\right)/\mathbb P(E_\infty)=\mathbb P(\boldsymbol{Y}\in F)$.
Since every $\mathcal{G}$-measurable set is a disjoint union of subsets $A_{n,\bg}\neq\emptyset$ for $n\in\mathbb{N}$ and  $\bg\in\lrw_n$, it follows that $\Theta_{n_1}\boldsymbol{Y}$ is independent of $\mathcal{G}$ and has the same law as $\boldsymbol{Y}$.

Let $n_0=0$, and for each $k\in\mathbb{Z}_{\geq0}$, we inductively define $n_{k+1}-n_k$ as the first positive
regeneration time of the shifted path $\Theta_{n_k}\boldsymbol{Y}$. 
Note that if $n_k$ is a regeneration time of $\boldsymbol{Y}$ and $r$ is a regeneration time of
$\Theta_{n_k}\boldsymbol Y$, then $n_{k+1}=n_k+r$ is a regeneration time of
$\boldsymbol Y$. Since the first positive regeneration time of a positive $\nu_{\b}$-random walk is finite almost
surely by Step 2, this induction gives infinitely many regeneration times $0=n_0<n_1<\dots$ of $\boldsymbol{Y}$.
Moreover, by an induction on $k$, we know that $\Theta_{n_k}\boldsymbol{Y}$ is independent of the past up to time $n_k$ and has the law of a positive $\nu_{\b}$-random walk. Therefore
$(n_{k+1}-n_k,\ Y_{n_{k+1}}-Y_{n_k})$
is independent of the past and has the same law as $(n_1,Y_{n_1})$. Hence $(n_i-n_{i-1},\,Y_{n_i}-Y_{n_{i-1}})$ for $i\in\mathbb N$ are i.i.d.\\

\noindent\textbf{Step 4.} 
In this step, we show that $\mathbb{P}((n_1, Y_{n_1})=(1,1))=\nu_{\b}(1)$. Note that
\[
\{(n_1, Y_{n_1})=(1,1)\}=\{Y_1=1\mbox{ and }Y_i\geq2\mbox{ for all }i\geq2\}.
\]
With $\boldsymbol{S}$ and $E_{\infty}$ above, by the independence of increments and Lemma \ref{lem:basic hitting monotone coupling} with $h=1$,
\[
\mathbb{P}(S_1=1\mbox{ and }S_i\geq2\mbox{ for all }i\geq2)=\nu_{\b}(1)^2(1-\b^{-2}),\quad
\mathbb{P}(E_\infty)= \nu_{\b}(1)(1-\b^{-2}).
\]
Hence, we have $\mathbb{P}((n_1, Y_{n_1})=(1,1))=\mathbb{P}(S_1=1\mbox{ and }S_i\geq2\mbox{ for all }i\geq2\,|\,E_{\infty})=\nu_{\b}(1)$.
\end{proof}

\subsection{Regeneration times for the path measures}

We now provide similar results for the regeneration times of the open TASEP path measure. 

\begin{lemma}\label{lem:regeneration structure TASEP path measure}
    Assume $\b > \max(\a,1)$ and $\a\b>1$. Consider the probability measure $\mathrm{P}^{(0,\mathsf{a},\mathsf{b})}$ on $\lrw_N$ as in Definition \ref{def:approx TASEP measure}. Then, after enlarging the probability space if necessary, one can construct an infinite extension of a random path of length $N$  with law $\mathrm{P}^{(0,\mathsf{a},\mathsf{b})}$ together with random numbers 
    $0=m_0<m_1<m_2<\dots$,
    such that $m_i$ is a regeneration time of this extended path for each $i\in\mathbb{N}$. Furthermore, the increments $m_i-m_{i-1}$ for $i\in\mathbb{N}$ are independent, and the increments $m_i-m_{i-1}$ for $i\geq2$ are i.i.d. Moreover, there exist constants $C,c>0$ depending only on $\a$ and $\b$, such that $\mathbb{P}(m_1\geq y)\leq Ce^{-cy}$ and $\mathbb{P}(m_2-m_1\geq y)\leq Ce^{-cy}$ for $y\in\mathbb{N}$. We also have $\mathbb{P}(m_2-m_1=1)=\nu_{\b}(1)>0$.
\end{lemma}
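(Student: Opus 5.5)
The plan is to build the extended path directly from the definition of $\mathrm{P}^{(0,\a,\b)}$ in Definition~\ref{def:approx TASEP measure}. Under $\mathrm{P}^{(0,\a,\b)}$ we sample $Q_N$ and the prefix $(\gamma_i)_{i\in\llbracket0,Q_N\rrbracket}$ as under $\mathbb{P}^{(0,\a,\b)}$. The remainder $(\gamma_{Q_N+i}-\gamma_{Q_N})_{i\in\llbracket0,N-Q_N\rrbracket}$ is the first $N-Q_N$ steps of an infinite positive $\nu_{\b}$-random walk $\boldsymbol Y$, independent of the prefix. We therefore extend the path past time $N$ by simply running the whole infinite walk $\boldsymbol Y$ glued at $Q_N$, i.e.\ $\gamma_{Q_N+i}=\gamma_{Q_N}+Y_i$ for all $i\geq0$. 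The resulting infinite path restricted to $\llbracket0,N\rrbracket$ has law $\mathrm{P}^{(0,\a,\b)}$.

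For the regeneration times, we apply Proposition~\ref{lem:regeneration} to $\boldsymbol Y$, which gives regeneration times $0=n_0<n_1<n_2<\cdots$ with i.i.d.\ subexponential increments and $\mathbb{P}(n_1=1)\geq\mathbb{P}((n_1,Y_{n_1})=(1,1))=\nu_{\b}(1)$. Set $m_0=0$ and $m_i=Q_N+n_{i-1}$ for $i\in\mathbb N$. We then check that these are regeneration times of the extended path in the sense of Definition~\ref{def:regeneration}.

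For $m_1=Q_N$, note that $Q_N$ is the last time the prefix attains $\min(\bg)$, and $\boldsymbol Y$ is strictly positive after time $0$. So $\gamma_i>\gamma_{Q_N}$ for all $i>Q_N$. The condition $\gamma_i<\gamma_{Q_N}$ for $i<Q_N$ fails, however, since $Q_N$ is a minimum. I would resolve this by using the mirrored convention implicit in Lemma~\ref{lem:regeneration indices MAIN}: the times $\mathrm n_i$ there are indexed from the end, and the $0$-$1$ sequence records levels from the minimum upward. What is really needed is that each level $\gamma_{m_i}$ for $i\geq2$ is visited exactly once after $Q_N$ and never before. Levels above $\min(\bg)$ may be visited by the prefix, though. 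The cleanest fix is to define $m_1$ as the first regeneration time of $\boldsymbol Y$ at a level exceeding $\max_{i\leq Q_N}\gamma_i-\gamma_{Q_N}$, and $m_{i+1}$ as the subsequent regeneration times. By the strong-Markov-like independence in Step~3 of Proposition~\ref{lem:regeneration}, applied at the regeneration time of $\boldsymbol Y$, the later increments are i.i.d.\ copies of $n_1$ and are independent of $m_1$. Hence $\mathbb{P}(m_2-m_1\geq y)\leq Ce^{-cy}$ and $\mathbb{P}(m_2-m_1=1)=\nu_{\b}(1)$. One subtlety is that the index of the first qualifying regeneration time is random, but it is determined by the prefix and by $n_1,n_2,\ldots$ only through earlier blocks. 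This gives independence provided we argue via the i.i.d.\ block structure, taking the first block whose cumulative level exceeds a threshold that is independent of the blocks.

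The main obstacle is the tail bound for $m_1$. We have $m_1\leq Q_N+n_{J}$, where $J$ is the first index with $Y_{n_J}>D:=\max_{i\leq Q_N}\gamma_i-\gamma_{Q_N}\leq Q_N$. The level increments $Y_{n_i}-Y_{n_{i-1}}\geq1$, so $J\leq D+1\leq Q_N+1$. By \eqref{eq:rgregrt}, $Q_N$ has exponential tails uniformly in $N$ (note $Q_N\leq N$, so extend the bound trivially to $y>N$). Then $n_J=\sum_{i\leq J}(n_i-n_{i-1})$ is a random sum with subexponential $J$ of i.i.d.\ subexponential terms, so Lemma~\ref{lem:basic exponential tails}(iv) gives subexponentiality. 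Part (iii) then handles $Q_N+n_J$, with constants depending only on $\a,\b$.
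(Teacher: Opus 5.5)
Your final construction is essentially the paper's proof, once you discard the initial choice $m_i=Q_N+n_{i-1}$ (which does fail, as you noticed) and the unnecessary digression about a mirrored convention. The paper likewise glues an independent infinite positive $\nu_{\b}$-walk $\boldsymbol Y$ at $Q_N$ and sets $m_1=Q_N+n_K$ with $K=\min\{i\in\mathbb N: Y_{n_i}\geq L_N+1\}$ and $L_N=\max_{i\leq Q_N}\gamma_i-\gamma_{Q_N}$. It uses that $K$ is a stopping time for the filtration generated by the prefix and the i.i.d.\ blocks, and bounds $m_1$ via $K\leq L_N+1\leq Q_N+1$, \eqref{eq:rgregrt} and Lemma~\ref{lem:basic exponential tails}. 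One small correction: $\mathbb{P}(n_1=1)=\nu_{\b}(1)$ holds with equality, not just $\geq$, because $Y_1=1$ almost surely forces $\{n_1=1\}=\{(n_1,Y_{n_1})=(1,1)\}$.
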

\begin{proof}
    We construct the infinite random path $\bg=(\gamma_i)_{i\geq0}$ as follows. We first sample $Q_N\in\llbracket0,N\rrbracket$  from Definition~\ref{def:approx TASEP measure} and $(\gamma_i)_{i\in\llbracket0,Q_N\rrbracket}$  as under $\mathbb{P}^{(0,\a,\b)}$ and then attach an independent positive $\nu_{\b}$-random walk $\boldsymbol{Y}=\{Y_i\}_{i=0}^{\infty}$ so that $\gamma_{Q_N+i}=\gamma_{Q_N}+Y_i$ for each $i\in\mathbb{Z}_{\geq0}$. By Definition \ref{def:positive lazy random walk}, the restriction of this infinite path $\bg$ to $\llbracket0,N\rrbracket$ has law $\mathrm{P}^{(0,\mathsf{a},\mathsf{b})}$ on $\lrw_N$. We define the random variable 
    \begin{equation}\label{eq:etgetrg}
    L_N=\max_{i\in\llbracket0,Q_N\rrbracket}\gamma_i-\gamma_{Q_N},
    \end{equation}
    and we can observe that $0\leq L_N\leq Q_N$. As shown by \eqref{eq:rgregrt} in the proof of Lemma \ref{lem:TV distance replaced measure}, the random variable $Q_N$ (and hence also $L_N$) are subexponential. 

    We denote by $0=n_0<n_1<n_2<\dots$ the regeneration times of the positive $\nu_{\b}$-random walk $\boldsymbol{Y}$ as in Lemma \ref{lem:regeneration}.
    Since the path $\boldsymbol{Y}$ is independent of $(\gamma_i)_{i\in\llbracket0,Q_N\rrbracket}$, these regeneration times $n_i$, $i\in\mathbb{N}$ are also independent of $(\gamma_i)_{i\in\llbracket0,Q_N\rrbracket}$, and hence $n_i$ are also independent of $L_N$ given by \eqref{eq:etgetrg} and of $Q_N$.
    We write $(X_i,H_i)=(n_i-n_{i-1},Y_{n_i}-Y_{n_{i-1}})$ for $i\in\mathbb{N}$, which are i.i.d. and independent of $L_N$ and $Q_N$. 
    Define  
    \begin{equation}\label{eq:etgetrg2}
    K:=\min\{i\in\mathbb{N}:Y_{n_i}\geq L_N+1\}=\min\{i\in\mathbb{N}:H_1+\dots+H_i\geq L_N+1\}.
    \end{equation}
    One can observe that the positive regeneration times of the extended path $\bg=(\gamma_i)_{i\geq0}$ are precisely 
    \[
    m_1=Q_N+n_{K},\quad m_2=Q_N+n_{K+1},\quad m_3=Q_N+n_{K+2},\quad\dots
    \]
    We can check that the random number $K$ given by \eqref{eq:etgetrg2} is a stopping time with respect to filtration $\mathcal{F}_i=\sigma(Q_N,L_N,(X_r,H_r)_{r\in\llbracket1,i\rrbracket})$. 
    Hence $(X_{K+1},X_{K+2},\dots)$ is independent of $\mathcal{F}_K$ and has the same law as $(X_{1},X_{2},\dots)$. Since $m_1=Q_N+n_{K}$ is $\mathcal{F}_K$-measurable and $m_i-m_{i-1}=X_{K+i-1}$ for $i\geq2$, we conclude that the increments $m_i-m_{i-1}$ for $i\in\mathbb{N}$ are independent, and the increments $m_i-m_{i-1}$ for $i\geq2$ are i.i.d. 

    By Lemma \ref{lem:regeneration}, $m_2-m_1$ is subexponential and $\mathbb{P}(m_2-m_1=1)=\nu_{\b}(1)$.
    We next show that $m_1=Q_N+n_{K}$ is subexponential. As shown by \eqref{eq:rgregrt} in the proof of Lemma \ref{lem:TV distance replaced measure}, $Q_N$ is subexponential. By Lemma \ref{lem:basic exponential tails} (\romannum{3}), it suffices to show that $n_K$ is subexponential. Observe from \eqref{eq:etgetrg2} that $K\leq L_N+1$. Note 
    \[
    n_K=\sum_{i=1}^KX_i\leq \sum_{i=1}^{L_N+1}X_i.
    \]
    Since $X_1,X_2,\dots$ are i.i.d. and independent of $L_N+1$, and that both $X_1$ and $L_N+1$ are subexponential, in view of Lemma \ref{lem:basic exponential tails} (\romannum{2}), (\romannum{4}), we conclude that $n_K$ is subexponential. 
\end{proof}

We have the following result on the regeneration times under the random path measure $\mathrm{P}^{(0,\mathsf{a},\mathsf{b})}$.

\begin{lemma}\label{lem:regeneration indices}
    Assume $\b > \max(\a,1)$ and $\a\b>1$. Consider a random path $\bg\sim\mathrm{P}^{(0,\mathsf{a},\mathsf{b})}$, where $\mathrm{P}^{(0,\mathsf{a},\mathsf{b})}$ is the probability measure on $\lrw_N$ as in Definition \ref{def:approx TASEP measure}. 
    Consider the composition $\bs(\bg)$ associated to $\bg$ as in \eqref{eq:Signature} and the $0$-$1$ sequence $\boldsymbol{\mathsf{S}}^{\bs(\bg)}\in\{0,1\}^{N+1}$ associated to the composition $\bs(\bg)$ as in Definition \ref{def:associated 01 seq}. 
    Then, after enlarging the probability space if necessary, there exist a random integer $K\in\mathbb{Z}_{\geq0}$, $\mathbb{N}$-valued random variables $R_i$ for $i\in\mathbb{N}$, and, on the event $\{K\geq1\}$, $\mathbb{N}$-valued random variables
    \begin{equation*}
        1\leq \mathrm{n}_1<\dots<\mathrm{n}_K\leq N,
    \end{equation*}
    such that 
\[
K=\max\{j\in\mathbb Z_{\geq0}:R_1+\cdots+R_j\leq N\},
\]
and satisfying the following properties, with constants $C,c>0$ depending only on $\a$ and $\b$:
    \begin{enumerate}
        \item [(\romannum{1})] On the event $\{K\geq 1\}$, we have
\begin{align*}
    &\mathsf{S}^{\bs(\bg)}(\mathrm{n}_i)=1\quad\mbox{ for all }i\in\llbracket1,K\rrbracket,\\
    &\mathrm{n}_i-\mathrm{n}_{i-1}=R_{K+2-i}\quad\mbox{ for all }i\in\llbracket2,K\rrbracket,\quad
        \mbox{and}\quad
        N-\mathrm{n}_K=R_1-1.
\end{align*}
        \item [(\romannum{2})] We have $R_i$ for $i\in\mathbb{N}$ are independent and $R_i$ for $i\geq2$ are i.i.d. Moreover, for all $y\in\mathbb{N}$ we have
        \begin{align*}
            & \mathbb{P}(K\geq 1, \mathrm{n}_1\geq y)\leq Ce^{-cy},\quad \mathbb{P}(K\geq 1,  N-\mathrm{n}_K\geq y)\leq Ce^{-cy},\\
            & \mathbb{P}(K=0)\leq Ce^{-cN},\quad \mathbb{P}(R_i\geq y)\leq Ce^{-cy}  \mbox{    for } i\in\mathbb{N},\quad\mbox{ and } \mathbb{P}(R_2=1)=\nu_{\b}(1).
        \end{align*} 
\end{enumerate} 
\end{lemma}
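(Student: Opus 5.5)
The plan is to read off everything from the infinite extension in Lemma~\ref{lem:regeneration structure TASEP path measure}. Using that lemma, I extend $\bg\sim\mathrm{P}^{(0,\a,\b)}$ to an infinite path $(\gamma_i)_{i\ge0}$ with regeneration times $0=m_0<m_1<m_2<\cdots$. The increments $m_i-m_{i-1}$ are independent, those with $i\ge2$ are i.i.d., $m_1$ and $m_2-m_1$ are subexponential, and $\mathbb P(m_2-m_1=1)=\nu_{\b}(1)$. I then set
\[
R_1=m_1+1,\qquad R_i=m_i-m_{i-1}\quad(i\ge2),\qquad K=\max\{j\in\mathbb Z_{\ge0}:R_1+\cdots+R_j\le N\}.
\]
Since $R_1+\cdots+R_j=m_j+1$, we get $K=\#\{i\ge1:m_i\le N-1\}$. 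On $\{K\ge1\}$ I define $\mathrm n_j=N-m_{K+1-j}$ for $j\in\llbracket1,K\rrbracket$. This gives $1\le\mathrm n_1<\dots<\mathrm n_K=N-m_1\le N$, hence $N-\mathrm n_K=R_1-1$, and for $j\in\llbracket2,K\rrbracket$,
\[
\mathrm n_j-\mathrm n_{j-1}=m_{K+2-j}-m_{K+1-j}=R_{K+2-j}.
\]
Independence of the $R_i$, the i.i.d.\ property for $i\ge2$, their exponential tails, and $\mathbb P(R_2=1)=\nu_{\b}(1)$ are then inherited directly from Lemma~\ref{lem:regeneration structure TASEP path measure}.

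The combinatorial heart is part (i): if $m\in\llbracket1,N-1\rrbracket$ is a regeneration time of the extended path, then $\mathsf S^{\bs(\bg)}(N-m)=1$. I would argue this as follows.
\begin{itemize}
\item Since $m\ge1$ is a regeneration time of the infinite path, it is also one for the restriction to $\llbracket0,N\rrbracket$. The restricted path visits levels $>\gamma_m$ exactly at the times $m+1,\dots,N$, and these are $N-m\ge1$ times.
\item Set $j=\gamma_m-\min(\bg)+1$. Then $j\ge1$, and $j\le r$ because $\gamma_N>\gamma_m$.
\item Since the path has steps in $\{-1,0,1\}$, the number of times spent at levels $\ge\min(\bg)+j$ is $\sigma_j+\dots+\sigma_r$, which equals $N-m$.
\item By Definition~\ref{def:associated 01 seq} this gives $\mathsf S^{\bs(\bg)}(N-m)=1$, so $\mathsf S^{\bs(\bg)}(\mathrm n_i)=1$ for all $i\in\llbracket1,K\rrbracket$.
\end{itemize}

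For the tail bounds in (ii), two are immediate. First, $\mathbb P(K=0)=\mathbb P(m_1\ge N)\le Ce^{-cN}$ by the tail of $m_1$. Second, $N-\mathrm n_K=R_1-1=m_1$ has an exponential tail directly. The remaining bound, on $\mathrm n_1=N-m_K$, is the one genuinely new estimate and the step I expect to be the main obstacle, since $K$ is random and depends on the $R_i$. By the definition of $K$ we have $m_{K+1}\ge N$, so $\mathrm n_1\le R_{K+1}$. I would control $R_{K+1}$ by the overshoot argument already used in Step 2 of the proof of Proposition~\ref{lem:moment bound simpler chain}:
\[
\mathbb P(K\ge1,\mathrm n_1\ge y)\le\sum_{a\ge1}\sum_{\ell\ge0}\mathbb P\bigl(m_a+1=N-\ell\bigr)\,\mathbb P\bigl(R_{a+1}\ge\max(y,\ell+1)\bigr).
\]
This factorization uses the independence of $R_{a+1}$ from $(R_1,\dots,R_a)$. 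Because the $m_a$ are strictly increasing, $\sum_a\mathbb P(m_a+1=N-\ell)\le1$ for each $\ell$. The uniform exponential tail of $R_{a+1}$ then gives $\sum_{\ell\ge0}Ce^{-c\max(y,\ell+1)}\le C'e^{-c'y}$. All constants depend only on $\a,\b$, as they come from Lemma~\ref{lem:regeneration structure TASEP path measure}.
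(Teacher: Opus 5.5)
Your proposal is correct and follows essentially the same route as the paper: you extend the path via Lemma~\ref{lem:regeneration structure TASEP path measure}, reverse the regeneration increments, identify each regeneration position as a partial sum of the composition, and bound $\mathrm n_1$ by the same renewal-overshoot sum using $\sum_a\mathbb P(m_a=\cdot)\le 1$. The only difference is an index shift: the paper takes $R_1=m_1$, $\mathrm n_i=N+1-m_{K+1-i}$ and counts times at levels $\ge\gamma_{m_i}$, whereas you take $R_1=m_1+1$, $\mathrm n_i=N-m_{K+1-i}$ and count times at levels $>\gamma_m$, and both versions satisfy the statement.
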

\begin{proof}
    We choose regeneration times $0=m_0<m_1<m_2<\dots$ for the infinite extension of the path $\bg\sim\mathrm{P}^{(0,\mathsf{a},\mathsf{b})}$ from Lemma \ref{lem:regeneration structure TASEP path measure}. 
    Define 
    \begin{equation}\label{eq:bhrttsbht}
        \begin{split} 
        &R_1=m_1,\quad R_i=m_i-m_{i-1} \quad\mbox{ for }i\geq2,\\
        &K=\max\{j\in\mathbb{Z}_{\geq 0}:m_j\leq N\},\quad \mathrm{n}_i=N+1-m_{K+1-i}\quad\mbox{ for }i\in\llbracket1,K\rrbracket.
    \end{split}
    \end{equation}
On the event $\{K\geq1\}$, since $1\leq m_1<m_2<\dots<m_K\leq N$, we have $1\leq \mathrm{n}_1<\mathrm{n}_2<\dots<\mathrm{n}_K\leq N$. 
We also have
$\mathrm{n}_i-\mathrm{n}_{i-1}=m_{K+2-i}-m_{K+1-i}=R_{K+2-i}$ for $i\in\llbracket2,K\rrbracket$ and $N-\mathrm{n}_K=m_1-1=R_1-1$.
For each $i\in\llbracket1,K\rrbracket$, since $m_i$ is a regeneration time of $\bg$, we have
\[
    N+1-m_i=\#\{j\in\llbracket0,N\rrbracket:\gamma_j\geq \gamma_{m_i}\}=\sum_{\ell=\gamma_{m_i}-\min(\bg)}^{r}\sigma_{\ell},
\]
where we recall from \eqref{eq:Signature} that $r=\max(\gamma_i:i\in\llbracket0,N\rrbracket)-\min(\bg)$ and $\sigma_i=\#\{j\in\llbracket0,N\rrbracket:\gamma_j=\min(\bg)+i\}\in\mathbb{N}$ for $i\in\llbracket0,r\rrbracket$. By Definition \ref{def:associated 01 seq}, this implies $\mathsf{S}^{\bs(\bg)}(N+1-m_i)=1$, and hence 
$\mathsf{S}^{\bs(\bg)}(\mathrm{n}_i)=1$ for $i\in\llbracket1,K\rrbracket$.

Since the regeneration times $0=m_0<m_1<\dots$ are chosen from Lemma \ref{lem:regeneration structure TASEP path measure}, we know that $R_i$ for $i\in\mathbb{N}$ are independent,  $R_i$ for $i\geq2$ are i.i.d.,  $\mathbb{P}(R_i\geq y)\leq C_1e^{-c_1y}$ for each $i,y\in\mathbb{N}$, and $\mathbb{P}(R_2=1)=\nu_{\b}(1)$. By the definition \eqref{eq:bhrttsbht} of $K$, we have $\mathbb{P}(K=0)=\mathbb{P}(m_1>N)=\mathbb{P}(R_1>N)\leq C_1e^{-c_1N}$.
We also have
$\mathbb{P}(K\geq 1,\ N-\mathrm{n}_K\geq y)=\mathbb{P}(K\geq 1,\ R_1-1\geq y)\leq \mathbb{P}(R_1-1\geq y)\leq C_1e^{-c_1y}$ for $y\in\mathbb{N}$. Additionally,  

\begin{equation*}
    \begin{split}
        &\mathbb{P}(K\geq 1, \mathrm{n}_1\geq y)=\sum_{k=1}^{\infty}\mathbb{P}(K=k, \mathrm{n}_1\geq y)=\sum_{k=1}^{\infty}\mathbb{P}(K=k,    m_{k}\leq N+1- y)\\
        &=\sum_{k=1}^{\infty} \mathbb{P}(  m_{k}\leq N+1- y, m_{k+1}\geq N+1)
        =\sum_{k=1}^{\infty} \mathbb{P}(  m_{k}\leq N+1- y, R_{k+1}\geq N+1-m_k)\\
        &  
        =\sum_{k=1}^{\infty}\mathbb{E}\left[ {\bf1}_{m_{k}\leq N+1- y}\cdot \mathbb{P}\left( R_{k+1}\geq N+1-m_k| \sigma(R_1,\dots,R_k)\right)\right]\\
        & \leq C_1 \sum_{k=1}^{\infty}\mathbb{E}\left[ {\bf1}_{m_{k}\leq N+1- y}\cdot e^{-c_1(N-m_k)}\right]
        =  C_1 \sum_{k=1}^{\infty}\sum_{\ell=0}^{\infty}e^{-c_1\ell}\mathbb{P}\left(  m_k=N-\ell,\ell\geq y-1 \right)\\
        &=C_1\sum_{\ell=y-1}^{\infty}e^{-c_1\ell}\mathbb{E}\left[\#\{k\in\mathbb{N}:m_k=N-\ell\}   \right]\leq C_1\sum_{\ell=y-1}^{\infty}e^{-c_1\ell}\leq Ce^{-c y}.
    \end{split}
\end{equation*} 
Here, the inequality from the third line to the fourth line uses the fact that $m_k$ is $\sigma(R_1,\dots,R_k)$-measurable, the fact that $R_{k+1}$ is independent of $\sigma(R_1,\dots,R_k)$, and that $\mathbb{P}(R_{k+1}\geq y)\leq C_1e^{-c_1y}$ for $y\in\mathbb{N}$.    
\end{proof}

\bibliographystyle{goodbibtexstyle.bst}
\bibliography{RWApproximation}

@book{askey1985some,
  title={Some basic hypergeometric orthogonal polynomials that generalize Jacobi polynomials},
  author={Askey, Richard and Wilson, James Arthur},
  volume={319},
  year={1985},
  publisher={American Mathematical Soc.}
}

@article{uchiyama2004asymmetric,
  title={Asymmetric simple exclusion process with open boundaries and {Askey-Wilson} polynomials},
  author={Uchiyama, Masaru and Sasamoto, Tomohiro and Wadati, Miki},
  journal={J. Phys. A},
  volume={37},
  number={18},
  pages={4985},
  year={2004},
  publisher={IOP Publishing}
}

@article{enaud2004large,
  title={Large deviation functional of the weakly asymmetric exclusion process},
  author={Enaud, C and Derrida, B},
  journal={J. Stat. Phys.},
  volume={114},
  pages={537--562},
  year={2004},
  publisher={Springer}
}

@article{bryc2024two,
  title={{A two-line representation of stationary measure for open TASEP}},
  author={Bryc, W{\l}odzimierz and Zatitskii, Pavel},
  journal={Electron. J. Probab.},
  volume={29},
  pages={1--24},
  year={2024},
  publisher={The Institute of Mathematical Statistics and the Bernoulli Society}
}

@article{corteel2011tableaux,
  title={Tableaux combinatorics for the asymmetric exclusion process and {Askey--Wilson} polynomials},
  author={Corteel, S. and Williams, L. K},
  journal={Duke Math. J.},
  volume={159},
  number={3},
  pages={385--415},
  year={2011},
  publisher={Duke University Press}
}

@article{uchiyama2005correlation,
  title={Correlation function of asymmetric simple exclusion process with open boundaries},
  author={Uchiyama, M. and Wadati, M.},
  journal={J. Nonlinear Math. Phys. },
  volume={12},
  number={sup1},
  pages={676--688},
  year={2005},
  publisher={Taylor \& Francis}
}

@book{liggett1999stochastic,
  title={Stochastic interacting systems: contact, voter and exclusion processes},
  author={Liggett, Thomas M},
  volume={324},
  year={1999},
  publisher={Springer Science \& Business Media}
}

@book{gasper2011basic,
  title={Basic hypergeometric series},
  author={Gasper, George and Rahman, Mizan},
  volume={96},
  year={2011},
  publisher={Cambridge university press}
}

@Article{Bryc2017,
author={Bryc, Wlodek
and Weso{\l }owski, Jacek},
title={Asymmetric Simple Exclusion Process with Open Boundaries and Quadratic Harnesses},
journal={J. Stat. Phys.},
year={2017},
month={Apr},
day={01},
volume={167},
number={2},
pages={383-415},
issn={1572-9613},
doi={10.1007/s10955-017-1747-5},
url={https://doi.org/10.1007/s10955-017-1747-5}
}

@article{KPZ86,
  author  = {Kardar, Mehran and Parisi, Giorgio and Zhang, Yi-Cheng},
  title   = {Dynamic scaling of growing interfaces},
  journal = {Phys. Rev. Lett.},
  volume  = {56},
  number  = {9},
  pages   = {889--892},
  year    = {1986},
  doi     = {10.1103/PhysRevLett.56.889}
}

@article{derrida1993exact,
  title={Exact solution of a {1D} asymmetric exclusion model using a matrix formulation},
  author={Derrida, B. and Evans, M. R and Hakim, V. and Pasquier, V.},
  journal={J. Phys. A},
  volume={26},
  number={7},
  pages={1493},
  year={1993},
  publisher={IOP Publishing}
}

@article{bryc2019limit,
  title={Limit fluctuations for density of asymmetric simple exclusion processes with open boundaries},
  author={Bryc, W. and Wang, Y.},
  journal={Ann. Inst. Henri Poincaré  Probab. Stat.},  
  volume={55},
  number={4},
  pages={2169--2194},
  year={2019}
}

@article{macdonald1968kinetics,
  author     = {MacDonald, Carolyn T. and Gibbs, Julian H. and Pipkin, Allen C.},
  title      = {Kinetics of biopolymerization on nucleic acid templates},
  journal    = {Biopolymers},
  volume     = {6},
  number     = {1},
  pages      = {1--25},
  year       = {1968},
  publisher = {Wiley Online Library},
  doi        = {10.1002/bip.1968.360060102}
}

@article{SPITZER1970246,
title = {Interaction of Markov processes},
journal = {Adv. Math.},
volume = {5},
number = {2},
pages = {246-290},
year = {1970},
issn = {0001-8708},
doi = {https://doi.org/10.1016/0001-8708(70)90034-4},
url = {https://www.sciencedirect.com/science/article/pii/0001870870900344},
author = {Frank Spitzer}
}

@article{johansson2000shape,
  title={Shape fluctuations and random matrices},
  author={Johansson, Kurt},
  journal={Comm. Math. Phys.},
  volume={209},
  number={2},
  pages={437--476},
  year={2000},
  publisher={Springer}
}

@article{blythe2007nonequilibrium,
  title={Nonequilibrium steady states of matrix-product form: a solver's guide},
  author={Blythe, Richard A and Evans, Martin R},
  journal={J. Phys. A},
  volume={40},
  number={46},
  pages={R333},
  year={2007},
  publisher={IOP Publishing}
}

@article{corwin2022some,
  title={Some recent progress on the stationary measure for the open {KPZ} equation},
  author={Corwin, I.},
  journal={Toeplitz Operators and Random Matrices: In Memory of Harold Widom},
  pages={321--360},
  year={2022},
  publisher={Springer}
}

@article{rost1981non,
  title={Non-equilibrium behaviour of a many particle process: Density profile and local equilibria},
  author={Rost, H.},
  journal={Zeitschrift f{\"u}r Wahrscheinlichkeitstheorie und Verwandte Gebiete},
  volume={58},
  number={1},
  pages={41--53},
  year={1981},
  publisher={Springer}
}

@article{matetski2021kpz,
  title={The {KPZ} fixed point},
  author={Matetski, K. and Quastel, J. and Remenik, D.},
  journal={Acta Math.},
  volume={227},
  number={1},
  pages={115--203},
  year={2021},
  publisher={Lehigh University Bethlehem, Penn., USA}
}

@article{yang2022stationary,
  title={Stationary measure for six-vertex model on a strip},
  author={Yang, Zongrui},
  journal={Electron. J. Probab.},
  volume={29},
  pages={1--28},
  year={2024},
  publisher={The Institute of Mathematical Statistics and the Bernoulli Society}
}

@article{mallick1997finite,
  title={Finite-dimensional representations of the quadratic algebra: applications to the exclusion process},
  author={Mallick, Kirone and Sandow, Sven},
  journal={J. Phys. A},
  volume={30},
  number={13},
  pages={4513},
  year={1997},
  publisher={IOP Publishing}
}

@article{nestoridi2024approximating,
  title={Approximating the stationary distribution of the {ASEP} with open boundaries},
  author={Nestoridi, Evita and Schmid, Dominik},
  journal={Comm. Math. Phys. },
  volume={405},
  number={8},
  pages={176},
  year={2024},
  publisher={Springer}
}

@article{schuetz2022reverse,
  title={A reverse duality for the {ASEP} with open boundaries},
  author={Sch{\"u}tz, Gunter},
  journal={J. Phys. A},
  year={2022}
}

@article{bryc2024stationary,
  title={Stationary distribution of open asymmetric simple exclusion processes on an interval as a marginal of a two-layer ensemble},
  author={Bryc, W{\l}odek},
  journal={ALEA Lat. Am. J. Probab. Math. Stat.},
  volume={22},
  pages={131--148},
  year={2025}
}

@article{aggarwal2024scaling,
  title={{Scaling limit of the colored {ASEP} and stochastic six-vertex models}},
  author={Aggarwal, Amol and Corwin, Ivan and Hegde, Milind},
  journal={arXiv preprint arXiv:2403.01341},
  year={2024}
}

@article{aggarwal2024kpz,
  title={{KPZ fixed point convergence of the ASEP and stochastic six-vertex models}},
  author={Aggarwal, Amol and Corwin, Ivan and Hegde, Milind},
  journal={arXiv preprint arXiv:2412.18117},
  year={2024}
}

@article{barraquand2024stationary,
  title={Stationary measures for integrable polymers on a strip},
  author={Barraquand, Guillaume and Corwin, Ivan and Yang, Zongrui},
  journal={Invent. Math. },
  pages={1--75},
  year={2024},
  publisher={Springer}
}

@article{bryc2024limitCoex,
  title={{Limit Fluctuations of Stationary Measure of Totally Asymmetric Simple Exclusion Process with Open Boundaries on the Coexistence Line}},
  author={Bryc, W{\l}odzimierz and Najnudel, Joseph and Wang, Yizao},
  journal={Comm. Math. Phys.},
  volume={406},
  number={8},
  pages={197},
  year={2025},
  publisher={Springer}
}

@article{wang2023askey,
  title={{Askey--Wilson signed measures and open {ASEP} in the shock region}},
  author={Wang, Yizao and Weso{\l}owski, Jacek and Yang, Zongrui},
  journal={Int. Math. Res. Not. IMRN},
  pages={rnae116},
  year={2024},
  publisher={Oxford University Press}
}

@book{kipnis1999scaling,
  author    = {Kipnis, Claude and Landim, Claudio},
  title     = {Scaling Limits of Interacting Particle Systems},
  series    = {Grundlehren der mathematischen Wissenschaften},
  volume    = {320},
  publisher = {Springer-Verlag},
  address   = {Berlin},
  year      = {1999},
  doi       = {10.1007/978-3-662-03752-2}
}

@article{sasamoto2000density,
  title={Density profile of the one-dimensional partially asymmetric simple exclusion process with open boundaries},
  author={Sasamoto, Tomohiro},
  journal={J. Phys. Soc. Jpn.},
  volume={69},
  number={4},
  pages={1055--1067},
  year={2000},
  publisher={The Physical Society of Japan}
}

@article{brak2006combinatorial,
  title={A combinatorial derivation of the {PASEP} stationary state},
  author={Brak, Richard and Corteel, Sylvie and Essam, John and Parviainen, Robert and Rechnitzer, Andrew},
  journal={Electron. J. Combin. },
  volume={13},
  number={1},
  pages={R108},
  year={2006}
}

@article{liggett1975ergodic,
  title={Ergodic theorems for the asymmetric simple exclusion process},
  author={Liggett, Thomas M},
  journal={Trans. Amer. Math. Soc.},
  volume={213},
  pages={237--261},
  year={1975}
}

@article{derrida2002large,
  title={Large deviation of the density profile in the steady state of the open symmetric simple exclusion process},
  author={Derrida, Bernard and Lebowitz, JL and Speer, ER},
  journal={J. Stat. Phys.},
  volume={107},
  number={3},
  pages={599--634},
  year={2002},
  publisher={Springer}
}

@article{wood2020combinatorial,
  title={Combinatorial mappings of exclusion processes},
  author={Wood, Anthony J and Blythe, Richard A and Evans, Martin R},
  journal={J. Phys. A },
  volume={53},
  number={12},
  pages={123001},
  year={2020},
  publisher={IOP Publishing}
}

@article{hegde2024large,
  title={Large deviation principle for the stationary measures of open asymmetric simple exclusion processes},
  author={Hegde, Milind and Yang, Zongrui},
  journal={arXiv preprint arXiv:2412.12026},
  year={2024}
}

@article{denisov1983random,
  title={Random walk and the {W}iener process considered from a maximum point},
  author={Denisov, IV},
  journal={Teor. Veroyatnost. i Primenen},
  volume={28},
  number={4},
  pages={785--788},
  year={1983}
}

@article{SY:openLight,
  author  = {Schmid, Dominik and Yang, Zongrui},
  title   = {The open {ASEP} with light particles},
  journal = {Probab. Theory Related Fields},
  year    = {2026},
  doi     = {10.1007/s00440-026-01486-w},
  url     = {https://doi.org/10.1007/s00440-026-01486-w}
}

@article{jackson1909xi,
  title={XI.—{O}n {$q$}-functions and a certain difference operator},
  author={Jackson, Frederick H},
  journal={Earth and Environmental Science Transactions of the Royal Society of Edinburgh},
  volume={46},
  number={2},
  pages={253--281},
  year={1909},
  publisher={Royal Society of Edinburgh Scotland Foundation}
}

@incollection{koekoek2010hypergeometric,
  title={Hypergeometric orthogonal polynomials},
  author={Koekoek, Roelof and Lesky, Peter A and Swarttouw, Ren{\'e} F},
  booktitle={Hypergeometric Orthogonal Polynomials and Their $q$-Analogues},
  pages={183--253},
  year={2010},
  publisher={Springer}
}

@article{yang2025limits,
  title={Limits of open {ASEP} stationary measures near a boundary},
  author={Yang, Zongrui},
  journal={Ann. Appl. Probab.},
  volume={35},
  number={5},
  pages={3242--3270},
  year={2025},
  publisher={Institute of Mathematical Statistics}
}

@book{durrett2019probability,
  title={Probability: theory and examples},
  author={Durrett, Rick},
  volume={49},
  year={2019},
  publisher={Cambridge university press}
}

@article{dembo1996tail,
  title={Tail estimates for one-dimensional random walk in random environment},
  author={Dembo, Amir and Peres, Yuval and Zeitouni, Ofer},
  journal={Comm. Math. Phys.},
  volume={181},
  number={3},
  pages={667--683},
  year={1996},
  publisher={Springer}
}

@unpublished{SchmidYangCoexistence, 
title = {Sharp approximations for the stationary measures of the open {ASEP} {II}: The coexistence line}, 
author = {Schmid, Dominik and Yang, Zongrui},   
year = {2026} 
}

@unpublished{SchmidYangFan, 
title = {Sharp approximations for the stationary measures of the open {ASEP} {III}: The fan region}, 
author = {Schmid, Dominik and Yang, Zongrui},  
year = {2026} 
}

@unpublished{SchmidYangOpenKPZ,
title = {{Stationary measures for the open KPZ equation: a short proof}}, 
author = {Schmid, Dominik and Yang, Zongrui},  
year = {2026} 
}

\end{document}